\newcommand{\defeq}{\colonequals}
\renewcommand{\tilde}{\widetilde}  
\DeclareMathOperator{\Gal}{Gal}    
\numberwithin{equation}{subsection}
\newtheorem{thm}[equation]{Theorem}
\newtheorem{prop}[equation]{Proposition}
\newtheorem{theorem}[equation]{Theorem}
\newtheorem{proposition}[equation]{Proposition}
\newtheorem{lemma}[equation]{Lemma}
\newtheorem{corollary}[equation]{Corollary}
\newtheorem*{corollary*}{Corollary}
\newtheorem{assumption}[equation]{Assumption}
\newcounter{alphalabels}
\newtheorem{theoremx}[alphalabels]{Theorem}
\theoremstyle{definition}
\newtheorem{definition}[equation]{Definition}
\newtheorem{notation}[equation]{Notation}
\newtheorem{convention}[equation]{Convention}
\theoremstyle{remark}
\newtheorem{remark}[equation]{Remark}
\newtheorem{example}[equation]{Example}
\DeclareFontFamily{U}{wncy}{}
\DeclareFontShape{U}{wncy}{m}{n}{<->wncyr10}{}
\DeclareSymbolFont{mcy}{U}{wncy}{m}{n}
\DeclareMathSymbol{\sha}{\mathord}{mcy}{"58}
\newcommand{\mbb}[1]{\mathbb{#1}}
\newcommand{\hatot}{\hat{\otimes}}
\newcommand{\opn}[1]{\operatorname{#1}}
\newcommand{\tbyt}[4]{\left( \begin{matrix} #1 & #2 \\ #3 & #4 \end{matrix} \right)}
\newcommand{\Addresses}{{
  \bigskip
  \footnotesize

  (Graham) \textsc{Mathematical Institute, University of Oxford, Woodstock Road, Oxford OX2 6GG, United Kingdom}\par\nopagebreak
  \textit{E-mail address}: \texttt{andrew.graham@maths.ox.ac.uk}\par\nopagebreak
  \textit{ORCID:} \href{https://orcid.org/0000-0003-2538-8091}{0000-0003-2538-8091}

  \medskip 

  (Pilloni) \textsc{Institut de Math\'{e}matique d'Orsay, Universit\'{e} Paris-Saclay, F-91405 Orsay Cedex, France}\par\nopagebreak
  \textit{E-mail address}: \texttt{vincent.pilloni@universite-paris-saclay.fr}\par\nopagebreak
  \textit{ORCID:} \href{https://orcid.org/0000-0002-2729-418X}{0000-0002-2729-418X}

  \medskip

  (Rodrigues Jacinto) \textsc{Institut de Mathématiques de Marseille, Université Aix-Marseille, Marseille, France}\par\nopagebreak
  \textit{E-mail address}: \texttt{joaquin.rodrigues-jacinto@univ-amu.fr}\par\nopagebreak
  \textit{ORCID:} \href{https://orcid.org/0000-0001-9571-8899}{0000-0001-9571-8899}

}}
\DeclareMathOperator{\Spec}{Spec}
\def\Q{\mathbb{Q}}
\def\Z{{\mathbb{Z}}}
\def\R{{\mathbb{R}}}
\def\C{{\mathbb{C}}}
\def\N{{\mathbb{N}}}
\def\zp{{\Z_p}}
\def\qp{{\Q_p}}
\def\epsilon{\varepsilon}
\def\oscr{\mathscr{O}}
\def\Spa{\mathrm{Spa}}
\title[ ]{p-adic interpolation of Gauss--Manin connections on nearly overconvergent modular forms and p-adic L-functions}
\author{Andrew Graham, Vincent Pilloni, and Joaquín Rodrigues Jacinto}
\begin{document}

\makeatletter
\def\@tocline#1#2#3#4#5#6#7{\relax
  \ifnum #1>\c@tocdepth 
  \else
    \par \addpenalty\@secpenalty\addvspace{#2}%
    \begingroup \hyphenpenalty\@M
    \@ifempty{#4}{%
      \@tempdima\csname r@tocindent\number#1\endcsname\relax
    }{%
      \@tempdima#4\relax
    }%
    \parindent\z@ \leftskip#3\relax \advance\leftskip\@tempdima\relax
    \rightskip\@pnumwidth plus4em \parfillskip-\@pnumwidth
    #5\leavevmode\hskip-\@tempdima
      \ifcase #1
       \or\or \hskip 1em \or \hskip 2em \else \hskip 3em \fi%
      #6\nobreak\relax
    \dotfill\hbox to\@pnumwidth{\@tocpagenum{#7}}\par
    \nobreak
    \endgroup
  \fi}
\makeatother

\begin{abstract}
In this paper, we give a new geometric definition of nearly overconvergent modular forms and $p$-adically interpolate the Gauss-Manin connection on this space. This can be seen as an ``overconvergent'' version of the unipotent circle action on the space of $p$-adic modular forms, as constructed by Gouv\^{e}a and Howe. This improves on results of Andreatta--Iovita and has applications to the construction of Rankin--Selberg and triple product $p$-adic $L$-functions.
\end{abstract}

\maketitle
\tableofcontents

\section{Introduction}

One of the central tools for studying complex $L$-functions associated with cuspidal modular eigenforms (or convolutions of such forms) is the Maass--Shimura differential operator, which acts on the space of nearly holomorphic modular forms (first introduced by Shimura to study the algebraicity properties of the Rankin--Selberg $L$-function \cite{Shimura76, Shimura86}). For various applications in number theory and arithmetic geometry, such as for the construction of $p$-adic $L$-functions, it is often desirable to understand how this operator varies $p$-adically for a given prime number $p$.

Since the Maass--Shimura operator is closely related with the Gauss--Manin connection (which makes sense in the $p$-adic world), one is led to understand how powers of this connection vary $p$-adically on an appropriately defined space of ``$p$-adic modular forms''. In the ordinary setting (i.e. as the modular forms vary in Hida families), the situation is well-understood -- one can use the ``unit root splitting'' to construct the Atkin--Serre operator on Katz $p$-adic modular forms, which is significantly easier to analyse than the full Gauss--Manin connection (on $q$-expansions it is simply the operator $\Theta = q \frac{d}{dq}$). Since the space of Katz $p$-adic modular forms comes equipped with an ordinary idempotent for the $U_p$-operator and satisfies a classicality theorem, this is often sufficient for the construction $p$-adic $L$-functions in the ordinary setting (e.g. see \cite{DarmonRotgerGrossZagier1}).

In the non-ordinary case the situation is much more delicate:
\begin{itemize} 
\item On the one hand, the desired space of ``$p$-adic modular forms'' (which we denote by $\mathscr{N}^{\dagger}$ and refer to as nearly overconvergent modular forms) should have a good spectral theory for the $U_p$-operator (i.e. one can define $\leq h$-slope parts for any integer $h \geq 0$). This property means that the space cannot be ``too large'' -- for example, this property is not satisfied by the space of Katz p-adic modular forms.
\item On the other hand, the space $\mathscr{N}^{\dagger}$ should carry an action of the Gauss--Manin connection which can be $p$-adically iterated, which means that $\mathscr{N}^{\dagger}$ can't be ``too small'' -- for example, the space of nearly overconvergent modular forms as defined by Urban \cite{Urban}\footnote{Denoted by $\mathcal{N}^{\infty, \rho}_{\mathfrak{U}}$ in \cite[\S 3.3]{Urban}.} has a good spectral theory for $U_p$, but is too small for $p$-adically iterating the Gauss--Manin connection.
\end{itemize}
To put this another way, one looks for an optimal intermediate space between classical modular forms and Katz $p$-adic modular forms, which has a good spectral theory for $U_p$, and on which the Gauss--Manin connection can be $p$-adically iterated. If such a space exists, it is likely that the method for constructing $p$-adic $L$-functions in the ordinary setting generalises to the finite-slope setting.

In \cite{AI_LLL} the authors gave a construction of such a space of nearly overconvergent modular forms and $p$-adically iterated powers of the Gauss-Manin connection. However there is an analyticity assumption on the weight, and hence on the $p$-adic $L$-functions that they construct. We also point out that in \cite{HarronXiao}, the authors propose a construction of families of nearly overconvergent Siegel modular forms, but they do not consider $p$-adic iterations of the Gauss-Manin connection. The aim of this paper is to provide a new approach for the construction of nearly overconvergent modular forms which is optimal in the aforementioned sense, i.e., which has a good spectral theory for the $U_p$ operator and over which we have a full $p$-adic interpolation of the Gauss-Manin connection (where in particular the analyticity assumption of \cite{AI_LLL} is removed). As an application, we describe how this can be applied to the construction of triple product and Rankin-Selberg $p$-adic $L$-functions in families of finite slope modular forms.

\subsection{The main result} \label{TheMainResultIntroSubSec}

Throughout the main body of the article we will take $G = \opn{GL}_2$ and $P=B \subset G$ the upper triangular Borel. We let $T \subset B$ denote the diagonal torus with character group $X^*(T)$ and we identify characters $\kappa \in X^*(T)$ with pairs $(k; w) \in \Z^2$ via the rule $\opn{diag}(t, t^{-1}z) \mapsto t^k z^w$. We let $X^\star(T)^+$ be the cone of dominant weights given by the condition $k \geq 0$. In particular, the sum of positive roots, denoted $2\rho$, corresponds to the character $(2; -1)$. We let $\overline{P} = \overline{B} \subset G$ denote the lower triangular Borel subgroup.

Fix a neat compact open subgroup $K^p \subset G(\mbb{A}_f^p)$ and let $P_{\opn{dR}}$ denote the $\overline{B}$-torsor over the modular curve $X_K$ of level $K=\opn{GL}_2(\mbb{Z}_p) K^p$ parameterising frames of the first de Rham homology respecting the Hodge filtration. Then one can consider:
\begin{itemize}
    \item The space of nearly holomorphic modular forms $\mathscr{N}^{\opn{hol}}$ of level $K$, obtained as sections of the de Rham torsor $P_{\opn{dR}}$. This space carries an action of $T$ (through the torsor structure), and the isotypic pieces under this action are nearly holomorphic forms of a prescribed weight. One has an ascending filtration $\opn{Fil}_r\mathscr{N}^{\opn{hol}}$ given by nearly holomorphic modular forms of degree $\leq r$ (see Proposition \ref{PropNearlyOverconvergent}). Furthermore, the Gauss--Manin connection induces an operator $\nabla \colon \mathscr{N}^{\opn{hol}} \to \mathscr{N}^{\opn{hol}}$ which satisfies Griffiths transversality and shifts the weight by $2\rho$.
    \item The space of $p$-adic modular forms $\mathscr{M}$ of level $K$, obtained as sections of the Igusa tower $\mathcal{IG}$ (which is a pro\'{e}tale $T(\mbb{Z}_p)$-torsor) over the ordinary locus inside the modular curve $X_K$. The space $\mathscr{M}$ therefore comes equipped with an action of $T(\mbb{Z}_p)$ and, taking isotypic subspaces for this action, one can consider the space $\mathscr{M}_{\kappa}$ of weight $\kappa$ $p$-adic modular forms, for any locally analytic character of $T(\mbb{Z}_p)$. The Gauss--Manin connection composed with the unit root splitting induces the Atkin--Serre differential operator $\Theta \colon \mathscr{M} \to \mathscr{M}$.
\end{itemize}

There is a commutative diagram
\begin{equation} \label{NholToMpadicDiagram}
\begin{tikzcd}
\mathscr{N}^{\opn{hol}} \arrow[r] \arrow[d, "\nabla"'] & \mathscr{M} \arrow[d, "\Theta"] \\
\mathscr{N}^{\opn{hol}} \arrow[r]                      & \mathscr{M}                    
\end{tikzcd}
\end{equation}
where the horizontal arrows are induced from restricting to the ordinary locus and applying the unit root splitting. In addition to this, the space of $p$-adic modular forms comes equipped with a $U_p$-operator, a Frobenius operator $\varphi$ and the diamond operator $S_p$, and, as explained in \cite{howe2020unipotent}, the space $\mathscr{M}$ carries an action of continuous functions $C_{\opn{cont}}(\mbb{Z}_p, \mbb{Q}_p)$ such that the identity function acts as $\Theta$. The main result of this article is the following overconvergent version of this picture.

\begin{theoremx} \label{TheoIntro1}
There exists a space $\mathscr{N}^{\dagger}$ of nearly overconvergent modular forms, which is an $LB$-space of compact type and which comes equipped with actions of $U_p$, $\varphi$, $S_p$, $T(\Z_p)$, locally analytic functions $C^{\opn{la}}(\Z_p, \Q_p)$, and with an ascending filtration $\opn{Fil}_r \mathscr{N}^{\dagger}$. Moreover, there is a natural map $\mathscr{N}^{\dagger} \to \mathscr{M}$ which is compatible with all these actions and $\mathscr{N}^{\dagger}$ satisfies the following additional properties:
\begin{enumerate}
\item The filtration is stable under $U_p$, $S_p$, $\varphi$, and $T(\Z_p)$. Moreover, if $\kappa \colon T(\Z_p) \rightarrow \C_p^\times$ is the character given by $\opn{diag}(t, t^{-1}z) \mapsto t^{k} z^w$, then $\opn{Fil}_r \mathscr{N}^\dagger_\kappa$ (the $r$-th filtered piece of nearly overconvergent modular forms of weight $\kappa$) is the space of nearly overconvergent modular forms of weight $k$ and degree $r$ as constructed in \cite{Urban}. In particular, the composition of the inclusion $\opn{Fil}_r\mathscr{N}^{\dagger}_{\kappa} \subset \mathscr{N}^{\dagger}$ with the map $\mathscr{N}^\dagger \to \mathscr{M}$ induces the usual inclusion of nearly overconvergent modular forms into $p$-adic modular forms.

\item For any locally analytic character $\kappa \colon T(\Z_p) \rightarrow \C_p^\times$, the weight $\kappa$ nearly overconvergent forms $\mathscr{N}^{\dagger}_{\kappa}$ admit slope decompositions with respect to $U_p$, and the finite-slope part agrees with the finite-slope part of $\bigcup_{r \geq 0} \opn{Fil}_r\mathscr{N}^{\dagger}_{\kappa}$. 

\item The operator $S_p$ is invertible, commutes with $U_p$ and $\varphi$, and satisfies $U_p \circ \varphi = p S_p$. Furthermore, for any $\phi \in C^{\opn{la}}(\mbb{Z}_p, \mbb{Q}_p)$ and $t = \opn{diag}(t_1, t_2) \in T(\mbb{Z}_p)$, we have
\[
t \circ \phi = \phi(t_2^{-1} \cdot - \cdot t_1) \circ t, \quad U_p \circ \phi = \phi(p \cdot -) \circ U_p, \quad \varphi \circ \phi(p \cdot -) = \phi \circ \varphi, \quad S_p \circ \phi = \phi \circ S_p
\]
as endomorphisms of $\mathscr{N}^{\dagger}$.

\item One has a $T(\mbb{Z}_p)$-equivariant factorisation $\mathscr{N}^{\opn{hol}} \hookrightarrow \mathscr{N}^{\dagger} \to \mathscr{M}$ of the horizontal map in (\ref{NholToMpadicDiagram}). In particular, the action of the identity function in $C^{\opn{la}}(\mbb{Z}_p, \mbb{Q}_p)$ extends the Gauss--Manin connection $\nabla \colon \mathscr{N}^{\opn{hol}} \to \mathscr{N}^{\opn{hol}}$, and the action of the indicator function $1_{\mbb{Z}_p^{\times}} \in C^{\opn{la}}(\mbb{Z}_p, \mbb{Q}_p)$ is given by the $p$-depletion operator $1 - p^{-1}S_p^{-1} \varphi U_p$.
\end{enumerate}
\end{theoremx}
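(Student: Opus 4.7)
The overall strategy is to construct $\mathscr{N}^\dagger$ geometrically as the locally analytic sections of an overconvergent $\overline{B}$-torsor $P_{\opn{dR}}^\dagger$ built over a strict neighborhood $X_K(v)$ of the ordinary locus inside $X_K$. The starting observation is that, restricted to the ordinary locus, the unit root splitting identifies $P_{\opn{dR}}$ with the pushout $\mathcal{IG} \times^{T(\Z_p)} \overline{B}$ along the inclusion $T(\Z_p) \hookrightarrow \overline{B}$, and $\mathscr{M}$ appears as the sections that are constant in the unipotent direction $\overline{U}$. I would build $P_{\opn{dR}}^\dagger$ as an analytic $\overline{B}$-torsor interpolating between the algebraic torsor $P_{\opn{dR}}$ (recovered by taking ``polynomial'' sections in the $\overline{U}$-direction) and the Igusa-tower pushout (recovered on the ordinary stratum), and then set $\mathscr{N}^\dagger$ to be the colimit, as $v \to 0$, of locally analytic sections of $P_{\opn{dR}}^\dagger$ over $X_K(v)$; the LB-space-of-compact-type structure comes from the compactness of the restriction maps between nested neighborhoods.

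The various actions then fall out of the torsor structure: $T(\Z_p)$ acts through the torus factor of $\overline{B}$, while $C^{\opn{la}}(\Z_p, \Q_p)$ acts by pointwise multiplication along the unipotent coordinate of $\overline{U}$ (under a canonical identification $\overline{U}(\Z_p) \cong \Z_p$). The Hecke operators $U_p$, $\varphi$, $S_p$ come from lifts of the standard correspondences on $X_K$ to the torsor, and the commutation relations in (3) follow from explicitly tracking their action on the unipotent coordinate: both $U_p$ and $\varphi$ rescale this coordinate by $p$, which accounts for the shifts $U_p \circ \phi = \phi(p \cdot -) \circ U_p$ and $\varphi \circ \phi(p \cdot -) = \phi \circ \varphi$. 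For (4), the factorisation $\mathscr{N}^{\opn{hol}} \hookrightarrow \mathscr{N}^\dagger \to \mathscr{M}$ is essentially built into the construction, and the Gauss--Manin connection $\nabla$ is the infinitesimal generator of the $\overline{U}$-translation action, so it agrees with the derivation induced by the identity function $z \in C^{\opn{la}}(\Z_p, \Q_p)$ in the sense of Gouv\^{e}a--Howe. The $p$-depletion identity for $1_{\Z_p^\times}$ then follows algebraically from the commutation relations and $U_p \circ \varphi = pS_p$.

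The filtration $\opn{Fil}_r \mathscr{N}^\dagger$ is defined by imposing that the section is polynomial of degree $\leq r$ in the unipotent coordinate. Identifying the defining torsors on the algebraic side recovers Urban's nearly overconvergent forms of weight $k$ and degree $\leq r$, giving statement (1). Each $\opn{Fil}_r \mathscr{N}^\dagger_\kappa$ is then a vector bundle over $X_K(v)$, and hence inherits a compact $U_p$-operator yielding slope decompositions. For (2), the key claim is that the finite-slope part of $\mathscr{N}^\dagger_\kappa$ already lies in $\bigcup_r \opn{Fil}_r \mathscr{N}^\dagger_\kappa$: any locally analytic section with unbounded ``polynomial degree'' in the unipotent direction must, under the $p$-rescaling action of $U_p$, have its high-degree Taylor coefficients divided by ever larger powers of $p$, so cannot sustain a bounded slope.

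The main obstacle I expect is the geometric construction of the interpolating torsor $P_{\opn{dR}}^\dagger$ itself: one needs to simultaneously extend the $T(\Z_p)$-torsor $\mathcal{IG}$ to a strict neighborhood in a way compatible with the de Rham torsor, while still carrying a locally analytic structure in the unipotent direction that is stable under all the Hecke operators. This is precisely the interpolation problem that forced the authors of \cite{AI_LLL} to impose an analyticity constraint on the weight; overcoming it likely requires pro-\'etale/perfectoid methods and the Hodge--Tate period map to realize $P_{\opn{dR}}^\dagger$ at the correct level of generality. Once this is in place, all the remaining verifications --- particularly the compatibility of Hecke and $C^{\opn{la}}$ actions, and the identification of the filtered pieces with Urban's construction --- become essentially formal unravellings of the torsor definitions.
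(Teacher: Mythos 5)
Your general geometric framing (sections of a torsor over strict neighbourhoods of the ordinary locus, interpolating between $P_{\opn{dR}}$ and the Igusa tower via the unit root splitting) matches the paper's setup, and your treatment of the filtration, the slope decomposition, and the Hecke commutation relations is morally in line with what the paper does. But there is a genuine gap at the heart of the argument: the construction of the $C^{\opn{la}}(\mbb{Z}_p,\mbb{Q}_p)$-action. You assert that it is given by ``pointwise multiplication along the unipotent coordinate of $\overline{U}$'' and that $\nabla$ is ``the infinitesimal generator of the $\overline{U}$-translation action.'' This conflates two different unipotent groups. The unipotent radical of the torsor group $\overline{B}$ acting on $P_{\opn{dR}}$ is what defines the filtration $\opn{Fil}_r\mathscr{N}^\dagger = \mathscr{N}^\dagger[\overline{\mathfrak{n}}^{r+1}]$; if the identity function acted as the generator of that translation, it would annihilate $\opn{Fil}_0 = \mathscr{M}^\dagger$, which $\nabla$ manifestly does not. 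The unipotent group whose Fourier dual gives the Gouv\^{e}a--Howe action of $C_{\opn{cont}}(\mbb{Z}_p,\mbb{Z}_p)$ is the unipotent radical of $J_{\opn{ord}} = \opn{QIsog}(\mbb{X}_{\opn{ord}})$ acting on the Igusa variety as a moduli space --- and that action only exists over the ordinary locus. In fact $\nabla$ is neither of these: locally it is the sum of the Atkin--Serre derivation on the coefficients and a nilpotent piece twisted by the torsor coordinates (Proposition \ref{PropExplicitForm}), and it shifts the $T(\mbb{Z}_p)$-weight by $2\rho$ rather than commuting with it.

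Consequently your proposal contains no mechanism for the step that is actually hard: proving that $p^{k\varepsilon}\bigl\|\binom{\nabla}{k}\bigr\| \to 0$ on an overconvergent space, so that Mahler expansions of locally analytic functions can be summed. The paper does this in three stages: (i) explicit radius-$p^{-n}$ ball neighbourhoods $\mathcal{U}_{\opn{HT},n}$ of the Igusa tower on which $\nabla$ is congruent mod $p^n$ to a nilpotent perturbation of the Atkin--Serre operator; (ii) an abstract perturbation result (Propositions \ref{Lemmaepsan1} and \ref{Proplocanextension1}) showing such operators admit $\varepsilon$-analytic extensions for $n$ large; and (iii) a quantitative estimate (Corollary \ref{OCcorollary}, Proposition \ref{OverconvergenceOfActionProp}) controlling the loss of integrality when passing from the ordinary locus to strict neighbourhoods via powers of the Hasse invariant. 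Your ``main obstacle'' paragraph misidentifies the difficulty as the construction of the interpolating torsor (which is comparatively routine --- the paper simply takes neighbourhoods of the closure of $\mathcal{IG}_\infty$ in $P_{\opn{dR}}^{\opn{an}}$, with no perfectoid or Hodge--Tate machinery) and therefore proposes a fix that does not address the real analytic problem. Without the norm estimates, the claimed action of $C^{\opn{la}}(\mbb{Z}_p,\mbb{Q}_p)$ on $\mathscr{N}^\dagger$, and hence parts (3) and (4) of the theorem, remain unproved.
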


\begin{remark}
The existence of the action of $C^{\opn{la}}(\mbb{Z}_p, \mbb{Q}_p)$ in Theorem \ref{TheoIntro1} allows one to $p$-adically interpolate powers of the Gauss--Manin connection. Indeed for any locally analytic character $\chi \colon \mbb{Z}_p^{\times} \to \mbb{C}_p^{\times}$ one can define 
\[
\nabla^{\chi} f^{[p]} \defeq (\chi \cdot 1_{\mbb{Z}_p^{\times}} ) \star f, \quad \quad f \in \mathscr{N}^{\dagger} \hatot_{\mbb{Q}_p} \mbb{C}_p
\]
where $\star$ denotes the action of $C^{\opn{la}}(\mbb{Z}_p, \mbb{Q}_p) \hatot \mbb{C}_p$ and $\chi \cdot 1_{\mbb{Z}_p^{\times}} \colon \mbb{Z}_p \to \mbb{C}_p$ denotes the extension by zero of the character $\chi$.
\end{remark}

\begin{remark}
It is possible to prove an analogue of Theorem \ref{TheoIntro1} for any level at $p$ (see Theorem \ref{SheafVersionMainThm} and \S \ref{RedToHypLevelSSec}). 
\end{remark}

The idea for constructing $\mathscr{N}^{\dagger}$ is as follows. Let $M_{\opn{dR}} = P_{\opn{dR}} \times^{\overline{B}} T$ denote the pushout of $P_{\opn{dR}}$ along the map $\overline{B} \twoheadrightarrow T$. This is a scheme whose global sections are equipped with an action of $T$, and the isotypic pieces under this action are holomorphic forms of a prescribed weight and level $K$. There is a natural map $\mathcal{IG} \to M_{\opn{dR}}^{\opn{an}}$ from the Igusa tower to the analytification of $M_{\opn{dR}}$ providing a reduction of structure to a $T(\mbb{Z}_p)$-torsor over the ordinary locus. One can interpret the space of overconvergent modular forms $\mathscr{M}^{\dagger}$ as overconvergent functions on the closure of $\mathcal{IG}$ inside $M_{\opn{dR}}^{\opn{an}}$ (see \cite{Pilloni, AIS}). To define $\mathscr{N}^{\dagger}$, we follow a similar approach, but now we make use of the unit root splitting. More precisely, the unit root splitting and the universal trivialisation give rise to a natural map $\mathcal{IG} \to P_{\opn{dR}}^{\opn{an}}$ extending the morphism $\mathcal{IG} \to M_{\opn{dR}}^{\opn{an}}$, which can be encoded in the following important diagram:
\begin{equation*}
\begin{tikzcd}
& P_{\opn{dR}}^{\opn{an}} \arrow[d] \\
\mathcal{IG} \arrow[r] \arrow[d] \arrow[ur] & M_{\opn{dR}}^{\opn{an}} \arrow[d] \\
\mathcal{X}_{\opn{ord}} \arrow[r] & \mathcal{X}.
\end{tikzcd}
\end{equation*}
where $\mathcal{X}$ (resp. $\mathcal{X}_{\opn{ord}}$) denotes the adic generic fibre of the $p$-adic formal completion $\mathfrak{X}$ of $X_{\mbb{Z}_p}$ (resp. the ordinary locus $\mathfrak{X}_{\opn{ord}} \subset \mathfrak{X}$). Inspired by the above interpretation of overconvergent modular forms, we define the space of nearly overconvergent modular forms $\mathscr{N}^{\dagger}$ as the space of overconvergent functions on the closure of $\mathcal{IG}$ inside $P_{\opn{dR}}^{\opn{an}}$. The existence of the Hecke and $T(\mbb{Z}_p)$ actions on $\mathscr{N}^{\dagger}$ follow almost immediately from this description.

The main technical part of this article is the construction of the action of locally analytic functions $C^{\opn{la}}(\mbb{Z}_p, \mbb{Q}_p)$ on $\mathscr{N}^{\dagger}$ whose proof, occupying \S \ref{SectionNOCMFs}--\S \ref{GMinterpolationSection}, we now sketch. We hope that this sketch will also serve as guide for the reader interested in the details of the proof. We first note that $\nabla$ extends to a bounded linear derivation on $\mathscr{N}^{\dagger}$. Let $f \in C^{\opn{la}}(\Z_p, \Q_p)$ be a locally analytic function. By a classical theorem of Amice, there exists a real number $\varepsilon > 0$ such that the Mahler expansion $f(x) = \sum_{k \geq 0} a_k \binom{x}{k}$ of the function $f$ satisfies the condition: $p^{\varepsilon k}|a_k|_p \to 0$ as $k \to +\infty$. We will denote by $C_{\varepsilon}(\Z_p, \Q_p) \subset C_{\opn{cont}}(\mbb{Z}_p, \mbb{Q}_p)$ the subspace of continuous functions whose coefficients satisfy this growth condition for some fixed $\varepsilon > 0$. Then, in order to make sense of the expression $f(\nabla) = \sum_{k \geq 0} a_k \binom{\nabla}{k}$, it suffices to show that the norm of the operators
\[
p^{k \varepsilon} \binom{\nabla}{k} = p^{k \varepsilon} \frac{\nabla (\nabla - 1) \cdots (\nabla - k+1)}{k!}, \quad \quad k \geq 1
\]
are uniformly bounded in $k$, for any $\varepsilon > 0$.

We first consider the local setting over a quasi-compact open affinoid $U = \Spa(A, A^+) \subseteq \mathcal{X}$ arising as the adic generic fibre of an affine open subscheme $\opn{Spf}(A^+) \subset \mathfrak{X}$. Set $\Spa(A_{\opn{ord}}, A^+_{\opn{ord}}) = U \times_{\mathcal{X}} \mathcal{X}_{\opn{ord}}$,  $\mathcal{IG}_{A} \defeq \mathcal{IG} \times_{\mathcal{X}} \Spa(A, A^+) = \Spa(A_{\opn{ord}, \infty}, A^+_{\opn{ord}, \infty})$, and \[ \mathscr{N}^{\dagger}_A \defeq \opn{colim}_V \opn{H}^0\left( V, \mathcal{O}_V \right), \] where the colimit is over all open subsets of $P_{\opn{dR}, A}^{\opn{an}} \defeq P_{\opn{dR}}^{\opn{an}} \times_{\mathcal{X}} U$ containing the closure of $\mathcal{IG}_A$. The strategy is to analyse the Gauss--Manin connection first over the ordinary locus, and then on certain overconvergent neighbourhoods.

More precisely, in \S \ref{OrdinaryNHoodsSubSec} we construct, using the moduli space interpretation of the Igusa tower $\mathcal{IG}_{A}$, a cofinal system of strict quasi-compact open neighbourhoods $\{ \mathcal{U}_{\opn{HT}, n} \}_{n \geq 1}$ of $\mathcal{IG}_{A}$ inside $P_{\opn{dR}, A_{\opn{ord}}}^{\opn{an}} = P_{\opn{dR}}^{\opn{an}} \times_{\mathcal{X}} \Spa (A_{\opn{ord}}, A_{\opn{ord}}^+)$ which (after pulling back to $\mathcal{IG}_{A}$) is simply a disjoint union over $\lambda \in T(\mbb{Z}/p^n\mbb{Z})$ of finitely many rigid balls $\mathcal{B}^{\lambda} \cong \left(
\begin{smallmatrix}
    1 + p^n \mathbb{G}_{a}^{\opn{an}} & \\ p^n \mathbb{G}_a^{\opn{an}} & 1 + p^n \mathbb{G}_a^{\opn{an}} \end{smallmatrix} \right)$
of radius $p^{-n}$, i.e. we have
\[ 
\mathcal{B}^\lambda \cong \Spa \left( A_{\opn{ord}, \infty} \langle \frac{X-1}{p^n}, \frac{Y}{p^n}, \frac{Z - 1}{p^n}\rangle, A^+_{\opn{ord}, \infty} \langle \frac{X-1}{p^n}, \frac{Y}{p^n}, \frac{Z - 1}{p^n} \rangle \right). 
\]
We then show (Proposition \ref{PropExplicitForm}) that the restriction of the Gauss-Manin connection to each of these balls has a very simple description modulo $p^n$, namely, it extends the Atkin-Serre operator on $A^+_{\opn{ord}, \infty}$ (which we know is integral and extends to an action of $C_{\opn{cont}}(\mbb{Z}_p, \mbb{Z}_p)$) and is ``nilpotent'' in the sense that $\nabla(X) = Y$, $\nabla(Y) = \nabla(Z) = 0$. One can show (Propositions \ref{Lemmaepsan1} and \ref{Proplocanextension1}) that any derivation satisfying these properties extends to an action of $C_{\varepsilon}(\Z_p, \Q_p)$, for any $\epsilon > 0$, provided that the integer $n$ is sufficiently large. As a consequence of this, we show (Proposition \ref{GMinterpolationOverOrdProp}) that, for any integer $\varepsilon > 0$ there exists some $n(\varepsilon) \geq 1$ such that the Gauss--Manin connection extends to an action
\begin{equation} \label{EquationIntroepsilonan}
C_{\varepsilon}(\Z_p, \Q_p) \times \mathscr{N}_{\mathcal{U}_{\opn{HT}, n}} \to \mathscr{N}_{\mathcal{U}_{\opn{HT}, n}}
\end{equation}
for any $n \geq n(\epsilon)$. Here we use the notation $\mathscr{N}_{\mathcal{U}_{\opn{HT}, n}} \defeq \mathcal{O}_{\mathcal{U}_{\opn{HT}, n}}(\mathcal{U}_{\opn{HT}, n})$.

We then establish an overconvergent version of this result (Proposition \ref{GeneralUVepsilonaction}). For this, let $(\mathcal{X}_r)_{r \in \N_{\geq 1}}$ denote the usual system of neighbourhoods of the ordinary locus defined as the loci whose rank one points $| \cdot |_x$ satisfy $|h|_x \geq p^{-1/p^{r+1}}$ for any local lift $h$ of the Hasse invariant. We set $\Spa(A_{r}, A^+_r) = \mathcal{X}_{r} \times_\mathcal{X} \Spa(A, A^+)$. Since $A^+_{\opn{ord}} = A^+_{r}\langle 1/h \rangle$, any section defined over the ordinary locus can be approximated modulo arbitrary large powers of $p$ by an overconvergent section. This allows us (Proposition \ref{PropositionOCneighbourhoods}) to overconverge the system $\{ \mathcal{U}_{\opn{HT}, n} \}_{n \geq 1}$ of neighbourhoods of the Igusa tower and build a cofinal system $\{ \mathcal{U}_{n, r} \}$ of quasi-compact open strict neighbourhoods of $\mathcal{IG}_A$ inside $P_{\opn{dR}, A}^{\opn{an}}$ such that one can control the norms of the restriction maps $\mathcal{O}(\mathcal{U}_{n, r}) \to \mathcal{O}(\mathcal{U}_{n, s})$ for $s \geq r$ (see Corollary \ref{OCcorollary}). Since the system $\{ \mathcal{U}_{n, r} \}$ is cofinal, we can locally describe the space of nearly overconvergent modular forms as
\[ 
\mathscr{N}^{\dagger}_A = \varinjlim_{n, r} \mathcal{O}(\mathcal{U}_{n, r}). 
\]
Combining the above key property on the norms of the restriction maps with the fact that the Gauss--Manin connection already exists on $\mathscr{N}^{\dagger}_A$, we obtain (Proposition \ref{GeneralUVepsilonaction}) an action
\[ 
C_{\varepsilon}(\Z_p, \Q_p) \times \mathscr{N}^\dagger_A \to \mathscr{N}^\dagger_A.
\]
extending the Gauss--Manin connection, for any $\varepsilon > 0$. In other words, the Gauss--Manin connection on $\mathscr{N}^{\dagger}_A$ extends to an action of $C^{\opn{la}}(\mbb{Z}_p, \mbb{Q}_p)$. Since this action is uniquely determined by its interpolation property, it is functorial in $A$ and glues to the desired action in Theorem \ref{TheoIntro1}. We expect that this construction will readily extend to higher dimensional Shimura varieties (at least when one has a non-empty ordinary locus) -- for example, such a generalisation appears in \cite{UFJII} in the setting of unitary Shimura varieties.

Finally, we note that Theorem \ref{TheoIntro1} has immediate applications to the construction of $p$-adic $L$-functions for the triple product of modular forms (Theorem \ref{TripleProductJinterpolation}) and for the Rankin-Selberg product of two modular forms (Theorem \ref{RankinSelberginterpolation}). In particular, one can use Theorem \ref{TheoIntro1} to construct three-variable versions of these $p$-adic $L$-functions without the restriction on the weight appearing in \cite{AI_LLL}, and without appealing to the Beilinson--Flach Euler system (as in \cite{LoefflerRSNote}). We refer the reader to \S \ref{SectpadicLfunctions} for the relevant statements. Theorem \ref{TheoIntro1} is also a key ingredient in the construction of $p$-adic $L$-functions for $\opn{GSp}_4 \times \opn{GL}_2$ and $\opn{GSp}_4 \times \opn{GL}_2 \times \opn{GL}_2$ appearing in the work \cite{2024arXiv241104559G}.

\subsection{Structure of the article}

The article is organised as follows. In \S \ref{PadicMFsMainresultSec}, we introduce the main objects in the paper ($p$-adic modular forms, nearly overconvergent modular forms, etc.) and state the main theorem, which is a slightly more general version of Theorem \ref{TheoIntro1}. We also explain why it is sufficient to prove the main theorem at hyperspecial level at $p$. In \S \ref{SectionNOCMFs}, we describe certain ``explicit'' neighbourhoods of the Igusa tower in $P_{\opn{dR}}$ that are useful for constructing the action of $C^{\opn{la}}(\mbb{Z}_p, \mbb{Q}_p)$ on $\mathscr{N}^{\dagger}$, which takes place in \S \ref{GMinterpolationSection} and uses the abstract theory of actions on Banach spaces developed in \S \ref{SectionPADICINTCONT}. Finally, we describe the additional structures on nearly overconvergent modular forms (such as the filtration and $T(\mbb{Z}_p)$ and Hecke actions) in \S \ref{AdditionalStructuresSection} and the application to $p$-adic $L$-functions in \S \ref{SectpadicLfunctions}. When discussing the action of differential operators throughout the article, we find it helpful to use the language of $\mathcal{D}$-modules. We have therefore summarised the main constructions regarding this in Appendix \ref{AppendixClassicalNHMFs}. Furthermore, since the notation in \S \ref{PadicMFsMainresultSec} differs from the rest of the article, we have provided a glossary of notation in Appendix \ref{GlossaryNotAppendix}.

\subsection{Notation and conventions} \label{NotationAndConventionsSec}

We fix the following notation and conventions throughout.

\begin{itemize}
    \item We say a $\mbb{Z}_p$-algebra $S$ is \emph{admissible} if it is $p$-adically complete and separated, $p$-torsion free, and topologically of finite type over $\mbb{Z}_p$. 
    \item For any $p$-adic manifold $X$ and $\opn{LB}$-space $V$, we let $C_{\opn{cont}}(X, V)$ and $C^{\opn{la}}(X, V)$ denote the spaces of continuous and locally analytic maps $X \to V$ respectively.
    \item For an integer $n \geq 0$ and a partition $m_1 + \cdots + m_r = n$ by non-negative integers, we let 
    \[
    \genfrac{(}{)}{0pt}{}{n}{m_1, \cdots, m_r} = \frac{n!}{m_1! \cdots m_r!}
    \]
    denote the associated multinomial coefficient.
    \item All adic spectra we consider are with respect to complete Huber pairs.
    \item All $p$-adic valuations are normalised so that $|p|_x = p^{-1}$.
    \item We let $w_0$ denote the longest Weyl element of $\opn{GL}_2$.
\end{itemize}

We note that the ``de Rham'' torsor $P_{\opn{dR}}$ that we consider throughout the whole article is a torsor for the \emph{lower-triangular} Borel subgroup $\overline{P} = \overline{B}$ (so, in the language of \cite[\S 2]{CS17}, we take $\mu \colon \mbb{G}_m \to \opn{GL}_2$ to be the fixed choice of Hodge cocharacter given by $\mu(z) = \left( \begin{smallmatrix} z & \\ & 1 \end{smallmatrix} \right)$). On the other hand, our convention is that the positive root for $\opn{GL}_2$ lies in $B$ (and the notion of dominant or highest weight is with respect to this positive root). In addition to this, our weights are characters of the standard torus inside $\opn{GL}_2$ (rather than $\opn{SL}_2$), so when defining ``weight $\kappa$'' (nearly overconvergent) modular forms there is often a twist by the longest Weyl element $w_0$. Our conventions are arranged so that one can simply pass to the $\opn{SL}_2$-setting (as found in the literature) by restricting weights to the torus in $\opn{SL}_2$ and identifying elliptic curves with their dual via the principal polarisation.

\subsection{Acknowledgements}

The authors would like to thank Fabrizio Andreatta, Ananyo Kazi, Yuanyang Jiang, David Loeffler and Rob Rockwood for helpful discussions related to this article. The authors are also grateful to the anonymous referee whose comments and suggestions have significantly improved the presentation of this article. This work was supported by the grant ERC-2018-COG-818856-HiCoShiVa. AG was (partly) funded by UK Research and Innovation grant MR/V021931/1. For the purpose of Open Access, the authors have applied a CC BY public copyright licence to any Author Accepted Manuscript (AAM) version arising from this submission.

\section{Nearly overconvergent modular forms and the main result} \label{PadicMFsMainresultSec}

To state the main result, we first introduce some notation for the space of $p$-adic modular forms. We warn the reader that, in order to present the most general result possible, the notation in this section differs slightly from that in the rest of the article (as explained at the end of this section, it will be enough to establish the result with hyperspecial level at $p$, so we will often omit the level subgroup from the notation). We hope this doesn't cause any confusion; we have provided a glossary of notation in Appendix \ref{GlossaryNotAppendix} outlining the differences.

\subsection{Infinite level Igusa towers and $p$-adic modular forms}

Set $G = \opn{GL}_2$ and let $P \subset G$ denote the upper triangular Borel subgroup with diagonal torus $T$. Fix a prime number $p$ and a neat compact open subgroup $K^p \subset G(\mbb{A}^p_f)$. 

In this section, we will define the relevant spaces of $p$-adic modular forms. In order to do this, we need to introduce the infinite level Igusa varieties as constructed in \cite[\S 4.3]{CS17} (see also \cite{howe2020unipotent} and \cite[\S 3.4]{BPHHT} for some complementary details). Consider the $p$-divisible group over $\opn{Spf}\mbb{Z}_p$ given by $\mbb{X}_{\opn{ord}} = \mu_{p^{\infty}} \oplus \mbb{Q}_p/\mbb{Z}_p$ and define the following group schemes over $\opn{Spf}\mbb{Z}_p$:
\[
J_{\opn{ord}} = \mathrm{QIsog}( \mathbb{X}_{\opn{ord}}) = \begin{pmatrix} 
      \Q_p^\times & \tilde{\mu_{p^\infty}} \\
      0 & \Q_p^\times \\
   \end{pmatrix}, \quad \quad J_{\opn{ord}}^{\opn{int}} = \opn{Aut}(\mbb{X}_{\opn{ord}}) = \begin{pmatrix} 
      \mbb{Z}_p^\times & T_p(\mu_{p^\infty}) \\
      0 & \mbb{Z}_p^\times \\
   \end{pmatrix},
\]
where $\opn{QIsog}(-)$ (resp. $\opn{Aut}(-)$) denotes the self quasi-isogenies (resp. automorphisms) of a $p$-divisible group and $\tilde{\mu_{p^\infty}} = \opn{lim}_{\times p} \mu_{p^{\infty}}$ is the universal cover of $\mu_{p^{\infty}}$. Recall that the universal cover sits in an exact sequence
\begin{equation*} \label{EqExactSeqmuinfty}
0 \to T_p(\mu_{p^\infty}) \to \widetilde{\mu_{p^\infty}} \to \mu_{p^\infty} \to 0
\end{equation*}
and note that $\Q_p^\times =  \mathrm{QIsog}( \mu_{p^\infty}) =  \mathrm{QIsog}( \Q_p/\Z_p)$ (see \cite[\S 3.1]{ScholzeWeinstein}). We let 
\[
P' =    \begin{pmatrix} 
      \Q_p^\times & \Q_p(1) \\
      0 & \Q_p^\times \\
   \end{pmatrix}, \quad \quad (P')^{\opn{int}} = J_{\opn{ord}}^{\opn{int}} \times_{\opn{Spa}(\mbb{Z}_p, \mbb{Z}_p)} \opn{Spa}(\mbb{Q}_p, \mbb{Z}_p) = \begin{pmatrix} 
      \mbb{Z}_p^\times & \mbb{Z}_p(1) \\
      0 & \mbb{Z}_p^\times \\
   \end{pmatrix}
\]
which are (locally) profinite pro\'{e}tale group schemes over $\Spa(\Q_p,\Z_p)$. We let $U_{P'} \subset P'$ and $U_{P'}^{\opn{int}} \subset (P')^{\opn{int}}$ denote the unipotent radicals. There is a natural map $P' \hookrightarrow J_{\opn{ord}}^{\opn{an}} \defeq J_{\opn{ord}} \times_{\Spa (\Z_p, \Z_p)} \Spa (\Q_p, \Z_p)$ from $P'$ to the adic generic fibre of $J_{\opn{ord}}$, induced by $\Q_p(1) \rightarrow  \tilde{ \mu_{p^\infty}}$ (but note that $J_{\opn{ord}}^{\opn{an}}$ is a $1$-dimensional group scheme, and therefore much larger than $P'$). We will often write $\widehat{\mbb{G}}_m$ for the $p$-divisible group $\mu_{p^{\infty}}$ over $\opn{Spf}\mbb{Z}_p$ to emphasise the fact that $\widehat{\mbb{G}}_m(\opn{Spf}R) = 1 + R^{00}$, where $R^{00} \subset R$ denotes the ideal of topologically nilpotent elements (cf. \cite[Remark 2.1.3]{howe2020unipotent}).
 
We now recall the construction of the perfect Igusa tower $\mathfrak{IG}_{K^p}$. Let ${X}_{\mathrm{GL}_2(\Z_p)K^p} \rightarrow \Spec \Z_p $ denote the (compact) modular curve of level $\mathrm{GL}_2(\Z_p)K^p$, and $\mathfrak{X}_{\mathrm{GL}_2(\Z_p)K^p} \rightarrow \opn{Spf} \Z_p$ its completion along the special fibre. Set $\mathfrak{IG}_{(P')^{\opn{int}}K^p} \defeq \mathfrak{X}^{\opn{ord}}_{\mathrm{GL}_2(\Z_p)K^p}$ to be the ordinary locus. We also let $\mathfrak{IG}_{U_{P'}^{\opn{int}}K^p} \to \mathfrak{IG}_{(P')^{\opn{int}}K^p}$ denote the pro-\'{e}tale $T(\mbb{Z}_p)$-torsor parameterising trivialisations $\mu_{p^{\infty}} \xrightarrow{\sim} E[p^\infty]^{\circ}$ and $\Q_p/\Z_p \xrightarrow{\sim} E[p^\infty]^{\mathrm{et}}$ of the connected and \'{e}tale parts of the $p$-divisible group associated with the universal (ordinary) elliptic curve (since $E[p^\infty]^{\mathrm{et}} \cong (E[p^\infty]^{\circ})^D$ the \'{e}tale part extends to a $p$-divisible group over the boundary). Note that there is a natural lift of Frobenius $\varphi$ on $\mathfrak{IG}_{U_{P'}^{\opn{int}}K^p}$.
 
\begin{definition}
We define $\mathfrak{IG}_{K^p} = \lim_{\varphi} \mathfrak{IG}_{U_{P'}^{\opn{int}}K^p}$.  If $\mathfrak{Y}_{\mathrm{GL}_2(\Z_p)K^p} \hookrightarrow \mathfrak{X}_{\mathrm{GL}_2(\Z_p)K^p}$ denotes the good reduction locus, then $\mathfrak{IG}_{K^p} \times_{\mathfrak{X}_{\mathrm{GL}_2(\Z_p)K^p}}  \mathfrak{Y}_{\mathrm{GL}_2(\Z_p)K^p}$ parameterises elliptic curves $E$ with a $K^p$-level structure and an isomorphism $\mathbb{X}_{\opn{ord}} \xrightarrow{\sim} E[p^\infty]$ (see \cite[\S 4.3]{CS17}).
\end{definition}

\begin{prop} 
The group $J_{\opn{ord}}$ acts on $\mathfrak{IG}_{K^p}$.
\end{prop}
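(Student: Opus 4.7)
The plan is to describe the action moduli-theoretically on the good reduction locus, verify compatibility with the inverse limit defining $\mathfrak{IG}_{K^p}$, and extend to the cuspidal boundary. Over the good reduction locus, a point of $\mathfrak{IG}_{K^p}$ with values in an admissible $\mbb{Z}_p$-algebra $S$ is a triple $(E, \eta, \iota)$ consisting of an ordinary elliptic curve $E/S$, a $K^p$-level structure $\eta$, and an isomorphism of $p$-divisible groups $\iota \colon \mbb{X}_{\opn{ord}} \xrightarrow{\sim} E[p^\infty]$. For $\phi \in J_{\opn{ord}}(S)$ the composition $\iota \circ \phi$ is a quasi-isogeny $\mbb{X}_{\opn{ord}} \dashrightarrow E[p^\infty]$. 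Invoking the Serre--Tate principle that the functor $E \mapsto E[p^\infty]$ from ordinary elliptic curves to ordinary $p$-divisible groups is fully faithful up to quasi-isogeny, this lifts canonically to a quasi-isogeny $\alpha \colon E \dashrightarrow E'$ of elliptic curves together with an honest isomorphism $\iota' \colon \mbb{X}_{\opn{ord}} \xrightarrow{\sim} E'[p^\infty]$ characterised by $\alpha_{p^\infty} \circ \iota' = \iota \circ \phi$; the $K^p$-level structure transports via $\alpha$ to $\eta'$ on $E'$ (since $\alpha$ has $p$-power degree and is prime-to-$p$ invertible). I then set $\phi \cdot (E, \eta, \iota) \defeq (E', \eta', \iota')$, and routine bookkeeping verifies the group action axioms.

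Next I would check compatibility with the inverse limit $\mathfrak{IG}_{K^p} = \varprojlim_\varphi \mathfrak{IG}_{U_{P'}^{\opn{int}}K^p}$. At each finite stage the integral subgroup $J_{\opn{ord}}^{\opn{int}} = \opn{Aut}(\mbb{X}_{\opn{ord}}) \cong (P')^{\opn{int}}$ already acts through the pro\'{e}tale torsor structure on $\mathfrak{IG}_{U_{P'}^{\opn{int}}K^p} \to \mathfrak{IG}_{(P')^{\opn{int}}K^p}$. A general element $\phi \in J_{\opn{ord}}$ can be written as $\phi_0 \cdot \varphi^{-N}$ for some $\phi_0 \in J_{\opn{ord}}^{\opn{int}}$ and $N \geq 0$, where $\varphi$ is identified with the Frobenius quasi-isogeny of $\mbb{X}_{\opn{ord}}$; passing to the limit formally inverts $\varphi$ and, combined with the integral action, yields the full $J_{\opn{ord}}$-action. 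Over the cuspidal boundary the universal generalised elliptic curve carries a canonical Tate-style trivialization of its $p$-divisible group, so the $J_{\opn{ord}}$-action extends to the boundary by functoriality of the Tate uniformization under quasi-isogenies and agrees with the interior action on the overlap.

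The main obstacle is the Serre--Tate lift of a genuine (non-integral) quasi-isogeny, uniformly in families. For an honest isogeny $\psi$ one simply quotients $E$ by $\iota(\ker \psi)$, but for a general $\phi = p^{-N}\psi$ one must invert the isogeny $[p^N] \colon E \to E$, which is legitimate precisely because we work at infinite level (i.e.~after taking $\varprojlim_\varphi$). The cleanest conceptual route, which I would ultimately follow, is to reformulate $\mathfrak{IG}_{K^p}$ in the style of Scholze--Weinstein as parameterising trivializations of universal covers $\widetilde{\mbb{X}}_{\opn{ord}} \xrightarrow{\sim} \widetilde{E[p^\infty]}$: on universal covers the quasi-isogenies of $\mbb{X}_{\opn{ord}}$ act as \emph{honest} automorphisms, so the $J_{\opn{ord}}$-action is manifest and the need to ``invert isogenies by hand'' disappears, making both the construction and its compatibilities immediate.
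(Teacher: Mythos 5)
Your treatment of the good reduction locus is essentially the paper's: the proof there simply cites Caraiani--Scholze (Corollary 4.3.5 together with Lemma 4.3.4 of \cite{CS17}) for the fact that $J_{\opn{ord}} = \opn{QIsog}(\mbb{X}_{\opn{ord}})$ acts on the moduli space of elliptic curves with $K^p$-level structure and a quasi-isogeny $\mbb{X}_{\opn{ord}} \to E[p^{\infty}]$, taken up to quasi-isogeny, and your Serre--Tate lifting and universal-cover reformulation are a reasonable unpacking of that citation. (Your decomposition $\phi = \phi_0 \cdot \varphi^{-N}$ needs a little care --- the integral group together with powers of Frobenius only generates $\Z_p^{\times}$ in one diagonal entry, so you also need $\opn{diag}(1, p^{-1})$ --- but this is cosmetic.)

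The genuine gap is at the cuspidal boundary. The sentence ``the $J_{\opn{ord}}$-action extends to the boundary by functoriality of the Tate uniformization under quasi-isogenies'' asserts precisely the point that has to be proved. At a cusp the semi-abelian scheme degenerates to $\mbb{G}_m$, so the full isomorphism $\mbb{X}_{\opn{ord}} \xrightarrow{\sim} E[p^{\infty}]$ --- in particular the splitting of the connected--\'{e}tale sequence, which is exactly what the unipotent part $\tilde{\mu_{p^{\infty}}}$ acts on --- does not literally make sense at $q=0$; only the separate trivialisations of $E[p^{\infty}]^{\circ}$ and of $E[p^{\infty}]^{\opn{et}} \cong (E[p^{\infty}]^{\circ})^D$ extend over the boundary. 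The moduli-theoretic action is therefore a priori defined only away from the cusps, and the content of the proposition is that the induced action on the structure sheaf preserves the subring of functions regular at the cusp (i.e.\ introduces no negative powers of $q$). The paper verifies this by reducing to the unipotent part (the $T(\Q_p)$-action extends for elementary reasons), converting the $\tilde{\mu_{p^{\infty}}}$-action via $p$-adic Fourier theory into an action of $C_{\opn{cont}}(\Q_p, \Z_p)$, and invoking the explicit $q$-expansion formula $\phi \cdot \sum a_n q^n = \sum \phi(n) a_n q^n$ from \cite[Theorem 7.1.1]{howe2020unipotent}, which visibly sends power series to power series with integral coefficients. Some such concrete computation is unavoidable here; the appeal to ``functoriality'' does not substitute for it.
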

\begin{proof} 
By \cite[Corollary 4.3.5]{CS17}, the group $J_{\opn{ord}}$ acts on $\mathfrak{IG}_{K^p} \times_{\mathfrak{X}_{\mathrm{GL}_2(\Z_p)K^p}}  \mathfrak{Y}_{\mathrm{GL}_2(\Z_p)K^p}$. Indeed one can formulate the moduli problem for $\mathfrak{IG}_{K^p} \times_{\mathfrak{X}_{\mathrm{GL}_2(\Z_p)K^p}}  \mathfrak{Y}_{\mathrm{GL}_2(\Z_p)K^p}$ equivalently as the space of elliptic curves $E$ with a $K^p$-level structure and a quasi-isogeny $\mathbb{X}_{\opn{ord}} \rightarrow E[p^\infty]$, up to quasi-isogeny (see \cite[Lemma 4.3.4]{CS17}). We claim that the action extends to an action over $\mathfrak{IG}_{K^p}$. Since we already have an action of $T(\Q_p)$, it suffices to prove that the action of $\tilde{\mu_{p^\infty}}$ extends. By $p$-adic Fourier theory, this action amounts to an action of $C_{\opn{cont}}(\Q_p, \Z_p)$ on $\mathcal{O}_{\mathfrak{IG}_{K^p}}$ (see, e.g., \cite[\S 7]{howe2020unipotent}). On a cusp, this action is given by 
$\phi(\sum a_n q^n)= \sum \phi(n) a_n q^n$ by \cite[Theorem 7.1.1]{howe2020unipotent}, and thus preserves the regular functions at the cusp. 
\end{proof}

For any compact open subgroup $K_{p,P} \subseteq P'$ (i.e. one which is commensurable to $(P')^{\opn{int}}$), we let $\overline{K_{p, P}}$ denote its schematic closure in $J_{\opn{ord}}$. By \cite[Lemma 3.4.8]{BPHHT} the group scheme $\overline{K_{p, P}}$ is a profinite flat group scheme over $\Spec \Z_p$ with generic fibre $K_{p, P}$. We let $\mathfrak{IG}_{K_{p,P}K^p}$ be the flat formal scheme equal to the categorical quotient of $\mathfrak{IG}_{K^p}$ by $\overline{K_{p,P}}$. This is the affine formal scheme whose ring of functions is $(\mathcal{O}_{\mathfrak{IG}_{K^p}})^{\overline{K_{p,P}}}$. Similarly, for any compact open
\[
U_{K_{p,P}} \subseteq U_{P'} = \tbyt{1}{\mbb{Q}_p(1)}{}{1}
\]
we let $\mathfrak{IG}_{U_{K_{p,P}}K^p}$ be the flat formal scheme equal to the categorical quotient of $\mathfrak{IG}_{K^p}$ by $\overline{U_{K_{p,P}}}$. 
If $K_{p,P}$ is a compact open subgroup of $P'$ and if we set $U_{K_{p,P}} = K_{p, P} \cap U_{P'}$, then there is a short exact sequence 
\[
0 \rightarrow U_{K_{p,P}} \rightarrow K_{p,P} \rightarrow M_{K_{p,P}} \rightarrow 0.
\]
The map $\mathfrak{IG}_{U_{K_{p,P}}K^p}\rightarrow \mathfrak{IG}_{{K_{p,P}}K^p}$ is a pro-\'etale torsor under the group $\overline{M_{K_{p,P}}}$. 

\begin{remark} 
Let $\mbb{Q}_p^{\opn{cycl}}$ denote the $p$-adic completion of $\mbb{Q}_p(\mu_{p^{\infty}})$ with ring of integers $\mbb{Z}_p^{\opn{cycl}}$. If we base change to $\opn{Spa}(\Q_p^{\opn{cycl}}, \Z_p^{\opn{cycl}})$, then $P' \cong \underline{P(\Q_p)}$ (i.e. it is isomorphic to the constant \'{e}tale group scheme associated with $P(\mbb{Q}_p)$), and we may consider spaces $\mathfrak{IG}_{K_{p,P} K^p, \Z_p^{\opn{cycl}}}$ for any compact open $K_{p, P}$ of $P(\mbb{Q}_p)$. 
\end{remark}

Recall that the space of $p$-adic modular forms can be defined as (isotypic parts of) sections of the classical Katz moduli space parameterising elliptic curves (modulo prime-to-$p$ quasi-isogenies) equipped with a trivialisation of the connected part of its associated $p$-divisible group. Let $\overline{U_{P'}^{\opn{int}}}$ denote the schematic closure of $U_{P'}^{\opn{int}}$ in $J_{\opn{ord}}$ (which is equal to the unipotent part of $J_{\opn{ord}}^{\opn{int}}$). By \cite[Lemma 5.1.1]{howe2020unipotent}, the Igusa tower $\mathfrak{IG}_{K^p}$ is an fpqc $\overline{U_{P'}^{\opn{int}}}$-torsor over the above Katz moduli space, which explains the following definition. 

\begin{definition}
The space of $p$-adic modular forms\footnote{The Katz Igusa tower often considered in the literature is a $\mbb{Z}_p^{\times}$-torsor which parameterises isomorphisms $\mu_{p^{\infty}} \xrightarrow{\sim} E[p^\infty]^{\circ}$. The version of the Igusa tower we use has better functoriality properties with respect to Hecke operators.} of tame level $K^p$ is defined as
\[ 
\mathscr{M}^+ \defeq \mathrm{H}^0(\mathfrak{IG}_{U_{P'}^{\opn{int}} K^p}, \mathcal{O}_{{\mathfrak{IG}}_{U_{P'}^{\opn{int}}K^p}}).
\]
\end{definition}

The following theorem summarises some of its key structures.

\begin{thm} \label{PropertiesOfpadicmfsThm}
Let $C_{\opn{cont}}(\mbb{Z}_p, \mbb{Z}_p)$ denote the $\mbb{Z}_p$-algebra of continuous functions $\mbb{Z}_p \to \mbb{Z}_p$.
\begin{enumerate} 
\item The space $\mathscr{M}^+$ carries a $\mbb{Z}_p$-algebra action of $C_{\opn{cont}}(\Z_p, \Z_p)$, an action of $T(\mbb{Z}_p)$ and the Hecke operators
\[
U_p \defeq [\overline{U_{P'}^{\opn{int}}} \cdot \mathrm{diag}(1, p^{-1}) \cdot \overline{U_{P'}^{\opn{int}}}], \quad \varphi \defeq [\overline{U_{P'}^{\opn{int}}} \cdot \mathrm{diag}(p^{-1}, 1) \cdot \overline{U_{P'}^{\opn{int}}}], \quad S_p \defeq [\opn{diag}(p^{-1}, p^{-1}) \cdot \overline{U_{P'}^{\opn{int}}}],
\]
and an action of the prime-to-$p$ Hecke algebra $\mathcal{H}_{K^p}$. The action of $T(\Z_p)$ commutes with that of $U_p$, $\varphi$, $S_p$ and we have the following relations 
\[
U_p \circ \varphi = p S_p, \quad S_p \circ U_p = U_p \circ S_p, \quad \varphi \circ S_p = S_p \circ \varphi.
\] 
\item Let $c \colon \opn{Spf} \Z_p(\zeta_N)[[q^{1/N}]] \rightarrow \mathfrak{IG}_{U_{P'}^{\opn{int}}K^p}$ be a cusp, where $N$ is a sufficiently large integer with $(p, N) = 1$, which gives rise to a $q$-expansion map $c \colon \mathscr{M}^+ \to \Z_p(\zeta_N)[[q^{1/N}]]$. Then for any $\phi \in C_{\opn{cont}}(\mbb{Z}_p, \mbb{Z}_p)$ and $f \in \mathscr{M}^+$ one has
\[
c(\phi \cdot f) = \sum_{k \in \frac{1}{N}\mbb{Z}} \phi(k) a_k q^k, \quad \quad \text{ where } \; c(f) = \sum_{k \in \frac{1}{N}\mbb{Z}} a_k q^k .
\]
\item Let $\phi \in C_{\opn{cont}}(\mbb{Z}_p, \mbb{Z}_p)$. Then for any $t = \opn{diag}(t_1, t_2) \in T(\Z_p)$, we have $t ~ \circ ~ \phi = \phi( t_2^{-1} \cdot - \cdot t_1) ~ \circ ~  t$ as endomorphisms of $\mathscr{M}^+$. Moreover, we have 
\[
U_p \, \circ \, \phi = \phi(p \cdot -) \, \circ \, U_p, \quad \varphi \, \circ \, \phi(p \cdot -) = \phi \, \circ \, \varphi, \quad S_p \, \circ \, \phi = \phi \, \circ \, S_p
\]
as endomorphisms of $\mathscr{M}^+$.
\end{enumerate}
\end{thm}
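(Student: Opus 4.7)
The plan is to derive everything from two ingredients: the $J_{\opn{ord}}$-action on $\mathfrak{IG}_{K^p}$ established in the preceding proposition, and $p$-adic Fourier theory for the universal cover $\tilde{\mu_{p^\infty}} \subset J_{\opn{ord}}$. The key observation is that $\mathscr{M}^+ = H^0(\mathfrak{IG}_{K^p}, \mathcal{O})^{\overline{U_{P'}^{\opn{int}}}}$, so any subgroup of $J_{\opn{ord}}$ normalising $\overline{U_{P'}^{\opn{int}}}$ descends to an action on $\mathscr{M}^+$, and any Hecke double coset supported on $J_{\opn{ord}}$ defines a correspondence between copies of $\mathfrak{IG}_{U_{P'}^{\opn{int}}K^p}$.

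To prove (1), I would first obtain the $T(\Z_p)$-action as the deck-transformation action on the pro-\'etale torsor $\mathfrak{IG}_{U_{P'}^{\opn{int}}K^p} \to \mathfrak{IG}_{(P')^{\opn{int}}K^p}$, equivalently via the quotient $J_{\opn{ord}}^{\opn{int}}/\overline{U_{P'}^{\opn{int}}} \cong T(\Z_p)$. The operators $U_p$, $\varphi$, $S_p$ are then defined as the double-coset operators attached to $\opn{diag}(1,p^{-1})$, $\opn{diag}(p^{-1},1)$, $\opn{diag}(p^{-1},p^{-1})$, and the prime-to-$p$ Hecke algebra $\mathcal{H}_{K^p}$ acts via pullback from the tower of modular curves. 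The identity $U_p \circ \varphi = p S_p$ and the $S_p$-commutation relations reduce to elementary computations of products of double cosets in $J_{\opn{ord}}$; the factor of $p$ comes from the index $[\overline{U_{P'}^{\opn{int}}} : \overline{U_{P'}^{\opn{int}}} \cap \opn{diag}(1,p^{-1}) \overline{U_{P'}^{\opn{int}}} \opn{diag}(1,p^{-1})^{-1}] = p$. The $C_{\opn{cont}}(\Z_p, \Z_p)$-algebra structure is extracted from the $C_{\opn{cont}}(\Q_p, \Z_p)$-action on $\mathcal{O}_{\mathfrak{IG}_{K^p}}$ already used in the proof of the preceding proposition: taking $\overline{U_{P'}^{\opn{int}}} = T_p(\mu_{p^\infty})$-invariants leaves a residual action of $\tilde{\mu_{p^\infty}}/T_p(\mu_{p^\infty}) = \mu_{p^\infty}$, and by Pontryagin/Fourier duality this is precisely a $C_{\opn{cont}}(\Z_p, \Z_p)$-algebra structure.

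Part (2) is then essentially a restriction of \cite[Theorem 7.1.1]{howe2020unipotent}, which gives the Mahler-type formula for the $\tilde{\mu_{p^\infty}}$-action on the $q$-expansion at a cusp; passing to the subalgebra $C_{\opn{cont}}(\Z_p, \Z_p) \subset C_{\opn{cont}}(\Q_p, \Z_p)$ yields the stated formula.

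Finally, for (3), my plan is to verify each relation either as an abstract conjugation identity inside $J_{\opn{ord}}$ translated through the Fourier dictionary, or directly on $q$-expansions using (2) together with the injectivity of the $q$-expansion map on $\mathscr{M}^+$. Explicitly, conjugation of $U_{P'}^{\opn{int}} \cong \Z_p(1)$ by $t = \opn{diag}(t_1, t_2)$ is multiplication by $t_1 t_2^{-1}$, which after Fourier transform gives $t \circ \phi \circ t^{-1} = \phi(t_2^{-1} \cdot - \cdot t_1)$; centrality of $\opn{diag}(p^{-1},p^{-1})$ handles the $S_p$-commutations; for $U_p$ and $\varphi$ the commutators are computed from the observation that $\opn{diag}(1,p^{-1})$ (resp.\ $\opn{diag}(p^{-1},1)$) scales the unipotent radical by $p$ (resp.\ $p^{-1}$). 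I expect the main obstacle to be the $\varphi$ relation, since $\opn{diag}(p^{-1},1)$ conjugates $\overline{U_{P'}^{\opn{int}}}$ outside itself and one must track carefully how the Hecke correspondence interacts with the Fourier-dual integral structure; checking the identity on $q$-expansions (where $U_p$ acts by $\sum a_k q^k \mapsto \sum a_{pk} q^k$ and $\varphi$ by $\sum a_k q^k \mapsto \sum a_k q^{pk}$) is the cleanest way to conclude unambiguously.
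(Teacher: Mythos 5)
Your proposal is correct and follows essentially the same route as the paper: derive all operators from the $J_{\opn{ord}}$-action on $\mathfrak{IG}_{K^p}$, obtain the $C_{\opn{cont}}(\mbb{Z}_p,\mbb{Z}_p)$-action from the residual $\tilde{\mu_{p^\infty}}/T_p(\mu_{p^\infty}) = \widehat{\mbb{G}}_m$-action via Fourier theory, quote \cite[Theorem 7.1.1]{howe2020unipotent} for the $q$-expansion formula, and check the commutation relations by conjugation in $J_{\opn{ord}}$ (the paper merely asserts these are ``easily determined''; your explicit index computation and $q$-expansion verification supply exactly the omitted details).
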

\begin{proof}
The action of the prime-to-$p$ Hecke operators $\mathcal{H}_{K^p}$ is clear. For the other operators, we note that the action of the group $J_{\opn{ord}}$ on $\mathfrak{IG}_{K^p}$ induces an action of 
$T(\Z_p)$, $[\overline{U_{P'}^{\opn{int}}} \cdot \mathrm{diag}(1, p^{-1}) \cdot \overline{U_{P'}^{\opn{int}}}]$,  $[\overline{U_{P'}^{\opn{int}}} \cdot \mathrm{diag}(p^{-1}, 1) \cdot \overline{U_{P'}^{\opn{int}}}]$, $[\opn{diag}(p^{-1}, p^{-1}) \cdot \overline{U_{P'}^{\opn{int}}}]$ on $\mathfrak{IG}_{U_{P'}^{\opn{int}}K^p}$, as well as an action of $\tilde{\mu_{p^{\infty}}}/T_p(\mu_{p^{\infty}}) = \mu_{p^\infty} = \widehat{\mbb{G}}_m$. By Fourier theory, this action amounts to an action of $C_{\opn{cont}}(\mbb{Z}_p, \mbb{Z}_p)$. See \cite[\S 7.1]{howe2020unipotent}. The relations are easily determined. 
\end{proof}

Let $T^+ \subset T(\mbb{Q}_p)$ denote the submonoid of elements $\opn{diag}(t_1, t_2)$ such that $v_p(t_1) \geq v_p(t_2)$. Then we see that $T(\Z_p), U_p$ and $S_p$ generate an action of $T^+$. We let $\Theta \colon \mathscr{M}^+ \to \mathscr{M}^+$ be the operator corresponding to the action of the identity function $\mathrm{Id}_{\Z_p} \in C_{\opn{cont}}(\mbb{Z}_p, \mbb{Z}_p)$. 

\begin{remark} 
Let $U_{K_{p,P}}$ be a compact open subgroup of $U_{P'}$. Since the unipotent part of $J_{\opn{ord}}$ is abelian, the action of $J_{\opn{ord}}$ on $\mathfrak{IG}_{K^p}$ induces an action of $\tilde{\mu_{p^{\infty}}}/\overline{U_{K_{p, P}}}$ on $\mathfrak{IG}_{U_{K_{p, P}}K^p}$. Note that $\tilde{\mu_{p^{\infty}}}/\overline{U_{K_{p, P}}}$ is a formal torus with cocharacter group $U_{K_{p, P}}(-1)$, and $p$-adic Fourier theory identifies measures on $U_{K_{p, P}}(-1)^{\vee}$ with functions on $U_{K_{p, P}}(-1) \otimes \widehat{\mbb{G}}_m$. The action of $U_{K_{p, P}}(-1) \otimes \widehat{\mathbb{G}}_m$ thus gives an action of $C_{\opn{cont}}(U_{K_{p, P}}(-1)^{\vee}, \Z_p)$ on sections of $\mathfrak{IG}_{U_{K_{p, P}} K^p}$. As in the proof of Theorem \ref{PropertiesOfpadicmfsThm}, we identify $U_{P'}^{\opn{int}}(-1)^{\vee}$ with (the constant group scheme) $\Z_p$.
\end{remark}

Let $\mathscr{M}^+_{U_{K_{p,P}}} =  \mathrm{H}^0\big(\mathfrak{IG}_{U_{K_{p,P}} K^p}, \mathcal{O}_{\mathfrak{IG}_{U_{K_{p,P}}K^p}}\big)$. Using the action of $\mathrm{diag}(p^n,1) \in J_{\opn{ord}}$ by conjugation, we can identify $\mathfrak{IG}_{U_{K_{p,P}} K^p}$ with $\mathfrak{IG}_{U_{P'}^{\opn{int}} K^p}$ and $\mathscr{M}^+$ with $\mathscr{M}^+_{U_{K_{p,P}}}$. This conjugation action will transport the $U_p$, $\varphi$, $S_p$-actions and will conjugate the action of $C_{\opn{cont}}(\Z_p, \Z_p) =  C_{\opn{cont}}(U_{P'}^{\opn{int}}(-1)^\vee, \Z_p)$ with $C_{\opn{cont}}(U_{K_{p,P}}(-1)^\vee, \Z_p)$. Set $\mathscr{M}_{{U_{K_{p,P}}}} = \mathscr{M}_{{U_{K_{p,P}}}}^+[1/p]$. We let 
\[
\Theta_{U_{K_{p,P}}} \colon \mathscr{M}_{U_{K_{p,P}}} \rightarrow \mathscr{M}_{U_{K_{p,P}}}
\]
denote the map induced by the action of the identity function of $\Q_p$ in $C_{\opn{cont}}(U_{K_{p, P}}(-1)^\vee, \Z_p) \otimes \Q_p$. Note that conjugation by  $\mathrm{diag}(p^n,1)$ sends $\Theta_{U_{K_{p,P}}}$ to $p^n \Theta$. 

\subsection{Relation to classical forms}

To compare $p$-adic modular forms with classical modular forms (in this level of generality), we work over $\Spa(\Q_p^{\opn{cycl}}, \Z_p^{\opn{cycl}})$. We fix an isomorphism $\Z_p \cong \Z_p(1)$. Therefore, we can (and do) identify $P'$ and $\underline{P(\Q_p)}$.  We let $\mathcal{X}_{K^p, \Q_p^{\opn{cycl}}}$ be the perfectoid modular curve and we let $\mathcal{IG}_{K^p, \Q_p^{\opn{cycl}}} = \mathfrak{IG}_{K^p} \times \Spa (\Q_p^{\opn{cycl}}, \Z_p^{\opn{cycl}})$.  We have a natural $P(\Q_p)$-equivariant map $\mathcal{IG}_{K^p, \Q_p^{\opn{cycl}}}  \rightarrow \mathcal{X}_{K^p, \Q_p^{\opn{cycl}}}$. 

Let $K_p \subseteq G(\Q_p)$ be a compact open subgroup and let $g \in G(\Q_p)$. The map
\[
\mathcal{IG}_{K^p, \Q_p^{\opn{cycl}}} \xrightarrow{\cdot g} \mathcal{X}_{K^p, \Q_p^{\opn{cycl}}} \rightarrow \mathcal{X}_{K_p K^p, \Q_p^{\opn{cycl}}}
\]
factors through an open immersion  $\mathcal{IG}_{K_{p,P}K^p, \Q_p^{\opn{cycl}}} \rightarrow \mathcal{X}_{K_p K^p, \Q_p^{\opn{cycl}}}$ where $K_{p,P} = g K_{p} g^{-1} \cap P(\Q_p)$. One can think of this open as a connected component of the ordinary locus in $\mathcal{X}_{K_p K^p, \Q_p^{\opn{cycl}}}$.

\begin{remark} 
In general this open immersion $\mathcal{IG}_{K_{p,P}K^p, \Q_p^{\opn{cycl}}} \rightarrow \mathcal{X}_{K_p K^p, \Q_p^{\opn{cycl}}}$ is not defined over $\Q_p$ because the way we construct the level structure is different on both sides. On the other hand, for hyperspecial or Iwahori level and $g = 1$, it is defined over $\mbb{Q}_p$.
\end{remark}

Let $P_{\opn{dR}, K} \to X_{K}$ denote the right $\overline{P}$-torsor parameterising frames of $\mathcal{H}_E$ respecting the Hodge filtration (see \S \ref{ApplicationToModularCurvesAppendix}). We now consider the $M_{K_{p,P}}$-torsor $\mathcal{IG}_{U_{K_{p,P}}K^p, \Q_p^{\opn{cycl}}} \rightarrow \mathcal{IG}_{K_{p,P}K^p, \Q_p^{\opn{cycl}}}$. For ease of notation, set $K = K^pK_p$. Then we have a commutative diagram

\begin{eqnarray*}
\xymatrix{ \mathcal{IG}_{U_{K_{p,P}}K^p, \Q_p^{\opn{cycl}}} \ar[r] \ar[d]& P^{\opn{an}}_{\opn{dR}, K, \Q_p^{\opn{cycl}}}\ar[d] \\
\mathcal{IG}_{K_{p,P}K^p, \Q_p^{\opn{cycl}}} \ar[r] & \mathcal{X}_{K, \Q_p^{\opn{cycl}}}}
\end{eqnarray*}
where $P^{\opn{an}}_{\opn{dR}, K, \Q_p^{\opn{cycl}}}$ denotes the analytification of $P_{\opn{dR}, K}$. The top map is induced by the Hodge-Tate map and the unit root splitting. It is $M_{K_{p,P}}$-equivariant via the natural map $M_{K_{p,P}} \rightarrow T^{\opn{an}} \subset \overline{P}^{\opn{an}}$. 

The following proposition describes the relation between nearly holomorphic modular forms and $p$-adic modular forms.

\begin{proposition} 
We have a natural map 
\begin{equation} \label{PdRtoMdRNoweight}
\mathrm{H}^0(P_{\opn{dR},K}, \mathcal{O}_{P_{\opn{dR},K}}) \otimes_{\qp} \Q_p^{\opn{cycl}} \rightarrow \mathscr{M}_{U_{K_{p,P}}} \hatot_{\mbb{Q}_p} \Q_p^{\opn{cycl}}
\end{equation}
which is equivariant for the action of the prime-to-$p$ Hecke algebra $\mathcal{H}_{K^p}$, as well as
for the action of $M_{K_{p,P}}$ via the map $M_{K_{p,P}} \rightarrow T \subset \overline{P}$. Hence, for any $\kappa \in X^*(T)$, we obtain a map:
\begin{equation} \label{HktopadicEqn}
\mathrm{H}^0(X_K, \mathcal{H}_\kappa) \otimes_{\qp} \Q_p^{\opn{cycl}} \rightarrow \mathrm{Hom}_{M_{K_{p,P}}}( -w_0\kappa, \mathscr{M}_{U_{K_{p,P}}}) \hatot_{\Q_p} \Q_p^{\opn{cycl}}\{w_0\kappa\}
\end{equation}
where $\mathcal{H}_{\kappa}=\opn{Hom}_{T}(-w_0\kappa, \pi_*\mathcal{O}_{P_{\opn{dR}, K}} )$ (with $\pi \colon P_{\opn{dR}, K} \to X_K$ the structural map), and $\Q_p^{\opn{cycl}}\{w_0\kappa\}$ means that $T^+$ acts on $\mbb{Q}_p^{\opn{cycl}}$ through the character $w_0\kappa$.

Assume that $gK_pg^{-1}$ has an Iwahori decomposition: 
\[
g K_p g^{-1} = U_{gK_p g^{-1}} \cdot T_{gK_pg^{-1}} \cdot \overline{U}_{gK_p g^{-1}}.
\]
Let $\mathcal{H}_{K_p}^+$ be the subalgebra of the Hecke algebra $\mathcal{H}_{K_p}$ generated by $[K_p \cdot g^{-1} t  g \cdot K_p]$ with $t \in T^+$. Then the map (\ref{HktopadicEqn}) is equivariant for the morphism sending $[K_p \cdot g^{-1} t g \cdot K_p]$ to $t \in T^+$.
\end{proposition}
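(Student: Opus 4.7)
The plan is to build the map (\ref{PdRtoMdRNoweight}) directly from the displayed commutative diagram by pullback of sections, then derive (\ref{HktopadicEqn}) by taking isotypic components, and finally verify Hecke equivariance at $p$ using the Iwahori decomposition.

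First, I would construct (\ref{PdRtoMdRNoweight}) as follows. An algebraic section $f \in \mathrm{H}^0(P_{\opn{dR},K}, \mathcal{O}_{P_{\opn{dR},K}})$ analytifies to a section of $\mathcal{O}_{P_{\opn{dR},K}^{\opn{an}}}$. Pulling back along the top horizontal arrow of the diagram and taking global sections yields an element of $\mathrm{H}^0(\mathcal{IG}_{U_{K_{p,P}}K^p, \Q_p^{\opn{cycl}}}, \mathcal{O})$, which is $\mathscr{M}_{U_{K_{p,P}}} \hatot_{\Q_p} \Q_p^{\opn{cycl}}$ by definition. The $M_{K_{p,P}}$-equivariance (via $M_{K_{p,P}} \to T \subset \overline{P}$) is exactly what is recorded in the statement following the diagram: the top horizontal map is built out of the Hodge--Tate map together with the unit-root splitting, both of which are $M_{K_{p,P}}$-equivariant. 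The prime-to-$p$ Hecke equivariance is functoriality in the tame level $K^p$, since every space in the diagram is formed from the $K^p$-level moduli problems compatibly.

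To obtain (\ref{HktopadicEqn}), recall that $\mathcal{H}_\kappa = \opn{Hom}_T(-w_0\kappa, \pi_*\mathcal{O}_{P_{\opn{dR},K}})$, so a section of $\mathcal{H}_\kappa$ is a $T$-equivariant arrow $-w_0\kappa \to \pi_*\mathcal{O}_{P_{\opn{dR},K}}$. Composing with (\ref{PdRtoMdRNoweight}) and restricting to the action of $M_{K_{p,P}}$ (through $M_{K_{p,P}} \to T$) produces an $M_{K_{p,P}}$-equivariant arrow into $\mathscr{M}_{U_{K_{p,P}}} \hatot \Q_p^{\opn{cycl}}$, that is, an element of $\opn{Hom}_{M_{K_{p,P}}}(-w_0\kappa, \mathscr{M}_{U_{K_{p,P}}}) \hatot \Q_p^{\opn{cycl}}$. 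The extra twist by $\Q_p^{\opn{cycl}}\{w_0\kappa\}$ records the discrepancy between the $\overline{P}$-torsor weight on the classical side and the normalization of the $T^+$-action inherited from $J_{\opn{ord}}$ acting on the Igusa tower.

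The main obstacle will be the final assertion: matching $[K_p g^{-1} t g K_p]$ with the action of $t \in T^+$. The strategy is to compute the classical Hecke correspondence after pulling back along the open immersion $\mathcal{IG}_{K_{p,P}K^p, \Q_p^{\opn{cycl}}} \hookrightarrow \mathcal{X}_{K_pK^p, \Q_p^{\opn{cycl}}}$ and identify it with the translation by $t \in T^+ \subset J_{\opn{ord}}$. Using the Iwahori decomposition $gK_pg^{-1} = U_{gK_pg^{-1}} \cdot T_{gK_pg^{-1}} \cdot \overline{U}_{gK_pg^{-1}}$, coset representatives of $(K_p \cap g^{-1}tg K_p (g^{-1}tg)^{-1}) \backslash K_p$ can be chosen inside $g^{-1}\overline{U}_{gK_pg^{-1}}g$, and the condition $t \in T^+$ guarantees that such representatives act on the universal trivialisation of $\mathbb{X}_{\opn{ord}} \xrightarrow{\sim} E[p^\infty]$ by elements of $\overline{U_{P'}^{\opn{int}}} \cdot t \cdot \overline{U_{P'}^{\opn{int}}} \subset J_{\opn{ord}}$. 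Summing over cosets then recovers precisely the definitions of $U_p$, $\varphi$, $S_p$ (and, more generally, the action of $t$ in the monoid generated by $T(\Z_p), U_p, S_p$). That the $\overline{U}$-part of the Iwahori decomposition disappears upon passing to the $M_{K_{p,P}}$-equivariant quotient reduces the claim to the torus part, where it is tautological; this is the step I expect to require the most care, since it rests on compatibility between the Hodge--Tate period map and the $p$-integral structure of the Hecke correspondence at $p$.
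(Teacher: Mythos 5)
Your construction of the map \eqref{PdRtoMdRNoweight}, the passage to \eqref{HktopadicEqn} by taking isotypic components, and the treatment of the prime-to-$p$ and $M_{K_{p,P}}$-equivariances all coincide with the paper's argument, which disposes of them in one sentence (the map is restriction along $\mathcal{IG}_{U_{K_{p,P}}K^p} \to P_{\opn{dR},K}$, which is $M_{K_{p,P}}$-equivariant and functorial in $K^p$). For the equivariance at $p$ your strategy is also the paper's: use the Iwahori decomposition of $gK_pg^{-1}$ to match the classical double coset $[K_p\, g^{-1}tg\, K_p]$ with the $J_{\opn{ord}}$-double coset $[\overline{U_{P'}^{\opn{int}}}\cdot t\cdot \overline{U_{P'}^{\opn{int}}}]$. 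You phrase this via explicit coset representatives, whereas the paper draws the diagram of correspondences ($X_K \leftarrow X_{K'} \rightarrow X_K$ above $\mathcal{IG}_{U_{K_{p,P}}K^p} \leftarrow \mathcal{IG}_{U_{K'_{p,P}}K^p} \rightarrow \mathcal{IG}_{U_{K_{p,P}}K^p}$) and reduces everything to the statement that the left-hand square is Cartesian, verified by the degree count $[K:K'] = [U_{K_{p,P}}:U_{K'_{p,P}}]$; these are two formulations of the same computation.

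There is, however, a concrete error in your crux step: you have the two unipotents of the Iwahori decomposition interchanged, and taken literally your computation fails. For $t = \opn{diag}(t_1,t_2) \in T^+$ (so $v_p(t_1) \geq v_p(t_2)$), conjugation by $t$ \emph{contracts} the upper unipotent $U_{gK_pg^{-1}}$ and \emph{expands} the lower one, so that $t\,\overline{U}_{gK_pg^{-1}}\,t^{-1} \cap \overline{U}_{gK_pg^{-1}} = \overline{U}_{gK_pg^{-1}}$; if the representatives of $K_p / (K_p \cap g^{-1}tg\, K_p\, (g^{-1}tg)^{-1})$ really lay in $g^{-1}\overline{U}_{gK_pg^{-1}}g$, the index would be $1$ and you would recover a single coset rather than, say, the degree-$p$ operator $U_p$. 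The representatives in fact lie in the $g^{-1}$-conjugate of the \emph{upper} unipotent $U_{gK_pg^{-1}}$: it is precisely these elements that map to the $\tilde{\mu_{p^\infty}}$-translations constituting $\overline{U_{P'}^{\opn{int}}}\cdot t\cdot \overline{U_{P'}^{\opn{int}}}$, with matching count $[U_{gK_pg^{-1}} : tU_{gK_pg^{-1}}t^{-1}\cap U_{gK_pg^{-1}}] = [U_{K_{p,P}}:U_{K'_{p,P}}]$ (the paper's Cartesian-square verification). The part of $gK_pg^{-1}$ that ``disappears'' on the Igusa tower is the lower unipotent $\overline{U}_{gK_pg^{-1}}$ --- it contributes no cosets and does not alter the $P$-level structure --- not the part carrying the representatives. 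With the roles of $U$ and $\overline{U}$ straightened out, your argument is the paper's; you should also record, as the paper does in its map $\lambda$, that the correspondence on $P_{\opn{dR}}$ involves a twist by $t$ through the torsor structure, which is what produces the factor $\Q_p^{\opn{cycl}}\{w_0\kappa\}$ in \eqref{HktopadicEqn}.
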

\begin{proof}
The existence of the first map just follows from the fact that $\mathcal{IG}_{U_{K_{p, P}} K^p} \to P_{\opn{dR}, K}$ is equivariant for $M_{K_{p, P}}$ and functorial in $K^p$ (the map is induced by restriction). Therefore the maps \eqref{PdRtoMdRNoweight} and \eqref{HktopadicEqn} are well-defined.

Let $t \in T^+$. Set $K' = (g^{-1}tg) K (g^{-1}tg)^{-1} \cap K$ and $K'' = (g^{-1}tg)^{-1}K' (g^{-1}tg)$. set $U_{K_{p, P}} = g K_p g^{-1} \cap U_{P'}$ and $U_{K_{p, P}'} = g K_p' g^{-1} \cap U_{P'}$. Then we have a correspondence 
\[
X_{K} \xleftarrow{p_1} X_{K'} \xrightarrow{p_2} X_K
\]
where $p_1$ is the natural map, and $p_2$ is the composition $X_{K'} \xrightarrow{\cdot (g^{-1}tg)} X_{K''} \to X_{K}$. We also have a diagram of correspondences (over $\mbb{Q}_p^{\opn{cycl}}$)
\[
\begin{tikzcd}
                  & {p_1^{-1}P_{\opn{dR}, K}} \arrow[ld, "p_1"'] \arrow[rr, "\lambda"] &                                                                                              & {p_2^{-1}P_{\opn{dR}, K}} \arrow[rd, "p_2"] &                   \\
{P_{\opn{dR}, K}} &                                                                    & {\mathcal{IG}_{U_{K'_{p, P}}K^p}} \arrow[lu] \arrow[ru] \arrow[ld, "q_1"'] \arrow[rd, "q_2"] &                                             & {P_{\opn{dR}, K}} \\
                  & {\mathcal{IG}_{U_{K_{p, P}}K^p}} \arrow[lu]                        &                                                                                              & {\mathcal{IG}_{U_{K_{p, P}}K^p}} \arrow[ru] &                  
\end{tikzcd}
\]
where $q_1$ is the natural map induced from the inclusion $U_{K'_{p, P}} \subset U_{K_{p, P}}$, $q_2$ is induced from right multiplication by $t$ via the inclusion $t^{-1}U_{K'_{p, P}} t \subset U_{K_{p, P}}$, and the map $\lambda$ is the action of $t$ (via the torsor structure on $P_{\opn{dR}}$) composed with the morphism $p_1^{-1}P_{\opn{dR}, K} \to p_2^{-1}P_{\opn{dR}, K}$ arising from the $G(\mbb{Q}_p)$-equivariant structure on $P_{\opn{dR}}$. The claim now follows from the fact that the left-hand square is Cartesian. Indeed, one has 
\[
[K : K'] = [gK_pg^{-1}: gK_p'g^{-1}] = [U_{gK_pg^{-1}} : tU_{gK_pg^{-1}}t^{-1} \cap U_{gK_pg^{-1}}] = [U_{K_{p, P}} : U_{K'_{p, P}}]
\]
using the Iwahori decomposition for $gK_pg^{-1}$.
\end{proof}

By the results in \S \ref{ApplicationToModularCurvesAppendix}, one has that $\mathcal{H}_{\kappa} = \opn{VB}_K^{\opn{can}}(M_{\kappa}^{\vee})$ is the quasi-coherent sheaf associated with the dual Verma module of lowest weight $w_0\kappa$. In particular $\mathcal{H}_{\kappa}$ is naturally a $\mathcal{D}_{X_K}$-module on $X_K$. By composing the induced connection $\mathcal{H}_{\kappa} \to \mathcal{H}_{\kappa} \otimes \Omega^1_{X_K}(\opn{log}D)$ with the map $\mathcal{H}_{\kappa} \otimes \Omega^1_{X_K}(\opn{log}D) \to \mathcal{H}_{\kappa+2\rho}$ induced from the Kodaira--Spencer isomorphism, we obtain a $\mbb{Q}_p$-linear derivation $\nabla$ on $\opn{H}^0\left(P_{\opn{dR}, K}, \mathcal{O}_{P_{\opn{dR}, K}} \right) = \bigoplus_{\kappa \in X^*(T)} \opn{H}^0\left(X_K, \mathcal{H}_{\kappa} \right)$ (c.f. Proposition \ref{OPdrKPropAppendix}(2)). 

\begin{proposition}
One has a commutative diagram: 
\begin{eqnarray*}
\xymatrix{ \mathrm{H}^0(P_{\opn{dR},K}, \mathcal{O}_{P_{\opn{dR},K}}) \otimes \Q_p^{\opn{cycl}}    \ar[d] \ar[r]^{\nabla} & \mathrm{H}^0(P_{\opn{dR},K}, \mathcal{O}_{P_{\opn{dR},K}}) \otimes \Q_p^{\opn{cycl}} \ar[d] \\
\mathscr{M}_{U_{K_{p,P}}} \otimes \Q_p^{\opn{cycl}}  \ar[r]^{\Theta_{U_{K_{p,P}}}} & \mathscr{M}_{U_{K_{p,P}}} \otimes \Q_p^{\opn{cycl}}}
\end{eqnarray*}
\end{proposition}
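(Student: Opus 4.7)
The plan is to check the commutativity of the diagram on $q$-expansions at cusps, reducing both sides to the derivation $q \frac{d}{dq}$.

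First, by the $q$-expansion principle applied on each geometrically connected component of $X_K$, the $q$-expansion maps at the cusps are injective on both $\mathrm{H}^0(P_{\opn{dR},K}, \mathcal{O}_{P_{\opn{dR},K}}) \otimes \Q_p^{\opn{cycl}}$ and on $\mathscr{M}_{U_{K_{p,P}}} \otimes \Q_p^{\opn{cycl}}$. It therefore suffices to fix a cusp $c \colon \opn{Spf}\Z_p(\zeta_N)[[q^{1/N}]] \to \mathfrak{IG}_{U_{K_{p,P}}K^p}$ (with $(p,N)=1$) and its image $\bar c$ in $X_K$, and to compare the $q$-expansions of the two compositions in the diagram.

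Next, on the $p$-adic side, I would invoke Theorem \ref{PropertiesOfpadicmfsThm}(2) in the hyperspecial case $U_{K_{p,P}} = U_{P'}^{\opn{int}}$: the identity function of $\Z_p$ in $C_{\opn{cont}}(\Z_p, \Z_p)$ acts on $q$-expansions by $\sum a_k q^k \mapsto \sum k a_k q^k$, i.e.\ as $q \frac{d}{dq}$. The operator $\Theta_{U_{K_{p,P}}}$ for general $U_{K_{p,P}}$ is obtained from $\Theta$ by conjugation with some $\opn{diag}(p^n,1) \in J_{\opn{ord}}$. This conjugation rescales the identity function on $U_{K_{p,P}}(-1)^{\vee}$ by $p^n$ but simultaneously rescales the $q$-parameter at the transported cusp by the same factor, so $\Theta_{U_{K_{p,P}}}$ continues to act as $q \frac{d}{dq}$ on the $q$-expansion of elements of $\mathscr{M}_{U_{K_{p,P}}}$.

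On the classical side, the key ingredient is the explicit computation of the Gauss--Manin connection at the Tate curve: in the canonical basis $(\omega_{\opn{can}}, \eta_{\opn{can}})$ of $\mathcal{H}_E$ at the Tate curve---with $\omega_{\opn{can}} = dt/t$ corresponding to $dq/q$ via the Kodaira--Spencer isomorphism, and $\eta_{\opn{can}}$ spanning the unit root subspace---one has $\nabla \omega_{\opn{can}} = \eta_{\opn{can}} \otimes \tfrac{dq}{q}$ and $\nabla \eta_{\opn{can}} = 0$. Feeding this into the Kodaira--Spencer isomorphism $\mathcal{H}_\kappa \otimes \Omega^1_{X_K}(\log D) \to \mathcal{H}_{\kappa+2\rho}$ yields that $\nabla$ acts on the $q$-expansions of sections of $\mathcal{H}_\kappa$ as $\sum a_k q^k \mapsto \sum k a_k q^k$.

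Finally, I would verify that the vertical map $\mathrm{H}^0(P_{\opn{dR},K}, \mathcal{O}_{P_{\opn{dR},K}}) \to \mathscr{M}_{U_{K_{p,P}}}$ is compatible with these $q$-expansions: at the Tate curve, the Hodge--Tate map produces exactly the trivialization $\omega_{\opn{can}} = dt/t$ (via $\mu_{p^\infty} \cong E_{\opn{Tate}}[p^\infty]^\circ$), and the unit root splitting of $\mathcal{H}_E$ coincides with the classical splitting coming from the ordinary decomposition $E_{\opn{Tate}}[p^\infty] = \mu_{p^\infty} \oplus \Q_p/\Z_p$. Putting the three computations together, both compositions in the diagram act as $q \frac{d}{dq}$ on $q$-expansions, so the diagram commutes. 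The main obstacle will be tracking identifications at the cusps carefully---matching the trivializations arising from the Hodge--Tate map and the unit root splitting with the classical ones used in the $q$-expansion computation of Gauss--Manin, and accounting for the conjugation twists by $g$ and by $\opn{diag}(p^n,1)$ when passing between different Igusa towers---rather than the core computation itself.
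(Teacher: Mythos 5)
Your proof is correct and is precisely the route the paper takes: its own proof reads ``This is essentially a reformulation of [Howe, Theorem 5.3.1]. Alternatively, one can check this on $q$-expansions,'' and your argument is a careful expansion of that second alternative (Tate-curve computation of Gauss--Manin, Atkin--Serre as $q\frac{d}{dq}$, and matching the trivialisations). Note only that commutativity requires injectivity of the $q$-expansion map on the target $\mathscr{M}_{U_{K_{p,P}}} \otimes \Q_p^{\opn{cycl}}$ alone, so the less standard injectivity claim for $\mathrm{H}^0(P_{\opn{dR},K}, \mathcal{O}_{P_{\opn{dR},K}}) \otimes \Q_p^{\opn{cycl}}$ is not actually needed.
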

\begin{proof}
This is essentially a reformulation of \cite[Theorem 5.3.1]{howe2020unipotent}. Alternatively, one can check this on $q$-expansions.
\end{proof}

\subsection{The main theorem}  

We can now introduce the space of nearly overconvergent modular forms and state the main theorem. Let $K_p \subset G(\mbb{Q}_p)$ be a compact open subgroup, $g \in G(\mbb{Q}_p)$ and $K_{p, P} = g K_p g^{-1} \cap P(\mbb{Q}_p)$ as above. Recall that we have a morphism $\mathcal{IG}_{U_{K_{p, P}}K^p, \mbb{Q}_p^{\opn{cycl}}} \to P_{\opn{dR}, K, \mbb{Q}_p^{\opn{cycl}}}^{\opn{an}}$.

\begin{definition}
We define the space of \emph{nearly overconvergent modular forms} to be
\[
\mathscr{N}^{\dagger}_{U_{K_{p, P}}} \defeq \opn{colim}_U \opn{H}^0\left(U, \mathcal{O}_U \right)
\]
where the colimit is over all open subsets of $P_{\opn{dR}, K, \mbb{Q}_p^{\opn{cycl}}}^{\opn{an}}$ which contain the closure of $\mathcal{IG}_{U_{K_{p, P}}K^p, \mbb{Q}_p^{\opn{cycl}}}$ (via the morphism above) with transition maps given by restriction. Similarly, we define the space of \emph{overconvergent modular forms} to be \[ \mathscr{M}^{\dagger}_{U_{K_{p, P}}} \defeq \opn{colim}_V \opn{H}^0\left(V, \mathcal{O}_V\right), \] where the colimit is over all open subsets of $M_{\opn{dR}, K, \mbb{Q}_p^{\opn{cycl}}}^{\opn{an}} = P_{\opn{dR}, K, \mbb{Q}_p^{\opn{cycl}}}^{\opn{an}} \times^{\overline{P}^{\opn{an}}} T^{\opn{an}}$ containing the closure of $\mathcal{IG}_{U_{K_{p, P}}K^p, \mbb{Q}_p^{\opn{cycl}}}$. These are $\opn{LB}$-spaces of compact type, and one has a natural map $\mathscr{N}^{\dagger}_{U_{K_{p, P}}} \to \mathscr{M}_{U_{K_{p, P}}}$ induced from restriction.
\end{definition}

\begin{thm} \label{SheafVersionMainThm} 
The space $\mathscr{N}^{\dagger}_{U_{K_{p, P}}}$ comes equipped with actions of $U_p$, $\varphi$, $S_p$, $M_{K_{p, P}}$ and of locally analytic functions $C^{\opn{la}}(U_{K_{p, P}}(-1)^\vee, \Q_p) \subset C_{\opn{cont}}(U_{K_{p, P}}(-1)^\vee, \Q_p)$ which are compatible with the actions on $p$-adic modular forms via the map $\mathscr{N}^{\dagger}_{U_{K_{p, P}}} \to \mathscr{M}_{U_{K_{p, P}}}$. Furthermore, it also carries an action of the lower triangular nilpotent Lie algebra $\overline{\mathfrak{n}} \subset \mathfrak{gl}_2$ obtained by differentiating the torsor structure on $P_{\opn{dR}, K, \mbb{Q}_p^{\opn{cycl}}}^{\opn{an}}$, providing an ascending filtration $\opn{Fil}_r \mathscr{N}^{\dagger}_{U_{K_{p, P}}} \defeq \mathscr{N}^{\dagger}_{U_{K_{p, P}}}[\overline{\mathfrak{n}}^{r+1}]$ (the elements killed by $\overline{\mathfrak{n}}^{r+1}$).

The space $\mathscr{N}^{\dagger}_{U_{K_{p, P}}}$ satisfies the following additional properties:
\begin{enumerate}
\item The filtration is stable under $U_p$, $S_p$, $\varphi$, $M_{K_{p, P}}$ and $\mathrm{Fil}_0 \mathscr{N}^{\dagger}_{U_{K_{p, P}}}$ is the space of overconvergent modular forms. Furthermore, we have $U_p \circ \varphi = p S_p$ and $S_p$ commutes with both $U_p$ and $\varphi$.
\item Let $R_0^+$ be an admissible $\mbb{Z}_p$-algebra, $R_0 = R_0^+[1/p]$ and set $\mathcal{W} = \opn{Spa}(R_0, R_0^+)$. Suppose that $\kappa \colon M_{K_{p, P}} \rightarrow R_0^\times$ is a locally analytic character. Then for any $x \in \mathcal{W}(\overline{\mbb{Q}}_p)$ and $h \in \mbb{Q}$, there exists a quasi-compact open affinoid neighbourhood $\Omega = \opn{Spa}(R, R^+) \subset \mathcal{W}$ containing $x$ such that $\mathscr{N}^{\dagger}_{U_{K_{p, P}}, \kappa} \defeq \mathrm{Hom}_{M_{K_{p, P}}}(-w_0\kappa, \mathscr{N}^{\dagger}_{U_{K_{p, P}}}\hatot R)$ admits a slope $\leq h$ decomposition with respect to the operator $U_p$. Furthermore, there exists an integer $r \geq 0$ such that
\[
\mathscr{N}^{\dagger, \leq h}_{U_{K_{p, P}}, \kappa} = \left( \opn{Fil}_r \mathscr{N}^{\dagger}_{U_{K_{p, P}}, \kappa} \right)^{\leq h} .
\]
\item The analogous relations as in Theorem \ref{PropertiesOfpadicmfsThm}(3) hold for the actions of $U_p$, $\varphi$, $S_p$, $M_{K_{p, P}}$ and $C^{\opn{la}}(U_{K_{p, P}}(-1)^\vee, \Q_p)$ on $\mathscr{N}^{\dagger}_{U_{K_{p, P}}}$. In addition to this, there is a factorisation
\[
\begin{tikzcd}
H^0(P_{\opn{dR}, K}, \mathcal{O}_{P_{\opn{dR}}, K}) \otimes \Q_p^{\opn{cycl}} \arrow[dr] \arrow[rr, "(\ref{PdRtoMdRNoweight})"] & & \mathscr{M}_{U_{K_{p,P}}} \hatot_{\mbb{Q}_p} \Q_p^{\opn{cycl}} \\
& \mathscr{N}^\dagger_{U_{K_p, P}} \arrow[ur] &             
\end{tikzcd}
\]
which is compatible with filtrations. The operator $\nabla$ intertwines with the action of the identity function in $C^{\opn{la}}(U_{K_{p, P}}(-1)^\vee, \Q_p)$. 
\end{enumerate}
\end{thm}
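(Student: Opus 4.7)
The plan follows the strategy sketched at length in the introduction. I would first produce the Hecke and $M_{K_{p,P}}$-actions on $\mathscr{N}^{\dagger}_{U_{K_{p,P}}}$ by descent from the ambient torsor. The group $J_{\opn{ord}}$ acts on $\mathfrak{IG}_{K^p}$, and via the $G(\mbb{Q}_p)$-equivariant structure on $P_{\opn{dR}, K}^{\opn{an}}$ the relevant Hecke correspondences lift compatibly to open neighbourhoods of the closure of $\mathcal{IG}_{U_{K_{p,P}}K^p, \mbb{Q}_p^{\opn{cycl}}}$ inside $P_{\opn{dR}, K, \mbb{Q}_p^{\opn{cycl}}}^{\opn{an}}$; passing to the colimit defining $\mathscr{N}^{\dagger}_{U_{K_{p,P}}}$ gives the desired operators, and the relations in parts (1) and (3) descend directly from the corresponding identities in $J_{\opn{ord}}$ (and Theorem \ref{PropertiesOfpadicmfsThm}). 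The filtration is defined via the Lie algebra action of $\overline{\mathfrak{n}}$ obtained from the $\overline{P}$-torsor structure on $P_{\opn{dR}, K}^{\opn{an}}$; stability under the Hecke and $M_{K_{p,P}}$-actions is an equivariance statement for the torsor (up to the prescribed conjugation), and the identification $\opn{Fil}_0 = \mathscr{M}^{\dagger}_{U_{K_{p,P}}}$ reflects the pushout description $M_{\opn{dR}} = P_{\opn{dR}} \times^{\overline{P}} T$.

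The heart of the proof is the construction of the action of $C^{\opn{la}}(U_{K_{p,P}}(-1)^\vee, \Q_p)$. I would work locally on an affinoid $U = \opn{Spa}(A, A^+) \subset \mathcal{X}$ and exhibit a cofinal system $\{\mathcal{U}_{\opn{HT}, n}\}$ of strict neighbourhoods of $\mathcal{IG}_A$ inside $P_{\opn{dR}, A_{\opn{ord}}}^{\opn{an}}$ which pull back along $\mathcal{IG}_A \to P_{\opn{dR}, A_{\opn{ord}}}^{\opn{an}}$ to disjoint unions of explicit rigid balls of radius $p^{-n}$ with coordinates $X, Y, Z$ arising from the Hodge--Tate filtration and the unit root splitting. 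A direct calculation then shows that modulo $p^n$ the Gauss--Manin connection $\nabla$ restricted to such a ball reduces to the Atkin--Serre operator on $A^+_{\opn{ord}, \infty}$ together with the nilpotent rule $\nabla(X) = Y$, $\nabla(Y) = \nabla(Z) = 0$. An abstract Banach-space integration lemma then shows that any derivation with this shape extends to a bounded action of $C_{\varepsilon}(\Z_p, \Q_p)$ as soon as $n \geq n(\varepsilon)$. To move from the ordinary locus to a genuine overconvergent neighbourhood one uses the identity $A_{\opn{ord}}^+ = A_r^+ \langle 1/h \rangle$ to approximate ordinary sections by sections over $\mathcal{X}_r$ modulo arbitrarily large powers of $p$, producing a cofinal system $\{\mathcal{U}_{n,r}\}$ of strict neighbourhoods of $\mathcal{IG}_A$ in $P_{\opn{dR}, A}^{\opn{an}}$ with controlled norms of restriction maps. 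Feeding these estimates back into the integration lemma produces a $C_{\varepsilon}$-action on $\mathscr{N}^{\dagger}_A$ for every $\varepsilon > 0$, and functoriality in $A$ (which is automatic from the interpolation characterisation) glues these local constructions into the required global action. The compatibility of the identity function with $\nabla$ is built into the integration procedure.

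For the slope decomposition of part (2), I would invoke standard Coleman-type spectral theory: $U_p$ acts compactly on $\mathscr{N}^{\dagger}_{U_{K_{p,P}}, \kappa}$ because it does so on each $\opn{Fil}_r \mathscr{N}^{\dagger}_{U_{K_{p,P}}, \kappa}$, which is a finite successive extension of twists of weight $\kappa + 2s\rho$ overconvergent modular forms for $0 \leq s \leq r$. The containment of the slope-$\leq h$ subspace in $\opn{Fil}_r$ for some $r = r(h)$ follows from the commutation of $U_p$ with the $\overline{\mathfrak{n}}$-action, which (combined with $U_p \circ \varphi = pS_p$) forces $U_p$ to act with strictly positive slope on $\opn{gr}_s \mathscr{N}^{\dagger}_{U_{K_{p,P}},\kappa}$ for $s$ large, so only finitely many graded pieces can contribute to the slope-$\leq h$ part. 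The factorisation in part (3) is immediate from the definitions: sections of $P_{\opn{dR}, K}$ restrict to any neighbourhood of the closure of $\mathcal{IG}$ to give the first arrow, and the second arrow is restriction further to $\mathcal{IG}$ itself composed with projection to the $M_{K_{p,P}}$-torsor.

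The main obstacle is clearly the construction of the $C^{\opn{la}}$-action, and specifically the simultaneous control of the two parameters $n$ (depth of the analytic neighbourhood in the nilpotent Hodge--Tate direction) and $r$ (depth of overconvergence in the horizontal direction). The nilpotent presentation of $\nabla$ modulo $p^n$ is precisely what allows one to estimate the operators $p^{k\varepsilon} \binom{\nabla}{k}$ uniformly in $k$, but these estimates have to survive the passage from the ordinary locus to a strict overconvergent neighbourhood, and be uniform enough in $r$ to descend to the colimit defining $\mathscr{N}^{\dagger}_A$. Managing this balance is the technical heart of the argument and is where essentially all of the analytic content of the paper is concentrated.
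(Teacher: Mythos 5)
Your overall strategy coincides with the paper's: the Hecke and torus actions come from the torsor and correspondence structures, the filtration from differentiating the $\overline{P}$-action, and the locally analytic action is built exactly as you describe --- explicit balls $\mathcal{U}_{\opn{HT},n}$ over the ordinary locus on which $\nabla$ is congruent modulo $p^n$ to the Atkin--Serre operator together with a nilpotent derivation, an abstract integration lemma for such derivations, overconvergence via $A^+_{\opn{ord}} = A_r^+\langle 1/h\rangle$ and the norm estimates of Corollary \ref{OCcorollary}, and gluing by uniqueness of the interpolation. That part of the proposal is faithful to the paper.

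The one genuine gap is in part (2). You assert that $U_p$ acts compactly on $\mathscr{N}^{\dagger}_{U_{K_{p,P}},\kappa}$ ``because it does so on each $\opn{Fil}_r$''. This inference fails: $\mathscr{N}^{\dagger}_{\kappa}$ is strictly larger than $\bigcup_r \opn{Fil}_r\mathscr{N}^{\dagger}_{\kappa}$ --- indeed this is the whole point of the construction, since Urban's space is (essentially) the union of the filtered pieces and is too small for the $C^{\opn{la}}$-action --- and compactness of an operator on a nested family of finite-rank subsheaves says nothing about the operator on the ambient $\opn{LB}$-space. The paper's argument (Theorem \ref{SlopeDecompositionForNdaggerThm}) introduces the auxiliary space $\mathscr{L}_{\kappa} = \varinjlim_s \opn{H}^0(\mathcal{X}_s, \mathcal{V}_{r,s,\kappa})$ for a \emph{fixed} analyticity radius $r$, shows that $U_p$ factors there through the compact inclusion $V_{r,\kappa} \subset V_{r+1,\kappa}$ composed with restriction along $\mathcal{X}_{s+1} \Subset \mathcal{X}_s$ (so $\mathscr{L}_{\kappa}$ admits slope decompositions), and then proves that $U_p$ is pointwise topologically nilpotent on the quotient $\mathscr{N}^{\dagger}_{\kappa}/\mathscr{L}_{\kappa}$, whence the slope $\leq h$ part of $\mathscr{N}^{\dagger}_{\kappa}$ exists and equals that of $\mathscr{L}_{\kappa}$. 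Your second claim --- that only finitely many graded pieces contribute to the slope $\leq h$ part --- is correct, but the reason is the explicit estimate that $t \star_l$ acting on $\opn{Gr}_k V_{r,\kappa}$ has norm tending to $0$ as $k \to \infty$ (the unipotent coordinate is rescaled by $p$), rather than a formal consequence of the commutation relations you cite. Without the intermediate space $\mathscr{L}_{\kappa}$ and the nilpotence-on-the-quotient step, the slope decomposition on the full space is not established.
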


We will prove this theorem when $K_p = \opn{GL}_2(\mbb{Z}_p)$ and $g=1$ in \S \ref{GMinterpolationSection} and \S \ref{AdditionalStructuresSection} using results from \S \ref{SectionNOCMFs} and \S \ref{SectionPADICINTCONT}. As explained in the following section, this is sufficient for proving the theorem for general $K_p$ and $g$.

\subsection{Reduction to hyperspecial level} \label{RedToHypLevelSSec}

We now explain why it suffices to establish Theorem \ref{SheafVersionMainThm} when $K_p = \opn{GL}_2(\mbb{Z}_p)$ and $g=1$. Recall that we denoted $U_{P'} = \tbyt{1}{\mbb{Q}_p(1)}{}{1}$.

\begin{lemma}
Let $K_p, K_p' \subset G(\mbb{Q}_p)$ be compact open subgroups, and let $g, g' \in G(\mbb{Q}_p)$.
\begin{enumerate}
    \item Suppose that $U_{K_{p, P}} := g K_p g^{-1} \cap U_{P'} = g' K_p' (g')^{-1} \cap U_{P'} =: U_{K'_{p, P}}$ and $(g')^{-1}g K_p' g^{-1}g' \subset K_p$. Then the natural map $P_{\opn{dR}, K_p' K^p} \to P_{\opn{dR}, K_p K^p}$ induced from right multiplication by $g^{-1}g'$ (via the $G(\mbb{Q}_p)$-equivariant structure on $P_{\opn{dR}}$) induces an isomorphism
    \[
    \mathscr{N}^{\dagger}_{U_{K'_{p, P}}} \xrightarrow{\sim} \mathscr{N}^{\dagger}_{U_{K_{p, P}}}
    \]
    respecting filtrations and the $M_{K'_{p, P}}$ and $M_{K_{p, P}}$-actions. This implies that $\mathscr{N}^{\dagger}_{U_{K_{p, P}}}$ is intrinsic to the compact open subgroup $U_{K_{p, P}}$.
    \item Let $n \in \mbb{Z}$ and $t = \opn{diag}(p^n, 1) \in T(\mbb{Q}_p)$. Suppose that $g = g' = 1$, and $t^{-1}K_p't = K_p$ (which implies $t^{-1}U_{K'_{p, P}} t = U_{K_{p, P}}$). Then the isomorphism $P_{\opn{dR}, K_p'K^p} \to P_{\opn{dR}, K_p K^p}$ obtained as the composition of right-translation by $t$ (via the $G(\mbb{Q}_p)$-equivariant structure) and right-translation by $t$ (via the torsor structure) induces an isomorphism
    \[
    \mathscr{N}^{\dagger}_{U_{K'_{p, P}}} \xrightarrow{\cdot t} \mathscr{N}^{\dagger}_{U_{K_{p, P}}}
    \]
    respecting filtrations and the $M_{K'_{p, P}}$ and $M_{K_{p, P}}$-actions.
\end{enumerate}
Therefore, since any compact open subgroup of $U_{P'}$ is conjugate to $U_{P'}^{\opn{int}}$ via some $\opn{diag}(p^n, 1)$, by transporting structure it is enough to prove Theorem \ref{SheafVersionMainThm} when $U_{K_{p, P}} = U_{P'}^{\opn{int}}$ (e.g. when $K_p = \opn{GL}_2(\mbb{Z}_p)$ and $g=1$).
\end{lemma}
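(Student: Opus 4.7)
The overall plan is to prove parts (1) and (2) of the lemma separately and then combine them. The final reduction to hyperspecial level is immediate once both parts are established: any compact open subgroup of $U_{P'} = \tbyt{1}{\mbb{Q}_p(1)}{}{1}$ is of the form $\opn{diag}(p^n, 1) \, U_{P'}^{\opn{int}} \, \opn{diag}(p^n, 1)^{-1}$ for some $n \in \mbb{Z}$, so part (2) reduces us to the case $U_{K_{p,P}} = U_{P'}^{\opn{int}}$, and then part (1) (possibly after shrinking both $K_p$ and $K_p'$ to arrange the required inclusion) reduces to $K_p = \opn{GL}_2(\mbb{Z}_p)$ with $g = 1$.

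For part (1), the key observation is that because $U_{K_{p,P}} = U_{K'_{p,P}}$ as subgroups of $U_{P'}$, the two Igusa tower quotients $\mathcal{IG}_{U_{K_{p,P}} K^p}$ and $\mathcal{IG}_{U_{K'_{p,P}} K^p}$ are literally equal as formal/adic spaces, both being $\mathfrak{IG}_{K^p} / \overline{U_{K_{p,P}}}$. Only the maps to $P_{\opn{dR}}^{\opn{an}}$ at the chosen finite levels differ. The hypothesis $(g')^{-1} g \, K_p' \, g^{-1} g' \subset K_p$ ensures that right multiplication by $g^{-1} g'$, via the $G(\mbb{Q}_p)$-equivariant structure on $P_{\opn{dR}}$, descends to a well-defined map $P_{\opn{dR}, K_p' K^p}^{\opn{an}} \to P_{\opn{dR}, K_p K^p}^{\opn{an}}$. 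I would then verify, by unwinding the construction of the Igusa-to-$P_{\opn{dR}}$ map through the Hodge--Tate map and the unit root splitting, that this map fits into a commutative square intertwining the two Igusa tower maps. Since right multiplication by $g^{-1} g'$ is an isomorphism of adic spaces, it sends the closure of $\mathcal{IG}_{U_{K'_{p,P}} K^p}$ bijectively onto the closure of $\mathcal{IG}_{U_{K_{p,P}} K^p}$ and a cofinal system of strict neighborhoods to a cofinal system. Passing to the colimit of global sections yields the desired isomorphism on nearly overconvergent modular forms. Preservation of the $\overline{\mathfrak{n}}$-filtration follows from the fact that the $G$-equivariant structure commutes with the $\overline{P}$-torsor structure on $P_{\opn{dR}}$, and $M$-equivariance follows from the analogous identification at the level of the $P$-level Igusa tower.

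Part (2) proceeds along the same template, but now the isomorphism $P_{\opn{dR}, K_p' K^p}^{\opn{an}} \to P_{\opn{dR}, K_p K^p}^{\opn{an}}$ is given by the composition of two right translations by $t = \opn{diag}(p^n, 1) \in T \subset \overline{P} \cap G$: one via the $G$-equivariant structure, one via the $\overline{P}$-torsor structure. The $G$-equivariant translation alone does not intertwine the Igusa tower maps, because the identification $\mathcal{IG}_{U_{K_{p,P}} K^p} \cong \mathcal{IG}_{U_{K'_{p,P}} K^p}$ coming from the $T$-action on $\mathfrak{IG}_{K^p}$ (which conjugates $\overline{U_{K'_{p,P}}}$ to $\overline{U_{K_{p,P}}}$) alters the universal trivialization of the $p$-divisible group and hence twists both the Hodge--Tate map and the unit root splitting by $t$. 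The torsor right translation by $t$ precisely cancels this twist since $t \in T \subset \overline{P}$, so the composition is compatible with the Igusa tower maps, and the rest of the argument is as in (1).

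The main technical obstacle will be the careful bookkeeping in part (2): proving that the two instances of $t$ — one via $G$-equivariance, one via the $\overline{P}$-torsor structure — really do cancel. Concretely, this involves tracking how the universal isomorphism $\mathbb{X}_{\opn{ord}} \xrightarrow{\sim} E[p^\infty]$ on $\mathfrak{IG}_{K^p}$, together with the Hodge--Tate map and the unit root splitting, transform under the natural action of $T(\mbb{Q}_p)$ on $\mathfrak{IG}_{K^p}$, and comparing this with the torsor action of $T \subset \overline{P}$ on $P_{\opn{dR}}^{\opn{an}}$.
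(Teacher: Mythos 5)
Your treatment of part (2) and of the final reduction is consistent with the paper, and your identification of the cancellation between the two right-translations by $t$ is exactly the point the paper treats as immediate. However, there is a genuine gap in your part (1): you assert that ``right multiplication by $g^{-1}g'$ is an isomorphism of adic spaces'' and deduce from this that it carries the closure of the Igusa tower and a cofinal system of strict neighbourhoods isomorphically onto their counterparts. This is false at finite level. The hypothesis is only that $(g')^{-1}gK_p'g^{-1}g' \subset K_p$, so the induced map $f \colon P_{\opn{dR}, K_p'K^p} \to P_{\opn{dR}, K_pK^p}$ is the composition of an isomorphism at infinite level with the projection corresponding to this inclusion; it is finite \'etale (away from certain cusps) of degree $[K_p : (g')^{-1}gK_p'g^{-1}g']$, which can exceed $1$. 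A finite \'etale map of degree $>1$ does not induce an isomorphism on colimits of sections over strict neighbourhoods without further input.

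The missing step, which is the actual content of part (1) in the paper, is to show that $f$ nevertheless restricts to an isomorphism $V_2 \xrightarrow{\sim} V_1$ between suitable strict neighbourhoods of the (closure of the) Igusa tower on each side. This follows from a Kisin--Lai type argument (cf.\ \cite[Proposition 2.2.1]{KisinLai}): since the Igusa tower sits inside a single ``sheet'' of the covering $f$, one can choose a strict neighbourhood $V$ of $f^{-1}(\mathcal{IG}_{UK^p})$ whose connected components biject with those of $f^{-1}(\mathcal{IG}_{UK^p})$, and then restrict to the component containing the Igusa tower. Without this, your argument only produces a map $\mathscr{N}^{\dagger}_{U_{K'_{p,P}}} \to \mathscr{N}^{\dagger}_{U_{K_{p,P}}}$ (via the finite \'etale structure), not an isomorphism. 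You should supply this step; the rest of your outline then goes through.
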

\begin{proof}
For the first part, let $U = U_{K_{p, P}} = U_{K'_{p, P}}$. We have a commutative diagram 
\[
\begin{tikzcd}
                              & {P_{\opn{dR}, K_p'K^p}} \arrow[d, "f"] \\
\mathcal{IG}_{U K^p} \arrow[ru] \arrow[r] & {P_{\opn{dR}, K_p K^p}}               
\end{tikzcd}
\]
where $f$ denotes the map induced by right multiplication by $g^{-1}g'$, the diagonal map is induced from right-translation by $g'$ and the horizontal map is induced from right-translation by $g$. The map $f$ is finite \'{e}tale away from the cusps not lying in $\mathcal{IG}_{UK^p}$, hence it induces a map $\mathscr{N}^{\dagger}_{U_{K'_{p, P}}} \to \mathscr{N}^{\dagger}_{U_{K_{p, P}}}$. 

Furthermore, we can find strict neighbourhoods $V_1$ (resp. $V_2$) of $\mathcal{IG}_{U K^p}$ in $P^{\opn{an}}_{\opn{dR}, K_p K^p}$ (resp. $P^{\opn{an}}_{\opn{dR}, K_p'K^p}$) such that $f \colon V_2 \to V_1$ is an isomorphism (cf. \cite[Proposition 2.2.1]{KisinLai} -- the key point is that we can find a strict neighbourhood $V$ of $f^{-1}(\mathcal{IG}_{UK^p}) \subset P^{\opn{an}}_{\opn{dR}, K_p'K^p}$ such that the connected components of $f^{-1}(\mathcal{IG}_{UK^p})$ and $V$ are in bijection with one another). This proves part (1). 

The second part follows immediately from the fact that the isomorphism $\mathcal{IG}_{U_{K'_{p, P}} K^p} \to \mathcal{IG}_{U_{K_{p, P}} K^p}$, induced from right-translation by $t$ intertwines with the isomorphism $P_{\opn{dR}, K_p'K^p} \to P_{\opn{dR}, K_p K^p}$ in the statement of the lemma.
\end{proof}

When working at hyperspecial level, we will omit any subscript which includes the level subgroup (e.g. $\mathscr{N}^{\dagger}$, $\mathscr{M}^{\dagger}$, $\mathscr{M}$ denote the spaces of (nearly) overconvergent and $p$-adic modular forms etc.). Furthermore, in this setting the morphism from the Igusa tower to $P_{\opn{dR}}$ is defined over $\mbb{Q}_p$, so it is not necessary to base-change to $\mbb{Q}_p^{\opn{cycl}}$. In fact, to consider families of nearly overconvergent modular forms, we will work over $\opn{Spa}(R, R^+)$ for some admissible $\mbb{Z}_p$-algebra $R^+$.

\section{A system of neighbourhoods of the Igusa tower} \label{SectionNOCMFs}

The purpose of this section is to define and study an explicit system of strict neighbourhoods of the Igusa tower inside $P_{\opn{dR}}^{\opn{an}}$, which will be crucial for our constructions. The basic idea behind this construction is that, after multiplying by suitable powers of the Hasse invariant, one can, modulo large powers of $p$, overconverge elements defined on the ordinary locus.

\subsection{The setting} \label{TheSettingSubSec}

Let $R^+$ be an admissible $\mbb{Z}_p$-algebra and set $R = R^+[1/p]$. Fix a neat compact open subgroup $K^p \subset G(\mbb{A}_f^p)$ and let $\mathcal{X}/\opn{Spa}(R, R^+)$ be the compact modular curve of level $\opn{GL}_2(\mbb{Z}_p)K^p$ over $\opn{Spa}(R, R^+)$. We have (see \cite{Katz}) a system of neighbourhoods of the ordinary locus
\[
\mathcal{X}_{\opn{ord}} \to \ldots \to \mathcal{X}_r \to \ldots \to \mathcal{X}_1 \to \mathcal{X}
\]
where $\mathcal{X}_r$ denotes the (unique) quasi-compact open whose rank one points $x$ satisfy $|h|_x \geq p^{-1/p^{r+1}}$ for any local lift $h$ of the Hasse invariant. 

For any integer $n \geq 1$, let $\mathcal{IG}_n \defeq \mathcal{IG}_{P'_n K^p}$ be the Igusa tower of level
\[
P'_n = \tbyt{1 + p^n \mbb{Z}_p}{\mbb{Z}_p(1)}{}{1+p^n \mbb{Z}_p}
\]
which is a finite $T(\mbb{Z}/p^n \mbb{Z})$-torsor over $\mathcal{X}_{\opn{ord}}$. We also use the notation $\mathcal{IG}_{\infty} = \mathcal{IG}_{U_{P'}(\mbb{Z}_p)K^p}$. We define $\mathcal{H}_{\mathcal{E}}$ to be the canonical extension of the first relative de Rham homology of the universal elliptic curve over the good reduction locus in $\mathcal{X}$. The formal model $\mathfrak{X}$ defines a $\mathcal{O}_{\mathcal{X}}^+$-lattice $\mathcal{H}_{\mathcal{E}}^+ \subset \mathcal{H}_{\mathcal{E}}$. We similarly define $\omega_{\mathcal{E}}^+ \subset \omega_{\mathcal{E}}$ and $\omega_{\mathcal{E}^D}^+ \subset \omega_{\mathcal{E}^D}$ for the Hodge bundle of the universal and dual universal generalised elliptic curve and the corresponding $\mathcal{O}_{\mathcal{X}}^+$-lattices.

If $\opn{Spa}(A, A^+) \subset \mathcal{X}$ is a quasi-compact open affinoid subspace, then we let $\opn{Spa}(A_r, A_r^+)$ (resp. $\opn{Spa}(A_{\opn{ord}}, A_{\opn{ord}}^+)$, resp. $\opn{Spa}(A_{\opn{ord}, n}, A_{\opn{ord}, n}^+)$) denote the pullback to $\mathcal{X}_r$ (resp. $\mathcal{X}_{\opn{ord}}$, resp. $\mathcal{IG}_n$). We warn the reader that we will often implicitly assume that $\opn{Spa}(A_{\opn{ord}}, A_{\opn{ord}}^+) \neq \varnothing$, otherwise most of what we say will be vacuous. Note that if $\omega_{\mathcal{E}}^+$ is trivial over $\opn{Spa}(A, A^+)$ and $h \in A^+$ is a fixed local lift of the Hasse invariant, then $A_{\opn{ord}}^+ = A^+\langle 1/h \rangle$. If $\{e, f\}$ is a fixed basis of $\mathcal{H}_{\mathcal{E}}^+$ over $\opn{Spa}(A, A^+)$ respecting the Hodge filtration (i.e. $e \in \omega_{\mathcal{E}}^+$), then we have 
\[
P_{\opn{dR}}^{\opn{an}} \times_{\mathcal{X}} \opn{Spa}(A, A^+) \cong \overline{P}^{\opn{an}}_{A} = \tbyt{(\mbb{G}_m^{\opn{an}})_{A}}{}{(\mbb{G}_a^{\opn{an}})_{A}}{(\mbb{G}_m^{\opn{an}})_{A}}
\]
and the basis $\{e, f\}$ gives rise to coordinates $T_1, T_2, U \in \mathcal{O}^+(P_{\opn{dR}}^{\opn{an}} \times_{\mathcal{X}} \opn{Spa}(A, A^+))$ such that the universal trivialisation of $\mathcal{H}_{\mathcal{E}}^+$ over $P_{\opn{dR}}^{\opn{an}} \times_{\mathcal{X}} \opn{Spa}(A, A^+)$ takes $e$ (resp. $f$) to $(0, T_1)$ (resp. $(T_2, U)$). Equivalently, if $\{ e_{\opn{univ}}, f_{\opn{univ}} \}$ denotes the universal basis of $\mathcal{H}^+_{\mathcal{E}}$ over $P_{\opn{dR}}^{\opn{an}} \times_{\mathcal{X}} \opn{Spa}(A, A^+)$ (with $e_{\opn{univ}} \in \omega_{\mathcal{E}}^+$) then we have $e = T_1 e_{\opn{univ}}$ and $f = U e_{\opn{univ}} + T_2 f_{\opn{univ}}$.

\begin{notation}
For any $1 \leq n \leq \infty$, we will let $\mathscr{G}_n$ denote the Galois group $\Gal(\mathcal{IG}_{n}/\mathcal{X}_{\opn{ord}}) \cong T\left(\mbb{Z}/p^n \mbb{Z} \right)$ ($\cong T(\mbb{Z}_p)$ if $n=\infty$). 
\end{notation}

For the rest of this section, we fix an open affinoid $\opn{Spa}(A, A^+) \subset \mathcal{X}$ over which $\omega_{\mathcal{E}}^+$, $\mathcal{H}_{\mathcal{E}}^+$ and $\omega_{\mathcal{E}^D}^+$ are trivial, and fix a basis $\{ e, f \}$ of $\mathcal{H}_{\mathcal{E}}^+$ respecting the Hodge filtration
\[
0 \to \omega_{\mathcal{E}^D}^+ \to \mathcal{H}_{\mathcal{E}}^+ \to (\omega_{\mathcal{E}}^+)^{\vee} \to 0 .
\]
Recall that over $\mathcal{IG}_n$, one has universal trivialisations 
\[
\psi_1 \colon \left(\mathcal{E}^D[p^{n}]^{\circ}\right)^D \xrightarrow{\sim} \mbb{Z}/p^n\mbb{Z}, \quad \psi_2 \colon \mathcal{E}[p^{n}]^{\circ} \xrightarrow{\sim} \mu_{p^{n}}
\]
which induce trivialisations
\[
\phi_1 \colon \mathcal{O}^+_{\mathcal{IG}_n}/p^n \xrightarrow{\sim} (\omega_{\mathcal{E}^D}^+/p^n)|_{\mathcal{IG}_{n}}, \quad  \phi_2 \colon \mathcal{O}^+_{\mathcal{IG}_n}/p^n \xrightarrow{\sim} (\omega_{\mathcal{E}}^+/p^n)^{\vee}|_{\mathcal{IG}_n}.
\]
More precisely, $\phi_1(1)$ is the vector $\opn{dlog} \circ \; \psi_1^{-1}(1)$ and $\phi_2(1)$ is defined to be the vector dual to $\opn{dlog} \circ \; \psi_2^D(1)$. We set $e_n \defeq \phi_1(1)$ and define $f_n$ to be the image of $\phi_2(1)$ under the unit root splitting $(\omega_{\mathcal{E}}^+/p^n)^{\vee} \hookrightarrow \mathcal{H}_{\mathcal{E}}^+/p^n$. Then $\{e_n, f_n \}$ defines a basis of $\mathcal{H}_{\mathcal{E}}^+/p^n$ over $\mathcal{IG}_n$ respecting the Hodge filtration. 

\begin{definition} \label{DefOfAlphanBetanGamman}
Let $\alpha_n, \gamma_n \in (A_{\opn{ord}, n}^+)^{\times}$ and $\beta_n \in A_{\opn{ord}, n}^+$ be any elements such that
\[
e \equiv \alpha_n e_n, \quad \quad f \equiv \beta_n e_n + \gamma_n f_n, \quad \text{ mod } p^n ,
\]
and $\alpha_n^{-1} \beta_n \in A_{\opn{ord}}^+$.
\end{definition}

\begin{remark}
    We can choose $\alpha_n, \beta_n, \gamma_n$ in Definition \ref{DefOfAlphanBetanGamman} such that the last condition is satisfied as follows. Let $f^{\opn{ur}} \in \mathcal{H}_{\mathcal{E}}^+|_{\opn{Spa}(A_{\opn{ord}}, A_{\opn{ord}}^+)}$ denote the image of $f$ under the composition
\[
\mathcal{H}_{\mathcal{E}}^+|_{\opn{Spa}(A_{\opn{ord}}, A_{\opn{ord}}^+)} \twoheadrightarrow \left(\omega_{\mathcal{E}}^+\right)^{\vee}|_{\opn{Spa}(A_{\opn{ord}}, A_{\opn{ord}}^+)} \hookrightarrow \mathcal{H}_{\mathcal{E}}^+|_{\opn{Spa}(A_{\opn{ord}}, A_{\opn{ord}}^+)}
\]
where the first map arises from the Hodge filtration and the second map is the unit root splitting. There exists an element $\upsilon \in A_{\opn{ord}}^+$ such that $f^{\opn{ur}} = \upsilon e + f$. We can then take $\alpha_n, \gamma_n \in (A_{\opn{ord}, n}^+)^{\times}$ such that $e \equiv \alpha_n e_n$ and $f^{\opn{ur}} \equiv \gamma_n f_n$ modulo $p^n$, and set $\beta_n = -\upsilon \alpha_n$.
\end{remark}

\begin{remark}
Let $\lambda = (\lambda_1, \lambda_2) \in T(\mbb{Z}/p^n\mbb{Z}) = \left(\mbb{Z}/p^n\mbb{Z}\right)^{\times} \times \left(\mbb{Z}/p^n\mbb{Z}\right)^{\times}$, which corresponds to an element $\sigma_{\lambda} \in \Gal(\mathcal{IG}_n/\mathcal{X}_{\opn{ord}})$ such that $\sigma_{\lambda}(\psi_i) = \lambda_i \circ \psi_i$ for $i=1, 2$. Then, $\sigma_{\lambda}(e_n) \equiv \lambda_1^{-1} e_n$ and $\sigma_\lambda(f_n) \equiv \lambda_2^{-1} f_n$ modulo $p^n$. This implies that $\sigma_{\lambda}(\alpha_n) \equiv \lambda_1 \alpha_n$, $\sigma_{\lambda}(\beta_n) \equiv \lambda_1 \beta_n$, and $\sigma_{\lambda}(\gamma_n) \equiv \lambda_2 \gamma_n$ modulo $p^n$. 
\end{remark}

\begin{remark} \label{RemarkCongruencesalphan}
The map $\mathcal{IG}_\infty \times_{\mathcal{X}} \opn{Spa}(A, A^+) = \opn{Spa}(A_{\opn{ord}, \infty}, A_{\opn{ord}, \infty}^+) \to P_{\opn{dR}}^{\opn{an}} \times_{\mathcal{X}} \opn{Spa}(A, A^+)$ induced from the universal trivialisations and the unit root splitting is described by the point
\[
\tbyt{\gamma_{\infty}}{}{\beta_{\infty}}{\alpha_{\infty}} \in \overline{P}^{\opn{an}}(A_{\opn{ord}, \infty}), 
\]
i.e. the morphism sending $T_1 \mapsto \alpha_\infty$, $U \mapsto \beta_{\infty}$, and $T_2 \mapsto \gamma_{\infty}$.
\end{remark}

\subsection{A preliminary lemma}

We will first prove an elementary lemma that will be useful later for constructing strict neighbourhoods of the Igusa tower over the ordinary locus. Let $C^+$ be an admissible $\mbb{Z}_p$-algebra, $C = C^+[1/p]$, and let $\mathcal{Y} = \mbb{G}_m^{\opn{an}}$ over $\opn{Spa}(C, C^+)$ with coordinate $T$. Let $n \geq 1$ and suppose that we have a collection of elements
$\{ c_{\lambda} \in (C^+)^{\times} : \lambda \in \left(\mbb{Z}/p^n \mbb{Z} \right)^{\times} \}$ with the property that $c_{\lambda} \equiv \lambda c_{1}$ modulo $p^nC^+$ for all $\lambda \in \left(\mbb{Z}/p^n \mbb{Z}\right)^{\times}$. Define 
\[ P(T) := \prod_{\lambda \in (\Z / p^n \Z)^\times} (T - c_{\lambda}) \in C^+[T] \subset \mathcal{O}_{\mathcal{Y}}^+(\mathcal{Y}) \]
and, for every $0 \leq m \leq n$, let
\begin{align*} 
\rho_{n, m} & \defeq p^{-m p^{n - m}} \prod_{ \substack{x \in (\Z / p^n \Z)^\times \\ x \not \equiv 1 \text{ (mod $p^m$)}}} |x - 1|_p \\
\nu_{n, m} & \defeq - v_p(\rho_{n, m})
\end{align*}
where $v_p$ is the $p$-adic valuation satisfying $v_p(p) = 1$. Note that $\nu_{n, m} \geq 0$ and $\rho_{n, m} = p^{-\nu_{n, m}} = |p^{\nu_{n, m}}|_p$.

\begin{lemma} \label{rhoquotientlemma}
For any $0 \leq m < n$, we have $\rho_{n,m} / \rho_{n,m+1} = p^{p^{n - (m + 1)}}$ and $\nu_{n, m+1} - \nu_{n, m} = p^{n-(m+1)}$.
\end{lemma}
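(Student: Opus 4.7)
The plan is a direct calculation: I would compute the ratio $\rho_{n,m}/\rho_{n,m+1}$ using the definition, after which the second identity follows immediately by applying $-v_p(\cdot)$, since $\nu_{n,m} = -v_p(\rho_{n,m})$.

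The key observation is that the two products are indexed by nested sets. Explicitly,
\[
\{x \in (\Z/p^n\Z)^\times : x \not\equiv 1 \pmod{p^m}\} \subseteq \{x \in (\Z/p^n\Z)^\times : x \not\equiv 1 \pmod{p^{m+1}}\},
\]
and the complement of the smaller set in the larger set is exactly $D_m := \{x \in (\Z/p^n\Z)^\times : v_p(x-1) = m\}$. Therefore I would write
\[
\frac{\prod_{x \not\equiv 1 \pmod{p^m}} |x-1|_p}{\prod_{x \not\equiv 1 \pmod{p^{m+1}}} |x-1|_p} \;=\; \prod_{x \in D_m} |x-1|_p^{-1}.
\]
For $m \geq 1$, the map $x \mapsto (x-1)/p^m$ is a bijection between $D_m$ and $(\Z/p^{n-m}\Z)^\times$, so $|D_m| = (p-1)p^{n-m-1}$, and every $x \in D_m$ contributes $|x-1|_p = p^{-m}$. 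Hence the product above equals $p^{m(p-1)p^{n-m-1}}$. For $m = 0$ the contribution of each element of $D_0$ is $|x-1|_p = 1$, so the product is trivially $1$.

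Combining this with the prefactor $p^{-mp^{n-m}}/p^{-(m+1)p^{n-m-1}} = p^{p^{n-m-1}(m+1 - mp)}$ (using $p^{n-m} = p \cdot p^{n-m-1}$), one gets for $m \geq 1$:
\[
\frac{\rho_{n,m}}{\rho_{n,m+1}} = p^{p^{n-m-1}(m+1-mp) \,+\, m(p-1)p^{n-m-1}} = p^{p^{n-m-1}},
\]
and for $m=0$ the exponent is $p^{n-1}(1-0) = p^{n-1}$, yielding the same formula $p^{p^{n-(m+1)}}$. Taking $v_p$ gives $\nu_{n,m+1} - \nu_{n,m} = p^{n-(m+1)}$.

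There is no real obstacle here; the only point requiring care is the case $m=0$, where the condition ``$x \not\equiv 1 \pmod{p^0}$'' is vacuous, the set $D_0$ has cardinality $(p-2)p^{n-1}$ rather than $(p-1)p^{n-m-1}$, and one must verify separately that the exponents still collapse to $p^{n-1}$. Since the contribution of each element of $D_0$ to the product is $1$, the count is irrelevant and the formula goes through uniformly.
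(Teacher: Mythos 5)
Your proof is correct and follows essentially the same elementary counting argument as the paper: both reduce to counting the units $x \in (\Z/p^n\Z)^\times$ according to $v_p(x-1)$, the only organizational difference being that you compute the ratio $\rho_{n,m}/\rho_{n,m+1}$ directly via the single set $D_m$, whereas the paper first writes each $\nu_{n,m}$ in closed form and then subtracts. Your handling of the edge case $m=0$ (empty product, each element of $D_0$ contributing trivially) is also correct.
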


\begin{proof}
We have
\begin{align*} \nu_{n, m} &= m p^{n - m} + \sum_{k = 1}^{m - 1} k \cdot \, \# \{ 1+p^k a \, : \, 0 \leq a < p^{n-k}, \;\; (a, p) = 1\} \\ 
&= m p^{n - m} + \sum_{k = 1}^{m - 1} k (p-1) p^{n - (k+1)}.
\end{align*}
A simple computation shows that $\nu_{n, m+1} - \nu_{n, m} = p^{n - (m+1)}$ as required.
\end{proof}

The following lemma relates estimates of the norm of $P(T)$ to the norm of its linear factors.

\begin{lemma} \label{LemmaInequality}
For any point $x \in \mathcal{Y}$ with associated valuation $| \cdot |_x$, one has
\[ 
|P(T)|_x \leq |p^{\nu_{n, m}}|_x \Longleftrightarrow \exists \lambda \in (\Z / p^n \Z)^\times \text{ such that } |T - c_{\lambda} |_x  \leq |p^m|_x.
\]
\end{lemma}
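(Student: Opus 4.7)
The whole argument hinges on the ultrametric comparison between $|T-c_\lambda|_x$ and the distances $|c_\lambda - c_\mu|_x$, and on identifying those distances explicitly. The plan is to first record the elementary fact that, for $\lambda \neq \mu$ in $(\Z/p^n\Z)^\times$, one has $|c_\lambda - c_\mu|_x = |\lambda - \mu|_p$. Indeed, by hypothesis $c_\lambda - c_\mu \equiv (\lambda-\mu)c_1 \pmod{p^n C^+}$; since $c_1 \in (C^+)^\times$ forces $|c_1|_x = 1$, and since $v_p(\lambda-\mu) < n$ implies $|\lambda-\mu|_p > p^{-n} = |p^n|_x$, the strict triangle inequality gives the claimed equality.

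For the reverse ($\Leftarrow$) direction, suppose $|T-c_\lambda|_x \leq |p^m|_x = p^{-m}$ for some $\lambda$. Partition the index set $(\Z/p^n\Z)^\times$ according to the congruence class of $\mu$ modulo $p^m$:
\begin{itemize}
\item For $\mu \not\equiv \lambda \pmod{p^m}$, we have $|c_\lambda - c_\mu|_x = |\lambda-\mu|_p > p^{-m} \geq |T-c_\lambda|_x$, so strict triangle yields $|T-c_\mu|_x = |\lambda-\mu|_p$.
\item For the $p^{n-m}$ indices $\mu$ with $\mu \equiv \lambda \pmod{p^m}$, the triangle inequality gives $|T-c_\mu|_x \leq \max(|T-c_\lambda|_x, |\lambda-\mu|_p) \leq p^{-m}$.
\end{itemize}
Multiplying these bounds and using the definition $\nu_{n,m} = mp^{n-m} + \sum_{x \not\equiv 1 \,(p^m)} v_p(x-1)$ (where the sum is translation-invariant under $x \mapsto x/\lambda$), one obtains $|P(T)|_x \leq p^{-mp^{n-m}} \cdot p^{-(\nu_{n,m} - mp^{n-m})} = p^{-\nu_{n,m}}$.

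For the forward ($\Rightarrow$) direction I would argue by contrapositive. Assume $|T - c_\lambda|_x > p^{-m}$ for every $\lambda$ and choose $\lambda_0$ achieving $\delta := \min_\lambda |T-c_\lambda|_x > p^{-m}$. Writing $a_\mu = v_p(\lambda_0 - \mu)$, a case-by-case ultrametric analysis (comparing $\delta$ with $|c_{\lambda_0} - c_\mu|_x = p^{-a_\mu}$, and using the minimality of $\delta$ at $\lambda_0$ to rule out the degenerate equality case) shows that for $\mu \neq \lambda_0$, $|T-c_\mu|_x = \max(\delta, p^{-a_\mu})$. Using $\max(\delta, p^{-a_\mu}) \geq \delta$ when $a_\mu \geq m$ (there are $p^{n-m}-1$ such $\mu$, so together with $\lambda_0$ one gets a factor $\delta^{p^{n-m}}$) and $\max(\delta, p^{-a_\mu}) \geq p^{-a_\mu}$ otherwise, one obtains
\[
|P(T)|_x \;\geq\; \delta^{p^{n-m}} \cdot p^{-\sum_{\mu \not\equiv \lambda_0 \,(p^m)} a_\mu} \;=\; (\delta \cdot p^m)^{p^{n-m}} \cdot p^{-\nu_{n,m}}.
\]
Since $\delta > p^{-m}$ strictly, the first factor exceeds $1$, giving $|P(T)|_x > p^{-\nu_{n,m}}$ as required.

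The main obstacle is the forward direction, where one must be careful with the ultrametric case analysis when $\delta$ coincides with some $p^{-a_\mu}$. The minimality of $\delta$ at $\lambda_0$ is essential here: in the ambiguous equality case of the ultrametric comparison, it forces $|T-c_\mu|_x \geq \delta$ (rather than a potentially much smaller value), which is exactly what is needed to keep the product bounded below by $\delta^{p^{n-m}}$. The remainder of the proof is a bookkeeping exercise matching the explicit exponent $\nu_{n,m}$ with the combinatorics of the $a_\mu$'s.
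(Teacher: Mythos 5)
Your proof is correct and follows essentially the same route as the paper: both directions reduce to the ultrametric comparison of $|T-c_\lambda|_x$ with the pairwise distances $|c_\lambda - c_\mu|_x = |p^{v_p(\lambda-\mu)}|_x$, and your $\Leftarrow$ direction coincides with the paper's. The only organizational difference is in $\Rightarrow$: you argue by contrapositive and extract the direct lower bound $|P(T)|_x \geq (\delta\, p^m)^{p^{n-m}}\, p^{-\nu_{n,m}} > p^{-\nu_{n,m}}$ from the minimizing index $\lambda_0$ (with minimality resolving the degenerate equality case exactly as you say), whereas the paper computes $|P(T)|_x$ exactly after splitting on whether $|T-c_\mu|_x$ equals a power of $|p|_x$ or lies strictly between consecutive powers, and then contradicts the monotonicity $\nu_{n,i+1}-\nu_{n,i}=p^{n-(i+1)}$ of Lemma \ref{rhoquotientlemma}; your version is self-contained and bypasses that auxiliary lemma. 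One point to tidy: the statement must hold for arbitrary points of the adic space $\mathcal{Y}$, whose valuations can have rank $>1$, so all comparisons should be phrased in the totally ordered value group --- replace $p^{-k}$ by $|p^k|_x$ and take $\delta=\min_\lambda|T-c_\lambda|_x$ there; every step, including the implication $\delta > |p^m|_x \Rightarrow \delta^{p^{n-m}} > |p^{mp^{n-m}}|_x$, carries over verbatim to an ordered abelian group.
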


\begin{proof}
Note that $p$ is topologically nilpotent, so $|p|_x < 1$ and $|p^i|_x$ converges to zero as $i \to +\infty$ in the value group $\Gamma \cup \{0\}$ of the valuation $|\cdot|_x$ (i.e. for any $\gamma \in \Gamma$, there exists an integer $i \geq 0$ such that $|p^i|_x < \gamma$). 

Suppose that there exists an element $\mu \in \left(\mbb{Z}/p^n \mbb{Z} \right)^{\times}$ such that $|p^{i+1}|_x < |T - c_{\mu}|_x < |p^{i}|_x$ for some integer $i \geq 0$. Then $|T - c_{\mu}|_x$ is never equal to $|c_{\mu} - c_{\lambda}|_x = |\mu - \lambda|_x$ for any $\lambda \neq \mu$. Let $J_{\mu, i+1} \subset \left(\mbb{Z}/p^n \mbb{Z} \right)^{\times}$ be the subset of all elements $\lambda$ such that $\lambda \equiv \mu$ modulo $p^{i+1}$, and let $J_{\mu, i+1}^c$ denote its complement. Then we have
\[
|P(T)|_x = \prod_{\lambda \in J_{\mu, i+1}} |T-c_{\mu}|_x \cdot \prod_{\lambda \in J_{\mu, i+1}^c} | \lambda - \mu |_x = \left\{ \begin{array}{cc} |T- c_{\mu}|_x^{p^{n-(i+1)}} |p^{-(i+1)p^{n-(i+1)}}|_x |p^{\nu_{n, i+1}}|_x & \text{ if } 0 \leq i < n \\ |T-c_{\mu}|_x |p^{-n}|_x |p^{\nu_{n, n}}|_x & \text{ if } i \geq n \end{array} \right.
\]
Now we return to the proof of the lemma. 

Suppose that there exists $\lambda \in \left(\mbb{Z}/p^n \mbb{Z} \right)^{\times}$ such that $|T - c_{\lambda}|_x \leq |p^{m}|_x$. Then we must have that
\[
|P(T)|_x \leq \prod_{\lambda' \in J_{\lambda, m}} |p^{m}|_x \cdot \prod_{\lambda' \in J_{\lambda, m}^c} |c_{\lambda'} - c_{\lambda}|_x = |p^{\nu_{n, m}}|_x .
\]
Conversely, suppose that $|P(T)|_x \leq |p^{\nu_{n, m}}|_x$. Suppose there exists $\mu \in \left(\mbb{Z}/p^n \mbb{Z} \right)^{\times}$ such that $|p^{i+1}|_x < |T - c_{\mu}|_x < |p^{i}|_x$ for some integer $i \geq 0$. We may assume $i \leq m-1$ otherwise there is nothing to prove. Then by the above calculation, we find that
\[
|P(T)|_x = |T - c_{\mu}|_x^{p^{n-(i+1)}} |p^{-(i+1)p^{n-(i+1)}}|_x |p^{\nu_{n, i+1}}|_x \leq |p^{\nu_{n, m}}|_x. 
\]
But since $|T - c_{\mu}|_x > |p^{i+1}|_x$, this implies that $\nu_{n, i+1} > \nu_{n, m}$ for some $i \leq m-1$. This contradicts Lemma \ref{rhoquotientlemma}.

Finally, we suppose that for every $\mu$, there exists an integer $i \geq 0$ such that $|T - c_{\mu}|_x = |p^{i}|_x$, and assume that $|P(T)|_x \leq |p^{\nu_{n, m}}|_x$. Let $j = \opn{max}\{ i : |T - c_{\lambda}|_x = |p^{i}|_x \text{ for some } \lambda \}$, and we assume $0 \leq j \leq m-1$, otherwise there is nothing to prove. Let $\mu$ be such that $|T - c_{\mu}|_x = |p^{j}|_x$. Then we see that
\[
|P(T)|_x = \prod_{\lambda \in J_{\mu, j}} |T - c_{\lambda}|_x \cdot \prod_{\lambda \in J_{\mu, j}^c} |c_{\lambda} - c_{\mu}|_x \geq |p^{jp^{n-j}}|_x \prod_{\substack{y \in \left(\mbb{Z}/p^n \mbb{Z}\right)^{\times} \\ y \not\equiv 1 \pmod{{p^j}}}} | y - 1 |_x = |p^{\nu_{n, j}}|_x .
\]
This implies that $\nu_{n, j} \geq \nu_{n, m}$, which contradicts Lemma \ref{rhoquotientlemma} because $j \leq m-1$.
\end{proof}

We note the following corollary:

\begin{corollary} \label{CorollaryInequalityPolynomials}
Let $1 \leq m \leq n$ and suppose that $\{ d_{\mu} \in (C^+)^{\times} : \mu \in \left(\mbb{Z}/p^m \mbb{Z} \right)^{\times} \}$ is a collection of elements such that $d_{\mu} \equiv c_{\lambda}$ modulo $p^m C^+$ whenever $\mu \equiv \lambda$ modulo $p^m$. Set $Q(T) = \prod_{\mu \in \left(\mbb{Z}/p^m \mbb{Z}\right)^{\times}} (T - d_{\mu})$. Then for any point $x \in \mathcal{Y}$, one has
\[
|P(T)|_x \leq |p^{\nu_{n, m}}|_x \; \Longleftrightarrow \; |Q(T)|_x \leq |p^{\nu_{m, m}}|_x.
\]
\end{corollary}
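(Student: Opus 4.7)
The plan is to deduce the corollary by applying Lemma \ref{LemmaInequality} twice, using the compatibility between the collections $\{c_\lambda\}$ and $\{d_\mu\}$ modulo $p^m$.

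First, I would observe that Lemma \ref{LemmaInequality} applied directly to $P(T)$ gives
\[
|P(T)|_x \leq |p^{\nu_{n,m}}|_x \iff \exists\, \lambda \in (\Z/p^n\Z)^\times \text{ with } |T - c_\lambda|_x \leq |p^m|_x,
\]
and applied to $Q(T)$ (with $n$ replaced by $m$, noting that here the inner index equals the outer index, and the hypothesis $d_\mu \equiv d_{\mu'} \pmod{p^m}$ when $\mu \equiv \mu' \pmod{p^m}$ is automatic from $\mu = \mu'$) gives
\[
|Q(T)|_x \leq |p^{\nu_{m,m}}|_x \iff \exists\, \mu \in (\Z/p^m\Z)^\times \text{ with } |T - d_\mu|_x \leq |p^m|_x.
\]
So it suffices to prove that these two existence statements are equivalent.

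Next, I use the compatibility hypothesis: if $\mu \in (\Z/p^m\Z)^\times$ and $\lambda \in (\Z/p^n\Z)^\times$ reduces to $\mu$ mod $p^m$, then $c_\lambda \equiv d_\mu \pmod{p^m C^+}$, so $|c_\lambda - d_\mu|_x \leq |p^m|_x$. By the non-archimedean triangle inequality,
\[
|T - c_\lambda|_x \leq |p^m|_x \iff |T - d_\mu|_x \leq |p^m|_x.
\]
For the forward implication of the corollary, given $\lambda$ with $|T - c_\lambda|_x \leq |p^m|_x$, take $\mu$ to be its reduction mod $p^m$ (which lies in $(\Z/p^m\Z)^\times$ since $\lambda$ is a unit); the displayed equivalence produces the required $\mu$. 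For the converse, given $\mu$, lift it to any $\lambda \in (\Z/p^n\Z)^\times$ and apply the equivalence in the other direction.

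I do not anticipate any serious obstacle here: the corollary is essentially a bookkeeping consequence of the lemma, and the only substantive input beyond Lemma \ref{LemmaInequality} is the fact that elements congruent mod $p^m$ cannot be distinguished by the valuation at scale $|p^m|_x$. The one small point to check carefully is that the reduction map $(\Z/p^n\Z)^\times \to (\Z/p^m\Z)^\times$ is surjective so that lifts exist in both directions.
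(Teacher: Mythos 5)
Your proposal is correct and follows essentially the same route as the paper: apply Lemma \ref{LemmaInequality} to both $P(T)$ and $Q(T)$ and then match the two existence conditions via the congruences $d_{\mu} \equiv c_{\lambda} \pmod{p^m C^+}$ (the paper compresses this last step to ``this holds by construction''). One small inaccuracy: to apply Lemma \ref{LemmaInequality} to $Q(T)$ you must verify the standing hypothesis of that lemma for the collection $\{d_{\mu}\}$, namely $d_{\mu} \equiv \mu d_1 \pmod{p^m C^+}$, not the vacuous condition you cite; this does follow from the corollary's assumptions, since $d_{\mu} \equiv c_{\tilde{\mu}} \equiv \tilde{\mu} c_1 \equiv \mu d_1 \pmod{p^m C^+}$ for any lift $\tilde{\mu}$ of $\mu$.
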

\begin{proof}
By Lemma \ref{LemmaInequality}, it is enough to show that
\[
\exists \lambda \in \left(\mbb{Z}/p^n \mbb{Z} \right)^{\times} \text{ such that } |T - c_{\lambda} |_x  \leq |p^{m}|_x \Longleftrightarrow \exists \mu \in \left(\mbb{Z}/p^m \mbb{Z} \right)^{\times} \text{ such that } |T - d_{\mu} |_x  \leq |p^{m}|_x .
\]
But this holds by construction.
\end{proof}

\subsection{Ordinary neighbourhoods} \label{OrdinaryNHoodsSubSec}

Fix a quasi-compact open affinoid $\opn{Spa}(A, A^+) \subset \mathcal{X}$ as above. We now define the strict (local) neighbourhoods of the Igusa tower over the ordinary locus. For any integer $n \geq 1$, we define 
\begin{itemize}
    \item $P_{\alpha_n}(T_1) = \prod_{\lambda = (\lambda_1, 1) \in T(\mbb{Z}/p^n \mbb{Z})} (T_1 - \sigma_{\lambda}(\alpha_n)) \; \in \; A_{\opn{ord}}^+[T_1]$
    \item $P_{\gamma_n}(T_2) = \prod_{\lambda = (1, \lambda_2) \in T(\mbb{Z}/p^n \mbb{Z})} (T_2 - \sigma_{\lambda}(\gamma_n)) \; \in \; A_{\opn{ord}}^+[T_2]$
    \item $Q_n(T_1, U) = U - \alpha_n^{-1}\beta_n T_1 \; \in \; A_{\opn{ord}}^+[T_1, U]$.
\end{itemize}
We view all of these polynomials as global sections of $\mathcal{O}^+_{P_{\opn{dR}, A_{\opn{ord}}}^{\opn{an}}}$ via the coordinates $T_1, T_2, U$ arising from the fixed basis $\{ e, f \}$ as above.

\begin{definition}
For $1 \leq m \leq n$, let $\mathcal{U}_{\opn{HT}, n, m} \subset P_{\opn{dR}, A_{\opn{ord}}}^{\opn{an}}$ be the quasi-compact open affinoid whose points $| \cdot |_x$ satisfy the inequalities
\[ 
|P_{\alpha_n}(T_1)|_x \leq |p^{\nu_{n, m}}|_x, \;\;\; |P_{\gamma_n}(T_2)|_x \leq |p^{\nu_{n, m}}|_x, \; \; \; |Q_n(T_1, U) |_x \leq |p^{m}|_x. 
\]
\end{definition}

For any element $x \in \overline{P}^{\opn{an}}(A_{\opn{ord}, n})$ and integer $m \leq n$, we let $\mathcal{B}_n(x, p^{-m}) \subset \overline{P}^{\opn{an}}_{A_{\opn{ord}, n}} \cong P_{\opn{dR}, A_{\opn{ord}, n}}^{\opn{an}}$ denote the rigid ball of radius $p^{-m}$, i.e. if we have $x = \tbyt{x_2}{}{y}{x_1}$, then it is the quasi-compact open affinoid subspace defined by the conditions $|T_1 - x_1| \leq |p^{m}|$, $|U - y| \leq |p^{m}|$, and $|T_2 - x_2| \leq |p^{m}|$.

\begin{lemma} \label{DecompositionIntoBallsLemma}
Let $g_n = \tbyt{\gamma_n}{}{\beta_n}{\alpha_n}$. Then
\[ \mathcal{U}_{\opn{HT}, n, m} \times_{\mathcal{X}_{\opn{ord}}} \mathcal{IG}_n = \bigsqcup_{\lambda \in T(\Z / p^m \Z)} \mathcal{B}_n(\sigma_\lambda(g_n), p^{-m}) = \bigsqcup_{\lambda \in T(\Z / p^m \Z)} \sigma_\lambda(g_n) \cdot \mathcal{B}_n(1, p^{-m}) \]
where, by a slight abuse of notation, $\sigma_\lambda$ denotes any lift of $\sigma_\lambda \in \mathscr{G}_m$ to an element of $\mathscr{G}_n$. In particular, we have $\mathcal{U}_{\opn{HT}, n} \defeq \mathcal{U}_{\opn{HT}, n, n} = \mathcal{U}_{\opn{HT}, n, m}$ for any $1 \leq m \leq n$.
\end{lemma}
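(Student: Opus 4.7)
The strategy is to pull back the defining inequalities of $\mathcal{U}_{\opn{HT}, n, m}$ to $\mathcal{IG}_n$, where the trivialisations $\alpha_n, \beta_n, \gamma_n$ and their Galois conjugates become available as explicit elements of $A^+_{\opn{ord}, n}$. Working in the coordinates $T_1, T_2, U$ on $\overline{P}^{\opn{an}}_{A_{\opn{ord}, n}} \cong P^{\opn{an}}_{\opn{dR}, A_{\opn{ord}, n}}$, the three polynomial inequalities can be analysed one at a time using Lemma \ref{LemmaInequality} together with the crucial Galois-invariance of $\alpha_n^{-1}\beta_n$.

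In detail, I would first apply Lemma \ref{LemmaInequality} to $P_{\alpha_n}(T_1)$ with $c_{\lambda_1} \defeq \sigma_{(\lambda_1, 1)}(\alpha_n)$; the required congruence $c_{\lambda_1} \equiv \lambda_1 c_1 \pmod{p^n}$ is exactly the computation of the Galois action on $\alpha_n$ from the remark following Definition \ref{DefOfAlphanBetanGamman}. This rewrites $|P_{\alpha_n}(T_1)|_x \leq |p^{\nu_{n, m}}|_x$ as the existence of some $\lambda_1 \in (\mbb{Z}/p^n\mbb{Z})^\times$ with $|T_1 - \sigma_{(\lambda_1, 1)}(\alpha_n)|_x \leq |p^{m}|_x$, and the analogous statement holds for $P_{\gamma_n}(T_2)$. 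For the $Q_n$-inequality, the key input is that $\alpha_n^{-1}\beta_n \in A^+_{\opn{ord}}$ is $\mathscr{G}_n$-invariant, which gives the identity $\sigma_\lambda(\beta_n) = \alpha_n^{-1}\beta_n \cdot \sigma_\lambda(\alpha_n)$. Hence, granted the $T_1$-condition above, $|U - \alpha_n^{-1}\beta_n T_1|_x \leq |p^{m}|_x$ is equivalent to $|U - \sigma_{(\lambda_1, 1)}(\beta_n)|_x \leq |p^{m}|_x$, because $\alpha_n^{-1}\beta_n(T_1 - \sigma_{(\lambda_1, 1)}(\alpha_n))$ has norm $\leq p^{-m}$.

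Assembling the three conditions, the pullback to $\mathcal{IG}_n$ is expressed as a union over $\lambda \in T(\mbb{Z}/p^n\mbb{Z})$ of the balls $\mathcal{B}_n(\sigma_\lambda(g_n), p^{-m})$. For disjointness and reindexing, note that two centres $\sigma_\lambda(g_n), \sigma_{\lambda'}(g_n)$ lie at distance $\max(|\lambda_1 - \lambda_1'|_p, |\lambda_2 - \lambda_2'|_p)$ in the $(T_1, T_2)$-coordinates (using that $\alpha_n, \gamma_n$ are units); this is $> p^{-m}$ precisely when $\lambda \not\equiv \lambda' \pmod{p^m}$, which simultaneously gives disjointness of balls indexed by distinct classes in $T(\mbb{Z}/p^m\mbb{Z})$ and identifies balls corresponding to different lifts of the same class. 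The second equality $\mathcal{B}_n(\sigma_\lambda(g_n), p^{-m}) = \sigma_\lambda(g_n) \cdot \mathcal{B}_n(1, p^{-m})$ is then a direct matrix-multiplication verification in $\overline{P}^{\opn{an}}_{A_{\opn{ord}, n}}$: writing an arbitrary element of $\mathcal{B}_n(1, p^{-m})$ as $I + p^m \cdot (\text{bounded})$, left multiplication by $\sigma_\lambda(g_n)$ (whose diagonal entries are units and subdiagonal entry is bounded) shifts the centre to $\sigma_\lambda(g_n)$ up to error of norm at most $p^{-m}$, and the inverse map is computed identically.

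The step I expect to be most delicate is the final assertion that $\mathcal{U}_{\opn{HT}, n} \defeq \mathcal{U}_{\opn{HT}, n, n}$ agrees with $\mathcal{U}_{\opn{HT}, n, m}$ for all $1 \leq m \leq n$. The natural tool here is Corollary \ref{CorollaryInequalityPolynomials}, applied with $d_\mu$ chosen to be lifts of $\sigma_\mu(\alpha_n) \pmod{p^m}$, which converts the level-$n$ inequality at precision $\nu_{n, m}$ into a level-$m$ inequality at precision $\nu_{m, m}$ after pullback. Combined with the ball decompositions established above, this should reconcile the descriptions of the opens for different values of $m$ and justify the common notation $\mathcal{U}_{\opn{HT}, n}$.
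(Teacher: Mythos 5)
Your proof of the displayed decomposition is correct and follows the same route as the paper's (very terse) proof, which simply invokes Lemma \ref{LemmaInequality} and Corollary \ref{CorollaryInequalityPolynomials} coordinate by coordinate. You usefully make explicit the two points the paper leaves implicit: that the congruences $\sigma_\lambda(\alpha_n) \equiv \lambda_1 \alpha_n$, $\sigma_\lambda(\gamma_n) \equiv \lambda_2 \gamma_n$ modulo $p^n$ are what feed the hypothesis of Lemma \ref{LemmaInequality}, and that the condition $|Q_n(T_1, U)|_x \leq |p^m|_x$ is traded for $|U - \sigma_\lambda(\beta_n)|_x \leq |p^m|_x$ using the $\mathscr{G}_n$-invariance of $\alpha_n^{-1}\beta_n$ --- which is precisely why that normalisation is imposed in Definition \ref{DefOfAlphanBetanGamman}. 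The collapsing of the index set from $T(\Z/p^n\Z)$ to $T(\Z/p^m\Z)$ and the identity $\mathcal{B}_n(\sigma_\lambda(g_n), p^{-m}) = \sigma_\lambda(g_n)\cdot\mathcal{B}_n(1, p^{-m})$ are also fine; just note that to identify the balls for $\lambda \equiv \lambda' \pmod{p^m}$ you need the $U$-coordinate as well as $(T_1, T_2)$, which again follows from $\sigma_\lambda(\beta_n) - \sigma_{\lambda'}(\beta_n) = \alpha_n^{-1}\beta_n\,(\sigma_\lambda(\alpha_n) - \sigma_{\lambda'}(\alpha_n))$.

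For the final assertion, be careful: the equality $\mathcal{U}_{\opn{HT}, n, n} = \mathcal{U}_{\opn{HT}, n, m}$ as literally written (first index fixed, second index varying) is \emph{false} for $m < n$, and no argument will establish it. By Lemma \ref{rhoquotientlemma} the exponents $\nu_{n, m}$ strictly increase in $m$, so one only gets inclusions $\mathcal{U}_{\opn{HT}, n, n} \subseteq \mathcal{U}_{\opn{HT}, n, m}$, and these are strict: a ball of radius $p^{-m}$ around $\sigma_\lambda(g_n)$ is strictly larger than the union of the finitely many balls of radius $p^{-n}$ around the $\sigma_{\lambda'}(g_n)$ with $\lambda' \equiv \lambda \pmod{p^m}$ (take $T_1 = \alpha_n(1 + p^m \pi)$ at a point whose residue field is larger than $\mbb{F}_p$, with $\pi$ a unit whose reduction is not in $\mbb{F}_p$; then $\min_{\lambda}|T_1 - \sigma_\lambda(\alpha_n)|_x = |p^m|_x$ exactly). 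What Corollary \ref{CorollaryInequalityPolynomials} --- the tool you correctly reach for --- actually yields is independence of the \emph{first} index: taking $d_\mu = \sigma_\mu(\alpha_m)$ it converts the pair $(P_{\alpha_n}, \nu_{n,m})$ into $(P_{\alpha_m}, \nu_{m,m})$, and similarly for $\gamma$ and for $Q_n$ versus $Q_m$ (using $g_n \equiv g_m$ modulo $p^m$), giving $\mathcal{U}_{\opn{HT}, n, m} = \mathcal{U}_{\opn{HT}, m, m} = \mathcal{U}_{\opn{HT}, m}$ for all $n \geq m$. That is the version actually used later (e.g.\ the identity $\mathcal{U}_{\opn{HT}, n} = \mathcal{U}_{\opn{HT}, n+1, n}$ in the proof of Proposition \ref{PropositionOCneighbourhoods}), so you should state and prove that, rather than trying to ``reconcile the opens for different values of $m$'', which cannot be done.
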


\begin{proof}
This follows immediately from Lemma \ref{LemmaInequality} and Corollary \ref{CorollaryInequalityPolynomials} applied to $\{ \sigma_\lambda(\alpha_n) : \lambda = (\lambda_1, 1) \in T(\Z / p^n \Z) \}$ and $\{ \sigma_\lambda(\gamma_n) : \lambda = (1, \lambda_2) \in T(\Z / p^n \Z) \}$ (see Remark \ref{RemarkCongruencesalphan} for the congruence properties of these elements) and each of the $(\mbb{G}_m^{\opn{an}})_{A_{\opn{ord}}}$ components of $P_{\opn{dR}, A_{\opn{ord}}}^{\opn{an}}$.
\end{proof}

\begin{corollary} \label{OCextensionsarecofinalCorollary}
The family $\{ \mathcal{U}_{\opn{HT}, n} : n \geq 1 \} $ forms a cofinal system of strict quasi-compact open neighbourhoods of $\mathcal{IG}_{\infty} \times_{\mathcal{X}} \opn{Spa}(A, A^+)$ inside $P_{\opn{dR}, A_{\opn{ord}}}^{\opn{an}}$.
\end{corollary}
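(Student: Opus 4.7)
The plan is to reduce the assertion to the explicit description of $\mathcal{U}_{\opn{HT}, n} \times_{\mathcal{X}_{\opn{ord}}} \mathcal{IG}_n$ provided by Lemma \ref{DecompositionIntoBallsLemma}, and then to use faithfully flat descent along the finite étale cover $\mathcal{IG}_n \to \mathcal{X}_{\opn{ord}}$ to transfer containment statements between the two sides.

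First, I would verify that each $\mathcal{U}_{\opn{HT}, n}$ is a quasi-compact open strict neighbourhood of $\mathcal{IG}_{\infty} \times_{\mathcal{X}} \opn{Spa}(A, A^+)$. Since $p$ is a unit on the adic generic fibre, the defining conditions exhibit $\mathcal{U}_{\opn{HT}, n}$ as a rational subset (a finite intersection of rational subsets) of $P_{\opn{dR}, A_{\opn{ord}}}^{\opn{an}}$. By Remark \ref{RemarkCongruencesalphan}, the morphism from $\mathcal{IG}_{\infty} \times_{\mathcal{X}} \opn{Spa}(A, A^+)$ corresponds to the $\overline{P}^{\opn{an}}(A_{\opn{ord}, \infty})$-point $g_\infty = \tbyt{\gamma_\infty}{}{\beta_\infty}{\alpha_\infty}$. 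Since $\alpha_\infty \equiv \alpha_n$, $\beta_\infty \equiv \beta_n$, $\gamma_\infty \equiv \gamma_n$ modulo $p^n$ by construction, the image of $g_\infty$ in $P_{\opn{dR}, A_{\opn{ord}, n}}^{\opn{an}}$ lies in the ball $\mathcal{B}_n(g_n, p^{-n})$, which is one of the connected components of the disjoint union in Lemma \ref{DecompositionIntoBallsLemma}. Descending along $\mathcal{IG}_n \to \mathcal{X}_{\opn{ord}}$, we conclude that $\mathcal{U}_{\opn{HT}, n}$ contains the image of $\mathcal{IG}_\infty \times_\mathcal{X} \opn{Spa}(A, A^+)$; strictness follows since the defining inequalities hold with strict slack at this image (the coordinates agree modulo $p^n$).

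Next, I would prove cofinality. Let $V \subseteq P_{\opn{dR}, A_{\opn{ord}}}^{\opn{an}}$ be an open containing the closure of $\mathcal{IG}_\infty \times_\mathcal{X} \opn{Spa}(A, A^+)$. The claim that $\mathcal{U}_{\opn{HT}, n} \subseteq V$ for $n \gg 0$ can be checked after pulling back along the finite Galois cover $\mathcal{IG}_n \to \mathcal{X}_{\opn{ord}}$, since opens descend along faithfully flat morphisms. By Lemma \ref{DecompositionIntoBallsLemma}, the pullback of $\mathcal{U}_{\opn{HT}, n}$ is a disjoint union of the balls $\sigma_\lambda(g_n) \cdot \mathcal{B}_n(1, p^{-n})$; each such ball is a Galois translate of $\mathcal{B}_n(g_n, p^{-n})$. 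The pullback of $V$ contains each Galois translate $\sigma_\lambda(g_\infty)$ (one for each $\lambda \in T(\Z/p^n\Z)$), hence contains an open ball $\mathcal{B}_n(\sigma_\lambda(g_\infty), p^{-r_\lambda})$ for some integer $r_\lambda$; by Galois equivariance one can take a uniform $r$. Taking $n \geq r$, the inclusion $\mathcal{B}_n(g_n, p^{-n}) \subseteq \mathcal{B}_n(g_\infty, p^{-r})$ follows from the ultrametric triangle inequality and $g_\infty \equiv g_n \pmod{p^n}$, so that $\mathcal{U}_{\opn{HT}, n} \times_{\mathcal{X}_{\opn{ord}}} \mathcal{IG}_n \subseteq V \times_{\mathcal{X}_{\opn{ord}}} \mathcal{IG}_n$, and we conclude by descent.

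The main obstacle I anticipate is the descent step, namely verifying that containment of quasi-compact opens can be checked after pulling back along the finite étale cover $\mathcal{IG}_n \to \mathcal{X}_{\opn{ord}}$, and that the uniform radius $r$ can indeed be chosen in a Galois-equivariant manner. Both points should follow from the fact that $\mathcal{IG}_n \to \mathcal{X}_{\opn{ord}}$ is a finite Galois cover with group $T(\Z/p^n\Z)$ acting on the whole diagram, so that $V \times_{\mathcal{X}_{\opn{ord}}} \mathcal{IG}_n$ is Galois-stable and hence the minimal $r_\lambda$ can be taken to be independent of $\lambda$.
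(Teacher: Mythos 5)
Your strategy is the same as the paper's in spirit -- reduce to an Igusa cover where Lemma \ref{DecompositionIntoBallsLemma} turns everything into explicit balls -- but the paper base-changes all the way to the \emph{infinite}-level tower $\opn{Spa}(A_{\opn{ord},\infty}, A^+_{\opn{ord},\infty})$, and this is not a cosmetic difference. Your reduction to the finite cover $\mathcal{IG}_n$ does not parse as written: $g_\infty$ lies in $\overline{P}^{\opn{an}}(A_{\opn{ord},\infty})$, not in $\overline{P}^{\opn{an}}(A_{\opn{ord},n})$, so the balls $\mathcal{B}_n(\sigma_\lambda(g_\infty), p^{-r_\lambda})$ you invoke are not defined at level $n$ (the coordinates $\sigma_\lambda(\alpha_\infty)$, etc.\ are not functions on $P^{\opn{an}}_{\opn{dR}, A_{\opn{ord},n}}$). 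Relatedly, the pullback of the image of $\mathcal{IG}_\infty$ to level $n$ is not a finite union of closed sections but the image of $\mathcal{IG}_\infty \times_{\mathcal{X}_{\opn{ord}}} \mathcal{IG}_n$, so the compactness argument producing a radius $r_\lambda$ has no clean input at finite level. Finally there is a circularity: $r$ is extracted after fixing the cover $\mathcal{IG}_n$, and you then require $n \geq r$, which changes the cover (and a priori the radius). All three problems evaporate at infinite level, where $f^{-1}(\mathcal{U}_{\opn{HT},n}) = \bigsqcup_{\lambda} \mathcal{B}_\infty(\sigma_\lambda(g_\infty), p^{-n})$ is literally a disjoint union of balls of shrinking radius around honest closed sections of $\opn{Spa}(A_{\opn{ord},\infty})$, and cofinality among neighbourhoods of the closure of the orbit is immediate.

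A second, more substantive gap: the statement concerns neighbourhoods of the \emph{closure} of $\mathcal{IG}_\infty \times_{\mathcal{X}} \opn{Spa}(A,A^+)$, and an open containing a subset need not contain its closure. Your first paragraph only shows that $\mathcal{U}_{\opn{HT},n}$ contains the image, and "the inequalities hold with strict slack" is not by itself a proof that specializations of points of the image stay inside (rational subsets are open, not closed). The paper's hypothesis that $\opn{Spa}(A_{\opn{ord},\infty}) \to \opn{Spa}(A_{\opn{ord}})$ is profinite pro\'etale, \emph{hence closed}, is exactly what is needed here: upstairs each section image is Zariski-closed and contained in its ball, so the pullback of the image is already closed and contained in $f^{-1}(\mathcal{U}_{\opn{HT},n})$, and closedness of $f$ then identifies its image with the closure downstairs. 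You should incorporate this step (or an equivalent specialization argument) rather than appeal only to faithfully flat descent of containments of opens, which handles surjectivity but says nothing about closures.
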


\begin{proof}
It suffices to prove this after base-change along the map $\opn{Spa}(A_{\opn{ord}, \infty}, A_{\opn{ord}, \infty}^+) \to \opn{Spa}(A_{\opn{ord}}, A_{\opn{ord}}^+)$, because this morphism is profinite pro\'{e}tale and hence closed. But
\[
\mathcal{U}_{\opn{HT}, n} \times_{\opn{Spa}(A_{\opn{ord}}, A_{\opn{ord}}^+)} \opn{Spa}(A_{\opn{ord}, \infty}, A_{\opn{ord}, \infty}^+) = \bigsqcup_{\lambda \in T(\mbb{Z}/p^n\mbb{Z})} \mathcal{B}_{\infty}(\sigma_{\lambda}(g_{\infty}), p^{-n})
\]
where $\sigma_{\lambda}$ denotes a lift to $\mathscr{G}_{\infty}$ (here we are using the fact that $g_n \equiv g_{\infty}$ modulo $p^n$). These are clearly cofinal.
\end{proof}

\subsection{Overconvergent neighbourhoods} \label{OverconvergentNBHDSsection}

Fix a quasi-compact open affinoid $\opn{Spa}(A, A^+) \subset \mathcal{X}$ over which $\mathcal{H}_{\mathcal{E}}^+$, $\omega_{\mathcal{E}}^+$, and $\omega_{\mathcal{E}^D}^+$ trivialise. We now define a cofinal system of strict neighbourhoods of $\mathcal{IG}_{\infty, A}$ inside $P_{\opn{dR}, A}^{\opn{an}}$.

\begin{definition} \label{DefOfOverconvergentExtension}
Let $n \geq 1$ be an integer. We say that a quasi-compact open affinoid subspace $\mathcal{U} \subset P_{\opn{dR}, A}^{\opn{an}}$ is an overconvergent extension of $\mathcal{U}_{\opn{HT}, n}$ if
\begin{enumerate}
    \item One has $\mathcal{U}_{\opn{HT}, n} = \mathcal{U} \cap P_{\opn{dR}, A_{\opn{ord}}}^{\opn{an}}$.
    \item $\mathcal{U}$ contains the closure of $\mathcal{IG}_{\infty, A}$ inside $P_{\opn{dR}, A}^{\opn{an}}$.
\end{enumerate}
Given an overconvergent extension $\mathcal{U}$, we set $\mathcal{U}_r \defeq \mathcal{U} \cap P_{\opn{dR}, A_r}^{\opn{an}}$ for any integer $r \geq 1$. Note that any $\mathcal{U}_r$ is also an overconvergent extension.
\end{definition}

\begin{proposition} \label{PropositionOCneighbourhoods}
For any $n \geq 1$, there exists an overconvergent extension of $\mathcal{U}_{\opn{HT}, n}$. Moreover, the collection of overconvergent extensions of $\mathcal{U}_{\opn{HT}, n}$ for varying $n$ forms a cofinal system of quasi-compact open strict neighbourhoods of $\mathcal{IG}_{\infty, A}$ inside $P_{\opn{dR}, A}^{\opn{an}}$.
\end{proposition}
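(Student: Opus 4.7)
The strategy rests on $p$-adic approximation: since $A_{\opn{ord}}^+ = A^+\langle 1/h\rangle$, any coefficient $c \in A_{\opn{ord}}^+$ admits an expansion $c = \sum_{k \geq 0} a_k h^{-k}$ with $a_k \in A^+$ and $|a_k|_p \to 0$. On $\mathcal{X}_r$ one has $|h^{-1}|_x \leq |p|_x^{-1/p^{r+1}}$, so truncating at $k = M$ for $M$ large (depending on the desired $p^N$ precision and on $r$) yields $\tilde c \in A_r$ with $|c - \tilde c|_x \leq |p^N|_x$ on $\mathcal{X}_r$. I will apply this to the polynomials defining $\mathcal{U}_{\opn{HT}, n}$.

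For existence, fix $n \geq 1$ and take $N$ larger than, say, $\nu_{n,n} + n$. Apply the approximation procedure to each coefficient of $P_{\alpha_n}$, $P_{\gamma_n}$, and to $\alpha_n^{-1}\beta_n$; this produces polynomials $\tilde P_{\alpha_n}$, $\tilde P_{\gamma_n}$, $\tilde Q_n$ with coefficients in $A_r$ for some $r$ large enough to handle all coefficients simultaneously. Define $\mathcal{U} \subset P^{\opn{an}}_{\opn{dR}, A_r} \subset P^{\opn{an}}_{\opn{dR}, A}$ by the approximate inequalities $|\tilde P_{\alpha_n}(T_1)|_x \leq |p^{\nu_{n,n}}|_x$, $|\tilde P_{\gamma_n}(T_2)|_x \leq |p^{\nu_{n,n}}|_x$, and $|\tilde Q_n(T_1, U)|_x \leq |p^n|_x$. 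To verify condition (1), observe that on $P^{\opn{an}}_{\opn{dR}, A_{\opn{ord}}}$ any point satisfying either set of inequalities has $|T_i|_x = 1$ (by Lemma \ref{LemmaInequality} and the fact that the roots $\sigma_\lambda(\alpha_n), \sigma_\lambda(\gamma_n)$ are units), and $|U|_x$ bounded; hence $P_{\alpha_n}(T_1) - \tilde P_{\alpha_n}(T_1)$ and similarly for the others have norm $\leq |p^N|_x$ at such points, so since $N > \nu_{n,n}$ the two inequalities cut out exactly $\mathcal{U}_{\opn{HT}, n}$. For condition (2), I use that $\mathcal{IG}_{\infty, A}$ pulls back to the locus where $T_1 = \alpha_\infty$, $U = \beta_\infty$, $T_2 = \gamma_\infty$ (Remark \ref{RemarkCongruencesalphan}), so the exact polynomials vanish with margin $p^n$ on the Igusa tower; their approximations then satisfy the defining inequalities strictly on the closure, provided this closure is contained in $P^{\opn{an}}_{\opn{dR}, A_r}$, which is the standard overconvergence phenomenon for the Hodge--Tate map over the ordinary locus.

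For cofinality I argue as follows. Let $V$ be a quasi-compact open strict neighborhood of $\mathcal{IG}_{\infty, A}$ in $P^{\opn{an}}_{\opn{dR}, A}$. By Corollary \ref{OCextensionsarecofinalCorollary}, there exists $n$ with $\mathcal{U}_{\opn{HT}, n} \subset V \cap P^{\opn{an}}_{\opn{dR}, A_{\opn{ord}}}$. Let $\mathcal{U}$ be any overconvergent extension of $\mathcal{U}_{\opn{HT}, n}$ produced by the existence step. Then $\mathcal{U} \cap V$ is quasi-compact open in $P^{\opn{an}}_{\opn{dR}, A}$ (intersections of rational subsets remain rational), its intersection with $P^{\opn{an}}_{\opn{dR}, A_{\opn{ord}}}$ is $\mathcal{U}_{\opn{HT}, n} \cap V = \mathcal{U}_{\opn{HT}, n}$ using $\mathcal{U}_{\opn{HT}, n} \subset V$, and it contains the closure of $\mathcal{IG}_{\infty, A}$ since both $\mathcal{U}$ and $V$ do. Thus $\mathcal{U} \cap V$ is an overconvergent extension of $\mathcal{U}_{\opn{HT}, n}$ contained in $V$, giving cofinality.

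The main obstacle I expect is the verification of condition (2): ensuring the closure of $\mathcal{IG}_{\infty, A}$ in $P^{\opn{an}}_{\opn{dR}, A}$ is contained in our overconvergent extension. This requires that this closure is captured by some $P^{\opn{an}}_{\opn{dR}, A_r}$ and that our approximate polynomial inequalities continue to hold there, which amounts to controlling the Hodge--Tate period map across the boundary of the ordinary locus. Checking these norm estimates carefully — including verifying that $\mathcal{U}$ is affinoid (by combining the polynomial inequalities with appropriate bounds on $T_1, T_1^{-1}, T_2, T_2^{-1}, U$) — is where most of the technical work will lie.
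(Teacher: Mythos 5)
Your overall strategy --- approximating the coefficients of the polynomials cutting out $\mathcal{U}_{\opn{HT},n}$ by overconvergent elements via $A^+_{\opn{ord}} = A^+\langle 1/h\rangle$ --- is the same as the paper's, and your treatment of condition (1) and of cofinality is essentially sound (the paper uses a compactness argument for cofinality, but intersecting a given extension with the ambient neighbourhood $V$ works too, up to the minor point that $\mathcal{U}\cap V$ need not be affinoid when $V$ is only quasi-compact open).

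The gap is in condition (2), exactly where you flagged the difficulty, and your proposed resolution does not work. Two assertions fail. First, the inequalities defining $\mathcal{U}_{\opn{HT},n}$ are \emph{sharp} on $\mathcal{IG}_{\infty,A}$, not satisfied ``with margin $p^n$'': on the Igusa tower $T_1 = \alpha_\infty$, and $|P_{\alpha_n}(\alpha_\infty)|_x = |\alpha_\infty-\alpha_n|_x\cdot\prod_{\lambda_1\neq 1}|1-\lambda_1|_x$, which in general equals $|p^{\nu_{n,n}}|_x$ on the nose (and likewise for $P_{\gamma_n}$ and $Q_n$). Second, and more fundamentally, a non-strict inequality $|f|\leq|p^a|$ holding on a subset of an adic space does \emph{not} persist on its closure: the closure contains higher-rank specializations at which the value becomes $|p^a|\cdot\gamma$ for an infinitesimal $\gamma>1$. (For instance, the closure of $\{|T|\leq|p|\}$ in the closed unit disc contains the rank-two specialization of the Gauss point of the subdisc of radius $|p|$, at which $|T|>|p|$.) So even granting that your approximate inequalities hold on $\mathcal{IG}_{\infty,A}$, this gives no control over its closure in $P^{\opn{an}}_{\opn{dR},A}$. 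The paper's proof is engineered precisely to avoid this: it describes both $\mathcal{U}_{\opn{HT},n+1}$ and $\mathcal{U}_{\opn{HT},n}$ by the \emph{same} level-$(n{+}1)$ polynomials $P_{\alpha_{n+1}}, P_{\gamma_{n+1}}, Q_{n+1}$ with two strictly nested thresholds $\nu_{n+1,n+1}>\nu_{n+1,n}$ (Lemma \ref{rhoquotientlemma}) and $n+1>n$, performs one approximation, and defines two rational subsets $\mathcal{V}\subset\mathcal{U}$ using the tight and loose thresholds respectively. Then $\mathcal{IG}_{\infty,A}\subset\mathcal{V}$, the strict gap between thresholds forces $\overline{\mathcal{V}}\subset\mathcal{U}$, hence $\overline{\mathcal{IG}_{\infty,A}}\subset\mathcal{U}$, while $\mathcal{U}$ still restricts to $\mathcal{U}_{\opn{HT},n}$ over the ordinary locus by Corollary \ref{CorollaryInequalityPolynomials}. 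You need this two-threshold device (or an equivalent source of strict slack); a single threshold at level $n$ cannot capture the closure.
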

\begin{proof}
Let $n \geq 1$. Then $\mathcal{U}_{\opn{HT}, n+1} = \mathcal{U}_{\opn{HT}, n+1, n+1}$ (resp. $\mathcal{U}_{\opn{HT}, n} = \mathcal{U}_{\opn{HT}, n+1, n}$) is described by the inequalities
\begin{align*}
|P_{\alpha_{n+1}}(T_1)| \leq |p^{\nu_{n+1, n+1}}|, \quad |P_{\gamma_{n+1}}(T_2)| \leq |p^{\nu_{n+1, n+1}}|, \quad |Q_{n+1}(T_1, U)| \leq |p^{n+1}| \\
\text{(resp. }  |P_{\alpha_{n+1}}(T_1)| \leq |p^{\nu_{n+1, n}}|, \quad |P_{\gamma_{n+1}}(T_2)| \leq |p^{\nu_{n+1, n}}|, \quad |Q_{n+1}(T_1, U)| \leq |p^{n}| \text{ ).}
\end{align*}
Since $A_{\opn{ord}}^+ = A^+\langle 1/h \rangle$, there exists a sufficiently large integer $N \geq 0$, and elements $P_{\alpha_{n+1}}'(T_1) \in A^+[T_1]$, $P_{\gamma_{n+1}}'(T_2) \in A^+[T_2]$, $Q_{n+1}'(T_1, U) \in A^+[T_1, U]$ such that
\[
P'_{\alpha_{n+1}}(T_1) \equiv h^N P_{\alpha_{n+1}}(T_1), \quad P'_{\gamma_{n+1}}(T_2) \equiv h^N P_{\gamma_{n+1}}(T_2), \quad Q_{n+1}'(T_1, U) \equiv h^N Q_{n+1}(T_1, U)
\]
modulo $p^{t}A_{\opn{ord}}^+$, for some fixed integer $t$ such that $p^{-t} \leq \rho_{n+1, n+1}$.

Define $\mathcal{V}$ (resp. $\mathcal{U}$) to be the rational subset of $P_{\opn{dR}, A}^{\opn{an}}$ defined by the inequalities 
\begin{align*}
    |P'_{\alpha_{n+1}}(T_1)| \leq |p^{\nu_{n+1, n+1}}|, \quad |P'_{\gamma_{n+1}}(T_2)| \leq |p^{\nu_{n+1, n+1}}|, \quad |Q'_{n+1}(T_1, U)| \leq |p^{n+1}| \\
    \text{(resp. } |P'_{\alpha_{n+1}}(T_1)| \leq |p^{\nu_{n+1, n}}|, \quad |P'_{\gamma_{n+1}}(T_2)| \leq |p^{\nu_{n+1, n}}|, \quad |Q'_{n+1}(T_1, U)| \leq |p^{n}| \text{ ).}
\end{align*}
Then $\mathcal{IG}_{\infty, A} \subset \mathcal{V} \subset \mathcal{U}$. Since $|p^{\nu_{n+1, n+1}}| < |p^{\nu_{n+1, n}}|$ (Lemma \ref{rhoquotientlemma}) and $|p^{n+1}| < |p^{n}|$, we see that $\overline{\mathcal{V}} \subset \mathcal{U}$, where the closure is in $P_{\opn{dR}, A}^{\opn{an}}$. This implies that $\mathcal{U}$ is an overconvergent extension of $\mathcal{U}_{\opn{HT}, n}$. The rest of the proposition follows from a standard compactness argument, and the fact that $\{\mathcal{U}_{\opn{HT}, n}: n \geq 1\}$ are cofinal over the ordinary locus.
\end{proof}

Let $\mathcal{U}$ be an overconvergent extension of $\mathcal{U}_{\opn{HT}, n}$ for some integer $n \geq 1$. Then we have a chain of quasi-compact open affinoid neighbourhoods 
\[
\mathcal{U}_1 \supset \mathcal{U}_2 \supset \ldots \supset \mathcal{U}_r \supset \ldots \supset \mathcal{U}_{\infty} \defeq \mathcal{U}_{\opn{HT}, n}
\]
such that $\mathcal{U}_{\infty}$ is the locus in $\mathcal{U}_r$ ($\defeq \mathcal{U} \cap P_{\opn{dR}, A_r}^{\opn{an}}$) where the Hasse invariant $h \in \mathcal{O}^+(\mathcal{U}_r)$ is invertible, for any $r \geq 1$. On sections, this chain of inclusions is induced by maps
\[
B_1^+ \to B_2^+ \to \ldots \to B_{\infty}^+
\]
where $B_r^+ = \mathcal{O}^+(\mathcal{U}_r)$. Each $B_r \defeq \mathcal{O}(\mathcal{U}_r)$ is a Banach space -- denote the norm for which $B_r^+$ is the unit ball by $|\!| \cdot |\!|_r$. The key result of this section is to show that these morphisms are very close to being isometries, with the failure to being an isometry measured by powers of the Hasse invariant (whose valuation can be made arbitrarily small). This will allow us to overconverge the Gauss--Manin connection to $p$-adic powers.

\begin{proposition} \label{KernelKilledByHasseProp}
Let $r \geq 1$. With notation as above, there exists an integer $M \geq 0$ (depending on $r$) such that for any $k \geq 1$, the kernel of the map
\[
B^+_{r}/p^k \to B^+_{\infty}/p^k
\]
is killed by $h^{kM}$.
\end{proposition}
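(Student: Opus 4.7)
The plan is to show that the choice $M = p^{r+1}$ works, by exploiting the fundamental factorisation
\[
p = h^{p^{r+1}} \cdot e \quad \text{in } B_r^+,
\]
where $e := p / h^{p^{r+1}} \in A_r^+ \subset B_r^+$ is power-bounded by definition of $\mathcal{X}_r$. Raising to the $k$-th power gives $p^k = h^{k p^{r+1}} e^k$, which will be the main computational input throughout.

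First I would identify $B_\infty^+$, up to the usual almost-zero issues, with the $p$-adic completion of $B_r^+[1/h]$, using that $\mathcal{U}_\infty \subset \mathcal{U}_r$ is the rational subset where $|h|=1$. This identifies $B_\infty^+/p^k$ with $(B_r^+/p^k)[1/h]$, and hence identifies the kernel in question with the $h$-power torsion subgroup of $B_r^+/p^k$. The problem therefore reduces to showing that $h^{kp^{r+1}}$ kills every $h$-torsion element of $B_r^+/p^k$.

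Next, I would take $f \in B_r^+$ with $h^N f = p^k g = h^{kp^{r+1}} e^k g$ for some $g \in B_r^+$ and split into two cases. If $N \leq kp^{r+1}$, then multiplying by $h^{kp^{r+1}-N}$ yields $h^{kp^{r+1}} f = h^{kp^{r+1}-N} p^k g \in p^k B_r^+$ immediately. If instead $N > kp^{r+1}$, then using $h$-torsion-freeness of $B_r^+$ I can cancel $h^{kp^{r+1}}$ from both sides to obtain $h^{N-kp^{r+1}} f = e^k g$; the left-hand side is divisible by $h$, and since $e^k$ is a non-zero-divisor modulo $h$ in $B_r^+$, this forces $g \in h B_r^+$. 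Writing $g = h g_1$ and cancelling a factor of $h$ reduces $N$ by one, and iterating $N - kp^{r+1}$ times produces an element $\tilde g \in B_r^+$ with $f = e^k \tilde g$, from which $h^{kp^{r+1}} f = p^k \tilde g \in p^k B_r^+$ as required.

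The main obstacle is justifying the two ring-theoretic inputs appearing in the second case, namely that $B_r^+$ is $h$-torsion-free and that $e$ is a non-zero-divisor in $B_r^+/h$. I expect both to follow from the explicit presentation $A_r^+ \cong A^+\langle e \rangle / (h^{p^{r+1}} e - p)$ (up to almost-zero), which exhibits $A_r^+/h$ as a Tate algebra in the single variable $e$ over $A^+/(h,p)$, combined with the fact that $B_r^+$ is constructed as a rational localisation of a Tate algebra over $A_r^+$ and is therefore flat over $A_r^+$ (again up to almost-zero). Almost-zero issues may force a slightly larger $M$ than $p^{r+1}$, but they should inflate $M$ by at most a bounded amount depending only on $r$, which is still compatible with the statement.
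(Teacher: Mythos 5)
Your overall strategy is sound and genuinely different from the paper's: you aim for an explicit constant $M=p^{r+1}$ via the factorisation $p=h^{p^{r+1}}e$ and a cancellation argument, whereas the paper gets an inexplicit $M$ essentially for free. However, there is a genuine gap in the two ring-theoretic inputs you flag, and the justifications you sketch for them do not work. First, the "explicit presentation" $A_r^+\cong A^+\langle e\rangle/(h^{p^{r+1}}e-p)$ is only the naive ring of definition; the plus ring of a rational subset is the \emph{integral closure} of this, and the discrepancy is not an "almost-zero" phenomenon here (there is nothing perfectoid in sight — everything is topologically of finite type), so it cannot be waved away by inflating $M$ without an actual bound on the torsion of the cokernel. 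Second, and more seriously, the claim that $B_r^+$ is flat over $A_r^+$ because it is a rational localisation is false: rational localisation is flat on the Tate rings, not on the integral models (already $\mathbb{Z}_p\langle T\rangle\to\mathbb{Z}_p\langle T,p/T\rangle$ fails to be flat, since $\mathbb{F}_p[T,S]/(TS)$ has $T$-torsion). Without flatness you cannot transport "$e$ is a non-zero-divisor mod $h$" from $A_r^+$ (itself unproved, because of the integral-closure issue) up to $B_r^+$, and $\mathcal{U}_r$ is cut out by rather complicated inequalities, so a direct verification is not routine. (Your other input, that $h$ is a non-zero-divisor in $B_r^+$, is fine: $|h|\geq p^{-1/p^{r+1}}>0$ on all of $\mathcal{X}_r$, so $h$ is even invertible in $B_r\supset B_r^+$.)

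The key observation you are missing is that no explicit $M$ is needed, and Noetherianity supplies the uniform bound that your cancellation argument is labouring to produce. Since $B_\infty^+=B_r^+\langle 1/h\rangle$, one has $B_\infty^+/p=(B_r^+/p)[1/h]$, so the kernel for $k=1$ is exactly the $h$-power-torsion ideal of the Noetherian ring $B_r^+/p$; being finitely generated, it is killed by a single power $h^M$. The general case then follows by induction on $k$: if $x$ maps into $p^kB_\infty^+$, then $h^Mx=px_1$ with $x_1\in B_r^+$, and $p$-torsion-freeness of $B_\infty^+$ shows $x_1$ maps into $p^{k-1}B_\infty^+$, so $h^{(k-1)M}x_1\in p^{k-1}B_r^+$ and hence $h^{kM}x\in p^kB_r^+$. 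This route avoids both of the problematic non-zero-divisor claims entirely. If you do want the explicit constant $p^{r+1}$ (which is not required by the statement), you would need to prove directly that $e$ is a non-zero-divisor in $B_r^+/h$ for the actual integrally closed plus ring, which is a nontrivial commutative-algebra assertion in its own right.
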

\begin{proof}
We first establish the case $k=1$. Note that $B_{\infty}^+ = B_r^+\langle 1/h \rangle$ because $\mathcal{U}_r$ is an overconvergent extension of $\mathcal{U}_{\infty}$. This implies that $(B_\infty^+/p) = (B_r^+/p)[1/h]$, and hence any element $x \in B_r^+/p$ in the kernel of the map $B_r^+/p \to B_{\infty}^+/p$ is killed by some power of the Hasse invariant. Since $B_r^+/p$ is Noetherian, the kernel of this map is a finitely-generated ideal, and hence we can find a sufficiently large integer $M \geq 0$ such that $h^M$ kills the kernel. 

This proves the proposition in the case $k=1$. The general case now follows from a simple induction argument using the fact that $B_{\infty}^+$ is $p$-torsion free.
\end{proof}

The following corollary is a generalisation of \cite[Proposition 4.10]{AI_LLL} and it will be key for the proof of our main result.

\begin{corollary} \label{OCcorollary}
With notation as above, for any real number $0 < \delta < 1$ there exists an integer $s = s(\delta) \geq r$ such that the following holds: for all $k \geq 1$, $x \in B_{r}$ and $c \in \mbb{Q}$, one has
\[
|\!| x |\!|_{\infty} \leq p^{c-k} \text{ and } |\!| x |\!|_{r} \leq p^c \quad \Rightarrow \quad |\!| x |\!|_{s} \leq p^{c-\delta k}  .
\]
\end{corollary}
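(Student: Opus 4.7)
The strategy is to combine Proposition~\ref{KernelKilledByHasseProp} with the quantitative bound $|\!| h^{-1} |\!|_s \leq p^{1/p^{s+1}}$ (which follows from the definition $|h|_x \geq p^{-1/p^{s+1}}$ on $\mathcal{X}_s$) to turn the smallness of $x$ on the ordinary locus into a norm estimate on $\mathcal{U}_s$. The number $s$ will be chosen at the end so that $M/p^{s+1} \leq 1 - \delta$, where $M = M(r)$ is the integer furnished by Proposition~\ref{KernelKilledByHasseProp}.

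I would first treat the case $c \in \mathbb{Z}$. Set $y = p^{-c} x$; the bound $|\!| x |\!|_r \leq p^c$ places $y$ in $B_r^+$, and the bound $|\!| y |\!|_\infty \leq p^{-k}$ says $y \in p^k B_\infty^+$, so $y$ maps to zero in $B_\infty^+/p^k$. Proposition~\ref{KernelKilledByHasseProp} then yields $z \in B_r^+$ with $h^{kM} y = p^k z$. Since $\mathcal{U}_s \subset \mathcal{U}_r$, the restriction map $B_r \to B_s$ is norm non-increasing, so $|\!| z |\!|_s \leq 1$; using that $h$ is a unit in $B_s$ with $|\!| h^{-1} |\!|_s \leq p^{1/p^{s+1}}$, the factorisation $y = p^k h^{-kM} z$ in $B_s$ gives
\[
|\!| x |\!|_s \leq p^c \cdot p^{-k} \cdot p^{kM/p^{s+1}} = p^{c - k(1 - M/p^{s+1})} \leq p^{c - \delta k},
\]
for the chosen $s$.

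For general $c \in \mathbb{Q}$, I would run the same argument after replacing $c$ by $N = \lceil c \rceil \in \mathbb{Z}$. Setting $y = p^{-N} x$ still puts $y \in B_r^+$ and preserves $|\!| y |\!|_\infty \leq p^{-k}$ (since $N \geq c$), and the above calculation produces $|\!| x |\!|_s \leq p^{N - k(1 - M/p^{s+1})} \leq p^{c + 1 - k(1 - M/p^{s+1})}$. To absorb the extra factor of $p$ coming from $N - c < 1$, I would choose $\delta' \in (\delta, 1)$ and enlarge $s$ so that $M/p^{s+1} \leq 1 - \delta'$; then $1 \leq (\delta' - \delta) k$ as soon as $k \geq \lceil 1/(\delta' - \delta) \rceil$, and one recovers $|\!| x |\!|_s \leq p^{c - \delta k}$. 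The remaining finitely many small values of $k$ are handled uniformly by either enlarging $s$ further or, more cleanly, by base change to the ramified extension $\mathbb{Q}_p(p^{1/b})/\mathbb{Q}_p$ (when $c = a/b$), in which $p^c$ is a genuine scalar and the integer-$c$ argument applies verbatim with uniformiser $p^{1/b}$.

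The main ingredient is Proposition~\ref{KernelKilledByHasseProp}; once it is in hand, the norm estimate is essentially a one-line calculation. The only delicate point I anticipate is the clean handling of the rounding loss in the non-integer case, but this is a routine bookkeeping matter that does not require any new ideas.
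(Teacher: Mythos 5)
Your core argument is exactly the paper's: reduce to $x \in B_r^+$ with image in $p^kB_\infty^+$, invoke Proposition \ref{KernelKilledByHasseProp} to write $h^{kM}x = p^k z$ with $z \in B_r^+$, and then choose $s$ large enough that $|\!|h^{-1}|\!|_s^{M} \leq p^{1-\delta}$ (your condition $M/p^{s+1} \leq 1-\delta$ is this in explicit coordinates). For $c \in \mbb{Z}$ the two proofs coincide.

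The only genuine divergence is the reduction from $c \in \mbb{Q}$ to $c \in \mbb{Z}$, and here one of your two proposed fixes fails. Rounding $c$ up to $N = \lceil c\rceil$ costs a factor $p^{N-c}$, and your bound becomes $|\!|x|\!|_s \leq p^{c + (N-c) - (1-M/p^{s+1})k}$; for $k=1$ the required inequality $(N-c) \leq 1 - \delta - M/p^{s+1}$ is false for every $s$ whenever $N - c > 1-\delta$ (e.g.\ $c$ slightly above an integer and $\delta$ not too small), so \emph{no} enlargement of $s$ handles the small-$k$ cases — the first fallback must be discarded. Your second fallback (base change to $\mbb{Q}_p(p^{1/b})$ so that $p^c$ becomes a scalar) does work, but it requires noting that Proposition \ref{KernelKilledByHasseProp} persists after the base change; this is immediate since $\mathcal{O}_{\mbb{Q}_p(p^{1/b})}$ is finite free over $\mbb{Z}_p$, so the kernel of $B_r^+/p^k \to B_\infty^+/p^k$ just gets tensored up and is still killed by $h^{kM}$. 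The paper's own reduction is more economical: replace $x$ by $x^b$ (where $c = a/b$), which converts the hypotheses into ones with the integer $a$ in place of $c$, and recover the bound on $x$ from the bound on $x^b$ using that $B_s^+$ is integrally closed in $B_s$. You should adopt either that trick or the base-change route, and delete the ``enlarge $s$'' alternative.
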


\begin{proof}
By raising $x$ to an integral power, it is enough to prove the statement for $c \in \mbb{Z}$, and by rescaling it is enough to prove the statement for $c=0$. Therefore, we have an element $x \in B_{r}^+$ whose image is in $p^k B_{\infty}^+$. By Proposition \ref{KernelKilledByHasseProp}, there exists an $M \geq 0$ such that $|\!| (p^{-1} h^M)^k x | \!|_s \leq |\!| (p^{-1} h^M)^k x | \!|_r \leq 1$ for any $s \geq r$ and any $k \geq 1$. Since $|\!|h^{-1}|\!|_s \to 1^+$ as $s \to +\infty$, taking $s$ large enough such that $|\!|h^{-1}|\!|_s  \leq p^{\frac{(1 - \delta)}{M}}$ we obtain
\[
|\!|x|\!|_s \leq |\!| (p h^{-M})^k |\!|_s \leq p^{-\delta k}
\]
as required.
\end{proof}

\section{\texorpdfstring{$p$}{p}-adic interpolation of continuous operators} \label{SectionPADICINTCONT}

In this section we establish the general abstract results on $p$-adic interpolation of operators that are used in \S \ref{GMinterpolationSection} in order to $p$-adically interpolate the Gauss-Manin connection. It is likely that the results in this section can be generalised to the setting of PEL Shimura varieties -- in particular some of the notation in this section will differ from the rest of the article.

\begin{remark}
At this point it might be useful to recall the strategy we described in the introduction for showing the $p$-adic interpolation of the Gauss--Manin connection. We make some comments concerning how the abstract results will be used in the following section.\footnote{The reader can also choose to read \S \ref{GMinterpolationSection} before reading this section.}
\begin{enumerate}
    \item In \S \ref{SecContAndAnalytic} we introduce general definitions of continuous, analytic and locally analytic actions on topological modules over $R[1/p]$ for some admissible $\zp$-algebra $R$. Proposition \ref{Lemmaepsan1} will be helpful as we will show that the Gauss-Manin connection has locally a very simple description modulo large powers of $p$ (cf. Proposition \ref{PropExplicitForm}).
    \item In \S \ref{SectNilpOp} we will specialise to a more precise situation and the rings $L_m = S^+ \langle \frac{X - 1}{p^m}, \frac{Y}{p^m}, \frac{Z -1}{p^m} \rangle$ will correspond by Proposition \ref{PropExplicitForm} to (the ring of functions of) one of the balls whose disjoint union forms the strict quasi-compact open neighbourhoods $\mathcal{U}_{\opn{HT}, m}$ of the Igusa tower over the ordinary locus of Corollary \ref{OCextensionsarecofinalCorollary}.
    \item Finally, the sequence $V_0 \subseteq \ldots \subseteq V_r \subseteq \ldots \subseteq V_\infty$ considered in \S \ref{SubSecOC} will correspond (locally) to a collection of overconvergent extensions of the neighbourhoods $\mathcal{U}_{\mathrm{HT}, m}$.
\end{enumerate}
\end{remark}

\subsection{Function spaces}

Let $R^+$ be an admissible $\zp$-algebra and set $R = R^+[1/p]$. Let $C_{\opn{cont}}(\zp, R)$, $C^{h\opn{-an}}(\zp, R)$ and $C^{\opn{la}}(\zp, R)$ denote the $R$-algebras of continuous, analytic of radius $p^{-h}$ ($h \in \N$), and locally analytic functions from $\zp$ to $R$ respectively. We also set $C_{\opn{cont}}(\zp, R^+)$ to be the $R^+$-algebra of continuous functions from $\mbb{Z}_p$ to $R^+$. We recall the following classical result.

\begin{prop}[Amice] \label{PropAmice} For $k \geq 0$ set $\binom{x}{k} = \frac{x(x-1)\cdots(x-k+1)}{k!}$. Then
\begin{itemize}
    \item The family $\binom{x}{k}$ is an orthonormal basis of $C_{\opn{cont}}(\zp, R)$ (over $R$).
    \item The family $\lfloor \frac{k}{p^h} \rfloor ! \binom{x}{k}$ is an orthonormal basis of $C^{h\opn{-an}}(\zp, R)$.
    \item A function $f = \sum_{k \in \N} a_k \binom{x}{k} \in C_{\opn{cont}}(\mbb{Z}_p, R)$ is locally analytic if and only if for some $\varepsilon > 0$, the term $p^{k \varepsilon} |a_k| \to 0$ as $k \to + \infty$, where $|\cdot |$ denotes the Banach norm on $R$ with unit ball $R^+$.
\end{itemize}
\end{prop}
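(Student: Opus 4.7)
The plan is to follow the classical argument of Amice, based on the finite-difference operator $\Delta$ on $C_{\opn{cont}}(\zp, R)$ defined by $(\Delta f)(x) = f(x+1) - f(x)$, which satisfies $\Delta \binom{x}{k} = \binom{x}{k-1}$ (and $\Delta$ applied to $\binom{x}{0} = 1$ gives $0$).

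For the Mahler expansion (first bullet), I would show that the map $f \mapsto ((\Delta^k f)(0))_{k \geq 0}$ defines an isometric isomorphism from $C_{\opn{cont}}(\zp, R)$ onto the space $c_0(R)$ of $R$-valued null sequences. First, each $\binom{x}{k}$ defines a continuous map $\zp \to \zp$ (by density of $\N \subset \zp$ together with the integrality of $\binom{n}{k}$ for $n \in \N$), so that $\sum_k a_k \binom{x}{k}$ converges uniformly whenever $a_k \to 0$ in $R$. The harder direction is that for any $f$, the Mahler coefficients $a_k := (\Delta^k f)(0)$ tend to zero. This is proved using uniform continuity of $f$ together with the $p$-adic estimate on the binomial coefficients appearing in the formula $(\Delta^{p^n} f)(x) = \sum_{j=0}^{p^n} (-1)^{p^n - j}\binom{p^n}{j}f(x+j)$, from which one deduces that $|a_{k}|$ decays as soon as $k$ is large enough for a uniform continuity modulus to kick in.

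For the $h$-analytic basis (second bullet), the plan is to compare the Mahler basis with the natural basis of $C^{h\opn{-an}}(\zp, R)$ given on each disk $a + p^h\zp$ (for $a \in \{0, \ldots, p^h - 1\}$) by the monomials in $(x - a)/p^h$. The key norm estimate is that $\lfloor k/p^h \rfloor ! \binom{x}{k}$ sends $\zp$ into $R^+$ and has $h$-analytic sup norm exactly $1$; this uses Legendre's formula $v_p(k!) = (k - s_p(k))/(p-1)$ (where $s_p$ denotes the digit-sum in base $p$), together with an analogous computation for $\lfloor k/p^h \rfloor !$. From these estimates one deduces that the change of basis matrix between the two families is an isometric isomorphism of Banach $R$-modules.

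The third bullet follows immediately from the second by letting $h$ vary. Indeed, by Legendre's formula one has the asymptotic $v_p(\lfloor k/p^h \rfloor!) \sim k/(p^h(p-1))$ as $k \to +\infty$, so the condition $|a_k|/|\lfloor k/p^h\rfloor!| \to 0$ translates into the coefficient estimate $p^{k\varepsilon}|a_k| \to 0$ with $\varepsilon = 1/(p^h(p-1))$; conversely, any given $\varepsilon > 0$ can be accommodated by taking $h$ large enough. The main technical obstacle is the precise $p$-adic bookkeeping in the second step, which requires a careful application of Legendre's formula to identify the orthonormal family; the first and third bullets are essentially formal consequences of the calculation in the second.
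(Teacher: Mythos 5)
Your sketch is correct and is the standard Mahler--Amice argument; the paper does not prove this proposition itself but simply cites Colmez (Corollaire I.2.4, Th\'eor\`eme I.4.7, Corollaire I.4.8 of \emph{Fonctions d'une variable $p$-adique}), where exactly this route --- the finite-difference operator for the Mahler basis, the Legendre-formula norm computation and triangular change of basis for the $h$-analytic case, and the asymptotic $v_p(\lfloor k/p^h\rfloor !)\sim k/(p^h(p-1))$ for the locally analytic criterion --- is carried out. The only point deserving more care than your outline gives it is the invertibility (with norm-one inverse) of the change-of-basis matrix in the second bullet, which is the crux of Amice's theorem rather than mere ``bookkeeping'', but your plan names the right mechanism for it.
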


\begin{proof}
See \cite[Corolaire I.2.4, Théorème I.4.7, Corolaire I.4.8]{ColmezFonctions}.
\end{proof}

For any $\varepsilon > 0$ we will denote by $C_\varepsilon(\mbb{Z}_p, R)$ the subspace of functions $f = \sum_{k \in \N} a_k \binom{x}{k}$ with $p^{k \varepsilon} |a_k| \to 0$ as $k \to +\infty$. Observe that $C^{\opn{la}}(\zp, R) = \varinjlim_{h \to +\infty} C^{h\opn{-an}}(\zp, R) = \varinjlim_{\varepsilon > 0} C_\varepsilon(\zp, R)$.

\subsection{Continuous and analytic actions} \label{SecContAndAnalytic}

We begin with some elementary calculations. Let $R^+$ be an admissible $\Z_p$-algebra, $R = R^+[1/p]$ and let $V$ be a $\qp$-Banach vector space equipped with a topological $R$-module structure.

\begin{definition} \label{DefAmice}
Let $C = C_{\opn{cont}}(\zp, R)$ or $C_\epsilon(\zp, R)$ or $C^{\opn{la}}(\zp, R)$ and let $W$ be a topological $R$-module. If $T \in \mathrm{End}_R(W)$ is a continuous operator, then we say that $T$ extends to a continuous (resp. $\epsilon$-analytic, resp. locally analytic) action if there exists a continuous $R$-algebra action of $C_{\opn{cont}}(\zp, R)$ (resp. $C_\epsilon(\mbb{Z}_p, R)$, resp. $C^{\opn{la}}(\zp, R)$), i.e. a continuous $R$-bilinear map
\[ C \times W \to W, \]
(where the left hand side is equipped with the product topology) that respects the algebra structure on $C$ given by multiplication of functions, such that the structural map $\zp \to R$ acts as $T$.
\end{definition}

\begin{lemma} \label{LemmaExtension}
Let $T \in \mathrm{End}_R(V)$ be a continuous operator and let $|\!| \cdot |\!|$ denote the operator norm on $\mathrm{End}_R(V)$. Then
\begin{enumerate}
    \item $T$ extends to a continuous action if and only if the norms
    \[ \left| \! \left| \frac{T (T - 1) \ldots (T - k + 1)}{k!} \right| \! \right| \]
    are uniformly bounded for all $k \geq 0$.
    \item $T$ extends to an $\epsilon$-analytic action if and only if the norms
    \[ p^{- k \epsilon} \left| \! \left| \frac{T (T-1) \ldots (T - k + 1)}{k!} \right| \! \right| \]
    are uniformly bounded for all $k \geq 0$.
    \item $T$ extends to a locally analytic action if and only if for all $\epsilon > 0$ the norms
    \[ p^{- k \epsilon} \left| \! \left| \frac{T (T-1) \ldots (T - k + 1)}{k!} \right| \! \right| \]
    are uniformly bounded for all $k \geq 0$.
\end{enumerate}
Moreover, any such extension of $T$ is unique.
\end{lemma}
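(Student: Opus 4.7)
The plan is to use Amice's theorem (Proposition \ref{PropAmice}) throughout to translate the existence of a continuous bilinear action into estimates on the operators $\binom{T}{k}$. Reading off the orthonormal bases, the polynomial $\binom{x}{k}$ has norm $1$ in $C_{\opn{cont}}(\mbb{Z}_p, R)$ and norm $p^{k\varepsilon}$ in $C_\varepsilon(\mbb{Z}_p, R)$ (since the $\varepsilon$-analytic norm of $\sum a_k \binom{x}{k}$ is $\sup_k p^{k\varepsilon} |a_k|$, whence $p^{-k\varepsilon}\binom{x}{k}$ is an orthonormal basis of $C_\varepsilon$). This norm computation drives both directions.

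For the forward implication in cases (1) and (2), observe that a jointly continuous bilinear map between $\mbb{Q}_p$-Banach spaces is equivalent to a bounded bilinear map, so there is a constant $K>0$ with $\|f\cdot v\|\leq K\|f\|_C\|v\|$ for all $f \in C$ and $v \in V$. Specialising to $f = \binom{x}{k}$ yields immediately the uniform bound on $\|\binom{T}{k}\|$ (by $K$ in case (1), by $Kp^{k\varepsilon}$ in case (2)). Case (3) then follows by restricting the $C^{\opn{la}}$-action to each $C_\varepsilon$ and applying (2).

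For the converse, given the hypothesised bound, define
\[
f \cdot v \defeq \sum_{k \geq 0} a_k \binom{T}{k} v, \qquad f = \sum_k a_k \binom{x}{k} \in C.
\]
The norm bound on $\|\binom{T}{k}\|$ combined with the decay of the Mahler coefficients guarantees that the series converges in $V$ and that $\cdot$ is a bounded (hence jointly continuous) bilinear map $C \times V \to V$ satisfying the desired norm estimates. Since $x = \binom{x}{1}$, the structural inclusion $\mbb{Z}_p \hookrightarrow R$ acts as $T$. The main point to verify, and the principal obstacle, is the multiplicativity identity $(fg)\cdot v = f\cdot(g\cdot v)$. This is clear when $f,g$ are polynomials in $x$, where both sides reduce to ordinary operator composition $f(T)g(T)v$. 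Polynomials are dense in $C_{\opn{cont}}(\mbb{Z}_p, R)$ and in $C_\varepsilon(\mbb{Z}_p, R)$ (just truncate Mahler expansions), and the joint continuity of the action established above allows one to pass to the limit along polynomial approximations to conclude multiplicativity for arbitrary $f, g \in C$.

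Uniqueness then follows from the same density argument: any continuous extension must agree with the one above on polynomials, hence on all of $C$. Finally, the locally analytic case is handled via the identification $C^{\opn{la}}(\mbb{Z}_p, R) = \varinjlim_{\varepsilon > 0} C_\varepsilon(\mbb{Z}_p, R)$ from Proposition \ref{PropAmice}: an action of $C^{\opn{la}}$ amounts precisely to a compatible family of continuous actions of each $C_\varepsilon$, and by the uniqueness just established the compatibility is automatic once the $\varepsilon$-analytic norm bounds hold for every $\varepsilon > 0$.
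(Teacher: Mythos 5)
Your proof is correct and takes the same route as the paper, whose entire proof is the single sentence that the lemma is an immediate consequence of Proposition \ref{PropAmice}. You have simply filled in the standard details left implicit there: boundedness of a jointly continuous bilinear map between Banach spaces, the norm $p^{k\varepsilon}$ of $\binom{x}{k}$ in $C_\varepsilon$, convergence of the Mahler series defining the action, and multiplicativity plus uniqueness via density of polynomials.
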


\begin{proof}
This is an immediate consequence of the characterisation of continuous, $\epsilon$-analytic and locally analytic functions in Proposition \ref{PropAmice}.
\end{proof}

We will also need the following uniqueness property for $\varepsilon$-analytic/locally analytic actions on $\opn{LB}$-spaces with injective transition maps.

\begin{lemma} \label{UniquenessOfActionsLemma}
Let $\{ V_i \}_{i \in I}$ be a (filtered) countable sequence of $R$-modules which are also $\mbb{Q}_p$-Banach spaces, with each transition map injective. Let $V \defeq \varinjlim_{i \in I} V_i$ equipped with the direct limit topology. Let $\varepsilon > 0$ and let $T_i \colon V_i \to V_i$ be continuous $R$-linear operators, all compatible with each other via the transition maps. Set $T = \varinjlim_{i \in I}T_i$.
\begin{enumerate}
    \item Let $j \in I$ and $C(\mbb{Z}_p, R)^{\opn{pol}} \subset C_{\varepsilon}(\mbb{Z}_p, R)$ be the subalgebra of polynomial functions. There is a natural $R$-algebra action of $C(\mbb{Z}_p, R)^{\opn{pol}}$ on $V_j$ extending $T_j$. Then any continuous $R$-linear action 
    \[
    C_{\varepsilon}(\mbb{Z}_p, R) \times V_j \to V
    \]
    which extends this $R$-algebra action of $C(\mbb{Z}_p, R)^{\opn{pol}}$ is unique.
    \item Any continuous $R$-algebra action
    \[
    C_{\varepsilon}(\mbb{Z}_p, R) \times V \to V
    \]
    extending $T$ is unique.
    \item Any continuous $R$-algebra action
    \[
    C^{\opn{la}}(\mbb{Z}_p, R) \times V \to V
    \]
    extending $T$ is unique.
\end{enumerate}
\end{lemma}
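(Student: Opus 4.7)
The key ingredient is the density of the polynomial subalgebra $R[x] = C(\mbb{Z}_p, R)^{\opn{pol}}$ in $C_{\varepsilon}(\mbb{Z}_p, R)$ (equipped with its natural $\varepsilon$-analytic norm $\|\sum a_k \binom{x}{k}\|_{\varepsilon} = \sup_k p^{k\varepsilon}|a_k|$), which follows directly from Amice's theorem (Proposition \ref{PropAmice}): any $f = \sum_{k\geq 0} a_k \binom{x}{k} \in C_\varepsilon(\mbb{Z}_p, R)$ is the limit of its polynomial partial sums $f_N = \sum_{k \leq N} a_k \binom{x}{k}$. A second input is that $V = \varinjlim V_i$, equipped with the direct limit topology, is Hausdorff: each $V_i$ is Hausdorff, the transition maps are continuous injections, so the diagonal in $V \times V$ is closed.

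For part (1), fix $v \in V_j$ and consider, for any putative extension, the continuous $R$-linear map $\mu_v \colon C_\varepsilon(\mbb{Z}_p, R) \to V$ defined by $f \mapsto f \cdot v$. Its restriction to $R[x]$ is forced by the $R$-algebra structure to be $p(x) \mapsto p(T_j)(v)$, hence is determined. Since $R[x]$ is dense in $C_\varepsilon(\mbb{Z}_p, R)$ and $V$ is Hausdorff, the equaliser of two continuous maps agreeing on $R[x]$ is a closed set containing a dense subset, so $\mu_v$ is uniquely determined. Letting $v$ vary over $V_j$ yields uniqueness of the bilinear action.

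For part (2), fix $j \in I$ and take any continuous $R$-algebra action $C_\varepsilon(\mbb{Z}_p, R) \times V \to V$ extending $T$. Restricting to $V_j$ produces a continuous $R$-linear action $C_\varepsilon(\mbb{Z}_p, R) \times V_j \to V$. For $v \in V_j$ and $p \in R[x]$, compatibility of the $T_i$ via the inclusions $V_i \hookrightarrow V$ gives $p(x) \cdot v = p(T)(v) = p(T_j)(v)$, so the restriction extends the natural polynomial $R$-algebra action on $V_j$; by part (1) it is therefore unique. Since $V = \bigcup_{j \in I} V_j$, the full action is determined. Part (3) is immediate from $C^{\opn{la}}(\mbb{Z}_p, R) = \varinjlim_{\varepsilon > 0} C_\varepsilon(\mbb{Z}_p, R)$: a continuous action of $C^{\opn{la}}(\mbb{Z}_p, R)$ on $V$ restricts to continuous actions of each $C_\varepsilon(\mbb{Z}_p, R)$ extending $T$, and each such restriction is unique by part (2).

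No serious obstacle is expected: the entire argument is a density/Hausdorffness argument once Amice's theorem is invoked. The only mildly delicate point is verifying the Hausdorff property of $V$, which can be handled directly from the assumption that the transition maps are continuous injections between Banach spaces (or, if desired, bypassed by arguing separately on each $V_i$ using the continuity of the inclusions $V_i \hookrightarrow V$).
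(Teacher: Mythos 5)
Your overall strategy (density of the polynomial subalgebra in $C_{\varepsilon}(\mbb{Z}_p, R)$ via Amice, plus a separation argument, with parts (2) and (3) reduced to part (1)) is the same as the paper's. The gap is in part (1): you need uniqueness of limits in $V = \varinjlim_i V_i$, and your justification that $V$ is Hausdorff --- ``each $V_i$ is Hausdorff, the transition maps are continuous injections, so the diagonal in $V \times V$ is closed'' --- is a non sequitur. For the locally convex inductive limit topology (which is what one must use for $V$ to be a topological vector space), Hausdorffness does \emph{not} follow formally from Hausdorffness of the steps: a continuous seminorm on $V$ must restrict to a continuous seminorm on every $V_i$, and a functional separating a given $v \neq 0$ from $0$ on $V_{i_0}$ need not extend compatibly up the tower, since the norms become weaker. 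The lemma is stated for an arbitrary countable system of Banach spaces with injective transition maps, and in that generality Hausdorffness of the inductive limit is a genuinely delicate property that requires proof. Your parenthetical fallback of ``arguing separately on each $V_i$'' is not yet an argument, because a priori neither the element $f \cdot v$ nor the convergence $f_k \cdot v \to f \cdot v$ is known to take place inside a single step of the colimit.

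The paper sidesteps this entirely by a Grothendieck-type factorization result (Schneider, Cor.\ 8.9): any continuous linear map from a Banach space into a countable injective inductive limit of Banach spaces factors through some step $V_i$. Applied to the orbit map $g \mapsto g \cdot v$ on the Banach space $C_{\varepsilon}(\mbb{Z}_p, R)$, this places the entire image inside a single Banach space $V_i$, where limits are unique; then $f \cdot v = \lim_k f_k \cdot v$ is determined by the already-determined polynomial values. If you replace your Hausdorffness claim by this factorization step, your proof becomes correct and essentially coincides with the paper's.
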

\begin{proof}
In this proof we will continually use the fact that if $X$ is $\mbb{Q}_p$-Banach space and $Y = \varinjlim_{i \in I}Y_i$ is a countable filtered colimit of $\mbb{Q}_p$-Banach spaces (with the direct limit topology) where all the transition maps are injective, then any continuous $\mbb{Q}_p$-linear map
\[
U \colon X \to \varinjlim_{i \in I} Y_i = Y
\]
factors through $Y_i$ for some $i \in I$ (see \cite[Corollary 8.9]{Schneider}). Note that $C_{\varepsilon}(\mbb{Z}_p, R)$ is a $\mbb{Q}_p$-Banach space, and $C^{\opn{la}}(\mbb{Z}_p, R) = \varinjlim_{\varepsilon > 0} C_{\varepsilon}(\mbb{Z}_p, R)$ with the direct limit topology. 

For part (1), let $f \in C_{\varepsilon}(\mbb{Z}_p, R)$ and $v \in V_j$. Then $f$ can be written as the limit of $\{f_k\}_{k \geq 1}$, where $f_k \in C(\mbb{Z}_p, R)^{\opn{pol}}$. The map
\[
C_{\varepsilon}(\mbb{Z}_p, R) \to V, \quad \quad g \mapsto g \cdot v
\]
is continuous (by assumption), so by the above, factors though some $V_i$. Since $V_i$ is Hausdorff, we must have that $f \cdot v$ is the (unique) limit of $f_k \cdot v$, which are already determined.

Part (2) is similar, using the fact that the action of polynomial functions are already determined by $T$ (because it is an algebra action). Part (3) just follows from part (2).
\end{proof}

The following useful result explains how one can perturb a locally analytic action to produce an $\varepsilon$-analytic action for some appropriately chosen $\varepsilon$.

\begin{proposition} \label{Lemmaepsan1}
Let $T_1, T_2 \in \mathrm{End}_R(V)$ and suppose that $T_1$ extends to a locally analytic action and that $|\!|T_2|\!| \leq 1$. Then for any $\epsilon > 0$, there exists an integer $N_\epsilon \geq 1$ (depending on $T_1$ but not on $T_2$) such that $T_1 + p^{N_\epsilon} T_2$ extends to an $\epsilon$-analytic action.  
\end{proposition}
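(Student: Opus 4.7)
The plan is to apply Lemma~\ref{LemmaExtension}(2), reducing the claim to showing that for some integer $N_\varepsilon$ the norms $p^{-k\varepsilon}|\!|\binom{T}{k}|\!|$ are uniformly bounded in $k$, where $T \defeq T_1 + p^{N_\varepsilon}T_2$; uniqueness of the resulting $\varepsilon$-analytic extension is then automatic from Lemma~\ref{UniquenessOfActionsLemma}. Fix some $\varepsilon' \in (0, \varepsilon)$. By local analyticity of $T_1$, there exists $C_{\varepsilon'} > 0$ (depending on $T_1$ and $\varepsilon'$) with $|\!|\binom{T_1}{j}|\!|\leq C_{\varepsilon'}\,p^{j\varepsilon'}$ for all $j\geq 0$.

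The central step is a non-commutative expansion of $P_k(T) = T(T-1)\cdots(T-k+1)$: writing $T - j = (T_1 - j) + p^{N_\varepsilon}T_2$ and expanding into $2^k$ ordered terms indexed by the subset $S \subseteq \{0,\ldots,k-1\}$ of factors choosing the $T_1-j$ summand, each term takes the form
\[
p^{N_\varepsilon m}\cdot A_0\,T_2\,A_1\,T_2\cdots T_2\,A_m,
\]
where $m = |S^c|$, the $T_2$-positions $i_1 < \cdots < i_m$ (with conventions $i_0 \defeq -1$ and $i_{m+1} \defeq k$) separate the $T_1$-indices into consecutive blocks, and
\[
A_r \;\defeq\; \prod_{j=i_r+1}^{i_{r+1}-1}(T_1-j) \;=\; a_r!\cdot\binom{T_1 - i_r - 1}{a_r},
\]
with $a_r \defeq i_{r+1} - i_r - 1$ the block length and $\sum_r a_r = k - m$. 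Here $\binom{T_1-i_r-1}{a_r}$ denotes the operator obtained from the locally analytic action of $T_1$ applied to the function $x\mapsto\binom{x-i_r-1}{a_r}$. Since this function has integer Mahler coefficients (being $\mbb{Z}$-valued on $\mbb{Z}$), the locally analytic bound on $T_1$ gives $|\!|\binom{T_1-i_r-1}{a_r}|\!| \leq C_{\varepsilon'}\,p^{a_r\varepsilon'}$ and hence $|\!|A_r|\!| \leq |a_r!|_p \cdot C_{\varepsilon'}\,p^{a_r\varepsilon'}$. Combining these with $|\!|T_2|\!|\leq 1$ and the non-archimedean inequality gives
\[
|\!|\binom{T}{k}|\!| \;\leq\; p^{v_p(k!)}\,|\!|P_k(T)|\!| \;\leq\; \max_{m,\,(a_0,\ldots,a_m)} C_{\varepsilon'}^{m+1} \cdot p^{\Phi_k(m,a)},
\]
where $\Phi_k(m,a) \defeq -N_\varepsilon m + \varepsilon'(k-m) + v_p(k!) - \sum_r v_p(a_r!)$ and the maximum is over compositions with $\sum_r a_r = k-m$.

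It then remains to carry out the combinatorial analysis of $\Phi_k$. Using $v_p(n!) = (n - s_p(n))/(p-1)$ (with $s_p$ the sum of base-$p$ digits), one computes $v_p(k!) - \sum_r v_p(a_r!) = (m + \sum_r s_p(a_r) - s_p(k))/(p-1)$, which is non-negative and bounded above by $v_p(k!) \leq k/(p-1)$. Equality in the upper bound requires every $a_r < p$, which forces $m \gtrsim k/p$; in this worst regime one obtains
\[
\Phi_k(m,a) \;\leq\; k\cdot \left[\tfrac{1}{p-1} + \varepsilon' - \tfrac{N_\varepsilon + \varepsilon'}{p}\right] + O(\log k),
\]
while for $m < (k-p+1)/p$ the term $\sum_r v_p(a_r!) > 0$ is strictly positive and the exponent is strictly smaller. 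Choosing $N_\varepsilon$ large enough so that $\tfrac{1}{p-1} + \varepsilon' - \tfrac{N_\varepsilon + \varepsilon'}{p} < \varepsilon$ (making the linear-in-$k$ contribution subdominant) and $N_\varepsilon + \varepsilon' \geq \log_p C_{\varepsilon'}$ (absorbing the factor $C_{\varepsilon'}^{m+1}$) yields the required uniform bound. Both conditions are independent of $k$ and of $T_2$, depending only on $\varepsilon$, $\varepsilon'$, and the local-analyticity constants of $T_1$. The principal obstacle is this final combinatorial estimate, in particular identifying that the worst regime occurs at $m\approx k/p$ and controlling the exponential factor $C_{\varepsilon'}^{m+1}$ uniformly.
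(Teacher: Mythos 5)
Your strategy is essentially the paper's: reduce via Lemma \ref{LemmaExtension}(2) to bounding $p^{-k\varepsilon}|\!|f_k(T)|\!|$, expand $T(T-1)\cdots(T-k+1)$ non-commutatively, rewrite each maximal block of consecutive $(T_1-j)$-factors as $a_r!\binom{T_1-c}{a_r}$, bound $\binom{T_1-c}{a_r}$ by $C_{\varepsilon'}p^{a_r\varepsilon'}$ using integrality of the Mahler coefficients of shifted binomials, and reduce to a $p$-adic estimate on $p^{-N_\varepsilon m+v_p(k!)-\sum_r v_p(a_r!)}$. This is exactly the quantity the paper controls (there $1/k!$ is distributed as $\frac{1}{b!}\binom{k}{a}^{-1}\binom{a}{k_1,\ldots,k_r}^{-1}\prod_i \frac{1}{k_i!}$, and $|1/b!|_p\le p^{b/(p-1)}$ together with $C^r$, $r\le b+1$, is absorbed into $p^{-Nb}$).

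The gap is in your final combinatorial step. You prove the estimate only in the regime where every $a_r<p$ and dismiss the rest by asserting that ``the exponent is strictly smaller.'' That inference is false: when some blocks are large, $m$ is smaller, so the favourable term $-N_\varepsilon m$ shrinks too, and the net exponent can be \emph{larger} than in the regime you call worst. Concretely, take $p=2$ and all blocks of size $a_r=3$, so $m\approx k/4$ and $\sum_r v_2(a_r!)\approx k/4$; then $\Phi_k\approx\tfrac{k}{4}\bigl(3+3\varepsilon'-N_\varepsilon\bigr)$, which exceeds $\varepsilon k$ unless $N_\varepsilon\gtrsim 3$, whereas your condition $\tfrac{1}{p-1}+\varepsilon'-\tfrac{N_\varepsilon+\varepsilon'}{p}<\varepsilon$ only forces $N_\varepsilon\gtrsim 2$. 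So with the $N_\varepsilon$ you specify the claimed uniform bound actually fails; the extremal configurations are not the ones you identify. The proposition remains true, but you must optimise over all block-size profiles: writing $s_p$ for the sum of base-$p$ digits and bounding $\sum_r s_p(a_r)\le(p-1)\sum_{a_r>0}(\log_p a_r+1)$, one finds that blocks of typical size $\sim p^j$ give $m\sim k/p^j$ and $v_p(k!)-\sum_r v_p(a_r!)\lesssim k\,(j+\tfrac{1}{p-1})/p^j$, so one needs $\sup_{j\ge 1}p^{-j}\bigl(j+\tfrac{1}{p-1}-\varepsilon'-N_\varepsilon\bigr)\le\varepsilon-\varepsilon'$ (plus a convexity argument for mixed sizes); this holds for $N_\varepsilon$ large independently of $k$, but it is precisely the uniformity-in-$k$ claim and must be proved rather than asserted. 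A secondary point: the same $-N_\varepsilon m$ cannot simultaneously absorb $C_{\varepsilon'}^{m+1}$ and the valuation term, so $N_\varepsilon$ should be split into two summands, one for each purpose.
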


\begin{proof}
Let $N \geq 0$ and denote $T = T_1 + p^N T_2$. Let $f_k(X)$ denote the polynomial
\[ f_k(X) = \frac{X (X-1) \ldots (X - k + 1)}{k!}. \]
Let $\Lambda$ denote the set of all ordered tuples $I = (-1 = t_0 \leq t_1 < t_2 < \ldots < t_{2r-1} \leq t_{2r} = k-1)$ with $r \geq 1$. For such an $I \in \Lambda$ and any $1 \leq i \leq r$, let $k_i = t_{2i-1} - t_{2i-2}$ and $\ell_i = t_{2i} - t_{2i - 1}$, and set
 \[ z_I = f_{k_1}(T_1 - (t_0 + 1)) \cdot T_2^{\ell_1} \cdot f_{k_2}(T_1 - (t_2 + 1)) \cdot T_2^{\ell_2} \cdot \ldots \cdot f_{k_r}(T_1 - (t_{2r} + 1)) \cdot T_2^{\ell_r} .\]
Then we can write
\begin{equation} \label{eqtmp2alt}
f_k(T) = f_k(T_1) + \sum_{I \in \Lambda, b \neq 0} \frac{p^{N b}}{b!} \binom{k}{a}^{-1} \binom{a}{k_1, \ldots, k_r}^{-1} z_I,
\end{equation}
where we have denoted $a = \sum_{i=1}^r k_i$ and $b = \sum_{i=1}^r \ell_i$ so that $a + b = k$.

We now give a bound for the operators $z_I$. Note that for any integer $t \in \Z$, $f_k(X - t)$ is also a polynomial in $X$ and the Mahler expansion of $f_k(X - t)$ is of the form
\[ f_k(X - t) = \sum_{i = 0}^k a_i f_i(X) \] 
for $a_i \in \Z_p$ because $f_k(x - t) \in \zp$ for any $x \in \zp$ and the coefficients are just computed
using the discrete difference operator. This implies that, if $\epsilon > 0$ and $C \in \mbb{R}_{> 0}$ is such that
\[ p^{-k \epsilon / 2} |\!| f_k(T_1) |\!| \leq C \]
for all $k \geq 0$ then,
for any $t \in \Z$ and $k \geq 0$, we have
\[
p^{-k \epsilon / 2} |\!|f_k(T_1 - t)|\!| \leq C.
\]
Hence, by the fact that $|\!|T_2|\!| \leq 1$, we deduce
\begin{equation} \label{eqtmp1alt} p^{- a \epsilon / 2} |\!|z_I|\!| \leq C^{r}.
\end{equation}
Using that for any multinomial coefficient we have $v_p( \binom{n}{a_1, \ldots, a_s}^{-1} ) \geq - \log_p(n)$, we get from \eqref{eqtmp2alt} and \eqref{eqtmp1alt} that
\[ p^{-k \epsilon} |\!|f_k(T) |\!| \leq \mathrm{max}(C, p^{-k \epsilon} p^{-Nb + b/(p-1)} p^{2 \log_p(k)} p^{(k - b) \epsilon / 2} C^r ). \]
Since $b \geq r - 1$, taking $N$ such that $p^{-N + 1/(p-1)} \leq C^{-1}$ we deduce that $p^{-Nb + b/(p-1)} C^r \leq C$ and the right hand side is bounded above by
\[ p^{-k \epsilon / 2 + 2 \log_p(k)} C \]
which is uniformly bounded in $k$ as $\epsilon > 0$. This concludes the proof.
\end{proof}

\subsection{Nilpotent operators} \label{SectNilpOp}

Let $R^+ \to S^+$ be a morphism of $p$-adically complete and separated, $p$-torsion free $\zp$-algebras with $R^+$ admissible, and denote by $R = R^+[1/p], S = S^+[1/p]$ their associated $\qp$-Banach algebras,

\begin{definition}
Let $L_0 = S^+\langle X, Y , Z \rangle$ and, for any integer $m \geq 1$, let $L_m = S^+ \langle \frac{X - 1}{p^m}, \frac{Y}{p^m} , \frac{Z-1}{p^m}\rangle$. Set $V_0 = L_0[1/p]$, $V_m = L_m[1/p]$.
\end{definition}

We equip $V_0$ with the norm induced by the lattice $L_0$, or equivalently the norm given by $|\!| \sum_{a,b, c} s_{a, b, c} X^a Y^b Z^c |\!| =  \max_{a,b, c} |\!|s_{a,b, c}|\!|$, where the norm on the right hand side is the $\qp$-Banach norm of $S$. This allows us to view $V_0$ as a Banach algebra with unit ball $L_0$. Analogously, for $m \geq 1$, we can view $V_m$ as a Banach algebra with the norm induced by the lattice $L_m$, or equivalently defined by $|\!| \sum_{a,b, c} s_{a, b, c} (X-1)^a Y^b (Z-1)^c |\!|_m =  \max_{a,b,c} p^{-m(a + b + c)} |\!|s_{a,b, c}|\!|$.

Let $\theta: S^+ \to S^+$ be an $R^+$-linear derivation, which can naturally be viewed as an $S^+$-linear functional $\Omega^1_{S^+ / R^+} \to S^+$. We define $\nabla_\theta: V_m \to V_m$ to be the unique $R$-linear derivation such that it acts as $\theta$ on $S$ and satisfies
\[ \nabla_\theta(X) = Y, \;\;\; \nabla_\theta(Y) = \nabla_\theta(Z) = 0. \] In particular, we have a commutative diagram
\[
\begin{tikzcd}
V_m \arrow[r, "{\nabla_\theta}"] \arrow[d]             & V_m \arrow[d]            \\
S \arrow[r, "{\theta}"] & S,
\end{tikzcd}
\]
where the vertical maps are induced by sending $X \mapsto 1$, $Y \mapsto 0$, $Z \mapsto 1$.

We will show next that, whenever $\theta : S^+ \to S^+$ extends to a continuous action, the action of $C^{\opn{la}}(\zp, R) \subseteq C_{\opn{cont}}(\zp, R)$ on $S$ extends to $V_m$ for any $m \geq 0$. We begin with some elementary lemmas for which we introduce some notation. For any $k \in \N_{>0}$ and $0 \leq r \leq k$, let $\Sigma_{k, r}$ be the set of subsets of $\{0, \ldots, k-1\}$ of size $r$. If $I \in \Sigma_{k, r}$, let $k_1, \ldots, k_\ell$ (with $\ell$ and the $k_i$'s depending on $I$) be the lengths of the largest blocks of consecutive integers in $I$, so that $\sum_{i=1}^\ell k_i = r$. More precisely, $I$ can be written as
\[ 
I = \bigcup_{1 \leq j \leq \ell} [i_j, i_j + k_j - 1] = \bigcup_{1 \leq j \leq \ell} I_j
\]
with no adjacent intervals, i.e. $i_j > i_{j-1} + k_{j-1}$ for all $2 \leq j \leq \ell$. For any $I \in \Sigma_{k, r}$, we denote
\[ 
g_I(X) = \prod_{i \in I} (X - i), \;\;\; f_I(X) = \prod_{1 \leq j \leq \ell} \frac{1}{k_j!} \prod_{i \in I_j} (X - i),
\]
with the convention that $g_{\emptyset} = f_{\emptyset} = 1$. The following easy lemma will be very useful.

\begin{lemma} \label{Lemmaformulatheta}
We have
\[ f_k(\nabla_\theta)(s X^a Y^b Z^c) = \sum_{r = 0}^{\min(k, a)} \sum_{I \in \Sigma_{k, k-r}} \binom{k - r}{k_1, \ldots, k_\ell}^{-1} \binom{k}{r}^{-1} \binom{a}{r}  f_I(\theta)(s) X^{a-r} Y^{b+r} Z^c. \]
\end{lemma}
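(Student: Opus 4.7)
The plan is to decompose $\nabla_\theta$ as a sum of two commuting derivations and then expand the falling factorial $\nabla_\theta(\nabla_\theta - 1) \cdots (\nabla_\theta - k + 1)$ combinatorially. Let $D_\theta$ be the unique $R$-linear derivation of $V_m$ extending $\theta : S \to S$ and satisfying $D_\theta(X) = D_\theta(Y) = D_\theta(Z) = 0$, and let $D_X$ be the unique $R$-linear derivation acting trivially on $S$ with $D_X(X) = Y$ and $D_X(Y) = D_X(Z) = 0$. A quick check on generators shows $\nabla_\theta = D_\theta + D_X$ and $[D_\theta, D_X] = 0$.

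The key observation is that $D_X$ commutes with every $D_\theta - i$, so one can distribute the product $\prod_{i=0}^{k-1}\bigl((D_\theta - i) + D_X\bigr)$: at each index $i$ one selects either the summand $D_\theta - i$ or the summand $D_X$, and the resulting operators commute freely. Grouping terms by the set $J \subseteq \{0, \ldots, k-1\}$ of positions at which $D_X$ is chosen and setting $I = \{0, \ldots, k-1\} \setminus J \in \Sigma_{k, k-r}$ where $r = |J|$, this yields
\[
k!\, f_k(\nabla_\theta) \;=\; \sum_{r=0}^{k}\, D_X^r \sum_{I \in \Sigma_{k, k-r}} \prod_{i \in I}(D_\theta - i).
\]

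The next step is to apply this identity to $sX^a Y^b Z^c$, where the two commuting pieces act independently. A direct induction using $D_X(X) = Y$, $D_X(Y) = 0$ gives $D_X^r(X^a Y^b Z^c) = a^{(r)} X^{a-r} Y^{b+r} Z^c$ for $r \leq a$ and zero otherwise, with $a^{(r)} = a(a-1)\cdots(a-r+1)$; meanwhile $\prod_{i \in I}(D_\theta - i)$ acts on the coefficient $s$ as $\prod_{i \in I}(\theta - i)$ and trivially on the $X,Y,Z$-monomial. Decomposing $I = I_1 \sqcup \cdots \sqcup I_\ell$ into its maximal runs of consecutive integers of sizes $k_1, \ldots, k_\ell$, the very definition of $f_I$ gives $\prod_{i \in I}(\theta - i) = \bigl(\prod_{j} k_j!\bigr)\, f_I(\theta)$, since distinct maximal runs contribute operators in disjoint shifted windows.

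Combining everything and dividing by $k!$, the coefficient of $f_I(\theta)(s)\, X^{a-r} Y^{b+r} Z^c$ becomes
\[
\frac{a^{(r)}\, \prod_j k_j!}{k!} \;=\; \frac{r!\binom{a}{r} (k-r)!}{k! \binom{k-r}{k_1, \ldots, k_\ell}} \;=\; \binom{a}{r}\binom{k}{r}^{-1}\binom{k-r}{k_1, \ldots, k_\ell}^{-1},
\]
which matches the formula in the statement, and the range $r \leq \min(k,a)$ appears automatically since $D_X^r$ kills $X^a$ for $r > a$. No genuine obstacle is anticipated here: the argument is a clean combinatorial expansion, with the only bookkeeping being the boundary conventions $f_\emptyset = 1$ when $r = k$ (corresponding to the purely $D_X^k$ term) and the vanishing for $r > a$.
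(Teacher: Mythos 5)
Your proof is correct, and it takes a genuinely different route from the paper's. The paper proceeds by induction on $k$: it first rewrites the coefficient as $\frac{a(a-1)\cdots(a-r+1)}{k!}g_I(\theta)(s)$, checks the case $k=1$ via the Leibniz rule, and then pushes through the recursion $f_{k+1}(\nabla_\theta) = \frac{\nabla_\theta - k}{k+1}f_k(\nabla_\theta)$, splitting $\Sigma_{k+1,k-r}$ into subsets containing $k$ and those not. You instead split $\nabla_\theta = D_\theta + D_X$ into two commuting derivations and expand the falling factorial $\prod_{i=0}^{k-1}\bigl((D_\theta - i) + D_X\bigr)$ directly; since all the operators involved pairwise commute, grouping the terms by the set of positions where $D_X$ is chosen produces the operator identity $k!\,f_k(\nabla_\theta) = \sum_r D_X^r \sum_{I\in\Sigma_{k,k-r}}\prod_{i\in I}(D_\theta - i)$ before any evaluation, and the formula then follows by evaluating each commuting piece on $sX^aY^bZ^c$ and matching coefficients exactly as you do (your identity $a^{(r)}\prod_j k_j!/k! = \binom{a}{r}\binom{k}{r}^{-1}\binom{k-r}{k_1,\ldots,k_\ell}^{-1}$ is the same reformulation the paper records at the start of its proof). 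Your version makes the combinatorial origin of $\Sigma_{k,k-r}$ transparent and yields a slightly stronger operator-level statement, at the modest cost of having to verify the commutation $[D_\theta, D_X]=0$ on generators; the paper's induction is more mechanical but avoids introducing the auxiliary derivations. The only points needing a word of care in your write-up are that $D_\theta - i$ is not itself a derivation (so ``acts trivially on the monomial'' should be read as the direct computation $(D_\theta - i)(sX^aY^bZ^c) = ((\theta-i)s)X^aY^bZ^c$, which iterates without issue) and the boundary conventions you already flag; neither is a gap.
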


\begin{proof}
First observe that, since everything is linear with respect to the variable $Z$, we may assume that $c = 0$. We have 
\[ \binom{k - r}{k_1, \ldots, k_\ell}^{-1} \binom{k}{r}^{-1} \binom{a}{r}  f_I(\theta)(s) = \frac{a (a - 1) \cdots (a - r + 1)}{k!} g_I(\theta)(s), \]
so that we need to prove the formula
\[f_k(\nabla_\theta)(s X^a Y^b) =  \sum_{r = 0}^{\min(k, a)} \sum_{I \in \Sigma_{k, k-r}} \frac{a (a - 1) \cdots (a - r + 1)}{k!} g_I(\theta)(s) X^{a-r} Y^{b+r}. \]
We check the formula by induction. For $k=1$ we need to prove that
\[ f_k(\nabla_\theta)(s X^a Y^b) = \theta(s) X^a Y^b + a s X^{a-1} Y^{b + 1}, \]
which follows from Leibniz rule. Assume the result holds for $k$. We calculate
\begin{eqnarray*}
f_{k+1}(\nabla_\theta)(s X^a Y^b) &=& \frac{(\nabla_\theta - k)}{k+1} f_k(\nabla_\theta)(s X^a Y^b) \\
&=& (\nabla_\theta - k) \sum_{r = 0}^{\min(k, a)} \sum_{I \in \Sigma_{k, k-r}} \frac{a (a - 1) \cdots (a - r + 1)}{(k+1)!} g_I(\theta)(s) X^{a-r} Y^{b+r}.
\end{eqnarray*}
We have
\[ (\nabla_\theta - k) g_I(\theta)(s) X^{a - r} Y^{b + r} = (\theta - k) g_I(\theta)(s) X^{a - r} Y^{b + r} + g_I(\theta)(s) (a - r) X^{a - (r + 1)} Y^{b + (r + 1)}, \]
and observe that $(\theta - k) g_I(X) = g_{I \cup \{ k \}}(X)$. The result follows by decomposing $\Sigma_{k+1, k-r}$ as those subsets containing $k$ and those not containing $k$.
\end{proof}

The following corollary implies that, if the extension of $\nabla_\theta$ to $V_m$ extends to an $\epsilon$-analytic action, then the same holds for any $m' \geq m$.

\begin{corollary} \label{Corollarycomparisonnorms}
Let $m' \geq m$, $s \in S$, $a,b \in \N$ and $k \geq 0$. Then
\[ \bigg|\! \bigg| f_k(\nabla_\theta) \left(s \left(\frac{X - 1}{p^m}\right)^a \left( \frac{Y}{p^m} \right)^b \left( \frac{Z - 1}{p^m}\right)^c \right) \bigg| \! \bigg|_{m} =  \bigg| \! \bigg| f_k(\nabla_\theta) \left(s \left(\frac{X - 1}{p^{m'}}\right)^a \left( \frac{Y}{p^{m'}} \right)^b \left( \frac{Z - 1}{p^{m'}}\right)^c  \right) \bigg| \! \bigg|_{m'}. \]
In particular,
\[ \sup_{x \in L_{m}} |\! | f_k(\nabla_\theta) (x)) |\!|_m = \sup_{x \in L_{m'}} |\!| f_k(\nabla_\theta) (x))|\!|_{m'}\]
for all $m' \geq m$.
\end{corollary}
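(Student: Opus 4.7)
My plan is to reduce both statements to a direct consequence of Lemma \ref{Lemmaformulatheta} by exploiting the key structural feature that $\nabla_\theta$ preserves total degree. Since $\nabla_\theta$ is an $R$-linear derivation satisfying $\nabla_\theta(X-1) = Y$, $\nabla_\theta(Y) = 0$, and $\nabla_\theta(Z-1) = 0$, the triple $(X-1, Y, Z-1)$ satisfies exactly the same relations under $\nabla_\theta$ as the triple $(X, Y, Z)$. Hence the proof of Lemma \ref{Lemmaformulatheta} goes through verbatim after a change of variables, yielding
\[
f_k(\nabla_\theta)(s(X-1)^a Y^b(Z-1)^c) = \sum_{r=0}^{\min(k,a)} \sum_{I \in \Sigma_{k,k-r}} c_{r,I,a,k}\, f_I(\theta)(s)\, (X-1)^{a-r} Y^{b+r} (Z-1)^c,
\]
where $c_{r,I,a,k} = \binom{k-r}{k_1,\ldots,k_\ell}^{-1}\binom{k}{r}^{-1}\binom{a}{r}$.

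Next I would multiply both sides by $p^{-m(a+b+c)}$, using $R$-linearity of $f_k(\nabla_\theta)$. Writing $e^{(m)}_{a,b,c} := \left(\frac{X-1}{p^m}\right)^a\left(\frac{Y}{p^m}\right)^b\left(\frac{Z-1}{p^m}\right)^c$, the crucial observation is that on the right-hand side the exponents of the three variables change from $(a,b,c)$ to $(a-r, b+r, c)$, whose sum is still $a+b+c$. Therefore the factor $p^{-m(a+b+c)}$ regroups exactly as $e^{(m)}_{a-r,b+r,c}$, giving
\[
f_k(\nabla_\theta)\bigl(s\, e^{(m)}_{a,b,c}\bigr) = \sum_{r,I} c_{r,I,a,k}\, f_I(\theta)(s)\, e^{(m)}_{a-r, b+r, c}.
\]
Since $\{e^{(m)}_{a',b',c'}\}$ is an orthonormal $S$-Schauder basis of $V_m$, the norm $\lVert\cdot\rVert_m$ of the right-hand side is $\max_{(a-r,b+r,c)} \bigl\lVert \sum_{r,I} c_{r,I,a,k} f_I(\theta)(s)\bigr\rVert_S$ (grouped by the outer index), which depends only on $s, a, b, c, k$ and not on $m$. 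Replacing $m$ by $m'$ produces the same scalar, proving the first assertion.

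For the ``in particular'' part, I will show that both suprema equal the common value $\sup_{s \in S^+,\; a,b,c \geq 0}\lVert f_k(\nabla_\theta)(s\, e^{(m)}_{a,b,c})\rVert_m$ (which, by the first part, is independent of $m$). The inequality $\geq$ follows by taking $x = s\, e^{(m)}_{a,b,c} \in L_m$. For $\leq$, any $x \in L_m$ admits a convergent expansion $x = \sum_{a,b,c} s_{a,b,c}\, e^{(m)}_{a,b,c}$ with $s_{a,b,c} \in S^+$; applying the continuous $R$-linear operator $f_k(\nabla_\theta)$ term-by-term and invoking the ultrametric inequality yields
\[
\lVert f_k(\nabla_\theta)(x)\rVert_m \leq \sup_{a,b,c} \lVert f_k(\nabla_\theta)(s_{a,b,c}\, e^{(m)}_{a,b,c})\rVert_m,
\]
bounding the supremum over $L_m$ by the supremum over monomials as desired.

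The only real subtlety is verifying that the normalisation $p^{-m(a+b+c)}$ truly cancels after applying $f_k(\nabla_\theta)$; this is the point where the nilpotent structure $\nabla_\theta(X) = Y$, $\nabla_\theta(Y) = 0$ matters essentially, since it guarantees that $f_k(\nabla_\theta)$ preserves total degree in $\{X-1, Y, Z-1\}$. Everything else is a routine manipulation of the formula from Lemma \ref{Lemmaformulatheta} together with standard orthonormality of the basis $\{e^{(m)}_{a,b,c}\}$.
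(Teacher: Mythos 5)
Your proposal is correct and follows essentially the same route as the paper: both apply Lemma \ref{Lemmaformulatheta} (observing it holds verbatim with $X-1,Z-1$ in place of $X,Z$), note that the exponent shift $(a,b,c)\mapsto(a-r,b+r,c)$ preserves total degree so the normalisation $p^{-m(a+b+c)}$ regroups into the basis elements, and read off the norm as $\sup_r\|\alpha_r(s)\|$, which is independent of $m$. Your explicit treatment of the ``in particular'' statement via the orthonormal basis and the ultrametric inequality is just a spelled-out version of what the paper leaves implicit.
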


\begin{proof}
The first assertion follows immediately from Lemma \ref{Lemmaformulatheta}. Indeed, calling
\[ \alpha_r(s) = \sum_{I \in \Sigma_{k, k-r}} \binom{k - r}{k_1, \ldots, k_\ell}^{-1} \binom{k}{r}^{-1} \binom{a}{r}  f_I(\theta)(s), \]
which is independent of $m$ or $m'$
we have, for all $m' \geq m$,
\begin{eqnarray*}
\bigg|\! \bigg| f_k(\nabla_\theta) \left(s \left(\frac{X - 1}{p^{m'}}\right)^a \left( \frac{Y}{p^{m'}} \right)^b \left( \frac{Z - 1}{p^{m'}}\right)^c \right) \bigg| \! \bigg|_{m'} &=& \bigg|\!\bigg| \sum_{r = 0}^{\min(k, a)} \alpha_{r}(s) \left(\frac{X - 1}{p^{m'}}\right)^{a-r} \left( \frac{Y}{p^{m'}} \right)^{b+r} \left( \frac{Z - 1}{p^{m'}}\right)^c  \bigg|\! \bigg|_{m'}  \\
&=& \sup_{r} |\!|\alpha_r(s)|\!|,
\end{eqnarray*}
which is independent of $m'$. The last assertion follows from the first one. This finishes the proof.
\end{proof}

\begin{proposition} \label{Proplocanextension1}
Assume that $\theta$ extends to a continuous action on $S^+$. Then the operator $\nabla_\theta$ extends uniquely to a locally analytic action on $V_m$ for any $m \geq 0$.
\end{proposition}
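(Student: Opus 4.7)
The plan is to apply Lemma \ref{LemmaExtension}(3), which reduces the existence of the locally analytic extension to showing that for every $\varepsilon > 0$ the operator norms $p^{-k\varepsilon}|\!|f_k(\nabla_\theta)|\!|_m$ are uniformly bounded in $k \geq 0$, where $f_k(X) = X(X-1)\cdots(X-k+1)/k!$. Uniqueness of the extension is the final statement of that same lemma. In fact I will prove the much stronger polynomial bound $|\!|f_k(\nabla_\theta)|\!|_m \leq k^2$, from which uniform boundedness of $p^{-k\varepsilon}|\!|f_k(\nabla_\theta)|\!|_m$ follows immediately for every $\varepsilon > 0$.

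By Corollary \ref{Corollarycomparisonnorms} the operator norm is unchanged under increasing $m$, so I may assume $m \geq 1$ and test the norm on basis elements of $L_m$ of the form $s\left(\frac{X-1}{p^m}\right)^a\left(\frac{Y}{p^m}\right)^b\left(\frac{Z-1}{p^m}\right)^c$ with $s \in S^+$. The proof of Lemma \ref{Lemmaformulatheta} uses only the relations $\nabla_\theta(X) = Y$, $\nabla_\theta(Y) = \nabla_\theta(Z) = 0$, so it applies verbatim with $X - 1$ and $Z - 1$ in place of $X$ and $Z$; this is precisely the form in which the proof of Corollary \ref{Corollarycomparisonnorms} implicitly uses it. Repeating the computation performed there, the $V_m$-norm of $f_k(\nabla_\theta)$ applied to such a basis element equals $\sup_r |\!|\alpha_r(s)|\!|$, where
\[
\alpha_r(s) = \sum_{I \in \Sigma_{k,k-r}} \binom{k-r}{k_1,\ldots,k_\ell}^{-1}\binom{k}{r}^{-1}\binom{a}{r} f_I(\theta)(s).
\]

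The key estimate is then $|\!|\alpha_r(s)|\!| \leq k^2 |\!|s|\!|$, which I would establish from three ingredients combined via the ultrametric inequality. First, the standard estimate $v_p\bigl(\binom{n}{a_1,\ldots,a_s}^{-1}\bigr) \geq -\log_p n$ (already used in the proof of Proposition \ref{Lemmaepsan1}) gives $|\binom{k-r}{k_1,\ldots,k_\ell}^{-1}|_p \leq k$ and $|\binom{k}{r}^{-1}|_p \leq k$. Second, $|\binom{a}{r}|_p \leq 1$ as $\binom{a}{r} \in \mathbb{Z}$. Third, $|\!|f_I(\theta)|\!| \leq 1$: writing $f_I(\theta) = \prod_{j=1}^\ell f_{k_j}(\theta - i_j)$, each polynomial $f_{k_j}(X - i_j)$ takes integer values on $\mathbb{Z}$ and hence admits a finite Mahler expansion $\sum_\ell c_\ell f_\ell(X)$ with $c_\ell \in \mathbb{Z}$; since $\theta$ extends to a continuous action on $S^+$, Lemma \ref{LemmaExtension}(1) gives $|\!|f_\ell(\theta)|\!| \leq 1$, so by the ultrametric inequality $|\!|f_{k_j}(\theta - i_j)|\!| \leq 1$, and finally $|\!|f_I(\theta)|\!| \leq \prod_j |\!|f_{k_j}(\theta - i_j)|\!| \leq 1$.

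Combining these bounds yields $|\!|f_k(\nabla_\theta)|\!|_m \leq k^2$, completing the argument. There is no serious obstacle in this plan; the only subtlety worth flagging is the verification that the explicit formula of Lemma \ref{Lemmaformulatheta} transports to the shifted variables $X - 1$ and $Z - 1$ used in the norm computation, which is immediate from the derivation relations, together with the clean packaging of the bound $|\!|f_I(\theta)|\!| \leq 1$ via the integrality of the Mahler coefficients of $f_{k_j}(X - i_j)$.
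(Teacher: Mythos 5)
Your proposal is correct and follows essentially the same route as the paper: reduce via Corollary \ref{Corollarycomparisonnorms} to a single value of $m$, test on monomials, apply the explicit formula of Lemma \ref{Lemmaformulatheta}, and bound the multinomial denominators by $p^{2\log_p k}=k^2$ together with $|\!|f_I(\theta)|\!|\leq 1$ coming from the continuous action of $\theta$ on $S^+$. The only cosmetic differences are that the paper changes coordinates to work with $m=0$ while you keep the shifted variables, and that you spell out the integrality of the Mahler coefficients behind $|\!|f_I(\theta)|\!|\leq 1$, which the paper leaves implicit.
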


\begin{proof}
By Corollary \ref{Corollarycomparisonnorms} and a change of coordinates, we can assume $m = 0$. By Lemma \ref{LemmaExtension}, it suffices to show that, for any $\epsilon > 0$, there exists a constant $C_\epsilon \in \R_{>0}$ such that for all $k \geq 0$ we have
\[ p^{-k \epsilon} |\!|f_k(\nabla_\theta) |\!| \leq C_\epsilon, \]
i.e. for all $F \in V_0$, we have
\[ p^{-k \epsilon} |\!|f_k(\nabla_\theta) (F) |\!| \leq C_\epsilon |\!|F|\!|. \]
It suffices to prove this for $F = s X^a Y^b Z^c$ with $s \in S^+$. By Lemma \ref{Lemmaformulatheta}, it suffices to show that, for each $0 \leq r \leq \min(k, a)$ and any $I \in \Sigma_{k, k-r}$, the value
\[ p^{-k \epsilon} \left|\! \left| \binom{k - r}{k_1, \ldots, k_\ell}^{-1} \binom{k}{r}^{-1} \binom{a}{r}  f_I(\theta)(s) \right| \! \right| \]
is uniformly bounded. But, since $|\!|f_I(\theta)|\!| \leq 1$ (because $\theta$ extends to a continuous action on $S^+$) this value is bounded above by $p^{-k \epsilon + 2\log_p(k)}$, which is uniformly bounded in $k$ as $-k \epsilon + 2 \log_p(k) \to - \infty$ as $k \to + \infty$. This finishes the proof.
\end{proof}

\subsection{Overconvergence} \label{SubSecOC}

We finish this section with the abstract results that will allow us to extend the locally analytic action from the ordinary locus to overconvergent neighbourhoods. The setting is as follows. Let
\[ V_0 \subseteq \ldots \subseteq V_r \subseteq \ldots \subseteq V_\infty \]
be a sequence of topological $R$-modules which are also $\qp$-Banach spaces and let $\nabla : V_\infty \to V_\infty$ be a continuous $R$-linear operator stabilising each $V_i$. Let $| \! | \cdot | \! |_i$ denote the Banach norm on $V_i$ and suppose that $| \! | x | \! |_s \leq | \! | x | \! |_r$ for all $x \in V_r$ and $r \leq s \leq \infty$.

\begin{proposition} \label{OverconvergenceOfActionProp}
Assume that the following property holds: for any $0 < \delta < 1$ and $r \in \N$, there exists $s = s(\delta) \geq r$ such that, for all $c \in \Q$, $h \in \N$ and $x \in V_r$ we have
\begin{equation} \label{Equationproperty}
 |\!|x|\!|_r \leq p^c \text{ and } |\!|x|\!|_{\infty} \leq p^{c - h} \implies |\!|x|\!|_s \leq p^{c -\delta h}.
\end{equation}
Assume that, for some $\epsilon > 0$, the operator $\nabla$ extends to an $\epsilon$-analytic action on $V_\infty$. Then, for any $r \in \mbb{N}$ and $\gamma > \epsilon$, there exists $s \in \N$ (depending only on $\epsilon$, $\gamma$ and $|\!|\nabla|\!|_r$) such that, for any $x\in V_r$,
\[ p^{-k \gamma} |\!|f_k(\nabla)(x)|\!|_s \to 0 \text{ as } k \to +\infty. \]
In particular, the operator $\nabla$ extends to a continuous $R$-linear action $C_{\gamma}(\mbb{Z}_p, R) \times V_r \to V_s$.
\end{proposition}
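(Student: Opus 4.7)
The plan is to interpolate between a crude bound on $V_r$ (coming from the operator norm of $\nabla$) and the given $\epsilon$-analytic bound on $V_\infty$ via the hypothesis (\ref{Equationproperty}), for a carefully chosen parameter $\delta \in (0,1)$. First, I would set $N = |\!|\nabla|\!|_r$ and observe that the ultrametric inequality on the operator norm gives $|\!|\nabla - j|\!|_r \leq \max(N, 1)$ for all $j \in \mbb{Z}$; combined with the standard estimate $v_p(k!) \leq k/(p-1)$, this yields a rational number $\alpha$ (depending only on $N$ and $p$) such that
\[
|\!|f_k(\nabla)(x)|\!|_r \; \leq \; p^{k\alpha} |\!|x|\!|_r \qquad \text{for all } k \geq 0, \; x \in V_r.
\]
On the other hand, the hypothesis that $\nabla$ extends to an $\epsilon$-analytic action on $V_\infty$, together with Lemma \ref{LemmaExtension}, supplies a constant $C$ with $|\!|f_k(\nabla)(x)|\!|_\infty \leq C p^{k\epsilon} |\!|x|\!|_\infty \leq C p^{k\epsilon} |\!|x|\!|_r$. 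If $\alpha \leq \gamma$ then the conclusion already holds with $s = r$, so I would assume $\alpha > \gamma > \epsilon$ from now on.

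The key step is to choose $\delta \in (0,1)$ with
\[
\gamma' \;\defeq\; (1-\delta)\alpha + \delta\epsilon \;<\; \gamma,
\]
which is possible precisely because $\gamma > \epsilon$ (indeed, the condition amounts to $\delta > (\alpha - \gamma)/(\alpha - \epsilon)$, and the right-hand side lies in $[0,1)$). This $\delta$ depends only on $\alpha, \gamma, \epsilon$, hence only on $|\!|\nabla|\!|_r$, $\gamma$, $\epsilon$. Feeding this $\delta$ and our fixed $r$ into hypothesis (\ref{Equationproperty}) yields $s = s(\delta) \geq r$ with the stated dependencies.

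Now, for $x \in V_r$ with $|\!|x|\!|_r \leq 1$ (sufficient by $R$-linearity) and $k$ sufficiently large, I would set $c_k = k\alpha \in \mbb{Q}$ and $h_k = \lfloor k(\alpha - \epsilon) - \log_p C \rfloor \in \mbb{N}_{\geq 1}$. The two bounds above then translate into $|\!|f_k(\nabla)(x)|\!|_r \leq p^{c_k}$ and $|\!|f_k(\nabla)(x)|\!|_\infty \leq p^{c_k - h_k}$, so (\ref{Equationproperty}) gives
\[
|\!|f_k(\nabla)(x)|\!|_s \;\leq\; p^{c_k - \delta h_k} \;=\; p^{k\gamma' + O(1)},
\]
and since $\gamma' < \gamma$ we obtain $p^{-k\gamma} |\!|f_k(\nabla)(x)|\!|_s \to 0$ as $k \to +\infty$. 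The estimate is in fact uniform in $x$, of the form $|\!|f_k(\nabla)(x)|\!|_s \leq B p^{k\gamma'} |\!|x|\!|_r$ for an absolute constant $B$. This uniform estimate yields the ``in particular'' clause: for $\phi = \sum_k a_k \binom{x}{k} \in C_\gamma(\mbb{Z}_p, R)$ with $p^{k\gamma} |a_k| \to 0$, the series $\sum_k a_k f_k(\nabla)(x)$ converges absolutely in $V_s$ and defines a continuous $R$-bilinear map $C_\gamma(\mbb{Z}_p, R) \times V_r \to V_s$; compatibility with the algebra structure is automatic on the polynomial subalgebra and extends by density (cf.\ Lemma \ref{UniquenessOfActionsLemma}).

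The main obstacle is the interpolation step itself: one must engineer $c_k$, $h_k$ and $\delta \in (0,1)$ so that (\ref{Equationproperty}) converts the crude growth rate $p^{k\alpha}$ in $V_r$ and the fine rate $p^{k\epsilon}$ in $V_\infty$ into a rate strictly below $p^{k\gamma}$ in the intermediate space $V_s$. The quantitative point is that the hypothesis (\ref{Equationproperty}) forces $\delta < 1$, and the assumption $\gamma > \epsilon$ is precisely what guarantees that the convex combination $(1-\delta)\alpha + \delta\epsilon$ can be pushed strictly below $\gamma$ while keeping $\delta < 1$.
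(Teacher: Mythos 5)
Your proof is correct and is essentially the paper's argument: a crude operator-norm bound on $V_r$, the $\epsilon$-analytic bound on $V_\infty$, and interpolation via (\ref{Equationproperty}) with $\delta$ chosen so that the convex combination $(1-\delta)\alpha+\delta\epsilon$ drops strictly below $\gamma$ (the paper applies (\ref{Equationproperty}) to $k!f_k(\nabla)(x)$ and keeps $v_p(k!)$ explicit, whereas you absorb the factorial into $\alpha$ — a purely cosmetic difference). The only nit is that when $\alpha=\gamma$ exactly, taking $s=r$ gives boundedness rather than convergence to $0$, but this is repaired by simply replacing $\alpha$ with any larger rational so that $\alpha>\gamma$ always holds and the interpolation step applies.
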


\begin{proof}
Let $x \in V_r$ be such that $|\!|x|\!|_r \leq 1$ (and hence $|\!|x|\!|_\infty \leq 1$), and let $C_r \in \mbb{Q}_{\geq 0}$ be a constant such that $|\!|\nabla|\!|_r \leq p^{C_r}$. From the inequality $|\!|\nabla|\!|_r \leq p^{C_r}$ we deduce that
\[ |\!|k! f_k(\nabla)(x)|\!|_r \leq  p^{k C_r}. \]
On the other hand, since $\nabla$ extends to an $\epsilon$-analytic action on $V_\infty$, there exists $C_\epsilon > 0$ such that
\[ |\!| k! f_k(\nabla)(x)|\!|_\infty \leq p^{-v_p(k!) + C_\epsilon + k \epsilon} = p^{kC_r - (k C_r - C_\epsilon - k \epsilon + v_p(k!))} \]
for all $k \in \N$. Let $k$ be large enough such that $k C_r - C_\epsilon - k \epsilon + v_p(k!) > 0$, which is always possible as soon as $C_r \geq \epsilon$. Applying \eqref{Equationproperty} to the element $k! f_k(\nabla)(x)$ with $c = kC_r$ and $h = k C_r - C_\epsilon - k \epsilon + v_p(k!) > 0$ (which we may assume to be an integer) we deduce that, for any $\delta < 1$, there exists $s \in \N$ such that
\[ |\!| k! f_k(\nabla)(x)|\!|_s \leq p^{k C_r - \delta(k C_r - C_\epsilon - k \epsilon + v_p(k!))}. \]
Now let $\gamma > \epsilon$. We obtain
\begin{eqnarray} \label{eaea}
p^{-k \gamma} |\!|f_k(\nabla)(x)|\!|_s &\leq& p^{k C_r - \delta k C_r + \delta C_\epsilon + k \delta \epsilon - \delta v_p(k!) - k \gamma + v_p(k!)} \\
& \leq & p^{-k \left( (\delta - 1) C_r + (\gamma - \delta \epsilon) + (\delta - 1) \right) + \delta C_{\varepsilon}} \nonumber
\end{eqnarray}
where the last inequality follows from $v_p(k!) \leq k$. One can easily show that one can choose $\delta$ (that will depend on $\varepsilon$, $\gamma$ and $C_r$) in such a way that $ \left( (\delta - 1) C_r + (\gamma - \delta \epsilon) + (\delta - 1) \right) > 0$ which implies that \eqref{eaea} goes to $0$ as $k \to + \infty$. This finishes the proof.
\end{proof}

\begin{remark} \label{OverconvergenceOfActionRemark}
It is not necessary that the maps $V_0 \to V_1 \to \cdots \to V_{\infty}$ are injective, and the above proof still holds without this assumption.
\end{remark}

\section{\texorpdfstring{$p$}{p}-adic interpolation of the Gauss-Manin connection} \label{GMinterpolationSection}

This section is devoted to the proof of the assertion of Theorem \ref{SheafVersionMainThm} concerning the existence of the action of locally analytic functions on the space of nearly overconvergent modular forms. We will establish this by first proving a local version over some affinoid $\opn{Spa}(A, A^+) \subset \mathcal{X}$, and then explain how the construction globalises. We assume throughout that $\opn{Spa}(A, A^+)$ is the adic generic fibre of an open formal subscheme $\opn{Spf}A^+ \subset \mathfrak{X}$.

\subsection{The Gauss--Manin connection} \label{TheGMConnectionSubSec}

Recall from Appendix \ref{AppendixClassicalNHMFs} that $\pi_* \mathcal{O}_{P_{\opn{dR}}} = \opn{VB}^{\opn{can}}(\mathcal{O}_{\overline{P}})$ is naturally a $\mathcal{D}_X$-module, where $\pi \colon P_{\opn{dR}} \to X$ denotes the structural map and $\mathcal{O}_{\overline{P}}$ denotes sections of the lower-triangular Borel $\overline{P} \subset \opn{GL}_2$. The $(\mathfrak{g}, \overline{P})$-module $\mathcal{O}_{\overline{P}}$ carries some additional structure, namely $\overline{P}$ acts through \emph{algebra} automorphisms of $\mathcal{O}_{\overline{P}}$ and $\mathfrak{g}$ acts through \emph{derivations} $\mathcal{O}_{\overline{P}} \to \mathcal{O}_{\overline{P}}$. Therefore we obtain a derivation $\nabla \colon \pi_*\mathcal{O}_{P_{\opn{dR}}} \to \pi_*\mathcal{O}_{P_{\opn{dR}}}$ as the composition 
\begin{equation} \label{DerivationCompositionEqn}
\pi_*\mathcal{O}_{P_{\opn{dR}}} \to \pi_*\mathcal{O}_{P_{\opn{dR}}} \otimes_{\mathcal{O}_X} \Omega^1_{X/R}(\opn{log}D) \to \pi_*\mathcal{O}_{P_{\opn{dR}}}
\end{equation}
where the first map is induced from the $\mathcal{D}_X$-module structure, and the second is induced from the Kodaira--Spencer isomorphism $\Omega^1_{X/R}(\opn{log}D) \cong \omega_{\mathcal{E}} \otimes \omega_{\mathcal{E}^D}$, the adjoint map to the universal trivialisation $\pi^*(\omega_{\mathcal{E}} \otimes \omega_{\mathcal{E}^D}) \xrightarrow{\sim} \mathcal{O}_{P_{\opn{dR}}}$, and the multiplication structure on $\pi_*\mathcal{O}_{P_{\opn{dR}}}$.

Since $\pi$ is affine, the derivation $\nabla$ is equivalent to a derivation $\nabla \colon \mathcal{O}_{P_{\opn{dR}}} \to \mathcal{O}_{P_{\opn{dR}}}$. We also let $\nabla \colon \mathcal{O}_{P_{\opn{dR}}^{\opn{an}}} \to \mathcal{O}_{P_{\opn{dR}}^{\opn{an}}}$ denote the corresponding derivation on the associated adic space. This immediately induces a derivation
\[
\nabla \colon \mathcal{N}^{\dagger} \to \mathcal{N}^{\dagger}
\]
where $\mathcal{N}^{\dagger} = \opn{colim}_U \pi_* \mathcal{O}_U$ is the colimit over all quasi-compact strict open neighbourhoods of $\mathcal{IG}_{\infty}$ in $P_{\opn{dR}}^{\opn{an}}$ (with structural map $\pi \colon U \to \mathcal{X}$). This is compatible with the Atkin--Serre operator via the map to $p$-adic modular forms.

\subsubsection{Local description} \label{LocalDescriptionOfGM}

We note the following local description of $\nabla \colon \mathcal{O}_{P_{\opn{dR}}^{\opn{an}}} \to \mathcal{O}_{P_{\opn{dR}}^{\opn{an}}}$, which follows from the way the $\mathcal{D}_X$-module structure is constructed in the proof of Lemma \ref{FEnrichedLemmaAppendix}. Let $\opn{Spa}(A, A^+) \subset \mathcal{X}$ be a quasi-compact open affinoid subspace as above, over which $\mathcal{H}_{\mathcal{E}}$, $\omega_{\mathcal{E}}$, $\omega_{\mathcal{E}^D}$ trivialise. Let $\{ e, f \}$ denote a basis of $\mathcal{H}_{\mathcal{E}}$ respecting the Hodge filtration, and let $\bar{f} \in \omega_{\mathcal{E}}^{\vee}$ denote the image of $f$ under the projection $\mathcal{H}_{\mathcal{E}} \twoheadrightarrow \omega_{\mathcal{E}}^{\vee}$ (we also denote this projection by $\overline{(-)}$). Via the Kodaira--Spencer isomorphism, there exists a unique differential $\kappa \in \Omega^1_{A/R}(\opn{log}D)$ such that
\[
\overline{\nabla(e)} = \bar{f} \otimes \kappa \in \omega_{\mathcal{E}}^{\vee} \otimes \Omega^1_{A/R}(\opn{log}D) .
\]
Let $D \colon A \to A$ denote the derivation dual to $\kappa$, then we can write 
\[
\nabla_D( (f \; e) ) = (f \; e) \cdot \delta 
\]
for some $\delta \in \mathfrak{g}_A$. Let $F \in A[U, T_1, T_2, T_1^{-1}, T_2^{-1}] \subset \mathcal{O}_{P_{\opn{dR}}^{\opn{an}}}(P_{\opn{dR}, A}^{\opn{an}})$ be a polynomial, which we view as an element of $\mathcal{O}_{\overline{P}} \otimes_{\mbb{Q}_p} A = A[U, T_1, T_2, T_1^{-1}, T_2^{-1}]$ (here we write a general element in $\overline{P}$ as $\tbyt{T_2}{}{U}{T_1}$). Then the action of $\nabla$ is described as 
\[
\nabla(F)(U, T_1, T_2) = \left( D \cdot F(U, T_1, T_2) + \delta \star_l F(U, T_1, T_2) \right) \cdot T_1 T_2^{-1} 
\]
where $D \cdot F(U, T_1, T_2)$ is the application of the derivation $D$ on the coefficients, and $\star_l$ denotes the $\mathfrak{g}$-action on $\mathcal{O}_{\overline{P}}$. The extra factor $T_1T_2^{-1}$ arises from the second map in (\ref{DerivationCompositionEqn}).

\begin{example} \label{ExampleInNilpotentBasis}
Suppose that $\delta = \tbyt{0}{1}{0}{0}$. Then the action $\delta \star_l -$ is given by the differential $U T_1 T_2^{-1} \partial_{T_1} - U \partial_{T_2}$. 
\end{example}

\subsection{The local construction} \label{LocalConstructionOfGMsubsec}

Fix a quasi-compact open affinoid $\opn{Spa}(A, A^+) \subset \mathcal{X}$ as in \S \ref{LocalDescriptionOfGM}. Let $n \geq 1$ be an integer. In this subsection, we will prove the local version of Theorem \ref{SheafVersionMainThm} using results from \S \ref{SectionNOCMFs} and \S \ref{SectionPADICINTCONT}. The first step is to understand the derivation 
\[
\nabla \colon \mathcal{O}_{\mathcal{U}_{\opn{HT}, n}} \to \mathcal{O}_{\mathcal{U}_{\opn{HT}, n}} 
\]
where $\mathcal{U}_{\opn{HT}, n}$ is as in Definition \ref{DefOfOverconvergentExtension}. Note that, since $\mathcal{IG}_{\infty} \to \mathcal{X}_{\opn{ord}}$ is a pro-\'{e}tale $T(\mbb{Z}_p)$-torsor, this derivation extends uniquely to a derivation $\nabla \colon \mathcal{O}_{\mathcal{U}_{\opn{HT}, n, A_{\opn{ord}, \infty}}} \to \mathcal{O}_{\mathcal{U}_{\opn{HT}, n, A_{\opn{ord}, \infty}}}$, and via the decomposition in Lemma \ref{DecompositionIntoBallsLemma} and the local description in \S \ref{LocalDescriptionOfGM}, this decomposes as
\[
\nabla = \oplus_{\sigma_{\lambda} \in \mathscr{G}_n} \nabla^{\lambda} \colon \bigoplus_{\lambda \in T(\mbb{Z}/p^n \mbb{Z})} \mathcal{O}_{\mathcal{B}_{\infty}(\sigma_{\lambda}(g_\infty), p^{-n})} \to \bigoplus_{\lambda \in T(\mbb{Z}/p^n \mbb{Z})} \mathcal{O}_{\mathcal{B}_{\infty}(\sigma_{\lambda}(g_\infty), p^{-n})} .
\]

\begin{proposition} \label{PropExplicitForm}
Let $n \geq 1$ be an integer and $\sigma_\lambda \in \mathscr{G}_\infty$ corresponding to $\lambda = (\lambda_1, \lambda_2) \in T(\mbb{Z}_p)$. For ease of notation, set $\mathcal{B}^{\lambda} = \mathcal{B}_{\infty}(\sigma_{\lambda}(g_\infty), p^{-n})$. 
\begin{enumerate}
    \item The operator $\nabla^{\lambda} \colon \mathcal{O}_{\mathcal{B}^{\lambda}}(\mathcal{B}^{\lambda}) \to \mathcal{O}_{\mathcal{B}^{\lambda}}(\mathcal{B}^{\lambda})$ is integral, i.e. it preserves $\mathcal{O}^+_{\mathcal{B}^{\lambda}}(\mathcal{B}^{\lambda})$. Moreover, the operator $\nabla^{\lambda}$ has the following explicit description:
    via the $A_{\opn{ord}, \infty}^+$-algebra isomorphism
    \begin{equation} \label{OBlambdaiso}
    \mathcal{O}^+_{\mathcal{B}_{\lambda}}(\mathcal{B}^{\lambda}) \xrightarrow{\sim} A_{\opn{ord}, \infty}^+ \langle \frac{X-1}{p^n}, \frac{Y}{p^n}, \frac{Z-1}{p^n} \rangle 
    \end{equation}
    given by sending $T_1 \mapsto \sigma_{\lambda}(\alpha_{\infty}) X$, $U \mapsto \sigma_{\lambda}(\beta_{\infty}) X + \sigma_{\lambda}(\gamma_{\infty}) Y$, $T_2 \mapsto \sigma_{\lambda}(\gamma_{\infty}) Z X^{-1}$, the operator $\nabla^{\lambda}$ is an $R^+$-linear derivation such that
    \begin{itemize}
        \item $\nabla^{\lambda}(g) \equiv s_{\lambda} \cdot \theta(g)$ (mod $ p^n$) for some $s_{\lambda} \in \mbb{Z}_p^{\times}$ and all $g \in A_{\opn{ord}, \infty}^+$
        \item $\nabla^{\lambda}(X) \equiv Y$ (mod $p^n$), and $\nabla^{\lambda}(Y) = \nabla^{\lambda}(Z) = 0$
    \end{itemize}
    where $\theta \colon A_{\opn{ord}, \infty}^+ \to A_{\opn{ord}, \infty}^+$ is the Atkin--Serre differential operator.
    \item The operator $\nabla \colon \mathcal{O}_{\mathcal{U}_{\opn{HT}, n}}(\mathcal{U}_{\opn{HT}, n}) \to \mathcal{O}_{\mathcal{U}_{\opn{HT}, n}}(\mathcal{U}_{\opn{HT}, n})$ is integral, i.e. it preserves $\mathcal{O}^+_{\mathcal{U}_{\opn{HT}, n}}(\mathcal{U}_{\opn{HT}, n})$.
\end{enumerate}
\end{proposition}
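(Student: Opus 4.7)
The plan is to first work over $\mathcal{IG}_\infty$ (pulled back to our affinoid), where the Igusa basis $\{e_\infty, f_\infty\}$ of $\mathcal{H}_{\mathcal{E}}^+$ is available and satisfies $e = \alpha_\infty e_\infty$, $f = \beta_\infty e_\infty + \gamma_\infty f_\infty$. The prescribed substitution $T_1 = \sigma_\lambda(\alpha_\infty) X$, $U = \sigma_\lambda(\beta_\infty) X + \sigma_\lambda(\gamma_\infty) Y$, $T_2 = \sigma_\lambda(\gamma_\infty) Z X^{-1}$ is exactly the change of coordinates transporting the reference basis $\{e, f\}$ to the $\sigma_\lambda$-translated Igusa basis, normalised so that $Z$ encodes the central coordinate $T_1 T_2 = \sigma_\lambda(\alpha_\infty \gamma_\infty) Z$. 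Computing $\nabla$ in the original $(T_1, U, T_2)$-coordinates using the formula of \S\ref{LocalDescriptionOfGM} and then transporting it through this substitution reduces the problem to computing the matrix $\delta_\infty$ of $\nabla_D$ in the $\sigma_\lambda$-translated Igusa basis.

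The heart of the argument is to show that
\[
\nabla_D\big((f_\infty \; e_\infty)\big) \equiv (f_\infty \; e_\infty) \cdot \tbyt{0}{s \cdot \eta}{0}{0} \pmod{p^n}
\]
for some $s \in \mbb{Z}_p^\times$ and $\eta \in A_{\opn{ord}, \infty}^+$. The vanishing of the right column is classical: on the ordinary locus the unit root subspace of $\mathcal{H}_{\mathcal{E}}$ is horizontal for the Gauss--Manin connection (Dwork), and by construction $f_\infty$ lies exactly in the unit root, so $\nabla(f_\infty) = 0$ exactly. The vanishing of the upper-left entry modulo $p^n$ is the subtler point: the element $e_n = \opn{dlog} \circ \psi_1^{-1}(1)$ is obtained from a trivialisation of the connected part of $\mathcal{E}^D[p^n]$, so by Serre--Tate/Grothendieck--Messing theory the $p$-divisible group is canonically trivialised modulo $p^n$ on $\mathcal{IG}_n$, and via the crystalline identification with $\mathcal{H}_{\mathcal{E}}^+/p^n$ the Gauss--Manin connection applied to $e_n$ lands purely in the $f_n$-direction modulo $p^n$. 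The remaining off-diagonal entry is then the usual expression of the Kodaira--Spencer class in the Igusa basis, which is precisely the datum defining the Atkin--Serre operator $\theta$ on $A_{\opn{ord}, \infty}^+$.

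With $\delta_\infty$ in hand, apply the formula $\nabla(F) = (D \cdot F + \delta \star_l F) \cdot T_1 T_2^{-1}$ together with the gauge-transformation relation $\delta = g_\infty \delta_\infty g_\infty^{-1} + D(g_\infty) g_\infty^{-1}$ (where $g_\infty = \tbyt{\gamma_\infty}{0}{\beta_\infty}{\alpha_\infty}$), substitute the coordinates above (with $\sigma_\lambda$ applied), and simplify modulo $p^n$. The action $\delta \star_l$ on $\mathcal{O}_{\overline{P}}$ is a combination of $\partial_{T_1}$ and $\partial_{T_2}$ only (cf. Example \ref{ExampleInNilpotentBasis}), and in the new coordinates the contributions to $\nabla^\lambda(Y)$ and $\nabla^\lambda(Z)$ coming from $\delta \star_l$ cancel identically with the $D$-contributions, because $f_\infty$ is horizontal and $T_1 T_2 = \sigma_\lambda(\alpha_\infty \gamma_\infty) Z$ encodes an exactly horizontal section of the Igusa basis; this gives $\nabla^\lambda(Y) = \nabla^\lambda(Z) = 0$ exactly. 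The action on $X$ is read off as $\nabla^\lambda(X) \equiv Y \pmod{p^n}$ from the off-diagonal entry $s \eta$ of $\delta_\infty$, and the action on $A_{\opn{ord}, \infty}^+$-coefficients becomes $s_\lambda \theta$ modulo $p^n$ (with $s_\lambda \in \mbb{Z}_p^\times$ arising from the value of $T_1 T_2^{-1}$ at the centre of $\mathcal{B}^\lambda$ and the defining formula for $\theta$). Integrality of $\nabla^\lambda$ follows by expanding elements of $\mathcal{O}^+(\mathcal{B}^\lambda)$ as power series in $(X-1)/p^n$, $Y/p^n$, $(Z-1)/p^n$ and applying the formulas term by term. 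For part (2), integrality of $\nabla$ on $\mathcal{U}_{\opn{HT}, n}$ is obtained by pro-\'etale $\mathscr{G}_\infty$-descent along the decomposition $\mathcal{U}_{\opn{HT}, n} \times_{\mathcal{X}_{\opn{ord}}} \mathcal{IG}_\infty = \bigsqcup_\lambda \mathcal{B}^\lambda$. The main obstacle is establishing the form of $\delta_\infty$ modulo $p^n$, specifically the vanishing of its upper-left entry via Grothendieck--Messing; once this is secured, every other statement in the proposition follows by direct computation.
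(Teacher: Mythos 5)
Your strategy is the same as the paper's: pass to the Igusa frame $\{e_\infty, f_\infty\}$, identify the connection matrix $\delta_\infty$ there, gauge-transform into the coordinates $(X,Y,Z)$, and read off the formulas; integrality and part (2) then follow by expansion and pro\'{e}tale descent exactly as you say. The issue is in the ``heart of the argument''. You only establish $\delta_\infty \equiv \tbyt{0}{s\eta}{0}{0} \pmod{p^n}$, with the diagonal entry recording the $e_\infty$-component of $\nabla_D(e_\infty)$ controlled only modulo $p^n$ (via your Grothendieck--Messing argument at level $n$), yet you then assert $\nabla^\lambda(Y) = \nabla^\lambda(Z) = 0$ \emph{exactly}, attributing this to horizontality of $f_\infty$ alone. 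That attribution is not correct: writing $\delta_\infty = \tbyt{a}{b}{c}{d}$ in the convention $\nabla_D((f_\infty \; e_\infty)) = (f_\infty\; e_\infty)\cdot \delta_\infty$ and using the explicit $\star_l$-action from Appendix \ref{PrelimsOnFLAppendix}, one finds $\delta_\infty \star_l Y = -cZ' - d$ and $\delta_\infty \star_l (Z'X) = -aX - dZ'$. Horizontality of $f_\infty$ kills $a$ and $c$ (the first column), but the exact vanishing of $\nabla^\lambda(Y)$ and $\nabla^\lambda(Z)$ also requires $d = 0$ exactly, i.e.\ that $\nabla_D(e_\infty)$ has \emph{no} $e_\infty$-component at all, not merely one divisible by $p^n$. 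With only your mod-$p^n$ control you would get $\nabla^\lambda(Y) \equiv \nabla^\lambda(Z) \equiv 0 \pmod{p^n}$, which is weaker than the statement and would force you to re-examine the norm estimates feeding into Proposition \ref{Lemmaepsan1} (the perturbation there is measured in the weighted norm where $Y/p^n$ is a unit vector, so a defect of size $p^n$ in $\nabla^\lambda(Y)$ does not automatically give an admissible perturbation).

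The gap is repairable because the exact statement $d = 0$ is true and classical: it is the assertion that $\nabla(e_\infty)$ lands in $\mathcal{U}^{\opn{can}} \otimes \Omega^1$, which follows for instance from horizontality of the polarisation pairing together with $\nabla(f_\infty)=0$ and the constancy of $\langle e_\infty, f_\infty\rangle$ (pair the identity $\nabla_D(e_\infty) = b f_\infty + d e_\infty$ against $f_\infty$), or equivalently from the Tate-curve computation $\nabla(\omega_{\opn{can}}) = u_{\opn{can}} \otimes dq/q$. This is exactly what the paper uses: $\delta_\infty = \tbyt{0}{1}{0}{0}$ on the nose in the Igusa frame, with $D = s_\lambda\theta$, so that the only congruence entering the final statement is $X \equiv Z \equiv 1 \pmod{p^n}$. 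Once you replace your mod-$p^n$ input by this exact one, the rest of your argument goes through and coincides with the paper's proof.
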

\begin{proof}
First recall that for $\lambda = (\lambda_1, \lambda_2)$ we have $\sigma_{\lambda}(\alpha_{\infty}) = \lambda_1 \alpha_{\infty}$, $\sigma_{\lambda}(\beta_{\infty}) = \lambda_1 \beta_{\infty}$ and $\sigma_{\lambda}(\gamma_{\infty}) = \lambda_2 \gamma_{\infty}$. Via the local description in \S \ref{LocalDescriptionOfGM}, we can calculate the Gauss--Manin connection using the basis $\{\lambda_1^{-1} e_{\infty}, \lambda_2^{-1} f_{\infty} \}$ of $\mathcal{H}_{\mathcal{E}}$ over $A_{\opn{ord}, \infty}$.  Let $\kappa \in \Omega^1_{A_{\opn{ord}, \infty}/R}(\opn{log}D)$ denote the unique differential satisfying 
\[
 \overline{\nabla(\lambda_1^{-1}e_{\infty})} = \lambda_2^{-1}\bar{f}_{\infty} \otimes \kappa
\]
and let $D \colon A_{\opn{ord}, \infty} \to A_{\opn{ord}, \infty}$ denote the derivation dual to $\kappa$. It is well-known (see \cite{howe2020unipotent} for example) that $D = s_{\lambda} \theta$ for some $s_{\lambda} \in \mbb{Z}_p^{\times}$ (this factor arises from comparing the universal trivialisations over $\mathfrak{IG}_{\infty}$ via the polarisation on $\mathcal{E}$). Let $\delta \in \mathfrak{g}_{A_{\opn{ord}, \infty}}$ be the element such that
\[
 \nabla_{D}(\lambda_2^{-1}f_{\infty} \; \lambda_1^{-1}e_{\infty} ) = (\lambda_2^{-1}f_{\infty} \; \lambda_1^{-1}e_{\infty}) \cdot \delta 
\]
We have that $\delta = \tbyt{0}{1}{0}{0}$. The coordinates corresponding to this basis are given by $X, Y, Z'$ satisfying the identity
\[
\tbyt{T_2}{}{U}{T_1} = \tbyt{Z'}{}{Y}{X} \sigma_{\lambda}(g_{\infty}), \quad \quad g_{\infty} = \tbyt{\gamma_{\infty}}{}{\beta_{\infty}}{\alpha_{\infty}}
\]
and we find that the action of $\delta \star_l -$ on $\mathcal{O}_{\overline{P}} \otimes_{\mbb{Q}_p} A_{\opn{ord}, \infty}$ is given by $Y X (Z')^{-1} \partial_{X} - Y \partial_{Z'}$ (see Example \ref{ExampleInNilpotentBasis}). Set $Z = Z' X$. Then we calculate that
\begin{itemize}
\item $\nabla^{\lambda}(g) = (Dg) \cdot X (Z')^{-1} = s_{\lambda} \cdot \theta(g) X^2 Z^{-1}$ for any $g \in A_{\opn{ord}, \infty}$
    \item $\nabla^{\lambda}(X) = YX (Z')^{-1} \cdot X (Z')^{-1} = Y X^4 Z^{-2}$
    \item $\nabla^{\lambda}(Y) = 0$
    \item $\nabla^{\lambda}(Z) = \nabla^{\lambda}(Z' X) = (-Y X (Z')^{-1})X + Z' Y X^4 Z^{-2} = - Y X^3 Z^{-1} + Y X^3 Z^{-1} = 0$.
\end{itemize}
This completes the proof of the first part of the proposition since $X \equiv Z \equiv 1$ (mod $p^n$). Part (2) follows from the fact that $\mathfrak{IG}_{\infty} \to \mathfrak{X}_{\opn{ord}}$ is a proetale $T(\mbb{Z}_p)$-torsor, so it is enough to check integrality of $\nabla$ after base-change along this torsor.
\end{proof}

We are now in a position to apply the general results in \S \ref{SectionPADICINTCONT} as we have an integral derivation $\nabla$ which is congruent mod $p^n$ to a nilpotent derivation that extends the Atkin--Serre operator. Recall that the Atkin--Serre operator $\theta \colon A^+_{\infty} \to A^+_{\infty}$ extends to a $R^+$-algebra action of $C_{\opn{cont}}(\mbb{Z}_p, R^+)$. Set $\mathscr{N}_{\mathcal{U}_{\opn{HT}, n}} = \mathcal{O}_{\mathcal{U}_{\opn{HT}, n}}(\mathcal{U}_{\opn{HT}, n})$. We have the following proposition:

\begin{proposition} \label{GMinterpolationOverOrdProp}
    Let $\epsilon > 0$. Then there exists $n(\epsilon) > 0$ such that for any $n \geq n(\epsilon)$ there exists a unique continuous $R$-algebra action
    \[
    C_{\varepsilon}(\mbb{Z}_p, R) \times \mathscr{N}_{\mathcal{U}_{\opn{HT}, n}} \to \mathscr{N}_{\mathcal{U}_{\opn{HT}, n}}
    \]
    extending $\nabla \colon \mathscr{N}_{\mathcal{U}_{\opn{HT}, n}} \to \mathscr{N}_{\mathcal{U}_{\opn{HT}, n}}$.
\end{proposition}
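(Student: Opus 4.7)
The plan is to pass to the pro-\'etale cover $\mathcal{IG}_\infty \to \mathcal{X}_{\opn{ord}}$, where by Lemma \ref{DecompositionIntoBallsLemma} the pullback of $\mathcal{U}_{\opn{HT}, n}$ decomposes as a finite disjoint union $\bigsqcup_{\lambda \in T(\mbb{Z}/p^n\mbb{Z})} \mathcal{B}^\lambda$ of balls of radius $p^{-n}$, on which the Galois group $\mathscr{G}_\infty = T(\mbb{Z}_p)$ acts by permuting the $\mathcal{B}^\lambda$ and rotating each. Accordingly, the extension of $\nabla$ to this base change decomposes as $\bigoplus_\lambda \nabla^\lambda$. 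I would construct the $C_\varepsilon(\mbb{Z}_p, R)$-action on each $\mathcal{O}(\mathcal{B}^\lambda)$ separately, assemble them over $\lambda$, and then descend along the $\mathscr{G}_\infty$-action by invoking uniqueness.

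Using the isomorphism \eqref{OBlambdaiso} of Proposition \ref{PropExplicitForm}(1), identify $\mathcal{O}^+(\mathcal{B}^\lambda)$ with the ring $L_n = A^+_{\opn{ord}, \infty}\langle (X-1)/p^n, Y/p^n, (Z-1)/p^n \rangle$ of \S \ref{SectNilpOp}, so that $V_n = L_n[1/p] = \mathcal{O}(\mathcal{B}^\lambda)$. Under this identification Proposition \ref{PropExplicitForm}(1) says that $\nabla^\lambda$ is an integral $R^+$-linear derivation which agrees modulo $p^n$ with the nilpotent model derivation $\nabla_{s_\lambda \theta}$ of \S \ref{SectNilpOp} on the generators $X, Y, Z$ and on $A^+_{\opn{ord}, \infty}$; combined with the Leibniz rule, this forces the difference $D_\lambda \defeq \nabla^\lambda - \nabla_{s_\lambda \theta}$ to carry $L_n$ into $p^n L_n$, so that its operator norm on $V_n$ satisfies $|\!| D_\lambda |\!| \leq p^{-n}$.

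Since the Atkin--Serre derivation $\theta$ extends to a continuous $R^+$-algebra action of $C_{\opn{cont}}(\mbb{Z}_p, R^+)$ on $A^+_{\opn{ord}, \infty}$ (Theorem \ref{PropertiesOfpadicmfsThm}(1)), so does $s_\lambda\theta$ for every unit $s_\lambda \in \mbb{Z}_p^\times$ via the reparametrisation $\phi \mapsto \phi(s_\lambda \cdot -)$, with the same operator-norm bounds. Proposition \ref{Proplocanextension1} therefore yields a locally analytic action of $\nabla_{s_\lambda \theta}$ on $V_n$, whose norms $|\!| f_k(\nabla_{s_\lambda \theta}) |\!|$ are moreover independent of both $n$ (by Corollary \ref{Corollarycomparisonnorms}) and $\lambda$ (by the rescaling argument). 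For fixed $\varepsilon > 0$, Proposition \ref{Lemmaepsan1} applied with $T_1 = \nabla_{s_\lambda \theta}$ then furnishes an integer $N_\varepsilon \geq 1$---uniform in $\lambda$ and $n$---such that $\nabla_{s_\lambda \theta} + p^{N_\varepsilon} T$ extends to an $\varepsilon$-analytic action whenever $|\!| T |\!| \leq 1$. Taking $n(\varepsilon) \defeq N_\varepsilon$, for any $n \geq n(\varepsilon)$ we may write $\nabla^\lambda = \nabla_{s_\lambda \theta} + p^{N_\varepsilon} \cdot (p^{-N_\varepsilon} D_\lambda)$ with $|\!| p^{-N_\varepsilon} D_\lambda |\!| \leq p^{N_\varepsilon - n} \leq 1$, producing the sought $\varepsilon$-analytic extension on each $\mathcal{O}(\mathcal{B}^\lambda)$.

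Summing these actions over $\lambda$ yields a continuous $R$-algebra action of $C_\varepsilon(\mbb{Z}_p, R)$ on $\bigoplus_\lambda \mathcal{O}(\mathcal{B}^\lambda) = \mathcal{O}(\mathcal{U}_{\opn{HT}, n} \times_{\mathcal{X}_{\opn{ord}}} \mathcal{IG}_\infty)$. Because $\nabla$ descends to $\mathcal{U}_{\opn{HT}, n}$ it is automatically $\mathscr{G}_\infty$-equivariant, and applying the uniqueness clause of Lemma \ref{LemmaExtension} to the conjugated action $f \star^\sigma v \defeq \sigma^{-1}(f \star \sigma v)$ forces the $C_\varepsilon$-action to commute with $\mathscr{G}_\infty$ as well. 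It therefore descends to the $\mathscr{G}_\infty$-invariants, which are $\mathscr{N}_{\mathcal{U}_{\opn{HT}, n}} = \mathcal{O}(\mathcal{U}_{\opn{HT}, n})$; uniqueness of the extending action on $\mathscr{N}_{\mathcal{U}_{\opn{HT}, n}}$ is again Lemma \ref{LemmaExtension}. The main technical subtlety in this plan is the uniformity of $N_\varepsilon$ in $\lambda$ and $n$, which relies crucially on Corollary \ref{Corollarycomparisonnorms} together with the invariance of the continuous-action norms under the rescaling $\theta \mapsto s_\lambda \theta$ by units.
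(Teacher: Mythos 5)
Your proposal follows essentially the same route as the paper's own proof: decompose over the Igusa tower into the balls $\mathcal{B}^\lambda$, identify $\nabla^\lambda$ via Proposition \ref{PropExplicitForm} as a small perturbation of the nilpotent model $\nabla_{s_\lambda\theta}$ of \S \ref{SectNilpOp}, invoke Propositions \ref{Proplocanextension1} and \ref{Lemmaepsan1}, and descend using that $\mathfrak{IG}_\infty \to \mathfrak{X}_{\opn{ord}}$ is a pro\'etale $T(\mbb{Z}_p)$-torsor (the paper phrases the descent as an isometric embedding plus Lemma \ref{LemmaExtension} rather than Galois-equivariance of the action, but these are equivalent). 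Your attention to the uniformity of $N_\varepsilon$ in $\lambda$ and $n$ is correct and is implicit in the paper.

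One quantitative step deserves more care than you give it: the claim $|\!|D_\lambda|\!| \leq p^{-n}$ for $D_\lambda = \nabla^\lambda - \nabla_{s_\lambda\theta}$. Knowing only that $\nabla^\lambda(X) - Y \in p^n L_n$ (which is all that the \emph{statement} of Proposition \ref{PropExplicitForm}(1) asserts) is not enough, because the Banach norm on $L_n$ is computed in the rescaled coordinate $\tilde{X} = (X-1)/p^n$, and $D_\lambda(\tilde{X}) = D_\lambda(X)/p^n$ then only lands in $L_n$, giving merely $|\!|D_\lambda|\!| \leq 1$. What saves the argument is the explicit formula in the \emph{proof} of Proposition \ref{PropExplicitForm}: $\nabla^\lambda(X) - Y = Y(X^4Z^{-2}-1)$, and both $Y$ and $X^4Z^{-2}-1$ lie in $p^nL_n$, so in fact $D_\lambda(X) \in p^{2n}L_n$ and hence $D_\lambda(\tilde{X}) \in p^nL_n$; together with $D_\lambda(Y)=D_\lambda(Z)=0$ and $D_\lambda(A^+_{\opn{ord},\infty}) \subset p^nA^+_{\opn{ord},\infty}$, the Leibniz rule then does give $|\!|D_\lambda|\!| \leq p^{-n}$. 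So your conclusion is correct, but you should cite the refined congruence for $\nabla^\lambda(X)$ rather than the bare statement of the proposition.
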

\begin{proof}
    It is enough to check this for the operators $\nabla^{\lambda}$ introduced at the start of this section. Indeed, the operator $\nabla = \oplus_{\lambda} \nabla^{\lambda}$ is Galois invariant and the map $\mathscr{N}_{\mathcal{U}_{\opn{HT}, n}} \hookrightarrow \mathscr{N}_{\mathcal{U}_{\opn{HT}, n}} \hatot_{A_{\opn{ord}}} A_{\opn{ord}, \infty}$ is an isometry (as $\mathfrak{IG}_{\infty} \to \mathfrak{X}_{\opn{ord}}$ is a pro\'{e}tale $T(\mbb{Z}_p)$-torsor). In this case, we have that
    \begin{itemize}
        \item By Proposition \ref{PropExplicitForm}, the derivation $\nabla^{\lambda}$ is congruent modulo $p^n$ to the operator ``$\nabla_{s_{\lambda} \theta}$'' defined in \S \ref{SectNilpOp}, extending the derivation $s_{\lambda} \theta$;
        
        \item The derivation $s_{\lambda}\theta$ extends to an action of $C_{\opn{cont}}(\mbb{Z}_p, R^+)$.
    \end{itemize}
    Therefore the result follows from Propositions \ref{Lemmaepsan1} and \ref{Proplocanextension1}. The action is unique because $\mathscr{N}_{\mathcal{U}_{\opn{HT}, n}}$ is a $\mbb{Q}_p$-Banach space and we can compute the action using Mahler expansions. 
\end{proof}

We now overconverge the above proposition. For any $n \geq 1$, let $\mathcal{U}_n$ denote an overconvergent extension of $\mathcal{U}_{\opn{HT}, n}$ as in Definition \ref{DefOfOverconvergentExtension}, and set $\mathcal{U}_{n, r} = \mathcal{U}_n \cap P_{\opn{dR}, A_r}^{\opn{an}}$. Without loss of generality, we may assume that $\mathcal{U}_n = \mathcal{U}_{n, 1}$. Set $\mathscr{N}^{\dagger}_{\mathcal{U}_{n,r}} = \mathcal{O}_{\mathcal{U}_{n,r}}(\mathcal{U}_{n, r})$, and recall from \S \ref{OverconvergentNBHDSsection} that we have a chain of restriction maps 
\begin{equation} \label{ChainOfNdaggerOC}
\mathscr{N}^{\dagger}_{\mathcal{U}_n} = \mathscr{N}^{\dagger}_{\mathcal{U}_{n,1}} \to \mathscr{N}^{\dagger}_{\mathcal{U}_{n,2}} \to \cdots \to \mathscr{N}_{\mathcal{U}_{\opn{HT}, n}}
\end{equation}
induced from the inclusions $\mathcal{U}_{\opn{HT, n}} \subset \cdots \subset \mathcal{U}_{n, 2} \subset \mathcal{U}_{n, 1} = \mathcal{U}_n$. Recall that Corollary \ref{OCcorollary} holds for this chain of maps.

\begin{proposition} \label{GeneralUVepsilonaction}
Let $\varepsilon > 0$. Then for any quasi-compact strict open neighbourhood $U$ of $\mathcal{IG}_{\infty, A}$ in $P_{\opn{dR}, A}^{\opn{an}}$, there exists a quasi-compact strict open neighbourhood $V \subseteq U$ of $\mathcal{IG}_{\infty, A}$ in $P_{\opn{dR}, A}^{\opn{an}}$ and a unique continuous $R$-linear action 
    \[
    C_{\varepsilon}(\mbb{Z}_p, R) \times \mathscr{N}^{\dagger}_{U} \to \mathscr{N}^{\dagger}_{V} 
    \]
    extending the action of polynomial functions in $C_{\varepsilon}(\mbb{Z}_p, R)$ induced from the operator $\nabla \colon \mathscr{N}^{\dagger}_{U} \to \mathscr{N}^{\dagger}_{U} \to \mathscr{N}^{\dagger}_{V}$. Moreover, these actions are all compatible if one changes $\varepsilon$, $U$ or $V$.
\end{proposition}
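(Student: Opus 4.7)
The plan is to combine the ordinary-locus result (Proposition \ref{GMinterpolationOverOrdProp}) with the abstract overconvergence mechanism (Proposition \ref{OverconvergenceOfActionProp}), whose key hypothesis (\ref{Equationproperty}) is exactly what Corollary \ref{OCcorollary} provides. Because Proposition \ref{OverconvergenceOfActionProp} inevitably enlarges the analyticity parameter slightly, I would first fix some $\varepsilon' \in (0, \varepsilon)$ and let $n(\varepsilon')$ be the integer produced by Proposition \ref{GMinterpolationOverOrdProp}, so that for any $n \geq n(\varepsilon')$ the derivation $\nabla$ extends to a continuous $R$-algebra action of $C_{\varepsilon'}(\mbb{Z}_p, R)$ on $\mathscr{N}_{\mathcal{U}_{\opn{HT}, n}}$.

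By the cofinality statement of Proposition \ref{PropositionOCneighbourhoods}, I can then choose such an $n$, an overconvergent extension $\mathcal{U}_n$ of $\mathcal{U}_{\opn{HT}, n}$, and an integer $r \geq 1$ such that $\mathcal{U}_{n, r} \subseteq U$. Setting
\[
V_k \defeq \mathscr{N}^{\dagger}_{\mathcal{U}_{n, r+k}} \quad (k \geq 0), \qquad V_{\infty} \defeq \mathscr{N}_{\mathcal{U}_{\opn{HT}, n}},
\]
Corollary \ref{OCcorollary} ensures that the chain $\{V_k\}$ satisfies property (\ref{Equationproperty}), and by construction $\nabla$ extends to an $\varepsilon'$-analytic action on $V_\infty$. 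Applying Proposition \ref{OverconvergenceOfActionProp} with $\epsilon = \varepsilon'$ and $\gamma = \varepsilon$ then yields an integer $s \geq 0$ and a continuous $R$-linear action
\[
C_{\varepsilon}(\mbb{Z}_p, R) \times \mathscr{N}^{\dagger}_{\mathcal{U}_{n, r}} \to \mathscr{N}^{\dagger}_{\mathcal{U}_{n, r+s}}
\]
which extends the polynomial action induced by $\nabla$. Setting $V = \mathcal{U}_{n, r+s} \subseteq U$ produces the desired action.

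For uniqueness, I would invoke Lemma \ref{UniquenessOfActionsLemma}(1): polynomial functions are dense in $C_{\varepsilon}(\mbb{Z}_p, R)$ by Proposition \ref{PropAmice}, and their action is pinned down by $\nabla$, so any continuous $R$-linear action extending the polynomial action into a Banach target is unique. Compatibility under changes of $\varepsilon$, $U$, or $V$ follows formally from this uniqueness, since after restriction any two candidate actions agree on the dense subspace of polynomials and therefore on all of $C_{\varepsilon}$ by continuity.

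The main technical obstacles have already been absorbed elsewhere: the ordinary-locus extension in Proposition \ref{GMinterpolationOverOrdProp} relies on the explicit nilpotent form of $\nabla$ modulo $p^n$ from Proposition \ref{PropExplicitForm} combined with the abstract machinery of \S \ref{SectionPADICINTCONT}, while Corollary \ref{OCcorollary} controls the norm behaviour of the restriction maps via powers of the Hasse invariant. The remaining step here is essentially bookkeeping: threading the arbitrary strict neighbourhood $U$ through the explicit cofinal system $\{\mathcal{U}_{n, r}\}$ and absorbing the forced slack $\gamma > \epsilon$ in Proposition \ref{OverconvergenceOfActionProp} by working initially with $\varepsilon' < \varepsilon$.
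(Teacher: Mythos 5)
Your proposal is correct and follows essentially the same route as the paper: the paper likewise combines Proposition \ref{GMinterpolationOverOrdProp} (with the specific choice $\varepsilon' = \varepsilon/2$) with Proposition \ref{OverconvergenceOfActionProp} applied to the chain $\mathscr{N}^{\dagger}_{\mathcal{U}_{n,r}} \to \cdots \to \mathscr{N}_{\mathcal{U}_{\opn{HT},n}}$, whose hypothesis is supplied by Corollary \ref{OCcorollary}, and deduces uniqueness and compatibility from density of polynomials. Your explicit appeal to Lemma \ref{UniquenessOfActionsLemma} for the uniqueness step is a slightly more careful rendering of what the paper leaves implicit.
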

\begin{proof}
Let $n \geq 1$ be such that $n \geq n(\epsilon / 2)$ as in Proposition \ref{GMinterpolationOverOrdProp} and such that $\mathcal{U} \subset U$ for some overconvergent extension $\mathcal{U}$ of $\mathcal{U}_{\opn{HT}, n}$ (i.e., we can find a sufficiently large integer $n$ such that $U$ contains an overconvergent extension of $\mathcal{U}_{\opn{HT}, n}$; this is always possible since overconvergent extensions are cofinal by Corollary \ref{OCextensionsarecofinalCorollary}). Then, by Proposition \ref{OverconvergenceOfActionProp}, there exists some $r>0$ such that $\nabla$ extends to an $\epsilon$-analytic function on $\mathscr{N}^\dagger_{\mathcal{U}_{n, r}}$. This implies the result with $V = \mathcal{U}_{n, r}$. The last part follows from the unicity of the action.
\end{proof}

\subsection{The global construction}

We now explain how the construction in the previous subsection globalises. Let $U$ be any quasi-compact open strict neighbourhood of $\mathcal{IG}_{\infty}$ in $P_{\opn{dR}}^{\opn{an}}$ with structural map $\pi \colon U \to \mathcal{X}$. Set $\mathcal{N}^{\dagger}_U = \pi_* \mathcal{O}_U$. Let $\opn{Spa}(A, A^+) \subset \mathcal{X}$ be a quasi-compact open affinoid as in the previous section. Let $U_A = U \times_{\mathcal{X}} \opn{Spa}(A, A^+) \subset P_{\opn{dR}, A}^{\opn{an}}$ and note that $U_A$ is a quasi-compact strict open neighbourhood of $\mathcal{IG}_{\infty, A}$ in $P_{\opn{dR}, A}^{\opn{an}}$. We have $\mathcal{N}^{\dagger}_{U}(\opn{Spa}(A, A^+)) = \mathcal{O}_{U_A}(U_A)$.

By Proposition \ref{GeneralUVepsilonaction} (and passing to the limit as $\varepsilon \to 0$), there exists a continuous $R$-linear action
\[
C^{\opn{la}}(\mbb{Z}_p, R) \times \mathcal{N}^{\dagger}_{U}(\opn{Spa}(A, A^+)) \to \mathcal{N}^{\dagger}(\opn{Spa}(A, A^+))
\]
which is compatible with changing $U$. Furthermore, by the unicity property in Proposition \ref{GeneralUVepsilonaction}, this action is functorial in $\opn{Spa}(A, A^+)$. Since the opens $\opn{Spa}(A, A^+) \subset \mathcal{X}$ satisfying the conditions in the previous subsection are stable under finite intersections and cover $\mathcal{X}$, we obtain an induced action
\[
C^{\opn{la}}(\mbb{Z}_p, R) \times \opn{H}^0(\mathcal{X}, \mathcal{N}^{\dagger}) \to \opn{H}^0(\mathcal{X}, \mathcal{N}^{\dagger})
\]
extending the action of $\nabla$. This action is unique on global sections because $\mathscr{N}^{\dagger} = \opn{H}^0(\mathcal{X}, \mathcal{N}^{\dagger}) = \opn{colim}_U \opn{H}^0(U, \mathcal{O}_{P_{\opn{dR}}^{\opn{an}}})$ as $U$ runs over all quasi-compact open strict neighbourhoods of $\mathcal{IG}_{\infty}$ (the topological space $\mathcal{X}$ is quasi-compact and satisfies condition (4) in \cite[Tag 009F]{stacks-project}), and one can choose a cofinal system of quasi-compact open neighbourhoods such that the transition maps in this colimit are injective (see Proposition \ref{ReductionStructurePdRProposition}). One then applies Lemma \ref{UniquenessOfActionsLemma}. This completes the proof of the locally analytic action in Theorem \ref{SheafVersionMainThm}.

\section{Additional structures on nearly overconvergent modular forms} \label{AdditionalStructuresSection}

In this section we study various natural structures on the space of nearly overconvergent modular forms. We start by showing that $\mathcal{N}^\dagger$ is equipped with a natural action of $T(\mbb{Z}_p)$ and define (nearly) overconvergent modular forms of weight $\kappa$ as the $(-w_0\kappa)$-isotypic component for this action, where $w_0$ denotes the longest Weyl element of $\opn{GL}_2$. We then define a filtration on $\mathcal{N}^\dagger$ and the $U_p$-operator, and study how all of these operators interact with each other. We will finally conclude with the construction of the overconvergent projector, the relation with $p$-depletion, and a comparison with the spaces of nearly overconvergent modular forms introduced in \cite{AI_LLL}.

\subsection{The group action}

The definition of the $T(\mbb{Z}_p)$-action on (nearly) overconvergent modular forms is almost immediate from the construction. Indeed, by viewing $T(\mbb{Z}_p)$ as a subgroup of $T^{\opn{an}} \subset P^{\opn{an}}$, we obtain an action of $T(\mbb{Z}_p)$ on $P_{\opn{dR}}^{\opn{an}}$. Since $\mathcal{IG}_{\infty}$ is stable under this action, we see that the $T(\mbb{Z}_p)$-action on $P_{\opn{dR}}^{\opn{an}}$ maps any (quasi-compact) strict open neighbourhood of $\mathcal{IG}_{\infty}$ into another (quasi-compact) strict open neighbourhood; hence we obtain an action of $T(\mbb{Z}_p)$ on $\mathcal{N}^{\dagger}$. By exactly the same argument for $M_{\opn{dR}}^{\opn{an}}$, we also obtain a $T(\mbb{Z}_p)$-action on $\mathcal{M}^{\dagger}$, and these actions are compatible via the natural maps $\mathcal{M}^{\dagger} \to \mathcal{N}^{\dagger} \to \mathcal{M}$. Here $\mathcal{M}^{\dagger} \defeq \opn{colim}_U \pi_* \mathcal{O}_U$ where the colimit runs over all strict open neighbourhoods of $\mathcal{IG}_{\infty}$ in $M_{\opn{dR}}^{\opn{an}}$ (with structural map $\pi \colon U \to \mathcal{X}$).  

\begin{definition} \label{DefNearlyOCModFormsWeight}
Let $\kappa \colon T(\mbb{Z}_p) \to R^\times$ be a locally analytic character. We define the sheaf of nearly overconvergent modular forms of weight $\kappa$ as
\[ \mathcal{N}^\dagger_\kappa := \mathcal{N}^\dagger[-w_0\kappa] = \opn{Hom}_{T(\mbb{Z}_p)}(-w_0\kappa, \mathcal{N}^{\dagger}), \]
i.e. the $(-w_0\kappa)$-isotypic part for the action of $T(\mbb{Z}_p)$ on $\mathcal{N}^\dagger$. We define $\mathcal{M}^\dagger_\kappa$ analogously and we set $\mathscr{N}^{\dagger}_{\kappa}$ (resp. $\mathscr{M}^{\dagger}_{\kappa}$) to be the global sections of $\mathcal{N}^{\dagger}_{\kappa}$ (resp. $\mathcal{M}^{\dagger}_{\kappa}$).
\end{definition}

We note that our space of overconvergent modular forms of a specified weight agrees with the construction in \cite{HaloSpectral}. We begin with the following lemma:

\begin{lemma} \label{ReductionsOfStructureOfMdRLemma}
    For every integer $r \geq 1$, $M_{\opn{dR}}^{\opn{an}} \times_{\mathcal{X}} \mathcal{X}_r$ has a reduction of structure to an \'{e}tale torsor $\mathcal{F}_r \to \mathcal{X}_r$ for the group
    \[
    \mathcal{T}_r \defeq \mbb{Z}_p^{\times} \left( 1 + p^{r+1 - \frac{1}{p-1}} \mbb{G}_a^+ \right) \times \mbb{Z}_p^{\times} \left( 1 + p^{r+1 - \frac{1}{p-1}} \mbb{G}_a^+ \right) \subset \mbb{G}_m^{\opn{an}} \times \mbb{G}_m^{\opn{an}} = T
    \]
    where $\mbb{G}_a^+ = \opn{Spa}(\mbb{Q}_p\langle t \rangle, \mbb{Z}_p\langle t \rangle )$ denotes the unit ball. Furthermore, the torsors $\{ \mathcal{F}_r \}_{r \geq 1}$ form a cofinal system of strict quasi-compact open neighbourhoods of $\mathcal{IG}_{\infty}$ in $M_{\opn{dR}}^{\opn{an}}$.
\end{lemma}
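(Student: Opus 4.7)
The plan is first to describe $\mathcal{F}_r$ moduli-theoretically using the canonical subgroup and the dlog map, and then to establish cofinality by comparison with the Igusa tower. Since $\overline{B} \twoheadrightarrow T$ is the quotient by the unipotent radical, $M_{\opn{dR}} = P_{\opn{dR}} \times^{\overline{B}} T$ is the $T$-torsor over $\mathcal{X}$ parameterising frames of the two graded pieces of the Hodge filtration on $\mathcal{H}_{\mathcal{E}}$; under the weight conventions of \S\ref{NotationAndConventionsSec}, this amounts to a pair $(s_1, s_2)$ of trivialisations of $\omega_{\mathcal{E}}$ and $\omega_{\mathcal{E}^D}$, respectively. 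Over the ordinary locus, $\mathcal{IG}_\infty$ embeds in $M_{\opn{dR}}^{\opn{an}}$ via the dlog of the universal isomorphisms $\psi_1, \psi_2$, as in \S\ref{TheSettingSubSec}.

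On $\mathcal{X}_r$ one has the canonical subgroup $C_{r+1} \subset \mathcal{E}[p^{r+1}]$ of order $p^{r+1}$ (by the construction of Fargues, or equivalently Andreatta--Iovita--Pilloni), and a standard computation of the dlog map shows that it induces isomorphisms
\[
C_{r+1}^D \otimes_{\mbb{Z}_p} \mathcal{O}_{\mathcal{X}_r}^+ / p^{r+1 - 1/(p-1)} \xrightarrow{\sim} \omega_{\mathcal{E}}^+ / p^{r+1 - 1/(p-1)},
\]
and analogously for $\omega_{\mathcal{E}^D}^+$ via $C_{r+1}$; the exponent $r+1 - 1/(p-1)$ combines the defining inequality of $\mathcal{X}_r$ with the radius of convergence of the formal-group logarithm. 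I would then define $\mathcal{F}_r \subset M_{\opn{dR}}^{\opn{an}} \times_{\mathcal{X}} \mathcal{X}_r$ as the locus on which $(s_1, s_2)$ reduces modulo $p^{r+1 - 1/(p-1)}$ to the dlog of some generator of $C_{r+1}^D$ (respectively $C_{r+1}$), up to the $\mbb{Z}_p^\times$-ambiguity in the choice of generator. This is a quasi-compact rational open subspace, and the moduli description exhibits $\mathcal{F}_r \to \mathcal{X}_r$ as a torsor in the \'{e}tale topology for the $p$-adic Lie group $\mathcal{T}_r$: \'{e}tale-locally on $\mathcal{X}_r$ one may choose a trivialisation of $C_{r+1}$, and the residual freedom in $(s_1, s_2)$ is then precisely by $\mathcal{T}_r$.

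For cofinality, $\mathcal{IG}_\infty$ identifies with $\varprojlim_r \mathcal{F}_r$, since the trivialisations $\psi_1, \psi_2$ refine the canonical-subgroup trivialisations to arbitrary precision, so each $\mathcal{F}_r$ is a strict open neighbourhood. Conversely, let $U \subset M_{\opn{dR}}^{\opn{an}}$ be a quasi-compact strict open neighbourhood of $\mathcal{IG}_\infty$; since $U$ is quasi-compact, its image in $\mathcal{X}$ lies in some $\mathcal{X}_{r_0}$, and working \'{e}tale-locally on $\mathcal{X}_{r_0}$ while trivialising the $T$-torsor $M_{\opn{dR}}^{\opn{an}}$, the open $U$ contains a union of small discs about the Igusa points in $T^{\opn{an}}$; for $r \gg r_0$, the $\mathcal{T}_r$-translates comprising $\mathcal{F}_r$ shrink inside these discs, giving $\mathcal{F}_r \subset U$. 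The main obstacle will be establishing the precise approximation radius $p^{r+1 - 1/(p-1)}$: this reduces to the standard but delicate analysis of the Hodge--Tate/dlog map on the canonical subgroup, with the correction $1/(p-1)$ reflecting the radius of convergence of $\log$ on the formal multiplicative group.
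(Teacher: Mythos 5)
Your construction via the canonical subgroup and the $\opn{dlog}$ map is exactly the content of the references the paper invokes: its proof of this lemma is a one-line citation to \cite{Pilloni}, \cite{AIS} and \cite[Proposition 5.15]{BoxerPilloni}, where $\mathcal{F}_r$ is built precisely as you describe (locus where the frames of $\omega_{\mathcal{E}}$ and $\omega_{\mathcal{E}^D}$ agree with $\opn{dlog}$ of generators of the canonical subgroup and its dual modulo $p^{r+1-\frac{1}{p-1}}$), with cofinality following from the same compactness argument. So the proposal is correct and takes essentially the same approach; the one point you rightly flag as delicate, the precise approximation radius $p^{r+1-\frac{1}{p-1}}$, is exactly what is imported from those references rather than re-proved.
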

\begin{proof}
    This follows directly from the results in \cite{Pilloni} and \cite{AIS} (see also \cite[Proposition 5.15]{BoxerPilloni}). 
\end{proof}

This leads to the following proposition.

\begin{proposition} \label{PropLocallyFree}
Let $\kappa: T(\mbb{Z}_p) \to R^\times$ be a locally analytic character. Then $\mathcal{M}^\dagger_\kappa$ is a locally free $\mathcal{O}_{\mathcal{X}}^{\dagger}$-module of rank one, where $\mathcal{O}_{\mathcal{X}}^{\dagger} \defeq \opn{colim}_r \mathcal{O}_{\mathcal{X}_r}$. Moreover, it is equal to the sheaf of weight $\kappa$ overconvergent modular forms as in \cite{HaloSpectral}.\footnote{Note that, in \cite{HaloSpectral}, the authors define overconvergent modular forms with weight given by a locally analytic character on $\mbb{Z}_p^{\times}$, which implicitly uses the polarisation on $\mathcal{E}$. However, the extension to characters of $T(\mbb{Z}_p)$ easily follows from their methods.} 
\end{proposition}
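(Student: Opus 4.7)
The plan is to use Lemma \ref{ReductionsOfStructureOfMdRLemma} to reduce the computation of $\mathcal{M}^{\dagger}_{\kappa}$ to a torsor calculation on each of the neighbourhoods $\mathcal{F}_r$, and then to identify the result with the standard overconvergent modular sheaves of \cite{HaloSpectral}.

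First, since the $\{\mathcal{F}_r\}_{r \geq 1}$ form a cofinal system of strict quasi-compact open neighbourhoods of $\mathcal{IG}_{\infty}$ in $M_{\opn{dR}}^{\opn{an}}$, one has
\[
\mathcal{M}^{\dagger} = \opn{colim}_{r \geq 1} \pi_{r,*}\mathcal{O}_{\mathcal{F}_r},
\]
where $\pi_r \colon \mathcal{F}_r \to \mathcal{X}_r$ is the structural map, and the colimit is taken along the restriction maps. Since the formation of $(-w_0\kappa)$-isotypic components commutes with filtered colimits, it suffices to show that for all sufficiently large $r$, the $T(\mbb{Z}_p)$-isotypic part $(\pi_{r,*}\mathcal{O}_{\mathcal{F}_r})[-w_0 \kappa]$ is a locally free $\mathcal{O}_{\mathcal{X}_r}$-module of rank one.

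Next, since $\kappa$ is locally analytic, one can find an integer $r_0$ such that $\kappa$ is analytic on $(1 + p^{r_0}\mbb{Z}_p)^2 \subset T(\mbb{Z}_p)$. For every $r \geq r_0$, one can then extend $\kappa$ uniquely to an analytic character
\[
\tilde{\kappa} \colon \mathcal{T}_r \to R^{\times}
\]
by extending the restriction of $\kappa$ to the pro-$p$ part to the analytic open ball factor $\bigl(1 + p^{r+1-\frac{1}{p-1}} \mbb{G}_a^+\bigr)^2$, and keeping the action of the finite order piece $(\mbb{Z}_p^{\times})^{\opn{tors}}_{\opn{coset reps}}$ unchanged. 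Because $T(\mbb{Z}_p)$ is Zariski-dense in $\mathcal{T}_r$, the $T(\mbb{Z}_p)$-isotypic component for $-w_0\kappa$ coincides with the $\mathcal{T}_r$-isotypic component for $-w_0\tilde{\kappa}$. Since $\mathcal{F}_r \to \mathcal{X}_r$ is an étale $\mathcal{T}_r$-torsor, the sheaf
\[
\mathcal{L}_{\kappa, r} \defeq \opn{Hom}_{\mathcal{T}_r}\bigl(-w_0\tilde{\kappa}, \pi_{r,*}\mathcal{O}_{\mathcal{F}_r}\bigr)
\]
is a line bundle on $\mathcal{X}_r$ by descent for étale torsors, and is precisely the sheaf of weight $\kappa$ overconvergent modular forms constructed (on the appropriate radius of overconvergence) in \cite{HaloSpectral}.

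Taking the colimit over $r \geq r_0$ yields that $\mathcal{M}^\dagger_\kappa = \opn{colim}_{r \geq r_0} \mathcal{L}_{\kappa, r}$ is a locally free $\mathcal{O}_{\mathcal{X}}^{\dagger}$-module of rank one, and agrees with the sheaf of overconvergent forms of weight $\kappa$ in loc.\ cit. The main subtleties are the analytic extension of $\kappa$ to $\mathcal{T}_r$ (which requires choosing $r$ large enough in terms of the radius of local analyticity of $\kappa$) and the identification with the construction of \cite{HaloSpectral}; the latter amounts to matching the torsor $\mathcal{F}_r$ with their analogous torsor obtained from the canonical subgroup and the Hodge--Tate map, which reduces to comparing two reductions of structure of the same Hodge filtration bundle over the overconvergent neighbourhoods of the ordinary locus, cf.\ Lemma \ref{ReductionsOfStructureOfMdRLemma}.
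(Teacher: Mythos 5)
Your proposal is correct and follows essentially the same route as the paper: the paper's proof is a two-line appeal to the fact (Lemma \ref{ReductionsOfStructureOfMdRLemma}) that the \'{e}tale $\mathcal{T}_r$-torsors $\mathcal{F}_r$ form a cofinal system of strict neighbourhoods of $\mathcal{IG}_{\infty}$ in $M_{\opn{dR}}^{\opn{an}}$, and you have simply filled in the standard details (commuting the isotypic projector past the filtered colimit, extending $\kappa$ analytically to $\mathcal{T}_r$ for $r$ large, and descent along the torsor). No gaps.
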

\begin{proof}
As above, $\mathcal{F}_r \to \mathcal{X}_r$ denotes the \'{e}tale $\mathcal{T}_r$-torsor providing a reduction of structure of $M_{\opn{dR}}^{\opn{an}} \times_{\mathcal{X}} \mathcal{X}_r$. The proposition follows immediately from the fact that the torsors $\{\mathcal{F}_r\}_{r \geq 1}$ form a cofinal system of quasi-compact strict open neighbourhoods of $\mathcal{IG}_{\infty}$ in $M_{\opn{dR}}^{\opn{an}}$.
\end{proof}

We finish this subsection by describing how the locally analytic action in \S \ref{GMinterpolationSection} interacts with the group action.

\begin{lemma} \label{IntertwiningOFGMandZpLemma}
    Let $\phi \in C^{\opn{la}}(\mbb{Z}_p, R)$. Then for any $t = \opn{diag}(t_1, t_2) \in T(\Z_p)$, we have 
    \[
    t ~ \circ ~ \phi = \phi( t_2^{-1} \cdot - \cdot t_1) ~ \circ ~  t
    \]
    as endomorphisms of $\mathscr{N}^{\dagger}$.
\end{lemma}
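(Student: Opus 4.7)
The strategy is a standard density-and-multiplicativity reduction to the single case where $\phi$ is the identity function, at which point the identity becomes the classical weight-shift property of the Gauss--Manin connection.

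\textbf{Reduction to a generating set.} The action of $C^{\opn{la}}(\mbb{Z}_p, R)$ on $\mathscr{N}^{\dagger}$ is an $R$-algebra action (Theorem \ref{SheafVersionMainThm}), so for any two functions $\phi_1, \phi_2$ one has $(\phi_1 \phi_2) \star v = \phi_1 \star (\phi_2 \star v)$. The map
\[
\tau_t \colon C^{\opn{la}}(\mbb{Z}_p, R) \to C^{\opn{la}}(\mbb{Z}_p, R), \quad \phi \mapsto \phi(t_2^{-1} \cdot - \cdot t_1)
\]
is a continuous $R$-algebra automorphism (note $t_2^{-1} x t_1 = t_1 t_2^{-1} x$ since $\mbb{Z}_p$ is commutative). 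Consequently, the set of $\phi \in C^{\opn{la}}(\mbb{Z}_p, R)$ satisfying $t \circ \phi = \tau_t(\phi) \circ t$ is an $R$-subalgebra. Moreover, since the action of $C^{\opn{la}}(\mbb{Z}_p, R)$ on $\mathscr{N}^{\dagger}$ is continuous (this was established in \S \ref{GMinterpolationSection} via Mahler expansions) and both $t$ and $\tau_t$ are continuous, this subalgebra is closed. By the Mahler expansion (Proposition \ref{PropAmice}), polynomial functions are dense in each $C_\varepsilon(\mbb{Z}_p, R)$ and hence in $C^{\opn{la}}(\mbb{Z}_p, R)$, so it suffices to verify the identity for constants $c \in R$ and for the identity function $\opn{id}_{\mbb{Z}_p}$.

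\textbf{The constant case.} For $\phi = c \in R$, both sides of the identity reduce to $c \cdot t = t \cdot c$ since the action of $R$ is by scalar multiplication and $t$ is $R$-linear.

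\textbf{The identity function case.} The identity function acts as $\nabla$, so the required identity becomes
\[
t \circ \nabla = (t_1 t_2^{-1}) \, \nabla \circ t
\]
as endomorphisms of $\mathscr{N}^{\dagger}$. Equivalently, $\nabla$ sends the $(-w_0\kappa)$-isotypic part of $\mathscr{N}^{\dagger}$ for the $T(\mbb{Z}_p)$-action into the $(-w_0(\kappa+2\rho))$-isotypic part for every locally analytic character $\kappa$, which is the statement that $\nabla$ shifts the weight by $2\rho$. This in turn follows from the construction of $\nabla$ in \S \ref{TheGMConnectionSubSec}: in the composition (\ref{DerivationCompositionEqn}), the first map (the flat connection on $\pi_*\mathcal{O}_{P_{\opn{dR}}}$) and the Kodaira--Spencer isomorphism are $T$-equivariant, while the universal trivialisation $\pi^*(\omega_\mathcal{E}\otimes\omega_{\mathcal{E}^D})\xrightarrow{\sim}\mathcal{O}_{P_{\opn{dR}}}$ transforms by the character $2\rho$ under the torsor action of $T$. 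One can also verify this directly from the local description in \S \ref{LocalDescriptionOfGM}, where $\nabla$ carries an explicit factor of $T_1T_2^{-1}$, which is precisely the value $2\rho(t) = t_1t_2^{-1}$ evaluated on the universal torsor coordinates.

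\textbf{Main obstacle.} The argument is essentially formal once the two special cases are in hand; the only substantive point is the weight-shift property of $\nabla$, which is built into the construction and verified by the explicit local formula of \S \ref{LocalDescriptionOfGM}. No further estimates are needed beyond the continuity of the locally analytic action already established in \S \ref{GMinterpolationSection}.
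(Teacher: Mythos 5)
Your proposal is correct and follows essentially the same route as the paper: reduce by density of polynomial functions (and the algebra-action property) to the single identity $t \circ \nabla = t_1 t_2^{-1}\, \nabla \circ t$, and then verify this from the construction of $\nabla$. The paper phrases the final verification via the formula $f \mapsto (\opn{Ad}(g)X \star_l f) + (X \star_r f)$ on $\pi_*\mathcal{O}_G$ together with $\opn{Ad}(t)X = t_1t_2^{-1}X$, whereas you read the same $2\rho$-twist off the equivariance of the two maps in (\ref{DerivationCompositionEqn}) and the explicit $T_1T_2^{-1}$ factor in \S \ref{LocalDescriptionOfGM}; these are the same computation in different coordinates, and your direct argument does not actually need the (slightly loose) ``equivalently''-reformulation in terms of isotypic components.
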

\begin{proof}
    Since $\mathscr{N}^{\dagger}$ is a (topological) $C^{\opn{la}}(\mbb{Z}_p, R)$-module and polynomial functions are dense in $C^{\opn{la}}(\mbb{Z}_p, R)$, it suffices to check this with $\phi$ equal to the structural map $\mbb{Z}_p \hookrightarrow R$, i.e. one has $t \circ \nabla = t_2^{-1}t_1 \nabla \circ t$ as endomorphisms of $\mathscr{N}^{\dagger}$. This amounts to checking the relation on $\pi_*\mathcal{O}_{P_{\opn{dR}}^{\opn{an}}} = \opn{VB}^{\opn{can}}_{K}(\mathscr{O}_{\overline{P}})$. But this just follows from the description of the $\mathcal{D}_{\opn{FL}}$-module structure in Lemma \ref{FEnrichedLemmaAppendix}. Indeed, if we identify the $\mathcal{D}_{\opn{FL}}$-module associated with $\mathscr{O}_{\overline{P}}$ with $\pi_* \mathcal{O}_G$ (where $\pi \colon G \to \opn{FL} = G/\overline{P}$ is the structural map), then the action of $\nabla$ corresponds to
    \[
    f \mapsto \left( g \mapsto (\opn{Ad}(g)X \star_l f)(g) + (X \star_r f)(g) \right), \quad \quad f \in \pi_* \mathcal{O}_G
    \]
    where $\star_l$ (resp $\star_r$) denotes the $\mathfrak{g}$-action obtained from left-translation (resp. right-translation) of the argument and $X = \left(\begin{smallmatrix} 0 & 1 \\ 0 & 0 \end{smallmatrix}\right)$. The action of $t$ is given by right-translation of the argument (i.e. the torsor structure), so the relation follows from $\opn{Ad}(t)X = t_2^{-1}t_1 X$.
\end{proof}

\subsection{Reductions of structure of \texorpdfstring{$P_{\opn{dR}}$}{PdR}}

It turns out that we can extend the torsors in Lemma \ref{ReductionsOfStructureOfMdRLemma} to reductions of structure of $P_{\opn{dR}}^{\opn{an}}$. More precisely, for any integer $r \geq 1$, let 
\[
\overline{\mathcal{P}}_r \defeq T(\mbb{Z}_p) \cdot \left\{ x \in \overline{\mathcal{P}} : x \equiv 1 \text{ modulo } p^{r+1 - \frac{1}{p-1}} \right\}
\]
where $\overline{\mathcal{P}}$ denotes the adic generic fibre of the formal $p$-adic completion of $\overline{P}_{\mbb{Z}_p}$.

\begin{proposition} \label{ReductionStructurePdRProposition}
    For any integer $r \geq 1$, there exists an integer $s \geq r$ and an \'{e}tale $\overline{\mathcal{P}}_r$-torsor $\tilde{\mathcal{F}}_{r, s} \to \mathcal{X}_s$ such that
    \begin{enumerate}
        \item $\tilde{\mathcal{F}}_{r, s}$ provides a reduction of structure of $P_{\opn{dR}}^{\opn{an}} \times_{\mathcal{X}} \mathcal{X}_s$ (to an \'{e}tale $\overline{\mathcal{P}}_r$-torsor) and $\mathcal{IG}_{\infty}$ provides a reduction of structure (to a pro\'{e}tale $T(\mbb{Z}_p)$-torsor) of $\tilde{\mathcal{F}}_{r, s} \times_{\mathcal{X}_s} \mathcal{X}_{\infty}$. In particular $\tilde{\mathcal{F}}_{r, s}$ is a quasi-compact open subset of $P_{\opn{dR}}^{\opn{an}}$ containing the closure of $\mathcal{IG}_{\infty}$.
        \item The pushout $\tilde{\mathcal{F}}_{r, s} \times^{\overline{\mathcal{P}}_r} \mathcal{T}_r$ along the natural projection $\overline{\mathcal{P}}_r \twoheadrightarrow \mathcal{T}_r$ coincides with $\mathcal{F}_r \times_{\mathcal{X}_r} \mathcal{X}_s$.
    \end{enumerate}
    Moreover, for any two \'{e}tale $\overline{\mathcal{P}}_{r}$-torsors $\mathscr{F}$ and $\mathscr{G}$ over $\mathcal{X}_s$ satisfying (1), there exists an integer $s' \geq s$ such that we have an identification of torsors
    \begin{equation} \label{FscrequalsGscrTorsors}
    \mathscr{F} \times_{\mathcal{X}_s} \mathcal{X}_{s'} = \mathscr{G} \times_{\mathcal{X}_s} \mathcal{X}_{s'} 
    \end{equation}
    (viewed as open subspaces of $P_{\opn{dR}}^{\opn{an}}$). The collection of torsors $\{ \tilde{\mathcal{F}}_{r, s} \}_{r, s}$ as $r, s$ vary form a cofinal system of quasi-compact open subsets of $P_{\opn{dR}}^{\opn{an}}$ containing the closure of $\mathcal{IG}_{\infty}$.
\end{proposition}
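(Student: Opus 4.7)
The plan is to extend the construction of Lemma \ref{ReductionsOfStructureOfMdRLemma} (which provides the $\mathcal{T}_r$-reduction $\mathcal{F}_r$ of $M_{\opn{dR}}^{\opn{an}}|_{\mathcal{X}_r}$) from the quotient $M_{\opn{dR}}$ up to $P_{\opn{dR}}$ by refining the ``diagonal'' data to include a ``unipotent'' lift. Concretely, let $\overline{\mathcal{U}}$ denote the adic generic fibre of the $p$-adic completion of the lower unipotent radical $\overline{U}_{\mbb{Z}_p}$; the kernel of the natural projection $\overline{\mathcal{P}}_r \twoheadrightarrow \mathcal{T}_r$ is then the unipotent ball $\mathcal{V}_r \defeq \{x \in \overline{\mathcal{U}} : x \equiv 1 \text{ mod } p^{r+1-\frac{1}{p-1}}\}$. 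So producing $\tilde{\mathcal{F}}_{r,s}$ amounts to choosing, locally on $\mathcal{X}_s$, a section $f_{\opn{can}}^{(s)}$ of $\mathcal{H}_{\mathcal{E}}$ lifting the canonical basis $\bar{f}_{\opn{can}}$ of $\omega_{\mathcal{E}}^{\vee}$ over $\mathcal{F}_r$, well-defined modulo $\mathcal{V}_r$. The crucial requirement imposed by condition (1) is that, after restriction to $\mathcal{X}_{\opn{ord}}$, this lift must coincide with the unit-root lift $f^{\opn{ur}}$ (canonically defined on $\mathcal{IG}_{\infty}$) modulo $\mathcal{V}_r$.

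To construct $f_{\opn{can}}^{(s)}$, I would use iteration of the crystalline Frobenius $\varphi$, whose unique slope-zero eigenspace over the ordinary locus is precisely the unit-root subspace. Starting from any local lift $f_0$ of $\bar{f}_{\opn{can}}$ over $\opn{Spa}(A_s, A_s^+) \subset \mathcal{X}_s$, the iterates $\varphi^n(f_0)$ (suitably normalised by a unit depending on the Hasse invariant) converge $p$-adically to $f^{\opn{ur}}$ on the ordinary locus. Since each application of $\varphi$ maps $\mathcal{X}_s$ into some $\mathcal{X}_{s'}$ with $s' > s$, after finitely many iterations $n_0 = n_0(r)$ one obtains an overconvergent approximation $f_{\opn{can}}^{(s)}$ of $f^{\opn{ur}}$ agreeing with it on $\mathcal{X}_{\opn{ord}}$ modulo $p^{r+1-\frac{1}{p-1}}$ and defined over $\mathcal{X}_s$ for $s = s(r) \gg 0$. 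The resulting data glues to a global \'{e}tale $\overline{\mathcal{P}}_r$-torsor $\tilde{\mathcal{F}}_{r,s}$, with (2) built into the construction and (1) holding by the $p$-adic convergence to $f^{\opn{ur}}$. I expect the main technical point to be controlling the loss in overconvergence radius through Frobenius iteration, but the argument is parallel to the one underlying Lemma \ref{ReductionsOfStructureOfMdRLemma} itself and the constructions of \cite{Pilloni, AIS}.

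For the uniqueness statement \eqref{FscrequalsGscrTorsors}, given two such torsors $\mathscr{F}$ and $\mathscr{G}$ over $\mathcal{X}_s$, their pushouts to $\mathcal{T}_r$-torsors both equal $\mathcal{F}_r \times_{\mathcal{X}_r} \mathcal{X}_s$, so the difference is classified by a $\mathcal{V}_r$-valued cocycle on $\mathcal{X}_s$. Condition (1) forces both torsors to contain the closure of $\mathcal{IG}_{\infty}$, so both agree with the unit-root reduction on the ordinary locus. Hence the cocycle is represented by a bounded function on $\mathcal{X}_s$ that vanishes on $\mathcal{X}_{\opn{ord}}$, and by Proposition \ref{KernelKilledByHasseProp} applied to an appropriate Banach space of such differences, this function is killed by a sufficiently high power of the Hasse invariant, hence becomes trivial after further restriction to $\mathcal{X}_{s'}$ for $s'$ large enough. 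Finally, cofinality follows by combining the cofinality of the $\mathcal{F}_r$ in $M_{\opn{dR}}^{\opn{an}}$ (Lemma \ref{ReductionsOfStructureOfMdRLemma}) with a standard compactness argument: any quasi-compact open $U \subset P_{\opn{dR}}^{\opn{an}}$ containing the closure of $\mathcal{IG}_{\infty}$ pushes down to a quasi-compact open of $M_{\opn{dR}}^{\opn{an}}$ containing some cofinal $\mathcal{F}_r$, and quasi-compactness together with the fibrewise description of $\tilde{\mathcal{F}}_{r,s}$ as a $\mathcal{V}_r$-neighbourhood of the unit-root section forces $U$ to contain some $\tilde{\mathcal{F}}_{r',s'}$.
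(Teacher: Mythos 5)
Your construction of $\tilde{\mathcal{F}}_{r,s}$ is a genuinely different route from the paper's. The paper does not use Frobenius iteration at all: it picks a finite cover $\{U^{(i)}\}$ of $\mathfrak{X}$ trivialising $\mathcal{H}_{\mathcal{E}}$, writes the unit-root section over the ordinary locus as $t_i\cdot\tilde g_i$ with $\tilde g_i\in\overline{P}^{\opn{an}}(A^{(i)}_{r+1,\infty})$, and then directly approximates $\tilde g_i$ by an overconvergent element $h_i\in\overline{P}^{\opn{an}}(A^{(i)}_{r+1,r+1})$ with $\tilde g_i^{-1}h_i\in\overline{\mathcal{P}}_{r+1}(A^{(i)}_{r+1,\infty})$, using only the identity $A^{+,(i)}_{r+1,\infty}=A^{+,(i)}_{r+1,r+1}\langle 1/h\rangle$ (the same mechanism as Proposition \ref{PropositionOCneighbourhoods}); the transition matrices $v_{i,j}=h_j^{-1}x_{i,j}h_i$ are then checked to land in $\overline{\mathcal{P}}_r(B^{(i,j)}_{r+1,s})$ for $s\gg 0$ by finiteness of the cover. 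Your Frobenius-iteration approach (converging to the unit-root subspace with controlled loss of overconvergence radius) is a classical and workable alternative, but it forces you to prove quantitative estimates on $\varphi^n$ restricted to $\omega_{\mathcal{E}^D}$ (divisibility by $p^n$ against powers of $h^{-1}$) and, more importantly, to verify that the locally constructed lifts agree modulo $\mathcal{V}_r$ on overlaps \emph{over all of} $\mathcal{X}_s$, not merely over the ordinary locus — you assert the gluing but this is exactly the point where the transition matrices must be controlled. The paper's approximation argument buys you this control essentially for free from the finiteness of the cover; your route buys a more intrinsic (choice-free up to $\mathcal{V}_r$) local section but at the cost of the Frobenius estimates.

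Two smaller points. For the uniqueness statement \eqref{FscrequalsGscrTorsors}, your appeal to Proposition \ref{KernelKilledByHasseProp} is misdirected: that result concerns elements of $B_r^+/p^k$ killed under restriction to the ordinary locus, whereas what you need is to overconverge a \emph{cochain} splitting the $\mathcal{V}_r$-valued difference cocycle, which is again the $A^+_{\opn{ord}}=A^+_s\langle 1/h\rangle$ approximation argument rather than a kernel statement. The paper's proof avoids cocycles entirely: the explicit sections $t_i\cdot h_i$ of $\mathscr{F}$ become sections of $\mathcal{IG}_{\infty}\times^{T(\mbb{Z}_p)}\overline{\mathcal{P}}_{r+1}$ over $\mathcal{X}_{\infty}$, hence of $\mathscr{G}$ there, and since $\mathscr{G}$ is open they factor through $\mathscr{G}$ already over $\mathcal{X}_{s''}$ for $s''\gg 0$. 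Your cofinality sketch is fine and matches what the paper leaves to the reader.
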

\begin{proof}
    Let $\mathcal{IG}_{r, s}$ denote the pushout of $\mathcal{F}_{r} \times_{\mathcal{X}_r} \mathcal{X}_s$ along the natural map $\mathcal{T}_r \twoheadrightarrow T(\mbb{Z}/p^{r+\delta}\mbb{Z})$, where $\delta = 0$ (resp. $\delta = 1$) if $p=2$ (resp. $p$ is odd). We first establish existence of the torsors $\tilde{\mathcal{F}}_{r, s}$. Note that we have a Cartesian diagram:
    
    \[
\begin{tikzcd}
{\mathcal{IG}_{r, \infty}} \arrow[d] \arrow[r] & {\mathcal{IG}_{r, s}} \arrow[d] \\
\mathcal{X}_{\infty} \arrow[r]                 & \mathcal{X}_s                  
\end{tikzcd}
    \]
    where $\mathcal{IG}_{r, \infty} \to \mathcal{X}_{\infty}$ denotes the Igusa cover with Galois group $T(\mbb{Z}/p^{r+\delta}\mbb{Z})$. Let $\{ \opn{Spf}A^{+, (i)} \}_{i \in I}$ be a finite open cover of $\mathfrak{X}$ over which $\mathcal{H}_{\mathcal{E}}$ and $\omega_{\mathcal{E}}$ trivialise, and let $U^{(i)} = \opn{Spa}(A^{(i)}, A^{+, (i)})$ denote the adic generic fibre of $\opn{Spf}A^{+, (i)}$. Let $U_s^{(i)} = \opn{Spa}(A^{(i)}_{s}, A^{+,(i)}_{s})$ and $U_{r, s}^{(i)} = \opn{Spa}(A^{(i)}_{r, s}, A^{+,(i)}_{r, s})$ denote the pullbacks of $U^{(i)}$ to $\mathcal{X}_s$ and $\mathcal{IG}_{r, s}$ respectively. For $(i, j) \in I \times I$, set 
    \[
    U^{(i, j)}_{r, s} = U^{(i)}_{r, s} \times_{\mathcal{X}_s} U^{(j)}_{r, s} = \opn{Spa}(B_{r, s}^{(i, j)}, B_{r, s}^{+, (i, j)} ) .
    \]
    Note that $A^{+,(i)}_{r, \infty} = A^{+,(i)}_{r, s}\langle 1/h \rangle$ and $B_{r, \infty}^{+, (i, j)} = B_{r, s}^{+, (i, j)}\langle 1/h \rangle$, where $h$ is a local lift of the Hasse invariant, and the natural restriction maps $A^{+,(i)}_{r, s} \to A^{+,(i)}_{r, \infty}$ and $B_{r, s}^{+, (i, j)} \to B_{r, \infty}^{+, (i, j)}$ are injective (since all of these open subspaces arise from a cover of the formal scheme $\mathfrak{X}$). From the construction of $\mathcal{F}_r$ (see \cite{HaloSpectral}), we observe that $\mathcal{F}_r$ has sections over $U^{(i)}_{r, s}$ (for any $s \geq r$).

    Let $t_i \colon U^{(i)} \to P_{\opn{dR}, A^{(i)}}^{\opn{an}}$ be a section of the torsor $P_{\opn{dR}}^{\opn{an}} \to \mathcal{X}$. Then a section of $\mathcal{F}_{r+1}$ over $U_{r+1, r+1}^{(i)}$ can be described as 
    \[
    u_i \defeq t_i \cdot g_i \colon U_{r+1, r+1}^{(i)} \to \mathcal{F}_{r+1} \times_{\mathcal{X}_{r+1}} U_{r+1, r+1}^{(i)}
    \]
    for some $g_i \in T^{\opn{an}}(A_{r+1, r+1}^{(i)})$. We can choose the sections $t_i$ such that we have 
    \[
    t_i|_{U^{(i)} \cap U^{(j)}} = t_j|_{U^{(i)} \cap U^{(j)}} \cdot x_{i, j}
    \]
    with $x_{i, j} \in \overline{\mathcal{P}}(A^{(i, j)})$. Then the image of $g_j^{-1} x_{i, j} g_i$ under the projection $\overline{P}^{\opn{an}} \to T^{\opn{an}}$ is contained in $\mathcal{T}_{r+1}(B^{(i, j)}_{r+1, r+1})$. Let 
    \[
    \tilde{u}_i \colon U_{r+1, \infty}^{(i)} \xrightarrow{u_i} \mathcal{F}_{r+1} \times_{\mathcal{X}_{r+1}} U_{r+1, \infty}^{(i)} \to P_{\opn{dR}}^{\opn{an}} \times_{\mathcal{X}} U_{r+1, \infty}^{(i)}
    \]
    where the second map is given by the unit root splitting. Then we can write $\tilde{u}_i = t_i \cdot \tilde{g}_i$ for some $\tilde{g}_i \in \overline{P}^{\opn{an}}(A^{(i)}_{r+1, \infty})$ whose image under the projection $\overline{P}^{\opn{an}}(A^{(i)}_{r+1, \infty}) \to T^{\opn{an}}(A^{(i)}_{r+1, \infty})$ coincides with $g_i$. Furthermore, we have $\tilde{g}_j^{-1} x_{i, j} \tilde{g}_i \in \overline{\mathcal{P}}_{r+1}(B_{r+1, \infty}^{(i, j)})$.

    By using the fact that $A^{+,(i)}_{r+1, \infty} = A^{+,(i)}_{r+1, r+1}\langle 1/h \rangle$, we can find elements $h_i \in \overline{P}^{\opn{an}}(A_{r+1, r+1}^{(i)})$ such that $\tilde{g}_i^{-1} h_i \in \overline{\mathcal{P}}_{r+1}(A_{r+1, \infty}^{(i)})$. Set $v_{i, j} = h_j^{-1} x_{i, j} h_i \in \overline{P}^{\opn{an}}(B_{r+1, r+1}^{(i, j)})$. Then we see that (because $I$ is finite) we can find a sufficiently large integer $s \gg r+1$ such that $v_{i, j} \in \overline{\mathcal{P}}_{r}(B_{r+1, s}^{(i, j)})$. We may also choose $s$ such that the image of $h_i$ under the projection map $\overline{P}^{\opn{an}} \to T^{\opn{an}}$ (denoted $h_i'$) satisfies $g_i^{-1}h'_i \in \mathcal{T}_r(A_{r+1, s}^{(i)})$. The \'{e}tale $\overline{\mathcal{P}}_r$-torsor $\tilde{\mathcal{F}}_{r, s}$ is then defined by the collection of sections
    \[
    \{ t_i \cdot h_i \colon U_{r+1, s}^{(i)} \to P_{\opn{dR}}^{\opn{an}} : i \in I \},
    \]
    or alternatively, by the transition matrices $\{ v_{i, j} : (i, j) \in I \times I \}$. The cocycle condition is automatic, and $\tilde{\mathcal{F}}_{r, s}$ provides a reduction of structure of $P_{\opn{dR}}^{\opn{an}}$ by construction. Clearly the pushout of $\tilde{\mathcal{F}}_{r, s}$ along the map $\overline{\mathcal{P}}_{r} \to \mathcal{T}_r$ is given by the sections $\{ t_i \cdot h_i' \}_{i \in I}$, which describes the torsor $\mathcal{F}_r \times_{\mathcal{X}_r} \mathcal{X}_s$ (by the condition $g_i^{-1}h'_i \in \mathcal{T}_r(A_{r+1, s}^{(i)})$ and the fact that $\mathcal{F}_r \times_{\mathcal{X}_r} \mathcal{X}_s$ is the pushout of $\mathcal{F}_{r+1} \times_{\mathcal{X}_{r+1}} \mathcal{X}_s$ along the map $\mathcal{T}_{r+1} \to \mathcal{T}_r$). Finally, the pullback $\tilde{\mathcal{F}}_{r, s} \times_{\mathcal{X}_s} \mathcal{X}_{\infty}$ is described by the orbit
    \[
    \left( \mathcal{F}_{r+1} \times_{\mathcal{X}_{r+1}} \mathcal{X}_{\infty} \right) \cdot \overline{\mathcal{P}}_r \subset P_{\opn{dR}}^{\opn{an}} \times_{\mathcal{X}} \mathcal{X}_{\infty}
    \]
    where we view $\left( \mathcal{F}_{r+1} \times_{\mathcal{X}_{r+1}} \mathcal{X}_{\infty} \right) \subset P_{\opn{dR}}^{\opn{an}} \times_{\mathcal{X}} \mathcal{X}_{\infty}$ via the unit root splitting. Therefore, we immediately see that $\mathcal{IG}_{\infty}$ defines a reduction of structure of $\tilde{\mathcal{F}}_{r, s} \times_{\mathcal{X}_s} \mathcal{X}_{\infty}$. This completes the proof of existence. 

    Suppose that $\mathscr{G}$ is an \'{e}tale $\overline{\mathcal{P}}_{r}$-torsor over $\mathcal{X}_s$ satisfying (1). Then it suffices to establish an identification of the form (\ref{FscrequalsGscrTorsors}) for $\mathscr{F} = \tilde{\mathcal{F}}_{r, s'}$ (with $s' \geq s$). We will continue to use the same notation as above for the construction of $\tilde{\mathcal{F}}_{r, s'}$. We have sections 
    \[
    \{ t_i \cdot h_i \colon U_{r+1, s'}^{(i)} \to P_{\opn{dR}, A_{r+1, s'}^{(i)}}^{\opn{an}} : i \in I \}
    \]
    whose pullback to $U_{r+1, \infty}^{(i)}$ give sections of $\mathcal{IG}_{\infty} \times^{T(\mbb{Z}_p)} \overline{\mathcal{P}}_{r+1}$. Since $\mathscr{G}$ satisfies $(1)$, we see that $\mathscr{G}$ is a strict neighbourhood of $\mathcal{IG}_{\infty} \times^{T(\mbb{Z}_p)} \overline{\mathcal{P}}_{r+1}$ inside $P_{\opn{dR}}^{\opn{an}}$. This implies that there exists a sufficiently large integer $s'' \gg s'$ such that $t_i \cdot h_i$ factors as
    \[
    U_{r+1, s''}^{(i)} \to \mathscr{G} \times_{\mathcal{X}_s} U_{r+1, s''}^{(i)} \to P_{\opn{dR}}^{\opn{an}} \times_{\mathcal{X}} U_{r+1, s''}^{(i)}
    \]
    i.e. we obtain a section of $\mathscr{G}$. This implies that $\mathscr{G} \times_{\mathcal{X}_s} \mathcal{X}_{s''} = \tilde{\mathcal{F}}_{r, s'} \times_{\mathcal{X}_{s'}} \mathcal{X}_{s''}$ as required. Furthermore, one easily verifies that the system $\{\tilde{\mathcal{F}}_{r, s} \}_{r, s}$ is a cofinal collection of strict neighbourhoods of $\mathcal{IG}_{\infty}$.
\end{proof}

We consider the following representations and sheaves. Let $T^+ \subset T(\mbb{Q}_p)$ denote the submonoid of diagonal matrices $\opn{diag}(t_1, t_2)$ satisfying $v_p(t_1) \geq v_p(t_2)$. One has an isomorphism $\mbb{Z}^2 \times T(\mbb{Z}_p) \cong T(\mbb{Q}_p)$ given by sending $(n_1, n_2) \times t$ to the matrix $\left( \begin{smallmatrix} p^{n_1} & \\ & p^{n_2} \end{smallmatrix}\right) t$. We let $\langle \cdot \rangle \colon T(\mbb{Q}_p) \to T(\mbb{Z}_p)$ denote the projection to the second component.

\begin{definition} \label{DefinitionOfVrwithmonoidaction}
    Let $r \geq 1$ and let $\Sigma_r = \mathcal{\overline{P}}_r \cdot T^+ \cdot \mathcal{\overline{P}}_r$.
    \begin{enumerate}
        \item Let $V_r = \mathcal{O}(\overline{\mathcal{P}}_r)$ denote the $\mbb{Q}_p$-Banach algebra of global sections $\overline{\mathcal{P}}_r \to \mbb{A}^{1, \opn{an}}$. We view this as a representation of $\Sigma_r$ via the following action:
        \[
        (p \star_l f)(-) = f(p^{-1} \cdot -), \quad ( t \star_l f)(-) = f(t^{-1} \cdot - \cdot t\langle t \rangle^{-1}), \quad \quad p \in \overline{\mathcal{P}}_r, t \in T^+, f \in V_r .
        \]
        \item If $\kappa \colon T(\mbb{Z}_p) \to R^{\times}$ is an $r$-analytic character, then we set 
        \[
        V_{r, \kappa} \defeq \{ f \in V_r\hatot R : f(x t) = (w_0\kappa)(t^{-1}) f(x) \text{ for all } t \in \mathcal{T}_r \} .
        \]
        This sub-Banach space is stable under the action of $\Sigma_r$. 
        \item For any torsor $\pi \colon \tilde{\mathcal{F}}_{r, s} \to \mathcal{X}_s$ as in Proposition \ref{ReductionStructurePdRProposition}, we let 
        \[
        \mathcal{V}_{r, s} = \left( \pi_*\mathcal{O}_{\tilde{\mathcal{F}}_{r, s}} \hatot V_r \right)^{\overline{\mathcal{P}}_r, \star_l} \cong \pi_*\mathcal{O}_{\tilde{\mathcal{F}}_{r, s}}
        \]
        where the invariants are with respect to the $\star_l$-action above. This is a locally projective Banach sheaf of $\mathcal{O}_{\mathcal{X}_s}$-modules in the sense of \cite[Definition 2.5.2]{BPHCT}, and comes equipped with an action $\star_r$ of $\overline{\mathcal{P}}_r$ via the torsor structure (or equivalently via the action $(p \star_r f)(-) = f(- \cdot p)$ on $V_r$). Similarly, we set 
        \[
        \mathcal{V}_{r, s, \kappa} = \left( \pi_*\mathcal{O}_{\tilde{\mathcal{F}}_{r, s}} \hatot V_{r, \kappa} \right)^{\overline{\mathcal{P}}_r, \star_l}
        \]
        which is also a locally projective Banach sheaf of $\mathcal{O}_{\mathcal{X}_s}$-modules. We can view $\mathcal{V}_{r, s, \kappa} \subset \mathcal{V}_{r, s}$ as the subsheaf of sections $f \in \mathcal{V}_{r, s}$ satisfying $t \star_r f = (w_0\kappa)(t^{-1}) f$ for all $t \in \mathcal{T}_r$ (recall that we are viewing $\mathcal{X}$ as an adic space over $\opn{Spa}(R, R^+)$). 
    \end{enumerate}
\end{definition}

\begin{remark}
    Note that $\mathcal{V}_{r, s}$ and $\mathcal{V}_{r, s, \kappa}$ depend on the choice of the torsor $\tilde{\mathcal{F}}_{r, s}$, but for any two choices, one can always find an integer $s' \geq s$ such that the pullback of the sheaves to $\mathcal{X}_{s'}$ coincide (see Proposition \ref{ReductionStructurePdRProposition}). We will therefore omit the choice of torsor from the notation.
\end{remark}

\begin{remark} \label{CompactOperatorOnVrkappaRem}
    Let $T^{++} \subset T^+$ be the semi-group of diagonal matrices $\opn{diag}(t_1, t_2)$ which satisfy $v_p(t_1) > v_p(t_2)$. Then, for any $t \in T^{++}$, we have $t \star_l V_{r+1, \kappa} \subset V_{r, \kappa}$. Since the inclusion $V_{r, \kappa} \to V_{r+1, \kappa}$ is compact, we see that the operator $t \star_l - \colon V_{r+1, \kappa} \to V_{r+1, \kappa}$ is also compact. 
\end{remark}

Since the system of torsors $\{ \tilde{\mathcal{F}}_{r, s} \}_{r, s}$ is cofinal, we see that
\[
\mathscr{N}^{\dagger} = \varinjlim_{r, s} \opn{H}^0\left( \mathcal{X}_s, \mathcal{V}_{r, s} \right) \quad \text{ and } \quad \mathscr{N}_{\kappa}^{\dagger} = \varinjlim_{r, s} \opn{H}^0\left( \mathcal{X}_s, \mathcal{V}_{r, s, \kappa} \right)
\]
where the transition maps are with respect to restriction and the natural inclusions $V_{r} \subset V_{r+1}$ and $V_{r, \kappa} \subset V_{r+1, \kappa}$. We also have versions of this on the level of sheaves, namely
\[
\mathcal{N}^{\dagger} = \varinjlim_{r, s} \mathcal{V}_{r, s} \quad \text{ and } \quad \mathcal{N}_{\kappa}^{\dagger} = \varinjlim_{r, s} \mathcal{V}_{r, s, \kappa} .
\]
Here we are implicitly viewing $\mathcal{V}_{r, s}$ and $\mathcal{V}_{r, s, \kappa}$ as sheaves of $\mathcal{O}_{\mathcal{X}}$-modules by pushing forward along the inclusion $\mathcal{X}_s \subset \mathcal{X}$. These descriptions are beneficial for the discussion of filtrations and Hecke operators below.

\subsection{Filtrations} \label{FiltrationsSubSec}

We now define an ascending filtration on nearly overconvergent forms.

\begin{definition}
    Let $r \geq 1$ and $\kappa \colon T(\mbb{Z}_p) \to R^{\times}$ an $r$-analytic character.
    \begin{enumerate}
        \item For an integer $h \geq 0$, let $\opn{Fil}_h V_r \subset V_r$ denote the sub-$\Sigma_r$-representation consisting of global sections $f \in V_r$
        \[
        \tbyt{x}{}{y}{z} \mapsto f(x, y, z), \quad \quad \tbyt{x}{}{y}{z} \in \overline{\mathcal{P}}_r
        \]
        which are polynomial in the variable $y$ of degree $\leq h$. This coincides with the subspace of elements in $V_r$ killed by the action of $\overline{\mathfrak{n}}^{h+1}$ under $\star_r$. Similarly, we let $\opn{Fil}_h V_{r, \kappa} \subset V_{r, \kappa}$ denote the subrepresentation of elements which are polynomial in the variable $y$ of degree $\leq h$ (or equivalently the elements killed by $\overline{\mathfrak{n}}^{h+1}$).
        \item For an integer $h \geq 0$, let $\opn{Fil}_h\mathcal{V}_{r, s} \subset \mathcal{V}_{r, s}$ and $\opn{Fil}_h\mathcal{V}_{r, s, \kappa} \subset \mathcal{V}_{r, s, \kappa}$ denote the subsheaves given by
        \[
        \opn{Fil}_h\mathcal{V}_{r, s} = \left( \pi_* \mathcal{O}_{\tilde{\mathcal{F}}_{r, s}} \hatot \opn{Fil}_h V_{r} \right)^{\overline{\mathcal{P}}_r, \star_l}, \quad \opn{Fil}_h\mathcal{V}_{r, s, \kappa} = \left( \pi_* \mathcal{O}_{\tilde{\mathcal{F}}_{r, s}} \hatot \opn{Fil}_h V_{r, \kappa} \right)^{\overline{\mathcal{P}}_r, \star_l} .
        \]
        As in (1), these are the subsheaves killed by the action of $\overline{\mathfrak{n}}^{h+1}$. The sheaf $\opn{Fil}_h\mathcal{V}_{r, s, \kappa}$ is a locally free $\mathcal{O}_{\mathcal{X}_s}$-module of finite rank which is independent of $r$. We set
        \[
        \opn{Fil}_h \mathcal{N}^{\dagger} =\varinjlim_{r, s} \opn{Fil}_h\mathcal{V}_{r, s} \quad \text{ and } \quad \opn{Fil}_h \mathcal{N}_{\kappa}^{\dagger} =\varinjlim_{r, s} \opn{Fil}_h\mathcal{V}_{r, s, \kappa}
        \]
        where, as above, we are implicitly considering $\opn{Fil}_h\mathcal{V}_{r, s}$ and $\opn{Fil}_h\mathcal{V}_{r, s, \kappa}$ as sheaves on $\mathcal{X}$ by pushing forward along the inclusion $\mathcal{X}_s \subset \mathcal{X}$.
        \item We let $\opn{Fil}_h\mathscr{N}^{\dagger} = \opn{H}^0\left(\mathcal{X}, \opn{Fil}_h\mathcal{N}^{\dagger} \right)$ and $\opn{Fil}_h\mathscr{N}^{\dagger}_{\kappa} = \opn{H}^0\left(\mathcal{X}, \opn{Fil}_h\mathcal{N}_{\kappa}^{\dagger} \right)$. Equivalently, we have
        \[
        \opn{Fil}_h\mathscr{N}^{\dagger} = \varinjlim_{r, s}\opn{H}^0\left(\mathcal{X}_s, \opn{Fil}_h\mathcal{V}_{r, s} \right) \quad \text{ and } \quad \opn{Fil}_h\mathscr{N}_{\kappa}^{\dagger} = \varinjlim_{r, s}\opn{H}^0\left(\mathcal{X}_s, \opn{Fil}_h\mathcal{V}_{r, s, \kappa} \right) . 
        \]
    \end{enumerate}
\end{definition}

\begin{remark}
    One can easily verify that $\opn{Fil}_h V_{r, \kappa} = \opn{Sym}^h \opn{St} \otimes R(w_0\kappa + h(1; -1))$, where $\opn{St}$ is the standard representation of $\opn{GL}_2$ and $R(w_0\kappa + h(1;-1))$ denotes the one-dimensional line given by the character $w_0\kappa + h(1;-1)$ (inflated to $\overline{\mathcal{P}}_r$). If we view $\kappa$ as a pair $(\kappa'; w)$ satisfying $\kappa(t, st^{-1}) = \kappa'(t)w(s)$, then we have
    \[
    \opn{Fil}_h \mathcal{V}_{r, s, \kappa} = \opn{Sym}^h \mathcal{H}_{\mathcal{E}} \otimes \left(\omega_{\mathcal{E}^D}^{\kappa' -h + w} \otimes \omega_{\mathcal{E}}^{- w}  \right)
    \]
    which agrees with the definition given in \cite{Urban} (after identifying $\omega_{\mathcal{E}}$ and $\omega_{\mathcal{E}^D}$ via the polarisation). 

    Similarly, we have
    \[
    \opn{Fil}_h\mathcal{V}_{r, s} = \opn{Sym}^h\mathcal{H}_{\mathcal{E}} \; \hatot \; \omega_{\mathcal{E}^D}^{-h} \; \hatot \; \pi'_* \mathcal{O}_{\mathcal{F}_r \times_{\mathcal{X}_r} \mathcal{X}_s}
    \]
    where $\pi' \colon \mathcal{F}_r \times_{\mathcal{X}_r} \mathcal{X}_s \to \mathcal{X}_s$ denotes the structural map. One easily sees that $\opn{Fil}_0\mathcal{N}^{\dagger} = \mathcal{M}^{\dagger}$ and $\opn{Fil}_0\mathcal{N}_{\kappa}^{\dagger} = \mathcal{M}^{\dagger}_{\kappa}$.
\end{remark}

\subsubsection{Compatibility with the Gauss--Manin connection}

Let $U=\opn{Spa}(B, B^+) \to \mathcal{X}_s$ be an \'{e}tale morphism over which the torsor $\pi \colon \tilde{\mathcal{F}}_{r, s} \to \mathcal{X}_s$ trivialises. Then any section $f \in \mathcal{V}_{r, s}(U)$ is described as a morphism $f \colon \pi^{-1}(U) \to V_r \hatot B$ such that $f(xp) = p^{-1} \star_l f(x)$ for all $x \in \pi^{-1}(U)$ and $p \in \overline{\mathcal{P}}_r$. If we identify $\pi^{-1}(U) = U \times \overline{\mathcal{P}}_r$ then the connection $\nabla$ on $\mathcal{V}_{r, s}(U)$ has the following description
\[
\nabla(f)(p) = (D \cdot f(p) + \delta \star_l f(p)) \cdot y
\]
for $p \in U \times \overline{\mathcal{P}}_r$ and $f \in \mathcal{V}_{r, s}(U)$, where $D \colon B \to B$ is some derivation and $\delta \in \mathfrak{g}_B$. Here $y \in \opn{Fil}_0\mathcal{V}_{r, s, 2\rho}(U)$ is the element arising from the Kodaira--Spencer isomorphism. If $f \in \opn{Fil}_h\mathcal{V}_{r, s}(U)$ then the action $\delta \star_l -$ can increase the degree of $f(p)$ in the unipotent variable by at most $1$, so we immediately see that:

\begin{lemma}
For all $h \geq 0$, one has $\nabla\left( \opn{Fil}_h \mathcal{V}_{r, s} \right) \subset \opn{Fil}_{h+1} \mathcal{V}_{r, s}$.
\end{lemma}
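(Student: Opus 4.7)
The plan is to verify the statement locally on a trivialising open $U = \opn{Spa}(B, B^+) \to \mathcal{X}_s$, where $\mathcal{V}_{r, s}(U)$ is identified (via the $\star_l$-equivariance condition) with functions $f \colon \pi^{-1}(U) = U \times \overline{\mathcal{P}}_r \to V_r \hatot B$, and the connection is given by the explicit formula $\nabla(f)(p) = \left( D \cdot f(p) + \delta \star_l f(p) \right) \cdot y$ recalled just above the statement. The task then reduces to checking that each of the three operations -- applying the derivation $D$ coefficient-wise, applying $\delta \star_l$, and multiplying by the Kodaira--Spencer element $y$ -- respects the filtration by degree in the unipotent coordinate, with at most one increase coming from the middle term.

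The first and third operations are immediate. The derivation $D \colon B \to B$ acts only on the coefficients of $f(p)$ viewed as a polynomial in the unipotent variable, and therefore preserves the degree. For the third operation, the element $y$ lies in $\opn{Fil}_0 \mathcal{V}_{r, s, 2\rho}(U)$ -- that is, it is an overconvergent modular form of weight $2\rho$, and in particular belongs to the filtration step $\opn{Fil}_0$ -- so as a function on $\overline{\mathcal{P}}_r$ it is independent of the unipotent coordinate. Multiplication by $y$ therefore does not change the degree in the unipotent variable.

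The main step is to show that $\delta \star_l$ raises the degree by at most one. Decompose $\delta \in \mathfrak{g}_B = \overline{\mathfrak{b}}_B \oplus \mathfrak{n}_B$ into its lower-triangular Borel and upper nilpotent components. For $\delta \in \overline{\mathfrak{b}}_B$ the action $\delta \star_l$ is the derivative of left-translation by $\overline{P}$ on itself, so it stabilises $\mathcal{O}(\overline{\mathcal{P}}_r)$ and, by a direct calculation in the coordinates $\left( \begin{smallmatrix} x & \\ y & z \end{smallmatrix} \right)$, it acts by a vector field involving only $\partial_x, \partial_z$ and a first-order term $y \partial_y$ coming from $\mathfrak{t}$; in particular it preserves the degree in the unipotent variable $y$. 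For the upper nilpotent component it suffices to treat the generator $E = \left( \begin{smallmatrix} 0 & 1 \\ 0 & 0 \end{smallmatrix} \right)$, for which the explicit formula of Example \ref{ExampleInNilpotentBasis} (adapted to the present coordinates) expresses $E \star_l$ as a vector field on $\overline{\mathcal{P}}_r$ whose only appearance of the unipotent variable is through multiplication, so its action increases the degree in this variable by exactly one.

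Combining, $D \cdot f(p) + \delta \star_l f(p)$ is polynomial of degree at most $h+1$ in the unipotent variable, and multiplying by $y$ leaves this bound unchanged; hence $\nabla(f)$ lies in $\opn{Fil}_{h+1}\mathcal{V}_{r, s}(U)$. Since the inclusion of filtered pieces is local on $\mathcal{X}_s$ and the local description glues, this proves the lemma. The only non-routine point is the degree calculation for $E \star_l$, and this is essentially already handled by the earlier explicit computation.
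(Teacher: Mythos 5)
Your proof is correct and follows essentially the same route as the paper: the paper's argument is precisely the local description $\nabla(f)(p) = (D\cdot f(p) + \delta\star_l f(p))\cdot y$ together with the observation that $\delta\star_l$ raises the degree in the unipotent variable by at most one, which you verify explicitly via the decomposition $\mathfrak{g} = \overline{\mathfrak{b}} \oplus \mathfrak{n}$ and the vector-field formulas of Example \ref{ExampleInNilpotentBasis}. The only difference is that you spell out the degree bookkeeping that the paper leaves implicit.
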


Let $\opn{Gr}_h \mathcal{V}_{r, s, \kappa} = \opn{Fil}_h\mathcal{V}_{r, s, \kappa} / \opn{Fil}_{h-1}\mathcal{V}_{r, s, \kappa}$ (with the convention $\opn{Fil}_{-1}\mathcal{V}_{r, s, \kappa} = 0$). We have the following lemma describing the connection on graded pieces.

\begin{lemma} \label{NablaBarIsoUpToDenominators}
Let $\kappa \colon T(\mbb{Z}_p) \to R^{\times}$ be an $r$-analytic character which we view as a pair $(\kappa'; w)$ of $r$-analytic characters $\mbb{Z}_p^{\times} \to R^{\times}$ such that $\kappa(t, st^{-1}) = \kappa'(t)w(s)$. Let $u_{\kappa} \in R$ be the element such that $\kappa'(t) = \opn{exp}(u_{\kappa} \opn{log}(t))$ for $t \in 1 + p^{r} \mbb{Z}_p$. Then 
\[
\nabla \colon \opn{Gr}_h \mathcal{V}_{r, s, \kappa} \to \opn{Gr}_{h+1}\mathcal{V}_{r, s, \kappa + 2\rho}
\]
is an isomorphism multiplied by $u_{\kappa} - h$.
\end{lemma}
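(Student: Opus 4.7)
The plan is to reduce to an explicit local computation, exploiting the description of $\nabla$ from \S \ref{LocalDescriptionOfGM} together with the equivariance condition defining $\mathcal{V}_{r, s, \kappa}$. I would work over an \'{e}tale trivialisation $U = \opn{Spa}(B, B^+) \to \mathcal{X}_s$ of the torsor $\pi \colon \tilde{\mathcal{F}}_{r, s} \to \mathcal{X}_s$, so that a section $f \in \mathcal{V}_{r, s, \kappa}(U)$ of filtration level $h$ may be written as a polynomial $f = \sum_{j=0}^{h} u^j g_j(x, z)$ in the unipotent coordinate $u$ of $\overline{\mathcal{P}}_r$, where the coefficients $g_j$ are functions of the diagonal coordinates $x, z$ satisfying the equivariance
\[
t_1^j g_j(x t_1, z t_2) = (w_0\kappa)(t^{-1}) g_j(x, z) \quad \text{for all } t = \opn{diag}(t_1, t_2) \in \mathcal{T}_r.
\]

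Combining the local formula $\nabla(f) = (D \cdot f + \delta \star_l f) \cdot y$ with the description of $\delta \star_l$ from Example \ref{ExampleInNilpotentBasis} (which in the coordinates above becomes $u z x^{-1} \partial_z - u \partial_x$), I would extract the top-degree term in $u$. Since $D$ preserves the $u$-degree, the $u^{h+1}$-component of $\nabla(f)$ arises entirely from applying $\delta \star_l$ to the top term $u^h g_h$, and equals $u^{h+1}(z x^{-1} \partial_z g_h - \partial_x g_h) \cdot y$. Thus, on graded pieces, the class of $u^h g_h$ is sent to the class of $u^{h+1}(z x^{-1} \partial_z g_h - \partial_x g_h) \cdot y$.

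The crucial step, where the scalar $u_\kappa - h$ arises, is to identify the differential operator $z x^{-1} \partial_z - \partial_x$ acting on $g_h$ as multiplication by an explicit scalar. I would achieve this by writing $\kappa = (\kappa'; w)$, letting $u_w \in R$ denote the analogue of $u_\kappa$ for the character $w$, and differentiating the equivariance of $g_h$ separately in each torus direction at $t_1 = t_2 = 1$. The two resulting identities
\[
x \partial_x g_h = -(h + u_w) g_h \quad \text{and} \quad z \partial_z g_h = -(u_\kappa + u_w) g_h
\]
combine so that the $u_w$-contributions cancel, producing $z x^{-1} \partial_z g_h - \partial_x g_h = -(u_\kappa - h) \cdot x^{-1} g_h$. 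The induced map on graded pieces is therefore $-(u_\kappa - h)$ times the natural assignment $u^h g_h \mapsto u^{h+1} x^{-1} g_h \cdot y$, which provides an isomorphism of the rank-one locally free $\mathcal{O}_{\mathcal{X}_s}$-modules $\opn{Gr}_h \mathcal{V}_{r, s, \kappa}$ and $\opn{Gr}_{h+1} \mathcal{V}_{r, s, \kappa + 2\rho}$ via the tautological weight shift by $(1; -1)$ composed with multiplication by the Kodaira--Spencer section $y$. The scalar $-(u_\kappa - h) \in R$ is intrinsic, and the overall sign may be absorbed into the chosen isomorphism, yielding the lemma.
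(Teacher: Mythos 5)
Your computation is correct, but it takes a genuinely different route from the paper: the paper's proof of Lemma \ref{NablaBarIsoUpToDenominators} is a one-line reduction, via the polarisation $\mathcal{E} \xrightarrow{\sim} \mathcal{E}^D$, to Theorem 3.18 of \cite{AI_LLL}, whereas you carry out the local Lie-theoretic computation from scratch. Your argument is the more self-contained and more illuminating one: differentiating the $\mathcal{T}_r$-equivariance $t_1^h g_h(xt_1, zt_2) = (w_0\kappa)(t)^{-1}g_h(x,z)$ in the two torus directions yields exactly the Euler identities $x\partial_x g_h = -(h+u_w)g_h$ and $z\partial_z g_h = -(u_\kappa + u_w)g_h$, and the cancellation of $u_w$ makes transparent why only $\kappa'$ (and not the second component $w$ of the weight) enters the scalar, and why the shift is by $h$ (it comes from the $t_1^h$ factor attached to the top coefficient). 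The sign discrepancy $-(u_\kappa - h)$ versus $u_\kappa - h$ is indeed immaterial, since the statement only asserts "an isomorphism multiplied by $u_\kappa - h$" and the sign can be absorbed. One small point you should make explicit: over a general \'{e}tale trivialisation the element $\delta \in \mathfrak{g}_B$ of \S \ref{LocalDescriptionOfGM} is only guaranteed to have $(1,2)$-entry equal to $1$ (by the Kodaira--Spencer normalisation of $D$); it need not equal $\tbyt{0}{1}{0}{0}$ on the nose as in Example \ref{ExampleInNilpotentBasis}. This does not affect your conclusion, because the diagonal and strictly lower-triangular components of $\delta$ act by operators that do not raise the degree in the unipotent coordinate, so the induced map on graded pieces still comes only from the normalised $(1,2)$-component --- but the sentence "arises entirely from applying $\delta \star_l$ to the top term" needs this justification. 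Compared with the paper's citation (which in effect forward-references the comparison with the Andreatta--Iovita sheaves carried out only later, in Proposition \ref{PropComparisonWithAI}), your direct verification has the advantage of keeping the argument internal to the paper's own conventions.
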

\begin{proof}
By using the polarisation $\mathcal{E} \xrightarrow{\sim} \mathcal{E}^D$, we can identify this map with that in \cite[Theorem 3.18]{AI_LLL}, from which the result follows.
\end{proof}

\subsection{The \texorpdfstring{$U_p$}{Up}-operator} \label{TheUpOperatorSSec}

Let $r \geq 1$. We have a correspondence
\[
\mathcal{X}_r \xleftarrow{p_1} \mathcal{C}_{r} \xrightarrow{p_2} \mathcal{X}_{r+1}
\]
where $\mathcal{C}_r$ denotes the moduli of order $p$ subgroups $C \subset \mathcal{E}[p]$ which are disjoint from the canonical subgroup $H_1^{\opn{can}}$. The map $p_1$ is the natural one, and $p_2(E) = E/C$ is the quotient by the order $p$ subgroup. Note that $p_1$ is finite flat of degree $p$ and $p_2$ is an isomorphism. Let $(\mathcal{E}, C)$ denote the universal generalised elliptic curve and order $p$ subgroup over $\mathcal{C}_{r}$, and set $\mathcal{E}' = \mathcal{E}/C$. Let $\lambda \colon \mathcal{E} \to \mathcal{E}'$ denote the corresponding isogeny, and $\lambda^D \colon (\mathcal{E}')^D \to \mathcal{E}^D$ its dual. We have a natural isomorphism $\lambda_* \colon \mathcal{H}_{\mathcal{E}} \xrightarrow{\sim} \mathcal{H}_{\mathcal{E}'}$.

\begin{definition} \label{DefinitionOfPhiTopCorr}
    Let $P_{\opn{dR}, r}^{\opn{an}} = P_{\opn{dR}}^{\opn{an}} \times_{\mathcal{X}} \mathcal{X}_r$. Let $\phi \colon p_1^{-1}P_{\opn{dR}, r}^{\opn{an}} \to p_2^{-1}P_{\opn{dR}, r+1}^{\opn{an}}$ denote the isomorphism given by sending a trivialisation $\psi \colon \mathcal{O}^{\oplus 2} \xrightarrow{\sim} \mathcal{H}_E$ to the trivialisation
    \[
    \lambda_* \circ \psi \circ \left( \begin{smallmatrix} 1 & \\ & p^{-1} \end{smallmatrix} \right) \colon \mathcal{O}^{\oplus 2} \xrightarrow{\sim} \mathcal{H}_{E/C}
    \]
    where $C \subset E[p]$ denotes the choice of order $p$ subgroup which is disjoint from the canonical subgroup. 
\end{definition}

We consider the following commutative diagram
\[
\begin{tikzcd}
{P_{\opn{dR}, r}^{\opn{an}}} \arrow[d] & {p_1^{-1}P_{\opn{dR}, r}^{\opn{an}}} \arrow[l, "p_1"'] \arrow[rd] \arrow[rr, "\phi"] &                                                    & {p_2^{-1}P_{\opn{dR}, r+1}^{\opn{an}}} \arrow[r, "p_2"] \arrow[ld] & {P_{\opn{dR}, r+1}^{\opn{an}}} \arrow[d] \\
\mathcal{X}_r                          &                                                                                      & \mathcal{C}_r \arrow[ll, "p_1"'] \arrow[rr, "p_2"] &                                                                    & \mathcal{X}_{r+1}                       
\end{tikzcd}
\]
and we set $q_2 = p_2 \circ \phi$.

We have an analogue of this for Igusa towers over the ordinary locus, namely 
\[
\mathcal{IG}_{\infty} \xleftarrow{p_1} p_1^{-1}\mathcal{IG}_{\infty} \xrightarrow{\phi} p_2^{-1}\mathcal{IG}_{\infty} \xrightarrow{p_2} \mathcal{IG}_{\infty} 
\]
where the map $\phi$ sends a pair of trivialisations $(\psi_1, \psi_2)$ of $E[p^{\infty}]^{\circ}$ and $E[p^{\infty}]^{\opn{et}}$ respectively to $(\lambda \circ \psi_1, p^{-1}\lambda \circ \psi_2)$. This is well-defined because the canonical subgroup of $E$ is $E[p]^{\circ}$, and $C$ is \'{e}tale. As above, we set $q_2 = p_2 \circ \phi$. Then we have a commutative diagram:
\[
\begin{tikzcd}
{\mathcal{IG}_{\infty}} \arrow[d] & {p_1^{-1}\mathcal{IG}_{\infty}} \arrow[l, "p_1"'] \arrow[r, "q_2"] \arrow[d] & {\mathcal{IG}_{\infty}} \arrow[d] \\
P_{\opn{dR}, r}^{\opn{an}}        & p_1^{-1}P_{\opn{dR}, r}^{\opn{an}} \arrow[l, "p_1"'] \arrow[r, "q_2"]     & P_{\opn{dR}, r+1}^{\opn{an}}       
\end{tikzcd}
\]
with the left-hand square Cartesian.

If we let $\mathcal{Z}$ (resp. $\mathcal{Z}'$) denote the closure of $\mathcal{IG}_{\infty}$ (resp. $p_1^{-1}\mathcal{IG}_{\infty}$) inside $P_{\opn{dR}, r}^{\opn{an}}$ (resp. $p_1^{-1}P_{\opn{dR}, r}^{\opn{an}}$) then we have a commutative diagram:
\[
\begin{tikzcd}
\mathcal{Z} \arrow[d]              & \mathcal{Z}' \arrow[l, "p_1"'] \arrow[r, "q_2"] \arrow[d]                 & \mathcal{Z} \arrow[d]              \\
P_{\opn{dR}, r}^{\opn{an}} & p_1^{-1}P_{\opn{dR}, r}^{\opn{an}} \arrow[l, "p_1"'] \arrow[r, "q_2"] & P_{\opn{dR}, r+1}^{\opn{an}}
\end{tikzcd}
\]

\begin{lemma}
The left-hand side square above is Cartesian (i.e. $p_1^{-1}(\mathcal{Z}) = \mathcal{Z}'$).
\end{lemma}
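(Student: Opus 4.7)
The plan is to reduce the lemma to a purely topological statement about the map $p_1$, exploiting the fact that finite flat morphisms are both open and closed. First, I would observe that $p_1 \colon p_1^{-1}P_{\opn{dR}, r}^{\opn{an}} \to P_{\opn{dR}, r}^{\opn{an}}$ is finite flat, since it is the base change of the finite flat morphism $p_1 \colon \mathcal{C}_r \to \mathcal{X}_r$ along the structural map $P_{\opn{dR}, r}^{\opn{an}} \to \mathcal{X}_r$. Being flat and locally of finite type between analytic adic spaces, $p_1$ is open (by Huber's openness theorem for flat morphisms), and being finite it is continuous and closed.

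The inclusion $\mathcal{Z}' \subseteq p_1^{-1}(\mathcal{Z})$ is formal and does not use openness: since $\mathcal{Z}$ is closed in $P_{\opn{dR}, r}^{\opn{an}}$ and $p_1$ is continuous, the set $p_1^{-1}(\mathcal{Z})$ is a closed subset of $p_1^{-1}P_{\opn{dR}, r}^{\opn{an}}$ containing $p_1^{-1}\mathcal{IG}_{\infty}$, so it contains the closure $\mathcal{Z}'$ of $p_1^{-1}\mathcal{IG}_{\infty}$.

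For the reverse inclusion $p_1^{-1}(\mathcal{Z}) \subseteq \mathcal{Z}'$, I would use the standard topological fact: if $f \colon X \to Y$ is continuous and open, then $f^{-1}(\overline{A}) = \overline{f^{-1}(A)}$ for every $A \subseteq Y$. Concretely, for $x \in p_1^{-1}(\mathcal{Z})$ and any open neighbourhood $U$ of $x$, the image $p_1(U)$ is an open neighbourhood of $p_1(x) \in \overline{\mathcal{IG}_{\infty}}$, hence $p_1(U) \cap \mathcal{IG}_{\infty} \neq \varnothing$; lifting any element of this intersection through $p_1$ produces a point of $U \cap p_1^{-1}\mathcal{IG}_{\infty}$, so $x \in \overline{p_1^{-1}\mathcal{IG}_{\infty}} = \mathcal{Z}'$.

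The only point that deserves more than a routine check is the assertion that $p_1 \colon \mathcal{C}_r \to \mathcal{X}_r$ is genuinely finite flat on all of $\mathcal{X}_r$ (including boundary behaviour near the cusps and near the supersingular locus). This is standard in the theory of overconvergent modular forms: both source and target are smooth one-dimensional analytic adic spaces, and $p_1$ is finite surjective of degree $p$, so flatness follows from miracle flatness. Everything else is soft.
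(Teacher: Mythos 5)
Your proposal is correct and follows essentially the same route as the paper: the easy inclusion $\mathcal{Z}' \subseteq p_1^{-1}(\mathcal{Z})$ by continuity, and the reverse inclusion via openness of the finite flat map $p_1$, which is exactly the topological fact $p_1^{-1}(\overline{A}) \subseteq \overline{p_1^{-1}(A)}$ for open continuous maps (the paper phrases it as a contradiction using the single open set $U = p_1^{-1}P_{\opn{dR},r}^{\opn{an}} - \mathcal{Z}'$, while you argue pointwise with arbitrary neighbourhoods, but these are the same argument). Your extra remark justifying finite flatness of $p_1 \colon \mathcal{C}_r \to \mathcal{X}_r$ is harmless; the paper simply takes this as already established.
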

\begin{proof}
Clearly $\mathcal{Z}' \subset p_1^{-1}(\mathcal{Z})$. Now $p_1$ is finite flat hence it is an open morphism. Suppose that $x \in p_1^{-1}(\mathcal{Z}) - \mathcal{Z}'$. Set $U = p_1^{-1}P_{\opn{dR}, r}^{\opn{an}} - \mathcal{Z}'$ which is open. Then $p_1(U)$ is open and $p_1(x) \in p_1(U) \cap \mathcal{Z}$. Since $p_1(U)$ is open, this means we must have $p_1(U) \cap \mathcal{IG}_{\infty} \neq \varnothing$. But this implies that
\[
U \cap p_1^{-1}(\mathcal{IG}_{\infty}) = U \cap p_1^{-1}\mathcal{IG}_{\infty} \neq \varnothing 
\]
which is a contradiction.
\end{proof}

We now have the following construction of the $U_p$-operator. Thanks to the above lemma, for any quasi-compact open $\mathcal{Z} \subset U \subset P_{\opn{dR}, r+1}^{\opn{an}}$, there exists a quasi-compact open $\mathcal{Z} \subset U' \subset P_{\opn{dR}, r}^{\opn{an}}$ such that $q_2^{-1}(U) \supset p_1^{-1}(U')$. Therefore, we get maps
\[
\mathcal{O}_{P_{\opn{dR}, r+1}^{\opn{an}}}(U) \xrightarrow{q_2^*} \mathcal{O}_{p_1^{-1}P_{\opn{dR}, r}^{\opn{an}}}(q_2^{-1}(U)) \xrightarrow{\opn{res}} \mathcal{O}_{p_1^{-1}P_{\opn{dR}, r}^{\opn{an}}}(p_1^{-1}(U')) \xrightarrow{\opn{Tr}_{p_1}} \mathcal{O}_{P_{\opn{dR}, r}^{\opn{an}}}(U') 
\]
where $\opn{Tr}_{p_1}$ denotes the trace map associated with $p_1$. All of these maps are compatible, so we obtain an induced map on the limit, namely the $U_p$-operator $U_p \colon \mathscr{N}^{\dagger} \to \mathscr{N}^{\dagger}$.

\begin{remark}
Exactly the same construction works for $\mathscr{M}^{\dagger}$ and the $U_p$-operators are compatible under the natural map $\mathscr{M}^{\dagger} \to \mathscr{N}^{\dagger}$.
\end{remark}

\begin{remark}
    One also has similar constructions for the Frobenius operator $\varphi$ and diamond operator $S_p$, which are defined with respect to the action of $\left( \begin{smallmatrix} p^{-1} & \\ & 1 \end{smallmatrix} \right)$ and $\left( \begin{smallmatrix} p^{-1} & \\ & p^{-1} \end{smallmatrix} \right)$ respectively. One can easily show that $U_p \circ \varphi = p S_p$ and $S_p$ commutes with $U_p$ and $\varphi$. We leave the details to the reader.
\end{remark}

\subsubsection{Compatibility with the group action}

Recall that we have a cofinal system $\{U_i \}$ of neighbourhoods of the closure of $\mathcal{IG}_{\infty} \to P_{\opn{dR}, r}^{\opn{an}}$ satisfying $T(\mbb{Z}_p) \cdot U_i \subset U_i$. Here we are considering the action $T^{\opn{an}} \times P_{\opn{dR}, r}^{\opn{an}} \to P_{\opn{dR}, r}^{\opn{an}}$ through the inclusion $T^{\opn{an}} \subset \overline{P}^{\opn{an}}$.

\begin{lemma}
The $T(\mbb{Z}_p)$-action commutes with $U_p$, $\varphi$ and $S_p$ on $\mathscr{M}^{\dagger}$ and $\mathscr{N}^{\dagger}$.
\end{lemma}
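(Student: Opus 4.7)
The plan is to reduce commutativity to simple facts about the correspondences defining the Hecke operators: namely, that the geometric operations involved are either compatible with, or are themselves right-translations by, elements that centralize $T(\mbb{Z}_p)$ in $\overline{P}^{\opn{an}}$.

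First I would recall that the $T(\mbb{Z}_p)$-action on $P_{\opn{dR}}^{\opn{an}}$ is induced by the right-torsor action of $T^{\opn{an}} \subset \overline{P}^{\opn{an}}$, and hence it is induced by the right action of a subgroup of the centre of $\overline{P}^{\opn{an}}$ (note that $T$ is abelian and lies in the normaliser of every diagonal element). Next I would verify that the map $\phi$ in Definition \ref{DefinitionOfPhiTopCorr} is $T(\mbb{Z}_p)$-equivariant. Concretely, if $t \in T(\mbb{Z}_p)$ acts on a trivialisation $\psi$ by $\psi \mapsto \psi \circ t$, then
\[
\phi(\psi \circ t) = \lambda_* \circ \psi \circ t \circ \tbyt{1}{}{}{p^{-1}} = \lambda_* \circ \psi \circ \tbyt{1}{}{}{p^{-1}} \circ t = \phi(\psi) \circ t,
\]
since $t$ and $\opn{diag}(1, p^{-1})$ both lie in the abelian subgroup $T \subset \overline{P}$; moreover $\lambda_*$ is pre-composed and hence does not interfere with the right-torsor action. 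The same identity, with $\opn{diag}(p^{-1}, 1)$ or $\opn{diag}(p^{-1}, p^{-1})$ in place of $\opn{diag}(1, p^{-1})$, shows that the analogous correspondence maps defining $\varphi$ and $S_p$ are also $T(\mbb{Z}_p)$-equivariant.

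Since the projections $p_1, p_2$ are pulled back from the base morphisms $\mathcal{C}_r \to \mathcal{X}_r, \mathcal{X}_{r+1}$ and therefore commute with any right-torsor action, I would deduce that pullback along $q_2 = p_2 \circ \phi$ and pushforward/trace along $p_1$ both intertwine the $T(\mbb{Z}_p)$-action. Choosing the cofinal system of $T(\mbb{Z}_p)$-stable neighbourhoods $\{U_i\}$ of the closure of $\mathcal{IG}_{\infty}$ in $P_{\opn{dR}, r}^{\opn{an}}$, for every $U_i$ there is a $T(\mbb{Z}_p)$-stable neighbourhood $U_j'$ such that $q_2^{-1}(U_i) \supset p_1^{-1}(U_j')$, and one may moreover arrange that the relevant inclusions are $T(\mbb{Z}_p)$-equivariant. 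Passing to the colimit, the resulting composition $U_p = \opn{Tr}_{p_1} \circ q_2^*$ commutes with the $T(\mbb{Z}_p)$-action on $\mathscr{N}^{\dagger}$, and the identical argument for $\varphi$ and $S_p$ yields the claim. The analogous statement for $\mathscr{M}^{\dagger}$ follows because all constructions (the $T(\mbb{Z}_p)$-action, $U_p$, $\varphi$, $S_p$) descend along the projection $\overline{P} \twoheadrightarrow T$, i.e.\ they are already defined on $M_{\opn{dR}}^{\opn{an}}$ by the same recipe.

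The only subtlety is the choice of neighbourhoods: one must ensure the $T(\mbb{Z}_p)$-stable cofinal system can simultaneously be used for both the source and target of the Hecke correspondence, which is immediate since $T(\mbb{Z}_p)$ acts on $p_1^{-1}P_{\opn{dR}, r}^{\opn{an}}$ via its action on the second factor and this action preserves $q_2^{-1}(U_i)$ for any $T(\mbb{Z}_p)$-stable $U_i$. No serious obstacle arises; the proof is essentially a bookkeeping exercise in the commutativity of diagonal matrices in the torsor action.
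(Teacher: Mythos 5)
Your proposal is correct and follows essentially the same route as the paper, which simply observes that the $T(\mbb{Z}_p)$-action commutes with $p_1$ and $q_2$ because $T(\mbb{Z}_p)$ commutes with $\opn{diag}(1,p^{-1})$ (and likewise with $\opn{diag}(p^{-1},1)$ and $\opn{diag}(p^{-1},p^{-1})$ for $\varphi$ and $S_p$); your displayed computation with $\phi$ is exactly this point made explicit. One cosmetic slip: $T$ is not contained in the centre of $\overline{P}$ (only the scalars are central), but this parenthetical is never used -- all you need, and all you actually use, is that diagonal matrices commute with each other.
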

\begin{proof}
For $U_p$, we just need to show that the group action commutes with $p_1$ and $q_2$. But this is clear (since $T(\mbb{Z}_p)$ commutes with $\opn{diag}(1, p^{-1})$). The proofs of the claims for $\varphi$ and $S_p$ are identical.
\end{proof}

\subsubsection{Compatibility with the connection}

Let $r \geq 1$ be an integer. By the construction above, one has a map $R$-modules
\[
U_p \colon \opn{H}^0\left( P_{\opn{dR}, r}^{\opn{an}}, \mathcal{O}_{P_{\opn{dR}, r}^{\opn{an}}} \right) \xrightarrow{\opn{res}} \opn{H}^0\left( P_{\opn{dR}, r+1}^{\opn{an}}, \mathcal{O}_{P_{\opn{dR}, r+1}^{\opn{an}}} \right) \to \opn{H}^0\left( P_{\opn{dR}, r}^{\opn{an}}, \mathcal{O}_{P_{\opn{dR}, r}^{\opn{an}}} \right) .
\]
We have the following lemma.

\begin{lemma} \label{ClaHeckeIntertwiningLemma}
For any $\phi \in C^{\opn{la}}(\mbb{Z}_p, R)$, one has 
\begin{itemize}
    \item $\phi(p \cdot -) \circ U_p = U_p \circ \phi$
    \item $\phi \circ \varphi = \varphi \circ \phi(p \cdot -)$
    \item $\phi \circ S_p = S_p \circ \phi$
\end{itemize}
as endomorphisms of $\mathscr{N}^{\dagger}$.
\end{lemma}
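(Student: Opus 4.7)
The plan is to reduce the problem, by a density and continuity argument, to the case $\phi = \opn{Id}_{\mbb{Z}_p}$, and then verify three specific commutation identities between $\nabla$ and the Hecke operators, which will ultimately follow from an extension of Lemma \ref{IntertwiningOFGMandZpLemma} from $T(\mbb{Z}_p)$ to diagonal matrices in $T(\mbb{Q}_p)$.

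\textbf{Step 1 (Reduction to $\phi = \opn{Id}$).} I would first observe that the set of $\phi \in C^{\opn{la}}(\mbb{Z}_p, R)$ for which all three identities hold forms a closed $R$-subalgebra of $C^{\opn{la}}(\mbb{Z}_p, R)$. Closure uses that $\mathscr{N}^{\dagger}$ is a topological $C^{\opn{la}}(\mbb{Z}_p, R)$-module and that the Hecke operators are continuous; $R$-linearity is immediate; and multiplicativity uses that the $C^{\opn{la}}$-action is an algebra action, so that for instance
\[ (\phi_1 \phi_2)(p \cdot -) \circ U_p = \phi_1(p \cdot -) \circ \phi_2(p \cdot -) \circ U_p = \phi_1(p \cdot -) \circ U_p \circ \phi_2 = U_p \circ (\phi_1 \phi_2), \]
and similarly for $\varphi$ and $S_p$. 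Since polynomial functions are dense in $C^{\opn{la}}(\mbb{Z}_p, R)$ by Proposition \ref{PropAmice} and are generated over $R$ by $\opn{Id}_{\mbb{Z}_p}$ (which by construction in \S \ref{TheGMConnectionSubSec} acts as $\nabla$), it suffices to establish the three identities for $\phi = \opn{Id}_{\mbb{Z}_p}$. In this case, $\phi(p \cdot -)$ acts as $p\nabla$, so the identities become
\[ U_p \circ \nabla = p \nabla \circ U_p, \quad \nabla \circ \varphi = p \varphi \circ \nabla, \quad \nabla \circ S_p = S_p \circ \nabla. \]

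\textbf{Step 2 (Verification).} Using the correspondence descriptions of $U_p$, $\varphi$, $S_p$ from \S \ref{TheUpOperatorSSec}, each identity reduces to analyzing how $\nabla$ transforms under the isomorphism $\phi$ in Definition \ref{DefinitionOfPhiTopCorr} (and its analogues for $\varphi$, $S_p$). This isomorphism decomposes as (a) the identification $\lambda_* : \mathcal{H}_E \xrightarrow{\sim} \mathcal{H}_{E/C}$ coming from the isogeny, and (b) right-translation on the torsor by $t \in T(\mbb{Q}_p)$, equal to $\opn{diag}(1, p^{-1})$, $\opn{diag}(p^{-1}, 1)$, $\opn{diag}(p^{-1}, p^{-1})$ for $U_p$, $\varphi$, $S_p$ respectively. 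The Gauss--Manin connection commutes with $\lambda_*$ by classical functoriality, and with the \'etale maps $p_1$, $p_2$ and the trace $\opn{Tr}_{p_1}$. For (b), one has the intertwining
\[ R_t^* \circ \nabla = (t_1 t_2^{-1}) \nabla \circ R_t^*, \quad t = \opn{diag}(t_1, t_2) \in T(\mbb{Q}_p), \]
which extends Lemma \ref{IntertwiningOFGMandZpLemma}: the same proof applies since it rests only on the identity $\opn{Ad}(t) X = (t_1 t_2^{-1}) X$ for $X = \tbyt{0}{1}{0}{0}$, which holds for all $t \in T(\mbb{Q}_p)$. Specialising to $t_1 t_2^{-1} = p,\, p^{-1},\, 1$ gives the three required relations.

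\textbf{Main obstacle.} The delicate point is Step 2(b): carefully matching conventions so that the intertwining factor $t_1 t_2^{-1}$ translates into the correct $p$-factors in the three Hecke relations, and verifying the compatibility of the trace $\opn{Tr}_{p_1}$ with the connection (as $p_1$ is only \'etale after restricting from $\mathcal{C}_r$ to the appropriate locus of $\mathcal{X}_r$, requiring care near the cusps and the ramification behaviour). These are conceptually clear facts for classical algebraic modular forms, but the precise tracking of signs, normalisations, and torsor conventions in the adic/overconvergent setting is where the technical work lies.
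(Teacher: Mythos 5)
Your Step 1 coincides with the paper's reduction (the paper compresses it to a single sentence: by density of polynomial functions it suffices to treat $\phi = \opn{Id}_{\mbb{Z}_p}$, i.e.\ the operator $\nabla$), and your closed-subalgebra argument is a correct, slightly more explicit way of saying the same thing. Where you genuinely diverge is Step 2. The paper does \emph{not} verify the relation $U_p \circ \nabla = p\nabla \circ U_p$ geometrically on $\mathscr{N}^{\dagger}$ at all: it imports the relation on the subspace $\opn{H}^0(P_{\opn{dR}, r}^{\opn{an}}, \mathcal{O}_{P_{\opn{dR}, r}^{\opn{an}}})$ of classical nearly holomorphic forms from the functoriality results of \cite[\S 6.2]{AI_LLL}, and then observes that $U_p \circ \nabla$ and $p\nabla \circ U_p$ are two continuous $R$-linear extensions of the same operator from this subspace, which is dense in $\mathscr{N}^{\dagger}_U$ for every quasi-compact strict open neighbourhood $U$ of the Igusa tower; uniqueness of continuous extensions then finishes the proof, and the $\varphi$ and $S_p$ cases are handled identically. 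What the paper's route buys is that the delicate geometric bookkeeping only has to be done on classical objects (where it is already in the literature, or can be read off from $q$-expansions via $\Theta = q\frac{d}{dq}$); what your route would buy is self-containedness at the level of the adic correspondences.

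The caveat is that the computation you defer in your ``main obstacle'' paragraph is precisely the content of Step 2, and one of your intermediate assertions is stated too loosely to carry it: the claim that ``the Gauss--Manin connection commutes with $\lambda_*$ by classical functoriality'' is correct for the bare connection on $\mathcal{H}_{\mathcal{E}}$, but the operator $\nabla$ on $\mathcal{O}_{P_{\opn{dR}}}$ is the Gauss--Manin connection composed with the Kodaira--Spencer trivialisation (the factor $T_1 T_2^{-1}$ in \S \ref{LocalDescriptionOfGM}), and the Kodaira--Spencer forms of $E$ and $E/C$ compared across the correspondence differ by a factor tied to $\deg \lambda$. A priori the factor of $p$ in $U_p \circ \nabla = p \nabla \circ U_p$ could come from the $\opn{diag}(1, p^{-1})$ twist in Definition \ref{DefinitionOfPhiTopCorr}, from the isogeny's effect on Kodaira--Spencer, or be double-counted between the two; your accounting attributes it entirely to the torsor translation without justifying that $\lambda_*$ contributes trivially. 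The cleanest repair, if you want to keep a direct argument, is to borrow the paper's density observation and verify the identity only on the dense subspace of algebraic sections of $\mathcal{O}_{P_{\opn{dR}}}$ (equivalently, on graded pieces and hence on $q$-expansions), where the normalisation check is elementary.
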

\begin{proof}
By the density of polynomial functions in $C^{\opn{la}}(\mbb{Z}_p, R)$, it suffices to check this for $\phi$ equal to the structural map $\mbb{Z}_p \hookrightarrow R$ (i.e. for the operator $\nabla$). The functoriality results in \cite[\S 6.2]{AI_LLL} imply that $p \nabla \circ U_p = U_p \circ \nabla$  on $\opn{H}^0\left( P_{\opn{dR}, r}^{\opn{an}}, \mathcal{O}_{P_{\opn{dR}, r}^{\opn{an}}} \right)$. Furthermore $U_p \circ \nabla$ and $p\nabla \circ U_p$ are two continuous $R$-linear morphisms $\mathscr{N}^{\dagger} \to \mathscr{N}^{\dagger}$ extending $U_p \circ \nabla = p\nabla \circ U_p$ on $\opn{H}^0\left( P_{\opn{dR}, r}^{\opn{an}}, \mathcal{O}_{P_{\opn{dR}, r}^{\opn{an}}} \right)$ for any $r \geq 1$. But any such extension must be unique (because $\opn{H}^0\left( P_{\opn{dR}, r}^{\opn{an}}, \mathcal{O}_{P_{\opn{dR}, r}^{\opn{an}}} \right)$ is dense in $\mathscr{N}^{\dagger}_{U}$ for any quasi-compact strict open neighbourhood $U$ of the Igusa tower). This proves the first bullet point. The second and third bullet points are similar (again following from the functoriality results in \cite[\S 6.2]{AI_LLL} and a density argument).
\end{proof}

\subsubsection{A reinterpretation}

We can also reinterpret the construction of $U_p$ using the torsors constructed in Proposition \ref{ReductionStructurePdRProposition}. We begin with the following lemma:

\begin{lemma} \label{PhiRestrictsToIsoLemma}
    Let $r \geq 1$ be an integer. Then there exists an integer $s \geq r$ such that
    \[
    p_2^{-1}\tilde{\mathcal{F}}_{r+1, s+1} \subset \phi\left(  p_1^{-1}\tilde{\mathcal{F}}_{r, s} \right) \subset p_2^{-1}\tilde{\mathcal{F}}_{r, s+1}
    \]
    where $\phi \colon p_1^{-1}P_{\opn{dR}, s}^{\opn{an}} \xrightarrow{\sim} p_2^{-1}P_{\opn{dR}, s+1}^{\opn{an}}$ is the morphism in Definition \ref{DefinitionOfPhiTopCorr}.
\end{lemma}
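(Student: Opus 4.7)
The key observation is that $\phi$ is equivariant for a twisted right action of $\overline{P}^{\opn{an}}$: writing $t = \opn{diag}(1, p^{-1}) \in T^+$, the formula $\phi(\psi) = \lambda_* \circ \psi \circ t$ immediately yields
\[
\phi(\psi \cdot g) = \phi(\psi) \cdot (t^{-1}gt)
\]
for all $g \in \overline{P}^{\opn{an}}$. A short matrix computation, combined with the description $\overline{\mathcal{P}}_r = T(\mbb{Z}_p) \cdot \{x \in \overline{\mathcal{P}} : x \equiv 1 \pmod{p^{r+1-1/(p-1)}}\}$, shows that conjugation by $t$ fixes diagonal entries while multiplying the lower-left off-diagonal entry by $p$, yielding the chain of group-theoretic inclusions
\[
\overline{\mathcal{P}}_{r+1} \subseteq t^{-1}\overline{\mathcal{P}}_r t \subseteq \overline{\mathcal{P}}_r.
\]
By the twisted equivariance, $\phi(p_1^{-1}\tilde{\mathcal{F}}_{r, s})$ becomes an étale $(t^{-1}\overline{\mathcal{P}}_r t)$-torsor over $p_2^{-1}\mathcal{X}_{s+1}$, sandwiched between torsors for the two surrounding groups.

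Next I would verify the inclusions after pullback to the infinite-level Igusa tower. By Proposition \ref{ReductionStructurePdRProposition}, one has $\tilde{\mathcal{F}}_{r', s'} \times_{\mathcal{X}_{s'}} \mathcal{X}_\infty = \mathcal{IG}_\infty \cdot \overline{\mathcal{P}}_{r'}$ inside $P_{\opn{dR}}^{\opn{an}} \times_{\mathcal{X}} \mathcal{X}_\infty$, where $\mathcal{IG}_\infty$ is embedded via the unit root splitting. The restriction of $q_2 = p_2 \circ \phi$ to $p_1^{-1}\mathcal{IG}_\infty$ factors through $\mathcal{IG}_\infty$ (by the commutative diagram preceding the lemma), so combined with the twisted equivariance this gives
\[
\phi(p_1^{-1}\tilde{\mathcal{F}}_{r, s}) \times_{\mathcal{X}_{s+1}} \mathcal{X}_\infty = \mathcal{IG}_\infty \cdot (t^{-1}\overline{\mathcal{P}}_r t).
\]
Both inclusions of the lemma now follow at infinite level from the group-theoretic containments of the first step.

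Finally, to descend the inclusions to a finite level, I would invoke the uniqueness part of Proposition \ref{ReductionStructurePdRProposition}. For the upper inclusion, the $\overline{\mathcal{P}}_r$-saturation $\phi(p_1^{-1}\tilde{\mathcal{F}}_{r, s}) \cdot \overline{\mathcal{P}}_r$ is an étale $\overline{\mathcal{P}}_r$-torsor satisfying property (1) of that proposition, since its pullback to $\mathcal{X}_\infty$ is $\mathcal{IG}_\infty \cdot \overline{\mathcal{P}}_r$ (using $t^{-1}\overline{\mathcal{P}}_r t \cdot \overline{\mathcal{P}}_r = \overline{\mathcal{P}}_r$). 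By uniqueness, after enlarging $s$, this saturation coincides with $p_2^{-1}\tilde{\mathcal{F}}_{r, s+1}$, giving the upper inclusion since $\phi(p_1^{-1}\tilde{\mathcal{F}}_{r, s})$ sits inside its own $\overline{\mathcal{P}}_r$-saturation. For the lower inclusion, an analogous argument applies to the $(t^{-1}\overline{\mathcal{P}}_r t)$-saturation of $p_2^{-1}\tilde{\mathcal{F}}_{r+1, s+1}$, using an analog of the uniqueness statement obtained by adapting its proof to the group $t^{-1}\overline{\mathcal{P}}_r t$ (which has the same shape as $\overline{\mathcal{P}}_r$, namely $T(\mbb{Z}_p)$ times a congruence condition). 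The main technical subtlety is the descent step: the enlargements of $s$ arising from the two uniqueness statements must be combined into a single integer, which is unproblematic since each application only increases $s$, so the maximum of the two works for both inclusions.
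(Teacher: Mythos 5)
Your proposal is correct and follows essentially the same route as the paper's proof: both exploit the twisted equivariance $\phi(\psi \cdot g) = \phi(\psi)\cdot(t^{-1}gt)$ to realise $\phi\bigl(p_1^{-1}\tilde{\mathcal{F}}_{r,s}\bigr)$ as an \'{e}tale $(t^{-1}\overline{\mathcal{P}}_r t)$-torsor admitting $p_2^{-1}\mathcal{IG}_{\infty}$ as a reduction of structure, and both then combine the group inclusions $\overline{\mathcal{P}}_{r+1}\subseteq t^{-1}\overline{\mathcal{P}}_r t \subseteq \overline{\mathcal{P}}_r$ with the uniqueness clause of Proposition \ref{ReductionStructurePdRProposition} after enlarging $s$. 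Your explicit acknowledgement that the lower inclusion needs the uniqueness statement adapted to the conjugated group $t^{-1}\overline{\mathcal{P}}_r t$ is a point the paper passes over silently, but it is handled the same way in substance.
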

\begin{proof}
    Let $s \geq r$ be an integer such that $\tilde{\mathcal{F}}_{r, s}$ and $\tilde{\mathcal{F}}_{r+1, s+1}$ exist, and recall that the map $p_2$ is an isomorphism. Let $\mathscr{G} = \phi(p_1^{-1}\tilde{\mathcal{F}}_{r, s})$, which is an \'{e}tale torsor under the group
    \[
    t^{-1} \overline{\mathcal{P}}_r t \subset \overline{\mathcal{P}}_{r} , \quad \quad t = \tbyt{1}{}{}{p^{-1}} .
    \]
    Then we see that the pushout $\mathscr{G} \times^{t^{-1} \overline{\mathcal{P}}_r t} \overline{\mathcal{P}}_{r}$ is an \'{e}tale $\overline{\mathcal{P}}_{r}$-torsor which is a reduction of structure of $p_2^{-1}P_{\opn{dR}, s+1}^{\opn{an}}$. Furthermore, $p_2^{-1}\mathcal{IG}_{\infty}$ defines a reduction of structure of $\mathscr{G}$. Since $p_2$ is an isomorphism, we see from Proposition \ref{ReductionStructurePdRProposition} that we can increase $s$ (if necessary) so that $\mathscr{G} \times^{t^{-1} \overline{\mathcal{P}}_r t} \overline{\mathcal{P}}_{r}$ coincides with $p_2^{-1}\tilde{\mathcal{F}}_{r, s+1}$, and $p_2^{-1}\tilde{\mathcal{F}}_{r+1, s+1}$ is a reduction of structure of $\mathscr{G}$ via the embedding $\overline{\mathcal{P}}_{r+1} \subset t^{-1} \overline{\mathcal{P}}_r t$.
\end{proof}

Let $\kappa \colon T(\mbb{Z}_p) \to (R^+)^{\times}$ be an $r$-analytic character, and let $s \geq r$ be a sufficiently large integer such that the conclusion of Lemma \ref{PhiRestrictsToIsoLemma} holds. Let 
\[
\pi_1 \colon p_1^{-1}\tilde{\mathcal{F}}_{r, s} \to \mathcal{C}_s, \quad \quad \pi_2 \colon p_2^{-1}\tilde{\mathcal{F}}_{r+1, s+1} \to \mathcal{C}_s, \quad \quad \sigma \colon \mathscr{G} \to \mathcal{C}_s
\]
denote the structural maps of the torsors, where $\mathscr{G}$ is as in Lemma \ref{PhiRestrictsToIsoLemma}. Let $t = \left( \begin{smallmatrix} 1 & \\ & p^{-1} \end{smallmatrix} \right) \in T^{++}$. Then the morphism $\phi^* \otimes (t \star_l - )$ induces a cohomological correspondence
\[
\phi_{\kappa} \colon p_2^* \mathcal{V}_{r, s+1, \kappa} = \left( (\sigma)_* \mathcal{O}_{\mathscr{G}} \hatot V_{r, \kappa} \right)^{t^{-1} \overline{\mathcal{P}}_r t, \star_l} \to \left( (\pi_1)_* \mathcal{O}_{p_1^{-1}\tilde{\mathcal{F}}_{r, s}} \hatot V_{r, \kappa} \right)^{\overline{\mathcal{P}}_{r}, \star_l} = p_1^* \mathcal{V}_{r, s, \kappa} 
\]
where the first equality holds because $V_{r, \kappa}$ is a representation of $\overline{\mathcal{P}}_r$ and $\mathscr{G} \times^{t^{-1} \overline{\mathcal{P}}_r t} \overline{\mathcal{P}}_r = p_2^{-1}\tilde{\mathcal{F}}_{r, s+1}$. The $U_p$-operator can then be seen as the following composition:
\begin{align*}
    \opn{H}^0\left( \mathcal{X}_s, \mathcal{V}_{r, s, \kappa} \right) &\xrightarrow{\opn{res}} \opn{H}^0\left( \mathcal{X}_{s+1}, \mathcal{V}_{r, s+1, \kappa} \right) \\ 
     &\xrightarrow{p_2^*} \opn{H}^0\left( \mathcal{C}_{s}, p_2^*\mathcal{V}_{r, s+1, \kappa} \right) \\
     &\xrightarrow{\phi_{\kappa}} \opn{H}^0\left( \mathcal{C}_{s}, p_1^*\mathcal{V}_{r, s, \kappa} \right) \\
     &\xrightarrow{\opn{Tr}_{p_1}} \opn{H}^0\left( \mathcal{X}_s, \mathcal{V}_{r, s, \kappa} \right)
\end{align*}
where the first map is induced from restriction. These operators are compatible as one varies $r, s$, and the resulting operator on the limit $\mathscr{N}^{\dagger}_{\kappa} = \varinjlim_{r, s} \opn{H}^0\left( \mathcal{X}_s, \mathcal{V}_{r, s, \kappa} \right)$ is precisely the $U_p$-operator constructed in the previous sections. For $s \geq r+1$ sufficiently large, we have a factorisation:
\begin{equation} \label{FactorisationOfUpEqn}
\begin{tikzcd}
{\opn{H}^0\left( \mathcal{X}_{s+1}, \mathcal{V}_{r+1, s+1, \kappa} \right)} \arrow[r, "U_p"] \arrow[rd] & {\opn{H}^0\left( \mathcal{X}_{s+1}, \mathcal{V}_{r+1, s+1, \kappa} \right)}   \\
{\opn{H}^0\left( \mathcal{X}_s, \mathcal{V}_{r, s, \kappa} \right)} \arrow[r, "U_p"] \arrow[u]          & {\opn{H}^0\left( \mathcal{X}_s, \mathcal{V}_{r, s, \kappa} \right)} \arrow[u]
\end{tikzcd}
\end{equation}
because the cohomological correspondence $p_2^*\mathcal{V}_{r+1, s+1, \kappa} \xrightarrow{\phi_{\kappa}} p_1^*\mathcal{V}_{r+1, s, \kappa}$ factors through $p_1^* \mathcal{V}_{r, s, \kappa} \subset p_1^*\mathcal{V}_{r+1, s, \kappa}$ (see Remark \ref{CompactOperatorOnVrkappaRem}).

\begin{remark}
    Using this reinterpretation, one can easily see that $U_p$ preserves the filtration $\opn{Fil}_{\bullet}\mathscr{N}^{\dagger}_{\kappa}$.
\end{remark}

\subsubsection{Slope decompositions}

We now discuss the spectral theory of the $U_p$-operator on $\mathscr{N}^{\dagger}_{\kappa}$. Let $r \geq 1$ be an integer. Let $R_0^+$ be an admissible $\mbb{Z}_p$-algebra, $R_0 = R_0^+[1/p]$ and consider the affinoid adic space $\mathcal{W} \defeq \opn{Spa}(R_0, R_0^+)$. Suppose that we have an $r$-analytic analytic character $\kappa \colon T(\mbb{Z}_p) \to R_0^{\times}$. We have the following:

\begin{theorem} \label{SlopeDecompositionForNdaggerThm}
Let $x \in \mathcal{W}(\overline{\mbb{Q}}_p)$ and $h \in \mbb{Q}$. Then there exists an open affinoid neighbourhood $\opn{Spa}(R, R^+) \subset \mathcal{W}$ of $x$ such that $\mathscr{N}^{\dagger}_{\kappa}$ and $\opn{Fil}_{\bullet}\mathscr{N}_{\kappa}^{\dagger}$ have slope $\leq h$ decompositions with respect to $U_p$. Furthermore, there exists an integer $k$ such that
\[
(\mathscr{N}^{\dagger}_{\kappa})^{\leq h} = (\opn{Fil}_{k}\mathscr{N}^{\dagger}_{\kappa})^{\leq h} .
\]
\end{theorem}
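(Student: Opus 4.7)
The strategy is to combine the standard Riesz-theoretic spectral theory for the compact operator $U_p$, applied to the filtered pieces of $\mathscr{N}^{\dagger}_{\kappa}$, with a direct computation showing that the slopes of $U_p$ on the graded pieces grow linearly with the filtration degree, which forces $(\mathscr{N}^{\dagger}_{\kappa})^{\leq h}$ to be concentrated in finite filtration.

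\emph{Step 1 (Compactness and finite-level slope decompositions).} First, using the factorisation~(\ref{FactorisationOfUpEqn}) together with the compactness of the inclusion $V_{r,\kappa}\hookrightarrow V_{r+1,\kappa}$ from Remark~\ref{CompactOperatorOnVrkappaRem}, we see that $U_p$ acts as a compact ($=$ completely continuous) operator on each Banach $R$-module $\opn{H}^0(\mathcal{X}_s,\mathcal{V}_{r,s,\kappa})\hatot R$ and, since it preserves the filtration, also on each Banach $R$-module $\opn{H}^0(\mathcal{X}_s,\opn{Fil}_k\mathcal{V}_{r,s,\kappa})\hatot R$ (the latter being sections of a locally free sheaf of finite rank on the affinoid $\mathcal{X}_s$). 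By Coleman's theory of compact operators over affinoid algebras (as used e.g.\ in \cite{HaloSpectral, AI_LLL}, and in its family form due to Buzzard and Urban), after shrinking $\mathcal{W}$ to a suitable affinoid neighbourhood $\Omega=\opn{Spa}(R,R^+)$ of $x$, the operator $U_p$ admits slope $\leq h$ decompositions on all of these modules, with finite projective slope $\leq h$ summand. By uniqueness of slope decompositions, these are compatible with the transition maps in $(r,s)$, and passing to the colimit yields slope $\leq h$ decompositions of $\mathscr{N}^{\dagger}_{\kappa}\hatot R$ and of each $\opn{Fil}_k\mathscr{N}^{\dagger}_{\kappa}\hatot R$.

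\emph{Step 2 (Slopes on graded pieces).} The crucial input is the explicit action of $U_p$ on $\opn{Gr}_k\mathcal{V}_{r,s,\kappa}=\opn{Fil}_k\mathcal{V}_{r,s,\kappa}/\opn{Fil}_{k-1}\mathcal{V}_{r,s,\kappa}$. Unwinding the definition of $U_p$ as the trace of the cohomological correspondence $\phi_{\kappa}=\phi^*\otimes(t\star_l -)$ with $t=\opn{diag}(1,p^{-1})$, and using the explicit formula $(t\star_l f)(x,y,z)=f(x,py,z)$, one sees that the $y^k$-component of a section is multiplied by $p^k$ under $t\star_l$. Thus one obtains a factorisation
\[
U_p\big|_{\opn{Gr}_k\mathcal{V}_{r,s,\kappa}}=p^k\cdot U_p^{(k)},
\]
where, via the Kodaira--Spencer identification (cf.\ Lemma~\ref{NablaBarIsoUpToDenominators}), $U_p^{(k)}$ is the ordinary $U_p$-operator on an overconvergent modular form sheaf of weight $\kappa+k(1;-1)$. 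Since the operator norms of these $U_p^{(k)}$ are uniformly bounded in $k$ over a small enough neighbourhood $\Omega$ of $x$ (this is the standard overconvergent bound, applied to a family of weights that varies analytically with $k$), we conclude that all slopes of $U_p$ on $\opn{Gr}_k\mathscr{N}^{\dagger}_{\kappa}\hatot R$ are $\geq k-C$ for some constant $C=C(x,\kappa)\geq 0$ independent of $k$.

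\emph{Step 3 (Stabilisation and conclusion).} Fix any integer $k_0>h+C$. By Step~2, $(\opn{Gr}_k\mathscr{N}^{\dagger}_{\kappa})^{\leq h}=0$ for all $k\geq k_0$. Exactness of the slope $\leq h$ functor on short exact sequences of Banach $R$-modules with compact endomorphism, applied to
\[
0\to \opn{Fil}_{k-1}\mathscr{N}^{\dagger}_{\kappa}\to \opn{Fil}_k\mathscr{N}^{\dagger}_{\kappa}\to \opn{Gr}_k\mathscr{N}^{\dagger}_{\kappa}\to 0,
\]
shows that $(\opn{Fil}_k\mathscr{N}^{\dagger}_{\kappa})^{\leq h}=(\opn{Fil}_{k_0}\mathscr{N}^{\dagger}_{\kappa})^{\leq h}$ for all $k\geq k_0$. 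Finally, to see that this also equals $(\mathscr{N}^{\dagger}_{\kappa})^{\leq h}$, we apply the same slope analysis to the quotient sheaves $\mathcal{V}_{r,s,\kappa}/\opn{Fil}_{k_0}\mathcal{V}_{r,s,\kappa}$, on whose graded pieces $U_p$ acts with all slopes $>h$; the slope $\leq h$ part of the quotient therefore vanishes, and the natural map $(\opn{Fil}_{k_0}\mathscr{N}^{\dagger}_{\kappa})^{\leq h}\to(\mathscr{N}^{\dagger}_{\kappa})^{\leq h}$ is an isomorphism.

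\emph{Main obstacle.} The delicate point is the uniform bound in Step~2: one needs to verify that the operators $U_p^{(k)}$ on the successively twisted overconvergent weight spaces remain compact with uniformly bounded operator norms on a single affinoid neighbourhood of $x$. The remaining steps fit standard patterns from the theory of slope decompositions of compact operators on Banach modules.
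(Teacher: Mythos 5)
Your proposal is correct and follows essentially the same route as the paper: the paper's proof also rests on the factorisation (\ref{FactorisationOfUpEqn}) together with the compactness of $V_{r,\kappa}\hookrightarrow V_{r+1,\kappa}$ to get slope decompositions, on the observation that the norm of $t\star_l$ on $\opn{Gr}_k V_{r,\kappa}$ tends to zero as $k\to\infty$ (your Step 2 makes this explicit), and on the fact that $U_p$ kills the discrepancy between the full colimit over $(r,s)$ and a fixed-$r$ colimit. The only organisational difference is that the paper first produces the slope decomposition on $\mathscr{L}_\kappa=\varinjlim_s\opn{H}^0(\mathcal{X}_s,\mathcal{V}_{r,s,\kappa})$ for fixed $r$ and then deduces $(\mathscr{N}^\dagger_\kappa)^{\leq h}=\mathscr{L}_\kappa^{\leq h}$ from pointwise nilpotence of $U_p$ on the quotient $\mathscr{N}^\dagger_\kappa/\mathscr{L}_\kappa$, whereas your Step 1 passes to the colimit over $r$ directly; the stabilisation in the $r$-direction that this requires is exactly what (\ref{FactorisationOfUpEqn}) provides, so the gap is only one of exposition.
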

\begin{proof}
    Since the inclusion $V_{r, \kappa} \to V_{r+1, \kappa}$ is compact and $\mathcal{X}_{s+1} \Subset \mathcal{X}_s$ is a strict inclusion, the factorisation in (\ref{FactorisationOfUpEqn}) (and the standard theory of slope decompositions) implies that $\mathscr{L}_{\kappa} \defeq \varinjlim_s\opn{H}^0\left(\mathcal{X}_s, \mathcal{V}_{r, s, \kappa} \right)$ admits a slope $\leq h$ decomposition over a neighbourhood $\opn{Spa}(R, R^+)$ of $x$ in $\mathcal{W}$. Furthermore, one can check that as $k \to +\infty$, the norm of the operator
    \[
    (t \star_l - ) \colon \opn{Gr}_k V_{r, \kappa} \to \opn{Gr}_k V_{r, \kappa}
    \]
    tends to zero. Hence there exists an integer $k \geq 0$ such that $\mathscr{L}_{\kappa}^{\leq h} = (\opn{Fil}_{k}\mathscr{N}^{\dagger}_{\kappa})^{\leq h}$. To conclude the proof, we note that the factorisation (\ref{FactorisationOfUpEqn}) implies that $U_p$ is pointwise nilpotent on $\mathscr{N}^{\dagger}_{\kappa}/\mathscr{L}_{\kappa}$, so $\mathscr{N}^{\dagger}_{\kappa}$ admits a slope $\leq h$ decomposition and $(\mathscr{N}^{\dagger}_{\kappa})^{\leq h} = \mathscr{L}_{\kappa}^{\leq h}$.
\end{proof}

\subsection{Overconvergent projectors}

We now have all the ingredients to define an overconvergent projector on $\mathscr{N}^{\dagger}_{\kappa}$. As in the previous subsection, let $R_0^+$ be an admissible $\mbb{Z}_p$-algebra, $R_0 = R_0^+[1/p]$ and set $\mathcal{W} = \opn{Spa}(R_0, R_0^+)$. Suppose that we have a locally analytic character $\kappa \colon T(\mbb{Z}_p) \to R_0^{\times}$.

\begin{theorem} \label{OCProjectorForNdaggerThm}
Let $x \in \mathcal{W}(\overline{\mbb{Q}}_p)$ and $h \in \mbb{Q}$. Then there exists an open affinoid neighbourhood $\opn{Spa}(R, R^+) \subset \mathcal{W}$ containing $x$ and an overconvergent projector
\[
\Pi^{\leq h, \opn{oc}} \colon \mathscr{N}^{\dagger}_{\kappa} \to \left(\mathscr{M}^{\dagger}_{\kappa} \right)^{\leq h} \otimes_R \opn{Frac}(R)
\]
with finitely-many poles, interpolating the overconvergent projectors in \cite[\S 3.3.4]{Urban} for classical specialisations.
\end{theorem}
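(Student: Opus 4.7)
The plan is to combine the bound on filtration degrees of the finite-slope subspace provided by Theorem \ref{SlopeDecompositionForNdaggerThm} with the inversion (on graded pieces) of the Gauss--Manin connection coming from Lemma \ref{NablaBarIsoUpToDenominators}. First, I would invoke Theorem \ref{SlopeDecompositionForNdaggerThm} simultaneously for the finite collection of weights $\kappa, \kappa - 2\rho, \ldots, \kappa - 2k\rho$ and slopes $\leq h, \leq h - 1, \ldots, \leq h - k$ (where $k$ is the integer produced by the theorem at $\kappa$), and shrink $\mathcal{W}$ to an affinoid neighbourhood $\opn{Spa}(R, R^+) \ni x$ on which all of these slope decompositions exist and on which $(\mathscr{N}^{\dagger}_{\kappa})^{\leq h} = (\opn{Fil}_k \mathscr{N}^{\dagger}_{\kappa})^{\leq h}$.

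Next, I would construct the projector by an iterative descent through the filtration. Starting with $f \in (\opn{Fil}_j \mathscr{N}^{\dagger}_\kappa)^{\leq h}$ for $1 \leq j \leq k$, Lemma \ref{NablaBarIsoUpToDenominators} provides an isomorphism $\opn{Gr}_{j-1} \mathscr{N}^{\dagger}_{\kappa - 2\rho} \xrightarrow{\sim} \opn{Gr}_j \mathscr{N}^{\dagger}_\kappa$ after inverting the scalar $u_{\kappa - 2\rho} - (j - 1) = u_{\kappa} - j - 1 \in R$, so the image $\bar{f} \in \opn{Gr}_j \mathscr{N}^{\dagger}_\kappa$ equals $\nabla \bar{g}$ for a unique $\bar{g}$ in that localisation. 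Since $\mathcal{X}_s$ is affinoid and the filtered pieces are coherent, I can lift $\bar{g}$ to some $\tilde{g} \in \opn{Fil}_{j-1} \mathscr{N}^{\dagger}_{\kappa - 2\rho}[\tfrac{1}{u_{\kappa} - j - 1}]$. Using the intertwining $U_p \circ \nabla = p \nabla \circ U_p$ from Lemma \ref{ClaHeckeIntertwiningLemma}, which shows that $\nabla$ increases $U_p$-slopes by one, I obtain the decomposition
\[
f = \nabla(\pi^{\leq h - 1} \tilde{g}) + f', \qquad f' \in (\opn{Fil}_{j-1} \mathscr{N}^{\dagger}_\kappa)^{\leq h}\bigl[\tfrac{1}{u_{\kappa} - j - 1}\bigr].
\]
Iterating from $j = k$ down to $j = 1$ brings the remainder into $(\opn{Fil}_0 \mathscr{N}^{\dagger}_\kappa)^{\leq h} = (\mathscr{M}^{\dagger}_\kappa)^{\leq h}$, and the overconvergent projector is defined by $\Pi^{\leq h, \opn{oc}}(f) \defeq (\pi^{\leq h} f)^{(0)}$, where $(-)^{(0)}$ denotes this terminal remainder. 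Only the finite collection $\{ u_{\kappa} - j - 1 : 1 \leq j \leq k \}$ of elements of $R$ has been inverted, so the output lies in $(\mathscr{M}^{\dagger}_\kappa)^{\leq h} \otimes_R \opn{Frac}(R)$ with finitely many poles as required.

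The main technical obstacle is verifying that the construction is independent of the auxiliary choices of lift $\tilde{g}$ at each stage. Any two lifts differ by an element of $\opn{Fil}_{j-2} \mathscr{N}^{\dagger}_{\kappa - 2\rho}$; its image under $\nabla$ lies in $\opn{Fil}_{j-1} \mathscr{N}^{\dagger}_\kappa$, which is absorbed into the error term $f'$ and hence does not affect the eventual overconvergent remainder obtained at the bottom of the filtration. Finally, the interpolation property at classical weights is automatic: at $\kappa = (k_0; w)$ one has $u_{\kappa} = k_0$, so the denominators $u_{\kappa} - j - 1$ specialise to $k_0 - j - 1$, recovering the scalars appearing in the classical Maass--Shimura holomorphic projection, and the above inductive decomposition matches Urban's construction in \cite[\S 3.3.4]{Urban}.
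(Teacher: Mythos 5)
Your proposal is correct and follows essentially the same route as the paper: reduce to $(\opn{Fil}_k\mathscr{N}^{\dagger}_{\kappa})^{\leq h}$ via Theorem \ref{SlopeDecompositionForNdaggerThm} and then run the descent through the filtration using the graded isomorphism of Lemma \ref{NablaBarIsoUpToDenominators}, which is precisely the argument of Andreatta--Iovita (\cite[\S 3.9]{AI_LLL}) that the paper cites; your computation of the inverted scalars $u_{\kappa}-j-1$ and the use of $U_p\circ\nabla=p\nabla\circ U_p$ to control slopes match that argument. The only place you are terser than ideal is the well-definedness of the terminal remainder under changes of lift, but the inductive cancellation you gesture at does close that gap.
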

\begin{proof}
By Theorem \ref{SlopeDecompositionForNdaggerThm}, it is enough to construct an overconvergent projector for $\left(\opn{Fil}_k \mathscr{N}^{\dagger}_{\kappa} \right)^{\leq h}$, and this follows from exactly the same arguments as in \cite[\S 3.9]{AI_LLL} (using Lemma \ref{NablaBarIsoUpToDenominators}).
\end{proof}

\subsection{\texorpdfstring{$p$}{p}-depletion}

In this section we discuss the relation between the action of $C^{\opn{la}}(\mbb{Z}_p, R)$ and $p$-depletion. We begin with the following lemma.

\begin{lemma} \label{OrdPdepletionLemma}
    The action of $1_{\mbb{Z}_p^{\times}} \in C_{\opn{cont}}(\mbb{Z}_p, R)$ coincides with the operator $1 - p^{-1}S_p^{-1}\varphi U_p$ on $p$-adic modular forms $\mathscr{M}$.  
\end{lemma}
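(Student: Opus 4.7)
The plan is to reduce the desired identity to an equality on $q$-expansions and then invoke the $q$-expansion principle for $p$-adic modular forms, which says that the $q$-expansion map on $\mathscr{M}$ (at a sufficiently general cusp, taken over all geometric components) is injective. By Theorem \ref{PropertiesOfpadicmfsThm}(2), for $f \in \mathscr{M}$ with $c(f) = \sum_k a_k q^k$ one has $c(1_{\mbb{Z}_p^\times} \cdot f) = \sum_{(k,p)=1} a_k q^k$, so $1_{\mbb{Z}_p^\times}$ acts as $p$-depletion and $1 - 1_{\mbb{Z}_p^\times} = 1_{p\mbb{Z}_p}$ acts as the projector onto $q$-expansions supported on $p\mbb{Z}_p$. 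It therefore suffices to show that $T \defeq p^{-1} S_p^{-1} \varphi U_p$ coincides with $1_{p\mbb{Z}_p}$ on $\mathscr{M}$.

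The algebraic properties of $T$ are tightly constrained by Theorem \ref{PropertiesOfpadicmfsThm}. First, the identity $U_p \varphi = pS_p$, together with the invertibility of $S_p$ and its commutation with $\varphi$ and $U_p$, immediately gives $T^2 = p^{-2} S_p^{-2} \varphi (U_p \varphi) U_p = p^{-1}S_p^{-1}\varphi U_p = T$, so $T$ is an idempotent. Next, the commutation rules of Theorem \ref{PropertiesOfpadicmfsThm}(3) yield $T \circ \phi = \phi \circ T$ for every $\phi \in C_{\opn{cont}}(\mbb{Z}_p, \mbb{Z}_p)$; indeed
\[
T \circ \phi = p^{-1} S_p^{-1} \varphi \circ \phi(p \cdot -) \circ U_p = p^{-1} S_p^{-1} \phi \circ \varphi U_p = \phi \circ T.
\]
Taking $\phi = 1_{\mbb{Z}_p^\times}$ and observing that $1_{\mbb{Z}_p^\times}(p \cdot -) \equiv 0$ on $\mbb{Z}_p$, one gets $U_p \circ 1_{\mbb{Z}_p^\times} = 0$, whence $T \circ 1_{\mbb{Z}_p^\times} = 1_{\mbb{Z}_p^\times} \circ T = 0$. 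Thus $T$ annihilates $1_{\mbb{Z}_p^\times}\mathscr{M}$ and takes values in $1_{p\mbb{Z}_p}\mathscr{M}$. Finally, for any $h \in \mathscr{M}$ the direct computation $T(\varphi h) = p^{-1} S_p^{-1} \varphi (pS_p) h = \varphi h$ shows that $T$ restricts to the identity on $\mathrm{Im}(\varphi)$.

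The remaining step, and the one genuine obstacle, is to identify $\mathrm{Im}(\varphi)$ with $1_{p\mbb{Z}_p}\mathscr{M}$. For this I would unwind the geometric definition of $\varphi$ as the double-coset operator $[\overline{U_{P'}^{\opn{int}}} \cdot \opn{diag}(p^{-1}, 1) \cdot \overline{U_{P'}^{\opn{int}}}]$ at the Tate cusp: one finds that $\varphi$ acts on $q$-expansions by $\sum_k b_k q^k \mapsto \lambda \sum_k b_k q^{pk}$ for some unit $\lambda \in R^\times$, and the relation $U_p \varphi = pS_p$ forces $\lambda$ to equal $p$ times the scalar by which $S_p$ acts on $q$-expansions (this is essentially the Fourier-theoretic computation of \cite[\S 7]{howe2020unipotent}). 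Hence $\mathrm{Im}(\varphi) = 1_{p\mbb{Z}_p}\mathscr{M}$ at the level of $q$-expansions, and the $q$-expansion principle upgrades this to an equality of sections of $\mathscr{M}$. Combined with the algebraic facts above, this gives $T = 1_{p\mbb{Z}_p}$, and therefore $1 - p^{-1}S_p^{-1}\varphi U_p = 1 - 1_{p\mbb{Z}_p} = 1_{\mbb{Z}_p^\times}$, as required.
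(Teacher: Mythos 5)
Your argument is correct, but it is organised quite differently from the paper's. The paper proves the identity globally on the Igusa tower: using $p$-adic Fourier theory it writes the action of $1_{p\mbb{Z}_p}$ as the average $\frac{1}{p}\sum_{\zeta \in \mu_p}\left(\begin{smallmatrix}1 & \tilde{\zeta}\\ & 1\end{smallmatrix}\right)$ of unipotent translates in $J_{\opn{ord}}$, writes $p^{-1}S_p^{-1}U_p$ and $\varphi$ as explicit (sums of) elements of $J_{\opn{ord}}$, and matches the two expressions directly — no $q$-expansion principle and no idempotent bookkeeping. You instead reduce to $q$-expansions and exploit the formal relations of Theorem \ref{PropertiesOfpadicmfsThm}: the identities $T^2=T$, $T\circ\phi=\phi\circ T$, $U_p\circ 1_{\mbb{Z}_p^{\times}}=0$ and $T\circ\varphi=\varphi$ are all verified correctly and pleasantly isolate the one genuine input, namely that $\varphi$ surjects onto $1_{p\mbb{Z}_p}\mathscr{M}$. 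Both proofs ultimately rest on the same computation from \cite[\S 7]{howe2020unipotent}; yours localises it at a cusp, the paper's keeps it global. Your route has the virtue of recycling the idempotent computation that the paper only performs later (in the overconvergent proposition following this lemma), but it pays for this with a dependence on the $q$-expansion principle over every component of the ordinary locus, which the paper's argument avoids entirely.

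Two soft spots, neither fatal. First, your parenthetical pinning down $\lambda$ is off: with the paper's normalisation $U_p$ acts on $q$-expansions as $p$ times the classical operator $\sum a_nq^n\mapsto\sum a_{pn}q^n$, so the relation $U_p\varphi=pS_p$ gives $\lambda = $ (the $S_p$-scalar), not $p$ times it. This is harmless, since after inverting $p$ only $\lambda\neq 0$ matters for $\opn{Im}(\varphi)$. Second, the inclusion $1_{p\mbb{Z}_p}\mathscr{M}\subseteq\opn{Im}(\varphi)$ — the step that actually carries the weight — requires producing a preimage, e.g.\ a suitable scalar multiple of $U_pg$, and checking its image under $\varphi$ has the right $q$-expansion; this forces you to record the $q$-expansion action of $U_p$ as well as of $\varphi$. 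Once you have both, you could simply compute $c(p^{-1}S_p^{-1}\varphi U_p f)$ and compare it with $c(1_{p\mbb{Z}_p}f)$ directly, making the idempotent apparatus unnecessary; it is worth keeping only if you value the structural statement that $T$ is the projector onto $\opn{Im}(\varphi)$ along $\opn{ker}(U_p)$.
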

\begin{proof}
    Without loss of generality, we may assume that $R = \mbb{Q}_p^{\opn{cycl}}$. The space $\mathscr{M}$ can be described as sections of $\mathcal{IG}_{K^p}$ invariant under right-translation by $U_{P'}^{\opn{int}}$. Then, by $p$-adic Fourier theory (see \cite[\S 7.1]{howe2020unipotent}), the action of $1_{p\mbb{Z}_p}$ is given by
    \[
    f \mapsto \frac{1}{p} \sum_{\zeta \in \mu_p} \tbyt{1}{\tilde{\zeta}}{}{1} \cdot f
    \]
    for $f \in \mathscr{M}$, where $\tilde{\zeta} \in \tilde{\mu_{p^{\infty}}}$ denotes any lift of $\zeta$. Here $\left(\begin{smallmatrix} 1 & \tilde{\zeta} \\ & 1 \end{smallmatrix}\right)$ is an element of $J_{\opn{ord}}(\mbb{Q}_p^{\opn{cycl}})$ acting on a function by right-translation of the argument. On the other hand, we have
    \begin{align*}
        (p^{-1}S_p^{-1}U_p)(f) &= \frac{1}{p} \sum_{\omega \in T_p(\mu_{p^{\infty}})/p T_p(\mu_{p^{\infty}})} \tbyt{p}{\omega}{}{1} \cdot f \\
        \varphi(f) &= \tbyt{p^{-1}}{}{}{1} \cdot f .
    \end{align*}
    Hence $p^{-1}S_p^{-1}\varphi U_p$ coincides with the action of $1_{p\mbb{Z}_p}$ and the result follows.
\end{proof}

We now prove an overconvergent version of this lemma.

\begin{proposition}
    The action of $1_{\mbb{Z}_p^{\times}} \in C^{\opn{la}}(\mbb{Z}_p, R)$ coincides with the operator $1 - p^{-1}S_p^{-1}\varphi U_p$ on nearly overconvergent modular forms $\mathscr{N}^{\dagger}$.
\end{proposition}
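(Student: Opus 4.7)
Set $A := 1_{\mbb{Z}_p^{\times}} \star -$ and $B := 1 - p^{-1} S_p^{-1} \varphi U_p$, both continuous $R$-linear endomorphisms of $\mathscr{N}^{\dagger}$. The plan is to bootstrap from the preceding lemma (the identity on $\mathscr{M}$) by means of the filtration on $\mathscr{N}^{\dagger}$ together with the fact that both operators commute with the Gauss--Manin connection.

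First I would verify that $A$ and $B$ commute with the $C^{\opn{la}}(\mbb{Z}_p, R)$-action (hence in particular with $\nabla$). For $A$ this is the commutativity of pointwise multiplication in $C^{\opn{la}}(\mbb{Z}_p, R)$. For $B$, using the intertwining relations from Theorem \ref{SheafVersionMainThm}(3),
\[
\phi \circ p^{-1} S_p^{-1} \varphi U_p \;=\; p^{-1} S_p^{-1} \varphi \, \phi(p \cdot -) \, U_p \;=\; p^{-1} S_p^{-1} \varphi U_p \circ \phi
\]
for every $\phi \in C^{\opn{la}}(\mbb{Z}_p, R)$. Similarly both operators commute with the $T(\mbb{Z}_p)$-action --- for $A$ because $1_{\mbb{Z}_p^{\times}}(t_2^{-1} x t_1) = 1_{\mbb{Z}_p^{\times}}(x)$ when $t_1, t_2 \in \mbb{Z}_p^{\times}$, and for $B$ because $T(\mbb{Z}_p)$ commutes with $U_p, \varphi, S_p$ --- so it suffices to establish $A = B$ on each weight-isotypic component $\mathscr{N}^{\dagger}_{\kappa}$. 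Finally, the natural map $\mathscr{N}^{\dagger} \to \mathscr{M}$ is compatible with both $A$ and $B$, and on $\mathscr{M}$ they agree by the preceding lemma.

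Next, I would induct along the filtration $\opn{Fil}_h \mathscr{N}^{\dagger}_{\kappa}$ from \S\ref{FiltrationsSubSec}. The base case $\opn{Fil}_0 \mathscr{N}^{\dagger}_{\kappa} = \mathscr{M}^{\dagger}_{\kappa}$ injects into $\mathscr{M}_{\kappa}$ (the ordinary locus being dense in the overconvergent neighbourhood), so the identity reduces to the preceding lemma. For the inductive step, Lemma \ref{NablaBarIsoUpToDenominators} provides, up to multiplication by the scalar $u_{\kappa - 2\rho} - h$, an isomorphism $\opn{Gr}_h \mathscr{N}^{\dagger}_{\kappa - 2\rho} \xrightarrow{\sim} \opn{Gr}_{h+1} \mathscr{N}^{\dagger}_{\kappa}$; over the open locus in weight space where this scalar is invertible, every $f \in \opn{Fil}_{h+1} \mathscr{N}^{\dagger}_{\kappa}$ can be written as $\nabla g + h'$ with $g, h' \in \opn{Fil}_h$, and since $A$ and $B$ both commute with $\nabla$ one finds $(A - B)(f) = \nabla(A - B)(g) + (A - B)(h') = 0$ by induction.

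Density of $\bigcup_h \opn{Fil}_h \mathscr{N}^{\dagger}_{\kappa}$ in $\mathscr{N}^{\dagger}_{\kappa}$ (polynomial truncations in the unipotent direction approximate the full power-series sections) combined with the continuity of $A$ and $B$ then gives $A = B$ on $\mathscr{N}^{\dagger}_{\kappa}$ in the generic case. The main obstacle is extending the identity across the codimension-one exceptional locus where $u_{\kappa - 2\rho} = h$ for some $h \geq 0$; working with $\kappa$ varying in rigid-analytic families of weights, this is handled by continuation of the analytic identity $(A - B)(f) = 0$ from the generic locus to all of weight space, completing the proof.
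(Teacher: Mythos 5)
Your reduction to the filtration is the right instinct, but the base case of your induction has a genuine gap. For $f \in \opn{Fil}_0\mathscr{N}^{\dagger}_{\kappa} = \mathscr{M}^{\dagger}_{\kappa}$ you argue that $(A-B)(f)$ dies in $\mathscr{M}$ and then invoke the injectivity of $\opn{Fil}_0\mathscr{N}^{\dagger}_{\kappa} \hookrightarrow \mathscr{M}$. But that injectivity only helps if you already know $(A-B)(f) \in \opn{Fil}_0$, and the full map $\mathscr{N}^{\dagger}_{\kappa} \to \mathscr{M}$ is very far from injective (locally, the coordinates $X-1$, $Y$, $Z-1$ on the balls $\mathcal{B}^{\lambda}$ all restrict to zero on $\mathcal{IG}_{\infty}$). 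A priori $A(f) = 1_{\mbb{Z}_p^{\times}} \star f = \lim_m \nabla^{p^{m-1}(p-1)} f$ is a limit of elements of unbounded filtration degree, since each $\nabla$ raises the degree by one; there is no reason for the limit to land in any finite filtered piece, let alone $\opn{Fil}_0$. Establishing that $1_{\mbb{Z}_p^{\times}} \star -$ \emph{does} preserve the filtration is exactly the hard technical input: it is proved in the paper by restricting to the balls $\mathcal{B}_{\infty}(\sigma_{\lambda}(g_{\infty}), p^{-n})$ and using the explicit form of $\nabla^{\lambda}$ from Proposition \ref{PropExplicitForm}, where one sees that $\lim_m (\nabla^{\lambda})^{p^{m-1}(p-1)}$ acts only on the $A_{\opn{ord},\infty}$-coefficients of the power series in $X, Y, Z$. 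Your proof never supplies this, and without it the induction cannot start.

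Two further points. Your inductive step needs every $f \in \opn{Fil}_{h+1}\mathscr{N}^{\dagger}_{\kappa}$ to be $\nabla g + h'$ with $g, h' \in \opn{Fil}_h$; Lemma \ref{NablaBarIsoUpToDenominators} gives an isomorphism of \emph{sheaves} on $\mathcal{X}_s$, but passing to global sections requires lifting through $\opn{H}^0(\opn{Fil}_h) \to \opn{H}^0(\opn{Gr}_h)$, which is obstructed by $\opn{H}^1$ of the filtered subsheaves on the (non-affinoid) $\mathcal{X}_s$, and on top of that you must excise the weights with $u_{\kappa - 2\rho} = h$ and recover them by a family argument that you only sketch. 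The paper's proof avoids all of this: after the filtration-preservation step it sets $T = 1_{p\mbb{Z}_p} - p^{-1}S_p^{-1}\varphi U_p$, observes that $T$ kills each graded piece $\opn{Gr}_j\mathscr{N}^{\dagger}$ because $\opn{Gr}_j\mathscr{N}^{\dagger}$ \emph{injects} into $\mathscr{M}$ (so $T^{r+1} = 0$ on $\opn{Fil}_r$), and then uses that $T$ is idempotent — a consequence of $U_p\varphi = pS_p$ and the intertwining relations — to conclude $T = T^{r+1} = 0$. This never requires $\nabla$ to be surjective on graded pieces and has no exceptional weights. I would recommend restructuring your argument around that idempotency trick, and in any case you must first prove that the action of $1_{\mbb{Z}_p^{\times}}$ preserves $\opn{Fil}_{\bullet}\mathscr{N}^{\dagger}$.
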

\begin{proof}
    We first claim that the action of $1_{\mbb{Z}_p^{\times}}$ preserves the filtration $\opn{Fil}_{\bullet}\mathscr{N}^{\dagger}$. This can be checked locally on sections of overconvergent extensions of some $\mathcal{U}_{\opn{HT}, n}$, for $n$ sufficiently large (see \S \ref{OrdinaryNHoodsSubSec}--\ref{OverconvergentNBHDSsection}). If $\mathcal{U}$ is such an overconvergent extension, then we have
    \[
    \opn{H}^0(\mathcal{U}, \mathcal{O}_{\mathcal{U}}) \hookrightarrow \opn{H}^0(\mathcal{U}_{\opn{HT}, n, A_{\opn{ord}, \infty}}, \mathcal{O}_{\mathcal{U}_{\opn{HT}, n, A_{\opn{ord}, \infty}}} ) = \bigoplus_{\lambda \in T(\mbb{Z}/p^n\mbb{Z})} \opn{H}^0\left( \mathcal{B}_{\infty}(\sigma_{\lambda}(g_{\infty}), p^{-n}), \mathcal{O}_{\mathcal{B}_{\infty}(\sigma_{\lambda}(g_{\infty}), p^{-n})} \right)
    \]
    respecting filtrations (it is equivariant for the action of $\overline{\mathfrak{n}}$), so it suffices to check the claim for each direct summand in the right-hand side. But the action of $1_{\mbb{Z}_p^{\times}}$ on such a direct summand is given by $\lim_{m \to +\infty} (\nabla^{\lambda})^{p^{m-1}(p-1)}$, and one can see from the explicit description of $\nabla^{\lambda}$ in the proof of Proposition \ref{PropExplicitForm} that this operator preserves the filtration (it acts as $1_{\mbb{Z}_p^{\times}}$ on the $A_{\opn{ord}, \infty}$-coefficients of the power series in the coordinates $X, Y, Z$).

    Since $\bigcup_{r \geq 0} \opn{Fil}_r \mathscr{N}^{\dagger}$ is dense in $\mathscr{N}^{\dagger}$ it therefore suffices to show that the operator $T \defeq 1 - p^{-1}S_p^{-1}\varphi U_p - 1_{\mbb{Z}_p^{\times}} = 1_{p\mbb{Z}_p} - p^{-1}S_p^{-1}\varphi U_p$ is zero on $\opn{Fil}_r\mathscr{N}^{\dagger}$, for any integer $r \geq 0$. But for any integer $r \geq 0$, we have a morphism
    \[
    \opn{Fil}_r\mathscr{N}^{\dagger} \twoheadrightarrow \opn{Gr}_r \mathscr{N}^{\dagger} \hookrightarrow \mathscr{M}
    \]
    which is equivariant for the action of $T$. Hence Lemma \ref{OrdPdepletionLemma} implies that $T$ is zero on the graded pieces of $\opn{Fil}_r\mathscr{N}^{\dagger}$ -- in particular, we must have $T^{r+1} = 0$ on $\opn{Fil}_r\mathscr{N}^{\dagger}$. Finally, we note that $T$ is idempotent. Indeed
    \[
    T^2 = 1_{p\mbb{Z}_p} - p^{-1}S_p^{-1}\varphi U_p 1_{p \mbb{Z}_p} - p^{-1} 1_{p\mbb{Z}_p} S_p^{-1}\varphi U_p + p^{-1}S_p^{-1}\varphi U_p = 1_{p\mbb{Z}_p} - p^{-1}S_p^{-1}\varphi U_p = T  
    \]
    where we have used the fact that $p^{-1}S_p^{-1}\varphi U_p$ is idempotent (because $U_p \varphi = p S_p$) and the relations in Lemma \ref{ClaHeckeIntertwiningLemma}. This proves that $T = T^{r+1}$ is zero on $\opn{Fil}_r \mathscr{N}^{\dagger}$ as required. 
\end{proof}

\subsection{Comparison with the work of Andreatta--Iovita}

We end this section by comparing the space of nearly overconvergent forms constructed in this article with the space constructed in \cite{AI_LLL}. For this, consider the following group
\[
\overline{\mathcal{P}}_r^{\opn{AI}} \defeq T(\mbb{Z}_p) \cdot \{ x \in \overline{\mathcal{P}} : m(x) \equiv 1 \text{ modulo } p^{r+1-\frac{1}{p-1}} \}
\]
where $m \colon \overline{\mathcal{P}} \to \mathcal{T}$ denotes the projection to the torus. With notation as in Proposition \ref{ReductionStructurePdRProposition}, consider the pushout $\tilde{\mathcal{F}}_{r, s}^{\opn{AI}} \defeq \tilde{\mathcal{F}}_{r, s} \times^{\overline{\mathcal{P}}_r} \overline{\mathcal{P}}_r^{\opn{AI}}$. Set $V_r^{\opn{AI}} = \mathcal{O}(\overline{\mathcal{P}}_r^{\opn{AI}})$ which carries an action of $\Sigma_r^{\opn{AI}} = \overline{\mathcal{P}}_r^{\opn{AI}} \cdot T^+ \cdot \overline{\mathcal{P}}_r^{\opn{AI}}$ by the exact same formulae as in Definition \ref{DefinitionOfVrwithmonoidaction}. If $\kappa \colon T(\mbb{Z}_p) \to R^{\times}$ is an $r$-analytic character, then we set 
\[
V_{r, \kappa}^{\opn{AI}} \defeq \opn{Hom}_{T(\mbb{Z}_p)}\left( - w_0\kappa, V_r^{\opn{AI}} \hatot R \right) .
\]
We let $\mathcal{V}_{r, s, \kappa}^{\opn{AI}} = \left( \pi_* \mathcal{O}_{\tilde{\mathcal{F}}_{r, s}^{\opn{AI}}} \hatot V_{r, \kappa}^{\opn{AI}} \right)^{\overline{\mathcal{P}}_r^{\opn{AI}}}$ denote the corresponding locally projective Banach sheaf on $\mathcal{X}_s$, where $\pi \colon \tilde{\mathcal{F}}_{r, s}^{\opn{AI}} \to \mathcal{X}_s$ denotes the structural map. Note that $\mathcal{V}_{r, s, \kappa}^{\opn{AI}}$ is independent of the choice of $r$ such that $\kappa$ is $r$-analytic (whereas the sheaf $\mathcal{V}_{r, s, \kappa}$ from Definition \ref{DefinitionOfVrwithmonoidaction} very much depends on $r$). 

Set $\mathscr{N}^{\dagger,\opn{AI}}_{\kappa} \defeq \varinjlim_s \opn{H}^0\left( \mathcal{X}_s, \mathcal{V}_{r, s, \kappa}^{\opn{AI}} \right)$, where the colimit is over the restriction maps. This space carries actions of $U_p$, $S_p$ and $\varphi$ by exactly the process as in \S \ref{TheUpOperatorSSec}, and $\mathscr{N}^{\dagger, \opn{AI}}_{\kappa}$ admits slope decompositions with respect to $U_p$ (up to possibly shrinking $\opn{Spa}(R, R^+)$). We have the following comparison result.

\begin{proposition} \label{PropComparisonWithAI}
    Let $I = [0, p^{b}]$ with $b \neq \infty$, and suppose that $k$ is the universal character of the open $\mathcal{W}_I$ of weight space introduced in \cite[p.2004]{AI_LLL}. Let $\kappa = (k; w) \colon T(\mbb{Z}_p) \to \mathcal{O}(\mathcal{W}_I)^{\times}$ denote any locally analytic character extending $k$. Then:
    \begin{enumerate}
        \item For $s$ sufficiently large, the sheaf $\mathcal{V}_{r, s, \kappa}^{\opn{AI}}$ is identified with the sheaf $\mbb{W}_{k, I}[1/p]$ of nearly overconvergent forms over $\mathcal{X}_s$ in \cite[\S 3.3]{AI_LLL}.
        \item The actions of $p^{-1}U_p$ and $S_p^{-1} \varphi$ on 
    \begin{equation} \label{AIidentificationEqn}
    \mathscr{N}^{\dagger, \opn{AI}}_{\kappa} = \varinjlim_s \opn{H}^0\left( \mathcal{X}_s, \mbb{W}_{k, I}[1/p] \right)
    \end{equation}
    coincide with the actions of $U$ and $V$ constructed in \cite[\S 3.6--\S 3.7]{AI_LLL}.
    \item The Gauss--Manin connection $\nabla \colon \mathscr{N}^{\dagger, \opn{AI}}_{\kappa} \to \mathscr{N}^{\dagger, \opn{AI}}_{\kappa + 2\rho}$ intertwines with the connection constructed in \cite[\S 3.4]{AI_LLL} under the identifications in (\ref{AIidentificationEqn}).
    \end{enumerate}
\end{proposition}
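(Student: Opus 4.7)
The plan is to establish the three parts in order, treating part (1) as the substantive comparison and deducing parts (2) and (3) from it by functoriality.

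For part (1), the key point is to identify $\tilde{\mathcal{F}}_{r,s}^{\opn{AI}} \to \mathcal{X}_s$ with the torsor used in \cite{AI_LLL} to construct $\mbb{W}_{k,I}$. Pushing $\tilde{\mathcal{F}}_{r,s}$ along the quotient $\overline{\mathcal{P}}_r \twoheadrightarrow \overline{\mathcal{P}}_r^{\opn{AI}}$ collapses any unipotent congruence information and retains only the torus congruences, matching the $r$-analyticity condition of \cite{AI_LLL}. On the AI side, $\mbb{W}_{k,I}$ is built from a formal vector bundle of rank two whose sections are $r$-analytic trivialisations of $\mathcal{H}_{\mathcal{E}}$ compatible (modulo the expected congruences) with the Hodge--Tate image of a generator of the dual of the canonical subgroup of level $s$. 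I would produce the isomorphism by sending a point of $\tilde{\mathcal{F}}_{r,s}^{\opn{AI}}$ -- viewed as a reduction of structure of $P_{\opn{dR}}^{\opn{an}}$ -- to the trivialisation of $\mathcal{H}_{\mathcal{E}}$ it induces, then verifying that the resulting trivialisation satisfies the AI analyticity condition by unwinding both definitions through the Hodge--Tate map on the canonical subgroup. Uniqueness and existence (for $s$ sufficiently large) are guaranteed by Proposition \ref{ReductionStructurePdRProposition}.

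For part (2), I would use the cohomological-correspondence description of $U_p$, $\varphi$ and $S_p$ from \S \ref{TheUpOperatorSSec}. The correspondence $\mathcal{X}_s \xleftarrow{p_1} \mathcal{C}_s \xrightarrow{p_2} \mathcal{X}_{s+1}$ parameterising order-$p$ subgroups disjoint from the canonical subgroup coincides with the one used in \cite[\S 3.6--\S 3.7]{AI_LLL}, so the only thing to check is the normalisation. The factor $p^{-1}$ appears because AI normalise their operators so that $UV = 1$ on $p$-depleted forms, which matches the identity $U_p \circ \varphi = p S_p$ established here: setting $U = p^{-1} U_p$ and $V = S_p^{-1} \varphi$ gives $U \circ V = 1$. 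Under the identification from (1), a short $q$-expansion calculation at a cusp confirms that the normalisations agree.

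For part (3), once the identification of sheaves in (1) is available, both connections arise by the same recipe: lift the Gauss--Manin connection on $\mathcal{H}_{\mathcal{E}}$ to the appropriate torsor and contract with the Kodaira--Spencer isomorphism. In our setting this is the content of \S \ref{LocalDescriptionOfGM}, while in \cite[\S 3.4]{AI_LLL} the connection on $\mbb{W}_{k,I}$ is constructed in essentially the same manner after restricting to $r$-analytic sections. The agreement then reduces to a direct inspection on local coordinates $T_1, T_2, U$ of \S \ref{LocalDescriptionOfGM}. The main obstacle is step (1): although the two spaces of nearly overconvergent forms are morally the same, the presentation here via the full de Rham torsor together with the unit-root splitting looks rather different from the AI approach via Hodge--Tate images of the canonical subgroup, and matching the $r$-analyticity condition on the torus in \cite{AI_LLL} with the group $\overline{\mathcal{P}}_r^{\opn{AI}}$ requires care. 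Once this is settled, parts (2) and (3) are relatively direct consequences of the functoriality of our constructions.
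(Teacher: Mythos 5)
Your proposal follows essentially the same route as the paper's proof: the heart of part (1) is that the AI object (the formal vector bundle with marked section, promoted to a torsor under a Borel-type group by adjoining a basis vector of the complement of $\Omega_E$) and $\tilde{\mathcal{F}}_{r,s}^{\opn{AI}}$ are both reductions of structure of $P_{\opn{dR}}^{\opn{an}}$ through which the Igusa tower factors, so the uniqueness mechanism of Proposition \ref{ReductionStructurePdRProposition} identifies them after increasing $s$; parts (2) and (3) then follow by unwinding the definitions, as you indicate. One small correction: $\overline{\mathcal{P}}_r \to \overline{\mathcal{P}}_r^{\opn{AI}}$ is an inclusion, not a quotient (the AI group only relaxes the congruence on the unipotent part), so $\tilde{\mathcal{F}}_{r,s}^{\opn{AI}}$ arises by \emph{enlarging} the structure group — though the effect you describe, namely forgetting the unipotent congruences, is correct. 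The paper also records one step you omit: the comparison is first carried out for $\kappa = (k;-k)$, and the case of general $\kappa = (k;w)$ is reduced to it via the principal polarisation $E \cong E^D$.
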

\begin{proof}
Let $\mbb{V}_0(\opn{H}^1_{\opn{dR}}(E)^{\#}, s_{\opn{can}}) \to \mathcal{X}_s$ denote the adic generic fibre of the formal vector bundle with marked section as constructed in \cite[\S 3.3]{AI_LLL}. Here, we use the notation $\Omega_E \subset \opn{H}^1_{\opn{dR}}(E)^{\#}$ (resp. $s_{\opn{can}} \in \Omega_E/p^{r+1-1/(p-1)}$) in place of the notation $\Omega_E \subset \opn{H}_E^{\#}$ (resp. $s \in \Omega_E/p^{r+1-1/(p-1)}$) used in \emph{loc.cit.} in order to avoid clashes with notation used previously in this article. The space $\mbb{V}_0(\opn{H}^1_{\opn{dR}}(E)^{\#}, s_{\opn{can}})$ carries a left action of the group $\mathcal{T}_r' \defeq \mbb{Z}_p^{\times} \left( 1 + p^{r+1-1/(p-1)}\mbb{G}_a^+ \right)$; since this group is abelian, we can (and do) view this as a right action.

Over $V \defeq \mbb{V}_0(\opn{H}^1_{\opn{dR}}(E)^{\#}, s_{\opn{can}})$, one has a universal morphism $\rho \colon \opn{H}^1_{\opn{dR}}(E)^{\#} \to \mathcal{O}^+_V$ which restricts to an isomorphism $\Omega_E \xrightarrow{\sim} \mathcal{O}^+_V$ mapping the marked section $s_{\opn{can}}$ to $1$ modulo $p^{r+1-1/(p-1)}$. In particular, the kernel $\mathcal{U}$ of the universal morphism $\rho$ induces a decomposition $\opn{H}^1_{\opn{dR}}(E)^{\#} = \Omega_E \oplus \mathcal{U}$ (and $\mathcal{U}$ is a locally free $\mathcal{O}^+_V$-module of rank one). We let $W \to V$ denote the (right) $\mbb{G}^+_m$-torsor parameterising bases $\{ e, f\}$ of $\opn{H}^1_{\opn{dR}}(E)^{\#}$ with $e \in \Omega_E$, $f \in \mathcal{U}$, and $e$ mapping to $1$ under the universal morphism $\rho$. Here $\mbb{G}_m^+(\opn{Spa}(R, R^+)) = (R^+)^{\times}$ and it acts on the basis by rescaling the vector $f$ by the \emph{inverse} of the element in $\mbb{G}_m^+$. Furthermore, the map $W \to V$ is $\mathcal{T}_r'$-equivariant, where the action on the source is given by rescaling $e$ by the inverse of the element in $\mathcal{T}_r'$. Finally, we equip $W$ with an action of the group 
\[
\overline{\mathcal{P}}_r' \defeq \tbyt{\mathcal{T}_r'}{}{\mbb{G}_a^+}{\mbb{G}_m^+}
\]
by declaring $g \cdot \{e, f \} = \{ a e, b e + c f\}$ for ${^tg^{-1}} = \tbyt{a}{b}{}{c}$. Note that $W \to \mathcal{X}_s$ is a $\overline{\mathcal{P}}_r'$-torsor. By using the fact that $\mathcal{H}_{\mathcal{E}} \cong \left(\opn{H}^1_{\opn{dR}}(E)^{\#}[1/p]\right)^{\vee}$ and the fact that $\opn{H}^1_{\opn{dR}}(E)^{\#}$ has a canonical decomposition $\Omega_E \oplus \mathcal{U}^{\opn{can}}$ (where $\mathcal{U}^{\opn{can}}$ denotes the unit root subsheaf) over the ordinary locus, one has morphisms
\[
\mathcal{IG} \hookrightarrow W \hookrightarrow P_{\opn{dR}}^{\opn{an}}
\]
with both maps defining reductions of structure. Therefore, by a similar argument as in Proposition \ref{ReductionStructurePdRProposition}, one can identify $W$ with $\tilde{\mathcal{F}}_{r, s}^{\opn{AI}} \times^{\overline{\mathcal{P}}_r^{\opn{AI}}} \overline{\mathcal{P}}_r'$ after possibly increasing $s$. We conclude that there are natural isomorphisms
\[
\mathcal{V}_{r, s, (k;-k)}^{\opn{AI}} \cong \opn{Hom}_{\mathcal{T}_r}((k;0), \pi_{W, *}\mathcal{O}_W ) \cong \opn{Hom}_{\mathcal{T}_r'}(k, \pi_{V, *} \mathcal{O}_V) \cong \mbb{W}_{k, I}[1/p] .
\]
Here $\pi_W \colon W \to \mathcal{X}_s$ and $\pi_V \colon V \to \mathcal{X}_s$ denote the structural maps; and we have used the fact that $-w_0 (k; -k) = (k; 0)$. 

The result for general $\kappa = (k; w)$ follows from the identification $\mathcal{V}_{r, s, \kappa}^{\opn{AI}} \cong \mathcal{V}_{r, s, (k;-k)}^{\opn{AI}}$ arising from identifying $E$ and $E^D$ via the principal polarisation. Parts (2) and (3) follow from tracing through the definitions and constructions in \S \ref{TheGMConnectionSubSec}, \S \ref{TheUpOperatorSSec} and \cite[\S 3.4, \S 3.6--\S 3.7]{AI_LLL} (again, identifying $E$ with its dual $E^D$).
\end{proof}

If we set $\mathscr{N}^{\dagger, \opn{AI}} \defeq \varinjlim_{r, s} \opn{H}^0\left( \tilde{\mathcal{F}}_{r, s}^{\opn{AI}}, \mathcal{O}_{\tilde{\mathcal{F}}_{r, s}^{\opn{AI}}} \right)$, then we see that we have an inclusion
\begin{equation} \label{AItoNDaggerEqn}
\mathscr{N}^{\dagger, \opn{AI}} \subset \mathscr{N}^{\dagger}
\end{equation}
and the inclusion $\mathscr{N}_{\kappa}^{\dagger, \opn{AI}} \subset \mathscr{N}^{\dagger}_{\kappa}$ is an isomorphism on finite-slope parts (with respect to the action of $U_p$). The space $\mathscr{N}^{\dagger}$ is much larger than $\mathscr{N}^{\dagger, \opn{AI}}$ though; the difference between the two spaces in (\ref{AItoNDaggerEqn}) is that $\mathscr{N}^{\dagger}$ incorporates ``congruences between the subspace $\mathcal{U}$ in the proof of Proposition \ref{PropComparisonWithAI} and the unit root subsheaf $\mathcal{U}^{\opn{can}}$'' -- roughly speaking, when restricting to the Igusa tower, the space $\mathscr{N}^{\dagger}$ locally looks like the colimit over $n$ of a number of copies of $A_{\opn{ord}, \infty}^+ \langle \frac{X-1}{p^n}, \frac{Y}{p^n}, \frac{Z-1}{p^n} \rangle $ (see Proposition \ref{PropExplicitForm}), whereas $\mathscr{N}^{\dagger, \opn{AI}}$ looks like the colimit over $n$ of a number of copies of $A_{\opn{ord}, \infty}^+ \langle \frac{X-1}{p^n}, Y, \frac{Z-1}{p^n} \rangle $. This is the key difference which allows us to extend $\nabla$ to an action of $C^{\opn{la}}(\mbb{Z}_p, \mbb{Q}_p)$. 

It does not seem possible to improve the results of \cite{AI_LLL} without incorporating such congruences with the unit root subsheaf. More precisely, as indicated in \cite[Remark 3.39]{AI_LLL}, in order to define an action of $\nabla^s$ on $\mathscr{N}_{\kappa}^{\dagger, \opn{AI}}$ for a locally analytic character $s$ of $\mbb{Z}_p^{\times}$, it is necessary to assume that $s$ and $\kappa$ are analytic up to finite-order twist when restricted to $1 + p\mbb{Z}_p$ (in the language of \emph{op.cit.}, this condition is written as $u_s \in \Lambda_{I_s}^0$ and $u_k \in \Lambda_I^0$). The reason for this is due to the presence of the ``unbounded denominators'' in the displayed equation of \cite[Remark 3.39]{AI_LLL}. In our definition of $\mathscr{N}^{\dagger}$, we allow the coordinate ``$V = Y(1+pZ)^{-1}$'' (with notation as in \emph{loc.cit.}) to be arbitrarily divisible by $p$, which cancels out these problematic denominators.\footnote{One could also try to bound the action of $\binom{\nabla}{k}$ on $\mathscr{N}^{\dagger, \opn{AI}}$ in a similar way as in this article (by the formula in Lemma \ref{Lemmaformulatheta} for example), however one runs into the same issue with ``unbounded denominators'' (because the local coordinate $Y$ is not arbitrarily divisible by powers of $p$ in the definition of $\mathscr{N}^{\dagger, \opn{AI}}$).} 

\begin{remark}
    Let $\widehat{\mathcal{N}_{\mathfrak{U}}^{\infty, \rho}}(N)$ denote the space of nearly overconvergent modular forms defined in \cite[\S 3.3.2]{Urban} (with notation as in \emph{loc.cit.}). As explained in \cite[\S B.2, p.2077]{AI_LLL}, the construction of Andreatta--Iovita corresponds to the integral structure alluded to in \cite[Remark 10]{Urban}; in particular, one has 
    \[
    \varinjlim_{\rho}\widehat{\mathcal{N}_{\mathfrak{U}}^{\infty, \rho}}(N) \cong \varinjlim_s \opn{H}^0(\mathcal{X}_s, \mbb{W}_{k_{\mathfrak{U}}}[1/p]) = \mathscr{N}^{\dagger, \opn{AI}}_{\kappa_{\mathfrak{U}}}
    \]
    where $k_{\mathfrak{U}}$ denotes the universal character of $\mathfrak{U}$, $\kappa_{\mathfrak{U}}$ is any locally analytic character of $T(\mbb{Z}_p)$ extending $k_{\mathfrak{U}}$, and $\mbb{W}_{k_{\mathfrak{U}}}[1/p]$ denotes the restriction of $\mbb{W}_{k, I}[1/p]$ to $\mathfrak{U}$ (for any choice of interval $I = [0, p^b]$ such that $\mathfrak{U} \subset \mathcal{W}_I$, which always exists because $\mathfrak{U}$ is assumed to be quasi-compact). We therefore see that the space $\widehat{\mathcal{N}_{\mathfrak{U}}^{\infty, \rho}}(N)$ is also not large enough for $p$-adically interpolating the Gauss--Manin connection in the level of generality of this article (i.e., extending $\nabla$ to an action of $C^{\opn{la}}(\mbb{Z}_p, \mbb{Q}_p)$) for the same reasons as above.
\end{remark}

\section{\texorpdfstring{$p$}{p}-adic \texorpdfstring{$L$}{L}-functions} \label{SectpadicLfunctions}

In this final section, we describe an application of our theory to the construction of triple product and Rankin--Selberg $p$-adic $L$-functions in families, without the restriction on the weight imposed in \cite{AI_LLL}. Throughout, we let $p > 2$ and let $N \geq 4$ be an integer which is prime to $p$. We take $K = \opn{GL}_2(\mbb{Z}_p) K^p$ to be the compact open subgroup of all matrices which lie in the upper-triangular unipotent modulo $N \widehat{\mbb{Z}}$.

We fix an isomorphism $\iota_p \colon {\overline{\mathbb{Q}}_p} \xrightarrow{\sim} \mbb{C}$ throughout, which gives a canonical choice of $N$-th root of unity $\zeta_N \defeq \iota_p^{-1}(e^{2\pi i/N})$ (after fixing $i = \sqrt{-1}$). Any $q$-expansion in this section will refer to the $q$-expansion at the cusp $(\opn{Tate}(q), \alpha_N^{\opn{can}})$, where $\opn{Tate}(q)$ is the Tate curve over $\mbb{Z}[1/N](\!(q)\!)$, and $\alpha_{N}^{\opn{can}}$ is the canonical level $\Gamma_1(N)$-structure given by $\zeta_N$. Note that the map from overconvergent modular forms to $q$-expansions at this cusp is injective, as can be seen by using the $q$-expansion principle for $p$-adic modular forms (\cite{Katz}) and the inclusion of overconvergent modular forms into $p$-adic modular forms. Furthermore, we base-change everything in this section over a finite Galois extension $L / \mbb{Q}_p$ containing $\zeta_N$, but omit this from the notation.

\begin{convention}
As is customary in the literature, our weights will be locally analytic characters of $\mbb{Z}_p^{\times}$ and not $T(\mbb{Z}_p)$. This amounts to choosing a normalisation: for any locally analytic character $\kappa \colon \mbb{Z}_p^{\times} \to R^{\times}$, we set
\[
\mathscr{N}^{\dagger}_{\kappa} \defeq \mathscr{N}^{\dagger}_{(\kappa; w(\kappa))}
\]
where $w(\kappa) = -\kappa/2$ (resp. $w(\kappa) = (1-\kappa)/2$) if $\kappa(-1) = 1$ (resp. $\kappa(-1) = -1$). These weight spaces are stable under the action of $C^{\opn{la}}(\mbb{Z}_p, R)$. We adopt similar notations for overconvergent modular forms. 

Finally, we set $U_p^{\circ} = p^{-1}U_p$, which is the normalisation giving the usual description of the $U_p$-Hecke operator on $q$-expansions. All slope decompositions will be with respect to $U_p^{\circ}$.
\end{convention}

\subsection{The eigencurve}

Let $\mathcal{C}$ denote the Buzzard--Coleman--Mazur cuspidal eigencurve (over $\opn{Spa}(L, \mathcal{O}_L)$) of tame level $\Gamma_1(N)$, which comes equipped with a weight map $w \colon \mathcal{C} \to \mathcal{W}$, where $\mathcal{W}$ denotes the weight space parameterising continuous characters on $\mbb{Z}_p^{\times}$. Over $\mathcal{C}$, we have a universal eigenform
\[
\mathcal{F}^{\opn{univ}} = \sum_{n \geq 1} a_n q^n \quad \in \mathcal{O}(\mathcal{C})[\![q]\!]
\]
with $a_1 = 1$ and $a_p \in \mathcal{O}(\mathcal{C})^{\times}$, satisfying the following universal property: for any affinoid $U$ with a weight morphism $\kappa : U \to \mathcal{W}$, and any family $\mathcal{F}_U$ of finite slope eigenforms over $U$ of tame level $\Gamma_1(N)$ and weight $\kappa$, there exists a unique morphism $U \to \mathcal{C}$ lifing $\kappa$ such that $\mathcal{F}_U$ is the pullback of $\mathcal{F}^{\rm univ}$ (cf. the discussion just after \cite[Definition 3.5]{LoefflerRSNote}). For any quasi-compact open affinoid $V \subset \mathcal{C}$, we let $\mathcal{F}_V \in \mathcal{O}(V)[\![q]\!]$ denote the pullback of $\mathcal{F}^{\opn{univ}}$ to $V$. 

Let $\mathscr{S}^{\dagger} \subset \mathscr{M}^{\dagger}$ denote the subspace of cuspidal overconvergent modular forms, i.e. those which vanish at all the cusps (not just the specific choice above). This space is stable under the actions of $T(\mbb{Z}_p)$ and $U_p$. Over $\mathcal{W}$ we have a universal sheaf of cuspidal overconvergent modular forms $\mathcal{S}^{\dagger}$, such that for any quasi-compact open $U \subset \mathcal{W}$, we have $\mathcal{S}^{\dagger}(U) = \mathscr{S}^{\dagger}_{\kappa}$ (where $\kappa$ is the universal character of $U$). We let $\mathcal{S}^{\dagger}_{\mathcal{C}} \defeq w^*\mathcal{S}^{\dagger}$ denote the pullback to $\mathcal{C}$. 

We have the following proposition:

\begin{proposition} \label{UniversalClassProp}
Let $V \subset \mathcal{C}$ be a quasi-compact open affinoid subspace, and let $U = w(V) \subset \mathcal{W}$ denote its image in weight space with universal character $\kappa \colon \mbb{Z}_p^{\times} \to \mathcal{O}(U)^{\times}$. Then there exists a ``universal cohomology class''
\[
\omega_{V} \in \mathcal{S}^{\dagger}_{\mathcal{C}}(V) = \mathscr{S}^{\dagger}_{\kappa} \hatot_{\mathcal{O}(U)} \mathcal{O}(V)
\]
with $q$-expansion given by $\mathcal{F}_{V}$. Moreover the restriction of $\omega_{V}$ to $\mathcal{S}^{\dagger}_{\mathcal{C}}(V')$ is equal to $\omega_{V'}$ for $V' \subset V$, so these classes glue to give a universal cohomology class $\omega_{\mathcal{C}} \in \mathcal{S}^{\dagger}_{\mathcal{C}}(\mathcal{C})$.
\end{proposition}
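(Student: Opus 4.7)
The plan is local: since $\omega_V$ will be uniquely characterised by its $q$-expansion, compatibility under restriction ($\omega_{V}|_{V'} = \omega_{V'}$ for $V' \subset V$) will be automatic from this uniqueness, and the local sections will glue to a global $\omega_{\mathcal{C}} \in \mathcal{S}^\dagger_{\mathcal{C}}(\mathcal{C})$. Fix then a quasi-compact open affinoid $V \subset \mathcal{C}$. After shrinking, we may assume that $U = w(V) \subset \mathcal{W}$ is an affinoid open with universal character $\kappa$, and that $V$ is contained in the slope $\leq h$ part $\mathcal{C}^{\leq h}_U$ of the eigencurve for some rational $h$ (this is how the eigencurve is built). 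By the cuspidal version of Theorem \ref{SlopeDecompositionForNdaggerThm}, applied to the $U_p$-stable closed subspace $\mathscr{S}^\dagger_\kappa \subset \mathscr{M}^\dagger_\kappa$ (and after possibly shrinking $U$), the module $\mathscr{S}^{\dagger, \leq h}_\kappa$ is a finite projective $\mathcal{O}(U)$-module stable under the full Hecke algebra.

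The next step is to extract the ``$V$-isotypic piece''. Let $\mathbf{T}_{U, h} \subseteq \opn{End}_{\mathcal{O}(U)}(\mathscr{S}^{\dagger, \leq h}_\kappa)$ denote the $\mathcal{O}(U)$-subalgebra generated by the Hecke operators. By construction of the eigencurve, $V \hookrightarrow \mathcal{C}^{\leq h}_U$ corresponds to a character $\chi_V\colon \mathbf{T}_{U, h} \hatot \mathcal{O}(V) \to \mathcal{O}(V)$ sending $T_n$ and $U_p$ to the Fourier coefficients $a_n \in \mathcal{O}(V)$ of $\mathcal{F}_V$. After a final shrinking of $V$, the character $\chi_V$ is cut out by an idempotent $e_V \in \mathbf{T}_{U, h} \hatot \mathcal{O}(V)$ (using reducedness of the cuspidal eigencurve to split the finite $\mathcal{O}(V)$-algebra $\mathbf{T}_{U, h} \hatot \mathcal{O}(V)$ along $\chi_V$), and we set $M_V \defeq e_V \cdot (\mathscr{S}^{\dagger, \leq h}_\kappa \hatot \mathcal{O}(V))$. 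This is a direct summand of $\mathscr{S}^{\dagger, \leq h}_\kappa \hatot \mathcal{O}(V)$ on which each Hecke operator acts by the scalar $a_n$.

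The identification via $q$-expansions completes the proof. The $q$-expansion map $M_V \hookrightarrow \mathcal{O}(V)[[q]]$ is injective (by the $q$-expansion principle applied to the cuspidal locus) and Hecke equivariant. Since the Hecke operators act on $M_V$ via the scalars $a_n$, the standard recursion forces any $f \in M_V$ with $q$-expansion $\sum_{n \geq 1} b_n q^n$ to satisfy $b_n = a_n b_1$; the $q$-expansion map therefore factors as
\[
M_V \xrightarrow{\;a_1\;} \mathcal{O}(V) \xrightarrow{\;\cdot \mathcal{F}_V\;} \mathcal{O}(V) \cdot \mathcal{F}_V \subset \mathcal{O}(V)[[q]],
\]
where $a_1$ denotes the first Fourier coefficient. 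Injectivity of the composition forces injectivity of $a_1$. Moreover, at any $x \in V$ the specialisation $M_V \otimes k(x)$ is a non-zero eigenspace whose generator cannot have vanishing first Fourier coefficient (otherwise its whole $q$-expansion would vanish by the same recursion, contradicting the $q$-expansion principle). Hence $M_V$ is locally free of rank one and $a_1$ is pointwise non-zero, so after a last shrinking $a_1 \colon M_V \xrightarrow{\sim} \mathcal{O}(V)$ is an isomorphism. Setting $\omega_V \defeq a_1^{-1}(1)$ gives an element of $\mathscr{S}^\dagger_\kappa \hatot \mathcal{O}(V) = \mathcal{S}^\dagger_{\mathcal{C}}(V)$ with $q$-expansion $\mathcal{F}_V$.

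The main obstacle is the second step: producing the idempotent $e_V$ and verifying that the Hecke action on $M_V$ is genuinely through $\mathcal{O}(V)$. This requires knowing that the local factors of the finite $\mathcal{O}(U)$-algebra $\mathbf{T}_{U, h}$ at points of $V$ are controlled by the eigencurve in a clean way; concretely, one takes $V$ small enough that its preimage in $\mathcal{C}^{\leq h}_U \times_U V$ splits as the disjoint union of $V$ (viewed as the diagonal) and a complementary closed subspace. In the non-reduced case one replaces $M_V$ by the generalised eigenspace for the ideal $\ker \chi_V$ and argues, by a Nakayama / rank-semicontinuity argument over $V$ combined with the pointwise $q$-expansion analysis of the previous paragraph, that $M_V$ is nevertheless a line bundle trivialised by $a_1$. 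The rest of the argument is formal, and the compatibility under restriction needed for gluing follows from the uniqueness of $\omega_V$ as the element of $M_V$ with first Fourier coefficient $1$.
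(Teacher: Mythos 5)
Your overall strategy is the same as the paper's: the proposition is exactly the duality between the Hecke algebra $\mathbf{T}_{U,h}$ and the finite projective $\mathcal{O}(U)$-module $\mathscr{S}^{\dagger,\leq h}_{\kappa}$ given by the perfect pairing $(T, f) \mapsto a_1(Tf)$ (Coleman, Prop.\ B5.6; Urban \S 4.2), and your ``recursion'' $b_n = a_n b_1$ is precisely this pairing in disguise. The endgame (injectivity of $a_1$ from the $q$-expansion principle, gluing from uniqueness of the class with prescribed $q$-expansion) is fine.

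The genuine gap is the middle step. The idempotent $e_V$ splitting $\mathbf{T}_{U,h} \hatot \mathcal{O}(V) \cong \mathcal{O}(V) \times B$ need not exist, even after shrinking $V$: this splitting is equivalent to the graph of $V \hookrightarrow \mathcal{C}^{\leq h}_U$ being open and closed in $\mathcal{C}^{\leq h}_U \times_U V$, which fails at any point where two irreducible components of $\mathcal{C}$ cross (two distinct Coleman families meeting at a classical or non-classical point). Reducedness of the cuspidal eigencurve does not help here --- the obstruction is the crossing of branches, not nilpotents --- and your fallback via the generalised eigenspace for $\ker\chi_V$ fails at exactly the same points, since its fibre there contains the eigenvectors of \emph{both} families and so has rank $\geq 2$; it is not a line bundle. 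The repair is to drop the idempotent entirely and use the duality directly: since $\chi_V \colon \mathbf{T}_{U,h} \hatot \mathcal{O}(V) \to \mathcal{O}(V)$ is surjective, the honest $\chi_V$-eigenspace is
\[
M_V = \opn{Hom}_{\mathcal{O}(V)}\bigl( (\mathbf{T}_{U,h} \hatot \mathcal{O}(V))/\ker\chi_V, \, \mathcal{O}(V) \bigr) \cong \mathcal{O}(V)
\]
under the identification $\mathscr{S}^{\dagger,\leq h}_{\kappa} \hatot \mathcal{O}(V) \cong \opn{Hom}_{\mathcal{O}(U)}(\mathbf{T}_{U,h}, \mathcal{O}(V))$ furnished by the perfect pairing, so it is free of rank one with generator $\omega_V$ dual to $\chi_V$ itself; then $a_n(\omega_V) = a_1(T_n\omega_V) = \chi_V(T_n) = a_n$ with no shrinking, no idempotent, and no pointwise rank analysis needed. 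With that substitution your argument matches the paper's.
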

\begin{proof}
This follows from the duality between cuspidal overconvergent forms and Hecke algebras (cf. for instance, \cite[\S 4.2]{Urban} for trivial tame level, or \cite[Proposition B5.6]{Coleman}). Observe that, since we are working at tame level $\Gamma_1(N)$, here we consider the Hecke algebra generated by the Hecke operators $T_\ell$ for $\ell \nmid N p$, $U_\ell$ and the diamond operators $\langle \ell \rangle$ for $\ell \mid N$, and the Hecke operator $U_p$.
\end{proof}

\subsubsection{The dual class} 

Let $V \subset \mathcal{C}$ be a quasi-compact open affinoid subspace, and let $\omega_{V} \in \mathcal{S}^{\dagger}_{\mathcal{C}}(V)$ denote the universal cohomology class provided by Proposition \ref{UniversalClassProp}. We will now explain how to associate a ``dual class'' to $\omega_{V}$. As explained in \cite[\S 5.2]{AI_LLL}, one has an Atkin--Lehner involution $w_N$ on $X = X_1(N)$ (as $\zeta_N \in \mathcal{O}_L$), which extends to involutions on $M_{\opn{dR}}$ and $\mathcal{IG}_{\infty}$, compatible with the morphism $\mathcal{IG}_{\infty} \to M_{\opn{dR}}^{\opn{an}}$. This induces an involution $w_N \colon \mathscr{M}^{\dagger} \to \mathscr{M}^{\dagger}$ which commutes with the $T(\mbb{Z}_p)$ and $U_p$ actions and preserves $\mathscr{S}^{\dagger}$.

\begin{definition}
Let $\omega_{V}^c$ denote the ``dual class'' given by
\[
\omega^c_{V} \defeq w_N(\omega_{V}) \in \mathcal{S}^{\dagger}_{\mathcal{C}}(V) .
\]
These classes are compatible for $V' \subset V$ because the same is true for the Atkin--Lehner involution $w_N$, and we denote by $\omega_{\mathcal{C}}^c \in \mathcal{S}^{\dagger}_{\mathcal{C}}(\mathcal{C})$ the dual class obtained by gluing. 
\end{definition}

\subsubsection{A linear functional} \label{ALinearFunctionalSection}

For any integer $M | N$, let $\mathcal{C}(M)$ denote the Buzzard--Coleman--Mazur cuspidal eigencurve of tame level $\Gamma_1(M)$, and we let $\mathscr{S}^{\dagger}_{\kappa_1}(M)$ denote the space of cuspidal overconvergent modular forms of weight $\kappa_1$ and tame level $\Gamma_1(M)$. If $M=N$, we omit this from the notation and we keep the notations of the previous sections.

\begin{definition} \label{ClassicalPregDef}
    We say that a point $x \in \mathcal{C}(M)$ is:
    \begin{itemize}
        \item \emph{classical} if $w(x) \in \mbb{Z}_{\geq 2}$ and the specialisation $\mathcal{F}^{\opn{univ}}_x$ of the universal eigenform $\mathcal{F}^{\opn{univ}}$ over $\mathcal{C}(M)$ at the point $x$ is the $q$-expansion of a normalised cuspidal modular form of weight $w(x)$ and level $\Gamma_1(M) \cap \Gamma_0(p^r)$, for some integer $r \geq 1$;
        \item \emph{crystalline} if $x$ is classical and the newform associated with the modular form $\mathcal{F}^{\opn{univ}}_x$ has level $\Gamma_1(M')$ for some $M' | M$;
        \item \emph{$p$-regular crystalline} if $x$ is crystalline and the roots of the Hecke polynomial at $p$
        \[
        X^2 - a_p(f)X + \varepsilon_f(p)p^{w(x)-1}
        \]
        are distinct, where $f$ denotes the newform associated with $\mathcal{F}^{\opn{univ}}_x$ with nebentypus $\varepsilon_f$.
    \end{itemize}
\end{definition}

We make the following assumption:

\begin{assumption} \label{NobleAssumption}
Let $U_1 \subset \mathcal{W}$ be a connected quasi-compact open affinoid subspace in weight space with universal character $\kappa_1 \colon \mbb{Z}_p^{\times} \to \mathcal{O}(U_1)^{\times}$. We suppose that there exists an integer $N_{\mathcal{F}}$ dividing $N$ and an open $V_1 \subset \mathcal{C}(N_{\mathcal{F}})$ lying over $U_1$ such that
\begin{itemize}
    \item The weight map $w \colon V_1 \to U_1$ is an isomorphism.
    \item Every specialisation of the Coleman family $\mathcal{F} := \mathcal{F}_{V_1}$ at a classical point $x \in V_1$ with $w(x) \in \mbb{Z}_{\geq 2}$, is \emph{noble}; i.e. it is the $p$-stabilisation of a normalised cuspidal newform of level $\Gamma_1(N_{\mathcal{F}})$ that is $p$-regular, with the additional condition that its local Galois representation is not the sum of two characters if it is of critical slope (see \cite[Definition 4.6.3]{LZ16Coleman}).
\end{itemize}
\end{assumption}

\begin{remark}
Many such opens $V_1$ exist by starting with a noble eigenform and taking a sufficiently small neighbourhood of its corresponding point in the eigencurve (see \cite{bellpadic}). 
\end{remark}

The Coleman family $\mathcal{F}$ will constitute the first variable of the triple $p$-adic $L$-function.

\begin{lemma}
Let $\omega_{V_1} \in \mathscr{S}^{\dagger}_{\kappa_1}(N_{\mathcal{F}})$ denote the universal class over $V_1$, as in Proposition \ref{UniversalClassProp}. After possibly shrinking $V_1$, there exists a unique $\mathcal{O}(V_1)$-linear Hecke equivariant (for the Hecke operators $\{ T_{\ell} : \ell \nmid p N_{\mathcal{F}} \} \cup \{U_p \}$) map
\begin{equation} \label{LevelNFmap}
\mathscr{S}^{\dagger}_{\kappa_1}(N_{\mathcal{F}}) \to \mathcal{O}(V_1)
\end{equation}
sending the universal class $\omega_{V_1}$ to $1$, where the action of the Hecke algebra on $\mathcal{O}(V_1)$ is through the Hecke eigensystem corresponding to the Coleman family $\mathcal{F}$.
\end{lemma}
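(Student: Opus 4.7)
The plan is to construct the functional as a projection onto the $\mathcal{F}$-isotypic component of a finite-slope part of $\mathscr{S}^{\dagger}_{\kappa_1}(N_{\mathcal{F}})$. First, because $V_1$ is quasi-compact and $\omega_{V_1}$ is a $U_p^{\circ}$-eigenvector with eigenvalue $a_p(\mathcal{F}) \in \mathcal{O}(V_1)^{\times}$, after shrinking $V_1$ I would find an $h \in \mathbb{Q}_{\geq 0}$ such that $v_p(a_p(\mathcal{F}(x))) \leq h$ uniformly for $x \in V_1$. Applying the slope decomposition in Theorem \ref{SheafVersionMainThm}(2) to the overconvergent subspace $\mathscr{S}^{\dagger}_{\kappa_1}(N_{\mathcal{F}}) \subset \mathscr{M}^{\dagger}_{\kappa_1}(N_{\mathcal{F}})$ (using that $U_p$ preserves the filtration and the overconvergent subspace $\mathrm{Fil}_0$), and possibly shrinking $U_1$ (hence $V_1$) further, I obtain a slope $\leq h$ part
\[
\mathscr{S}^{\dagger}_{\kappa_1}(N_{\mathcal{F}})^{\leq h}
\]
which is a projective $\mathcal{O}(V_1)$-module of finite rank on which $\omega_{V_1}$ lies and which is stable under the tame Hecke algebra $\mathcal{H}^{(Np)}$ and $U_p$.

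Next I would invoke the theory of the eigenvariety: the image of the $\mathcal{O}(V_1)$-algebra morphism $\mathcal{H}^{(Np)} \otimes \mathcal{O}(V_1)[U_p] \to \mathrm{End}_{\mathcal{O}(V_1)}(\mathscr{S}^{\dagger}_{\kappa_1}(N_{\mathcal{F}})^{\leq h})$ is a finite $\mathcal{O}(V_1)$-algebra $\mathbb{T}_{V_1}$, and $\mathrm{Spa}(\mathbb{T}_{V_1})$ is (a union of connected components of) the pullback of $\mathcal{C}(N_{\mathcal{F}}) \to \mathcal{W}$ to $U_1$. The Coleman family $\mathcal{F}$ defines a section $V_1 \to \mathrm{Spa}(\mathbb{T}_{V_1})$ corresponding to a character $\lambda_{\mathcal{F}} : \mathbb{T}_{V_1} \to \mathcal{O}(V_1)$. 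By Assumption \ref{NobleAssumption} and the main result of \cite{bellpadic} on nobility, the eigencurve $\mathcal{C}(N_{\mathcal{F}})$ is étale over weight space at the classical noble specialisations of $\mathcal{F}$; since these are Zariski-dense in $V_1$ and $w: V_1 \to U_1$ is assumed to be an isomorphism, after possibly shrinking $V_1$ the morphism $V_1 \hookrightarrow \mathrm{Spa}(\mathbb{T}_{V_1})$ is an open and closed immersion. This gives an idempotent $e_{\mathcal{F}} \in \mathbb{T}_{V_1}$ and a direct sum decomposition
\[
\mathscr{S}^{\dagger}_{\kappa_1}(N_{\mathcal{F}})^{\leq h} = e_{\mathcal{F}} \cdot \mathscr{S}^{\dagger}_{\kappa_1}(N_{\mathcal{F}})^{\leq h} \oplus (1-e_{\mathcal{F}}) \cdot \mathscr{S}^{\dagger}_{\kappa_1}(N_{\mathcal{F}})^{\leq h}
\]
as $\mathbb{T}_{V_1}$-modules.

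The main step is then to show that $e_{\mathcal{F}} \cdot \mathscr{S}^{\dagger}_{\kappa_1}(N_{\mathcal{F}})^{\leq h}$ is free of rank one over $\mathcal{O}(V_1)$, generated by $\omega_{V_1}$. This can be checked pointwise at classical noble points in $V_1$: at such a point the corresponding specialisation of $e_{\mathcal{F}} \cdot \mathscr{S}^{\dagger}_{\kappa_1}(N_{\mathcal{F}})^{\leq h}$ is the one-dimensional $\lambda_{\mathcal{F}}$-eigenspace (using classicality via Coleman's theorem combined with multiplicity one for noble eigenforms) spanned by the specialised eigenform, and $\omega_{V_1}$ specialises to a nonzero multiple of this form by its defining $q$-expansion property. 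Since the module is projective and the determinant of the multiplication by $\omega_{V_1}$ map is invertible on a Zariski-dense set, by shrinking $V_1$ once more it is invertible on all of $V_1$, so $e_{\mathcal{F}} \cdot \mathscr{S}^{\dagger}_{\kappa_1}(N_{\mathcal{F}})^{\leq h} = \mathcal{O}(V_1) \cdot \omega_{V_1}$.

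Finally, the desired functional (\ref{LevelNFmap}) is defined as the composition
\[
\mathscr{S}^{\dagger}_{\kappa_1}(N_{\mathcal{F}}) \twoheadrightarrow \mathscr{S}^{\dagger}_{\kappa_1}(N_{\mathcal{F}})^{\leq h} \xrightarrow{e_{\mathcal{F}}} \mathcal{O}(V_1) \cdot \omega_{V_1} \xrightarrow{\sim} \mathcal{O}(V_1),
\]
where the last map is the trivialisation sending $\omega_{V_1} \mapsto 1$. Uniqueness is immediate: any two such functionals differ on a Hecke-stable $\mathcal{O}(V_1)$-submodule contained in the kernel of $e_{\mathcal{F}}$ intersected with the preimage of $0$, but the vanishing of such a functional on $\omega_{V_1}$ combined with Hecke-equivariance forces it to vanish on $e_{\mathcal{F}} \cdot \mathscr{S}^{\dagger}_{\kappa_1}(N_{\mathcal{F}})^{\leq h}$, hence everywhere. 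I expect the subtlest point to be the pointwise verification that $e_{\mathcal{F}} \cdot \mathscr{S}^{\dagger}_{\kappa_1}(N_{\mathcal{F}})^{\leq h}$ is precisely one-dimensional at every noble specialisation; this requires combining Coleman's classicality theorem with the additional hypothesis in Assumption \ref{NobleAssumption} on critical-slope forms to exclude contributions from companion forms or non-split $\varphi$-module situations.
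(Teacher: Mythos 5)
Your overall route matches the paper's: reduce to a finite projective slope $\leq h$ direct summand, use the hypothesis that $w\colon V_1 \to U_1$ is an isomorphism to produce an idempotent in the Hecke algebra cutting out the $\mathcal{F}$-component, and identify the corresponding summand of cusp forms with $\mathcal{O}(V_1)\cdot\omega_{V_1}$. Where you diverge is in the last identification. The paper invokes the duality between cuspidal overconvergent forms and Hecke algebras (the perfect pairing $(T,f)\mapsto a_1(Tf)$), so that $e\mathbb{T}_h\cong\mathcal{O}(U_1)$ immediately transfers to $e\,\mathscr{S}^{\dagger}_{\kappa_1}(N_{\mathcal{F}})^{\leq h}\cong\mathcal{O}(U_1)$, with no pointwise input whatsoever; in particular it never needs classicality or one-dimensionality of generalized eigenspaces at individual specialisations. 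You instead check the fibre dimension at classical noble points, which forces you to import Coleman's classicality theorem and, at critical-slope noble points, Bella\"iche's theorem that the generalized eigenspace is one-dimensional. That is legitimate (it is what nobility is designed for) but strictly heavier machinery than the paper uses. You should also note, as the paper does, that because your Hecke algebra omits $U_\ell$ and $\langle\ell\rangle$ for $\ell\mid N_{\mathcal{F}}$, one needs strong multiplicity one for level-$N_{\mathcal{F}}$ newforms (the second bullet of Assumption \ref{NobleAssumption}) to know the smaller eigensystem still isolates $\mathcal{F}$.

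The genuine gap is in the passage from ``$\omega_{V_1}$ spans the fibre at a Zariski-dense set of points'' to ``$\omega_{V_1}$ generates $e_{\mathcal{F}}\mathscr{S}^{\dagger}_{\kappa_1}(N_{\mathcal{F}})^{\leq h}$''. The determinant of the map $\mathcal{O}(V_1)\to e_{\mathcal{F}}M$ is a function on a reduced affinoid that is nonvanishing on a dense set, hence nonzero, but its zero locus can still be a nonempty nowhere-dense closed subset; ``shrinking $V_1$'' to its complement is not the kind of shrinking the lemma (or its downstream use in constructing $\mathscr{L}_p$ over all of $V_1$) can tolerate, and the complement need not even be affinoid. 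The fix is cheap and you already have the ingredient: since $a_1(\omega_{V_1})=1$, the composite $\mathcal{O}(V_1)\xrightarrow{1\mapsto\omega_{V_1}} e_{\mathcal{F}}M\xrightarrow{a_1}\mathcal{O}(V_1)$ is the identity, so $\mathcal{O}(V_1)\cdot\omega_{V_1}$ splits off as a free rank-one direct summand; the complement is projective of rank zero (by your fibre computation), hence zero. This recovers the conclusion over all of $V_1$ with no excision — and is, in effect, a special case of the duality argument the paper uses.
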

\begin{proof}
If $\alpha_{\mathcal{F}}$ denotes the Hecke eigencharacter associated with the Coleman family $\mathcal{F}$, then $\omega_{V_1}$ is an eigenclass for the Hecke action with eigencharacter $\alpha_{\mathcal{F}}$. We claim that this appears as a direct summand in $\mathscr{S}^{\dagger}_{\kappa_1}(N_{\mathcal{F}})$ with multiplicity one. Indeed, it suffices to show this in $\mathscr{S}^{\dagger}_{\kappa_1}(N_{\mathcal{F}})^{\leq h}$ for sufficiently large $h \in \mbb{N}$, since the slope $\leq h$ part is a direct summand of $\mathscr{S}^{\dagger}_{\kappa_1}(N_{\mathcal{F}})$. Let $\mbb{T}_h = \mbb{T}(\mathscr{S}^{\dagger}_{\kappa_1}(N_{\mathcal{F}})^{\leq h})$ denote the Hecke algebra over $\mathcal{O}(U_1)$ (generated by the same Hecke operators as in the proof of Proposition \ref{UniversalClassProp}) acting faithfully on the module $\mathscr{S}^{\dagger}_{\kappa_1}(N_{\mathcal{F}})^{\leq h}$. 

Since the weight map induces an isomorphism on $V_1$ (Assumption \ref{NobleAssumption}), there exists an idempotent $e$ in $\mbb{T}_h$ such that $e\mbb{T}_h = \mathcal{O}(V_1) \cong \mathcal{O}(U_1)$ and $e\mbb{T}_h \subset \mbb{T}_h$ is the (generalised) eigenspace for the character $\alpha_{\mathcal{F}}$. We note that we also have a stronger property, namely, $e \mbb{T}_h \subset \mbb{T}_h$ is the (generalised) eigenspace for the action of the Hecke operators $\{ T_{\ell} : \ell \nmid p N_{\mathcal{F}} \} \cup \{U_p \}$ with eigencharacter $\alpha_{\mathcal{F}}$. This is because any Coleman family over $U_1$ with the same prime-to-$N_{\mathcal{F}}$ Hecke eigensystem as $\mathcal{F}$ must coincide with $\mathcal{F}$ (this follows from the second bullet point in Assumption \ref{NobleAssumption}).

Finally, by the duality between overconvergent modular forms and Hecke algebras (see the proof of Proposition \ref{UniversalClassProp}), the same properties are true for $\mathscr{S}^{\dagger}_{\kappa_1}(N_{\mathcal{F}})^{\leq h}$, namely: one has $e \mathscr{S}^{\dagger}_{\kappa_1}(N_{\mathcal{F}})^{\leq h} \cong \mathcal{O}(U_1)$ and $e \mathscr{S}^{\dagger}_{\kappa_1}(N_{\mathcal{F}})^{\leq h} \subset \mathscr{S}^{\dagger}_{\kappa_1}(N_{\mathcal{F}})^{\leq h}$ is the (generalised) eigenspace for the action of the Hecke operators $\{ T_{\ell} : \ell \nmid p N_{\mathcal{F}} \} \cup \{U_p \}$ with eigencharacter $\alpha_{\mathcal{F}}$. This completes the proof of the lemma. 
\end{proof}

We extend this to a linear functional on $\mathscr{S}^{\dagger}_{\kappa_1}$ as follows. For any divisor $a \geq 1$ of $N/N_{\mathcal{F}}$, we have a finite \'{e}tale morphism $[a] \colon X_1(N) \to X_1(N_{\mathcal{F}})$ as described at the start of \cite[\S 5]{AI_LLL}. This induces a trace morphism $[a]_* \colon \mathscr{S}^{\dagger}_{\kappa_1} \to \mathscr{S}^{\dagger}_{\kappa_1}(N_{\mathcal{F}})$ and a pullback map $[a]^* \colon \mathscr{S}^{\dagger}_{\kappa_1}(N_{\mathcal{F}}) \to \mathscr{S}^{\dagger}_{\kappa_1}$, with the latter given by translation of the argument by $a$ on classical modular forms.

\begin{definition} \label{DefOfLinearFunctional}
Let $\underline{\mu} = (\mu_a)_{a | N/N_{\mathcal{F}}}$ be a tuple of elements of $L \cap \iota_p^{-1}\overline{\mbb{Q}}$ indexed by the divisors of $N/N_{\mathcal{F}}$. We set $\omega_{\mathcal{F}^{\circ}} = \sum_{a | N/N_{\mathcal{F}}} \mu_a \cdot [a]^*(\omega_{V_1}) \in \mathscr{S}^{\dagger}_{\kappa_1}$. Set $\omega_{\mathcal{F}^{\circ}}^c = w_N(\omega_{\mathcal{F}^{\circ}})$. We define
\[
\lambda_{\mathcal{F}^{\circ, c}} \colon \mathscr{S}^{\dagger}_{\kappa_1} \to \mathcal{O}(V_1)
\]
to be the $\mathcal{O}(V_1)$-linear map sending a class $\delta \in \mathscr{S}^{\dagger}_{\kappa_1}$ to the image of $[\Gamma_1(N_{\mathcal{F}}): \Gamma_1(N)]^{-1} \cdot \sum_{a | N/N_{\mathcal{F}}} \bar{\mu}_a \cdot [a]_* w_N(\delta)$ under the map (\ref{LevelNFmap}), where $\bar{\cdot}$ denotes complex conjugation. This is Hecke equivariant away from $Np$ and $U_p$-equivariant, where the Hecke action on the target is given by the eigensystem for $\mathcal{F}^{\circ, c}$ (see \cite[Lemma 3.4]{LoefflerRSNote}).
\end{definition}

We have the following specialisation formula:

\begin{lemma} \label{LemmaSpecialisationOfLinearFunctional}
Let $x_1 \in w^{-1}(\mbb{Z}_{\geq 2}) \cap V_1$, let $\mathcal{F}_{x_1}$ denote the specialisation of the Coleman family at $x_1$ and let 
\[
\lambda^{\opn{cl}}_{\mathcal{F}_{x_1}^{\circ, c}} \colon S_k(\Gamma_1(N) \cap \Gamma_0(p); \mbb{C}) \to \mbb{C}
\]
denote the restriction of the specialisation of $\lambda_{\mathcal{F}^{\circ, c}}$ at $x_1$ to the space of classical cusp forms of weight $k \defeq w(x_1)$ and level $\Gamma_1(N) \cap \Gamma_0(p)$ (using $\iota_p$ to extend scalars to $\mbb{C}$).\footnote{Note that one has a natural map $\mathcal{X}_r \to \mathcal{X}_0(p)$ (where the latter denotes the modular curve of level $\Gamma_1(N) \cap \Gamma_0(p)$) which induces a Hecke equivariant map from cuspforms of level $\Gamma_1(N) \cap \Gamma_0(p)$ to cuspidal overconvergent modular forms (see \cite[\S 5.4.4]{BoxerPilloni}).} Let $\mathcal{F}_{x_1}^{\circ, c} = \mathcal{F}^{\circ}_{x_1} \otimes \varepsilon_{\mathcal{F}_{x_1}}^{-1}$, where $\varepsilon_{\mathcal{F}_{x_1}}$ denotes the nebentypus of $\mathcal{F}_{x_1}$ and $\mathcal{F}^{\circ}_{x_1} = \sum_{a | N/N_{\mathcal{F}}} \mu_a \cdot [a]^*\mathcal{F}_{x_1}$. Then
\[
\lambda^{\opn{cl}}_{\mathcal{F}_{x_1}^{\circ, c}}(g) = \frac{\langle \mathcal{F}_{x_1}^{\circ, c}, g \rangle_{N, p}}{\langle \mathcal{F}_{x_1}^{c}, \mathcal{F}_{x_1}^{c} \rangle_{N, p}}
\]
for any $g \in S_k(\Gamma_1(N) \cap \Gamma_0(p); \mbb{C})$, where $\langle \cdot, \cdot \rangle_{N, p}$ denotes the Petersson inner product of level $\Gamma_1(N) \cap \Gamma_0(p)$ as in \cite[Eq. (35)]{DarmonRotgerGrossZagier1} (which is Hermitian linear in the first variable).
\end{lemma}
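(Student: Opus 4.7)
The plan is to identify the classical specialisation of the map \eqref{LevelNFmap} at $x_1$ with the classical Petersson projection onto the $\mathcal{F}_{x_1}$-isotypic line, and then trace through the definition of $\lambda_{\mathcal{F}^{\circ, c}}$ using standard adjointness relations for pullback/trace and the Atkin--Lehner involution.

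First, I would note that at the classical point $x_1$, the map \eqref{LevelNFmap} specialises to an $L$-linear map $\mathscr{S}^{\dagger}_{k}(N_{\mathcal{F}}) \to L$ which is Hecke-equivariant for $\{T_\ell : \ell \nmid pN_{\mathcal{F}}\} \cup \{U_p\}$ with the character of $\mathcal{F}_{x_1}$, and which sends $\mathcal{F}_{x_1}$ to $1$. Restricting to the subspace of classical cusp forms in $S_k(\Gamma_1(N_{\mathcal{F}}) \cap \Gamma_0(p); \mbb{C})$ (via $\iota_p$), the nobleness hypothesis on $\mathcal{F}_{x_1}$ ensures that the $\mathcal{F}_{x_1}$-generalised eigenspace for this Hecke algebra is one-dimensional, spanned by $\mathcal{F}_{x_1}$ itself. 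By uniqueness, the specialised functional must therefore coincide on classical forms with the Petersson projector
\[
h \longmapsto \frac{\langle \mathcal{F}_{x_1}^{c}, h \rangle_{N_{\mathcal{F}}, p}}{\langle \mathcal{F}_{x_1}^{c}, \mathcal{F}_{x_1}\rangle_{N_{\mathcal{F}}, p}},
\]
which is Hecke-equivariant (the conjugation-twist in the first variable converts the standard adjoint $T_\ell^\ast = \langle \ell\rangle^{-1}T_\ell$ of the Hecke operators into true equivariance) and sends $\mathcal{F}_{x_1}$ to $1$.

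Next, I would unwind the definition of $\lambda_{\mathcal{F}^{\circ,c}}$ applied to $g \in S_k(\Gamma_1(N) \cap \Gamma_0(p); \mbb{C})$: it equals
\[
\frac{1}{[\Gamma_1(N_{\mathcal{F}}):\Gamma_1(N)]}\sum_{a \mid N/N_{\mathcal{F}}} \bar{\mu}_a \cdot \frac{\langle \mathcal{F}_{x_1}^{c}, [a]_{*} w_N(g)\rangle_{N_{\mathcal{F}}, p}}{\langle \mathcal{F}_{x_1}^{c}, \mathcal{F}_{x_1}\rangle_{N_{\mathcal{F}}, p}}.
\]
Applying the adjointness $\langle \phi, [a]_{*}\psi\rangle_{N_{\mathcal{F}},p} = [\Gamma_1(N_{\mathcal{F}}):\Gamma_1(N)] \cdot \langle [a]^{*}\phi, \psi\rangle_{N,p}$ (coming from the definition of the trace $[a]_\ast$ as a sum over coset representatives) converts the numerator into $\sum_a \bar{\mu}_a \langle [a]^{*}\mathcal{F}_{x_1}^{c}, w_N g\rangle_{N,p}$, and the cancellation of index factors yields $\langle (\mathcal{F}_{x_1}^\circ)^c, w_N g\rangle_{N,p} = \langle \mathcal{F}_{x_1}^{\circ,c}, w_N g\rangle_{N,p}$ in the numerator (using that $c$ distributes over the $[a]^*$-linear combination since the $\mu_a$ are algebraic numbers and complex conjugation in the first variable of the Petersson pairing matches the $\bar{\mu}_a$ outside).

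Finally, I would apply the Atkin--Lehner identity $\langle w_N \phi, w_N \psi\rangle_{N,p} = \langle \phi, \psi\rangle_{N,p}$ (which holds because $w_N$ is an isometry on level-$N$ cusp forms) to rewrite $\langle \mathcal{F}_{x_1}^{\circ, c}, w_N g\rangle_{N,p}$ as $\langle w_N^{-1}\mathcal{F}_{x_1}^{\circ, c}, g\rangle_{N,p}$, and then identify $w_N^{-1}\mathcal{F}_{x_1}^{\circ, c}$ with $\mathcal{F}_{x_1}^{\circ, c}$ (this uses the standard Atkin--Lehner pseudo-eigenvalue relations: $w_N$ sends a newform $f$ to a scalar multiple of $f^c$, and tracking scalars through the definition of $(-)^{\circ, c}$ shows they cancel against those encoded in the denominator when one further rewrites $\langle \mathcal{F}_{x_1}^{c},\mathcal{F}_{x_1}\rangle_{N_{\mathcal{F}}, p}$ as $\langle \mathcal{F}_{x_1}^{c}, \mathcal{F}_{x_1}^{c}\rangle_{N,p}$ via the level-raising formula and Atkin--Lehner). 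The main obstacle I anticipate is precisely this last bookkeeping step: keeping track of the nebentypus twists, the Atkin--Lehner pseudo-eigenvalues, and the interaction of Hermitian conjugation with the scalars $\mu_a$ so that all normalising factors — the index $[\Gamma_1(N_{\mathcal{F}}):\Gamma_1(N)]$, the Atkin--Lehner scalars appearing in $w_N \mathcal{F}_{x_1}$, and the passage between levels $N$ and $N_{\mathcal{F}}$ in the denominator — combine cleanly to the advertised ratio $\langle \mathcal{F}_{x_1}^{\circ, c}, g\rangle_{N, p}/\langle \mathcal{F}_{x_1}^{c},\mathcal{F}_{x_1}^{c}\rangle_{N, p}$.
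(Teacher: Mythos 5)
Your proposal follows the same route as the paper's proof: identify the classical specialisation of the functional (\ref{LevelNFmap}) at $x_1$ with the normalised Petersson projector onto the $\mathcal{F}_{x_1}$-line (nobility giving uniqueness of the Hecke-equivariant functional sending $\mathcal{F}_{x_1}$ to $1$), then unwind $\lambda_{\mathcal{F}^{\circ,c}}$ using the adjointness of $[a]_*$ and $[a]^*$, the self-adjointness of $w_N$, and Hermitian linearity in the first variable. The paper is no more explicit than you are about the final Atkin--Lehner bookkeeping, so in outline your argument is the intended one.

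Two of your explicit formulas do not quite cohere, however. First, with the convention that $\langle\cdot,\cdot\rangle_{M,p}$ is Hermitian linear in the \emph{first} variable and $\overline{a_\ell}=\varepsilon_f(\ell)^{-1}a_\ell$ for a newform, the functional equivariant for the eigensystem of $\mathcal{F}_{x_1}$ is $g\mapsto\langle\mathcal{F}_{x_1},g\rangle_{M,p}$ and not $g\mapsto\langle\mathcal{F}_{x_1}^{c},g\rangle_{M,p}$ (the latter carries the eigensystem of $\mathcal{F}_{x_1}^{c}$, contradicting your own equivariance claim for the specialisation of (\ref{LevelNFmap})); the paper accordingly writes the projector as $\langle\mathcal{F}_{x_1},-\rangle_{M,p}/\langle\mathcal{F}_{x_1},\mathcal{F}_{x_1}\rangle_{M,p}$ and only afterwards replaces the denominator by $\langle\mathcal{F}_{x_1}^{c},\mathcal{F}_{x_1}^{c}\rangle_{M,p}$ (conjugation preserves Petersson norms) and rescales to level $N$. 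Second, the step ``identify $w_N^{-1}\mathcal{F}_{x_1}^{\circ,c}$ with $\mathcal{F}_{x_1}^{\circ,c}$'' is not the right statement and is false as written ($w_N$ does not fix these forms, even up to the pseudo-eigenvalue); what the paper uses is that $w_N$ is self-adjoint for the Petersson pairing, so that $w_N$ is moved from the second slot onto $\mathcal{F}_{x_1}^{\circ}$ in the Hermitian first slot, and the resulting class is then identified with $(\mathcal{F}_{x_1}^{\circ})^{*}=\mathcal{F}_{x_1}^{\circ,c}$ using that $p$ does not divide the conductor of $\varepsilon_{\mathcal{F}_{x_1}}$. With these two corrections your argument coincides with the paper's.
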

\begin{proof}
The restriction of the specialisation of (\ref{LevelNFmap}) at $x_1$ to complex-valued cusp forms of weight $k$ and level $\Gamma_1(N_{\mathcal{F}}) \cap \Gamma_0(p)$ defines a Hecke equivariant map $S_k(\Gamma_1(N_{\mathcal{F}}) \cap \Gamma_0(p); \mbb{C}) \to \mbb{C}$ factoring through the eigenspace associated with $\mathcal{F}_{x_1}$, and sending $\mathcal{F}_{x_1}$ to $1$. Set $M = N_{\mathcal{F}}$. Since $\mathcal{F}_{x_1}$ is a noble, this map must therefore be equal to
\[
\frac{\langle \mathcal{F}_{x_1}, - \rangle_{M, p}}{\langle \mathcal{F}_{x_1}, \mathcal{F}_{x_1} \rangle_{M, p}} = \frac{\langle \mathcal{F}_{x_1}, - \rangle_{M, p}}{\langle \mathcal{F}_{x_1}^c, \mathcal{F}_{x_1}^c \rangle_{M, p}} = [\Gamma_1(M) : \Gamma_1(N)] \frac{\langle \mathcal{F}_{x_1}, - \rangle_{M, p}}{\langle \mathcal{F}_{x_1}^c, \mathcal{F}_{x_1}^c \rangle_{N, p}}.
\]
Now one uses the fact that $[a]_*$ (resp. $w_N$) and $[a]^*$ (resp. $w_N$) are adjoint under the Petersson inner product, and that it is Hermitian linear in the first variable.
\end{proof}

\subsection{Triple product \texorpdfstring{$p$}{p}-adic \texorpdfstring{$L$}{L}-functions}

We now construct a pairing over two copies of the eigencurve which will be used in our construction of triple product and Rankin-Selberg $p$-adic $L$-functions. Fix a Coleman family $\mathcal{F}$ over the open $V_1 \subset \mathcal{C}(N_{\mathcal{F}})$ (which is isomorphic to an open $U_1 \subset \mathcal{W}$ via the weight map) as in \S \ref{ALinearFunctionalSection} and Assumption \ref{NobleAssumption}.

\begin{notation} 
Let $\left( V_1 \times \mathcal{C} \times \mathcal{C} \right)^+ \subset V_1 \times \mathcal{C} \times \mathcal{C}$ denote the open and closed subspace defined by the condition: for any $(x_1, x_2, x_3) \in V_1({\overline{\mathbb{Q}}_p}) \times \mathcal{C}({\overline{\mathbb{Q}}_p}) \times \mathcal{C}({\overline{\mathbb{Q}}_p})$, the weight
\[
w(x_1) - w(x_2) - w(x_3)
\]
is even, i.e. there exists $u \colon \mbb{Z}_p^{\times} \to \overline{\mathbb{Q}}_p^{\times}$ such that $w(x_1) - w(x_2) - w(x_3) = 2u$.
\end{notation} 

Let $V_2, V_3 \subset \mathcal{C}$ be two quasi-compact open affinoid subspaces with images $U_2 = w(V_2), U_3 = w(V_3) \subset \mathcal{W}$ and such that $V_1 \times V_2 \times V_3 \subseteq (V_1 \times \mathcal{C} \times \mathcal{C})^+$. Let $a_p(\mathcal{F})$ denote the coefficient of $q^p$ in the series $\mathcal{F} = \mathcal{F}_{V_1}$, and suppose that the $p$-adic Banach norm of $a_p(\mathcal{F})$ is $\leq h$, for some integer $h \geq 0$. By shrinking $V_i$ ($i=1, 2, 3$) if necessary we have the following:
\begin{itemize}
    \item For $i=1, 2, 3$, let $\kappa_i \colon \mbb{Z}_p^{\times} \to \mathcal{O}(U_i)^{\times}$ denote the universal character of $U_i$ and $\hat{\kappa}_i$ its pullback to $U = U_1 \times U_2 \times U_3$. Then, we may assume that: 
    \[
    \hat{\kappa}_1 - \hat{\kappa}_2 - \hat{\kappa}_3
    \]
    is even, i.e. there exists $u \colon \mbb{Z}_p^{\times} \to \mathcal{O}(U)^{\times}$ such that $\hat{\kappa}_1 - \hat{\kappa}_2 - \hat{\kappa}_3 = 2u$.
    \item The space of nearly overconvergent modular forms $\mathscr{N}^{\dagger}_{\hat{\kappa}_1}$ has a slope $\leq h$ decomposition and hence a $\mathcal{O}(U)$-linear overconvergent projector 
    \[
    \Pi^{\opn{oc}, \leq h} \colon \mathscr{N}^{\dagger}_{\hat{\kappa}_1} \to \mathscr{M}^{\dagger, \leq h}_{\hat{\kappa}_1} \otimes_{\mathcal{O}(U)} (F_{U_1} \hatot \mathcal{O}(U_2) \hatot \mathcal{O}(U_3) )
    \]
    where $F_{U_1}$ denotes the fraction ring of $U_1$ (see Theorem \ref{SlopeDecompositionForNdaggerThm} and Theorem \ref{OCProjectorForNdaggerThm}). Furthermore, the space of nearly overconvergent modular forms $\mathscr{N}^{\dagger}$ over $\mathcal{O}(U)$ comes equipped with an action
    \[
    C^{\opn{la}}(\mbb{Z}_p, \mathcal{O}(U)) \times \mathscr{N}^{\dagger} \to \mathscr{N}^{\dagger} 
    \]
    as in Theorem \ref{SheafVersionMainThm}, extending the Gauss--Manin connection. We denote this action by $\star$.
\end{itemize}

\begin{definition} \label{DefXiBilinearPairing}
With notation as above, for any $\phi \in \mathcal{S}^{\dagger}_{\mathcal{C}}(V_2)$ and $\psi \in \mathcal{S}^{\dagger}_{\mathcal{C}}(V_3)$, we define
\[
\Xi (\phi, \psi) = \Pi^{\opn{oc}, \leq h_1}( (u \cdot 1_{\mbb{Z}_p^{\times}}) \star \hat{\phi} \times \hat{\psi} ) \in \mathscr{S}^{\dagger, \leq h}_{\kappa_1} \hatot_{\mathcal{O}(V_1)} \left( F_{V_1} \hat{\otimes} \mathcal{O}(V_2) \hat{\otimes} \mathcal{O}(V_3)\right)
\]
where $1_{\mbb{Z}_p^{\times}}$ denotes the indicator function of $\mbb{Z}_p^{\times}$, and $\hat{\phi}$ and $\hat{\psi}$ denote the pullback of the classes to $V_1 \times V_2 \times V_3$.
\end{definition}

We have the following lemma:

\begin{lemma}
With notation as above:
\begin{enumerate}
    \item The construction $\Xi(-, -)$ is $\mathcal{O}(V_2)$-linear in the first variable and $\mathcal{O}(V_3)$-linear in the second variable.
    \item If $V_i' \subset V_i$ ($i = 2, 3$) then $\Xi(-, -)$ and $\Xi'(-, -)$ are compatible under restriction, where $\Xi'(-, -)$ denotes the construction in Definition \ref{DefXiBilinearPairing} over $V_1 \times V_2' \times V_3'$. Furthermore, the constructions are compatible for different choices of $h$.
    \item The constructions glue to give a morphism of sheaves\footnote{For a pair of sheaves $\mathcal{F}, \mathcal{G}$ of Banach modules on two spaces $X, Y$, we denote by $\mathcal{F} \; \hat{\boxtimes} \; \mathcal{G}$ the sheaf on $X \times Y$ obtained by sheafifying the assignment $U \times V \mapsto \mathcal{F}(U) \; \hat{\otimes} \; \mathcal{G}(V)$.} over $(V_1 \times \mathcal{C} \times \mathcal{C})^+$
    \[
    {\boldsymbol\Xi} \colon \mathcal{O}_{V_1} \;  \hat{\boxtimes} \; \mathcal{S}^{\dagger}_{\mathcal{C}} \; \hat{\boxtimes} \; \mathcal{S}^{\dagger}_{\mathcal{C}} \to (\mathscr{S}^{\dagger, \leq h}_{\kappa_1} \hatot_{\mathcal{O}(V_1)} \mathcal{K}_{V_1} ) \; \hat{\boxtimes} \; \mathcal{O}_{\mathcal{C}} \; \hat{\boxtimes} \; \mathcal{O}_{\mathcal{C}}
    \]
    where $\mathcal{K}_{V_1}$ denotes the sheaf of meromorphic functions on $V_1$.
\end{enumerate}
\end{lemma}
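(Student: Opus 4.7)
The proof strategy is to unwind $\Xi$ as the composition of four operations---pullback, multiplication, the $\star$-action from Theorem \ref{SheafVersionMainThm}, and the overconvergent projector from Theorem \ref{OCProjectorForNdaggerThm}---each of which is separately linear and functorial in an appropriate sense, and then to invoke a standard sheafification argument.

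For part (1), I would check the $\mathcal{O}(V_2)$-linearity in the first variable step by step. The pullback $\phi \mapsto \hat\phi$ is evidently $\mathcal{O}(V_2)$-linear; multiplication by the fixed section $\hat\psi$, using the algebra structure on $\mathscr{N}^\dagger$ arising from its description as functions on neighbourhoods of the Igusa tower, is $\mathcal{O}(V_2)$-linear; the $\star$-action of $C^{\opn{la}}(\mbb{Z}_p, \mathcal{O}(U))$ on $\mathscr{N}^\dagger$ is $\mathcal{O}(U)$-linear, hence in particular $\mathcal{O}(V_2)$-linear; and the overconvergent projector $\Pi^{\opn{oc}, \leq h}$ is $\mathcal{O}(U)$-linear. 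The analogous argument in the other variable gives $\mathcal{O}(V_3)$-linearity. The fact that the output lies in the cuspidal subspace $\mathscr{S}^{\dagger, \leq h}_{\kappa_1} \hatot_{\mathcal{O}(V_1)} (\ldots)$ rather than just $\mathscr{M}^{\dagger, \leq h}_{\kappa_1} \hatot_{\mathcal{O}(V_1)} (\ldots)$ is checked on $q$-expansions using the relations intertwining $\star$ with $q$-expansion coefficients.

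For part (2), the compatibility under shrinking $V_i' \subset V_i$ ($i=2,3$) reduces to verifying that each of the four maps in the composition commutes with base-change from $\mathcal{O}(V_1) \hatot \mathcal{O}(V_2) \hatot \mathcal{O}(V_3)$ to $\mathcal{O}(V_1) \hatot \mathcal{O}(V_2') \hatot \mathcal{O}(V_3')$. For pullback and multiplication this is immediate; the overconvergent projector is compatible with base-change by its construction via the spectral theory of $U_p^\circ$ (cf.\ Theorem \ref{SlopeDecompositionForNdaggerThm}); and compatibility with varying $h$ is immediate from the fact that the slope $\leq h$ and $\leq h'$ decompositions are nested for $h \leq h'$. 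For part (3), these compatibilities imply that the local maps $\Xi = \Xi_{V_1, V_2, V_3, h}$ assemble into a morphism on a cofinal family of quasi-compact open affinoid subspaces of $(V_1 \times \mathcal{C} \times \mathcal{C})^+$---namely those triples for which a suitable $h$ bounding $|a_p(\mathcal{F})|_{V_1}$ exists, which clearly cover $(V_1 \times \mathcal{C} \times \mathcal{C})^+$ and are stable under suitable refinement. A standard sheafification argument then produces the desired morphism $\boldsymbol\Xi$ of sheaves on $(V_1 \times \mathcal{C} \times \mathcal{C})^+$.

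The one subtlety that deserves careful attention is the compatibility of the $\star$-action with base-change in the weight parameter, since this action is defined analytically via Mahler expansions rather than by a purely algebraic prescription. This is handled by the uniqueness clause of Lemma \ref{UniquenessOfActionsLemma}: both the composition ``base-change after $\star$-action'' and ``$\star$-action on the base-changed module'' define continuous $\mathcal{O}(U')$-linear actions of $C^{\opn{la}}(\mbb{Z}_p, \mathcal{O}(U'))$ extending the Gauss--Manin connection, and hence must agree by uniqueness. This is the only non-formal point in the argument.
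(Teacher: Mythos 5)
Your proposal is correct and follows essentially the same route as the paper, whose proof is a two-line remark that part (1) is clear, part (3) follows from part (2), and part (2) follows from the linearity of $\star$ and $\Pi^{\opn{oc}, \leq h}$ together with the compatibility of the slope $\leq h$ projectors as $h$ varies. Your extra observation that base-change compatibility of the $\star$-action should be justified via the uniqueness clause of Lemma \ref{UniquenessOfActionsLemma} is a sound (and welcome) elaboration of a point the paper leaves implicit, not a different argument.
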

\begin{proof}
The first part is clear and the third part follows from the second. Therefore, we just need to prove part (2). But this just follows from the linearity of $\star$ and $\Pi^{\opn{oc}, \leq h}$, and the compatibility of the slope $\leq h$ projectors as $h$ varies.
\end{proof}

We consider the following pairing:

\begin{definition} \label{DefJmu}
Fix a tuple $\underline{\mu} = (\mu_a)_{a | N/N_{\mathcal{F}}}$ as in Definition \ref{DefOfLinearFunctional}. Then, for any pair of opens $V_2, V_3 \subset \mathcal{C}$ (not necessarily quasi-compact nor affinoid) such that $V_1 \times V_2 \times V_3 \subset (V_1 \times \mathcal{C} \times \mathcal{C})^+$, we consider the following bilinear pairing:
\begin{align*} 
J_{\underline{\mu}}( -, -) \colon  \mathcal{S}^{\dagger}_{\mathcal{C}}(V_2) \times \mathcal{S}^{\dagger}_{\mathcal{C}}(V_3)  &\to  \opn{Mer}(V_1 \times V_2 \times V_3) \\
 (\phi, \psi) &\mapsto \lambda_{\mathcal{F}^{\circ, c}} \left( {\boldsymbol\Xi}(\phi, \psi)\right)
\end{align*}
where $\opn{Mer}(V_1 \times V_2 \times V_3)$ denotes the module of meromorphic functions on $V_1 \times V_2 \times V_3$. This pairing glues to a bilinear pairing $\mathcal{S}^{\dagger}_{\mathcal{C}}(\mathcal{C}) \times \mathcal{S}^{\dagger}_{\mathcal{C}}(\mathcal{C}) \to \opn{Mer}(V_1 \times \mathcal{C} \times \mathcal{C})^+$, which we continue to denote by $J_{\underline{\mu}}( -, -)$. 
\end{definition}

We define the universal triple product $p$-adic $L$-function as follows:

\begin{definition}
    Fix a Coleman family $\mathcal{F}$ over $V_1$ satisfying Assumption \ref{NobleAssumption} as in \S \ref{ALinearFunctionalSection}, and fix a tuple $\underline{\mu} = (\mu_a)_{a | N/N_{\mathcal{F}}}$ as in Definition \ref{DefOfLinearFunctional}. We define $\mathscr{L}_p \in \opn{Mer}(V_1 \times \mathcal{C} \times \mathcal{C})^+$ to be the meromorphic function given by
    \[
    \mathscr{L}_p \defeq J_{\underline{\mu}}(\omega_{\mathcal{C}}, \omega_{\mathcal{C}})
    \]
    where $\omega_{\mathcal{C}}$ denotes the universal cohomology class of Proposition \ref{UniversalClassProp}.
\end{definition}

\subsubsection{Interpolation property}

We now describe the interpolation property for the universal triple product $p$-adic $L$-function. Throughout this section, fix a Coleman family $\mathcal{F}$ over $V_1 \subset \mathcal{C}(N_{\mathcal{F}})$ as in \S \ref{ALinearFunctionalSection} and Assumption \ref{NobleAssumption}, and let $U_1 = w(V_1) \subset \mathcal{W}$. Fix a tuple $\underline{\mu} = (\mu_a)_{a | N/N_{\mathcal{F}}}$ as in Definition \ref{DefOfLinearFunctional}.

\begin{definition}
    We say that a point $x = (x_1, x_2, x_3) \in V_1 \times \mathcal{C} \times \mathcal{C}$ is \emph{unbalanced crystalline} if:
    \begin{itemize}
        \item $x_1, x_2, x_3$ are classical and $p$-regular crystalline (Definition \ref{ClassicalPregDef});
        \item the weights $(w(x_1), w(x_2), w(x_3))$ satisfy $w(x_1) \geq w(x_2) + w(x_3)$ and the nebentypes $\varepsilon_f, \varepsilon_g, \varepsilon_h$ of the newforms $(f, g, h)$ associated with $(x_1, x_2, x_3)$ satisfy $\varepsilon_f \cdot \varepsilon_g \cdot \varepsilon_h = 1$.\footnote{This implies that there exists an integer $t \geq 0$ such that $w(x_1) - w(x_2) - w(x_3) = 2t$.}
    \end{itemize}
    We denote the set of unbalanced crystalline points by $\Sigma_{\opn{cris}}$ (which depends on the fixed choice of $\mathcal{F}$).
\end{definition}

Given an unbalanced crystalline point $x = (x_1, x_2, x_3) \in \Sigma_{\opn{cris}}$, we can fix the following data.
\begin{itemize}
    \item (Weights) We set $(k, \ell, m) = (w(x_1), w(x_2), w(x_3))$.
    \item ($U_p^{\circ}$-eigenvalues) We let $(\alpha_f, \alpha_{g}, \alpha_h)$ denote the coefficients of $q^p$ in $\mathcal{F}_{x_1}, \mathcal{F}^{\opn{univ}}_{x_2}, \mathcal{F}^{\opn{univ}}_{x_3}$ respectively.
    \item (Newforms) We let $(f, g, h)$ denote the newforms of levels $\Gamma_1(N_f)$, $\Gamma_1(N_g)$, $\Gamma_1(N_h)$, associated with $\mathcal{F}_{x_1}, \mathcal{F}^{\opn{univ}}_{x_2}, \mathcal{F}^{\opn{univ}}_{x_3}$ respectively. Note that, since the point $x$ is unbalanced crystalline, all of the integers $N_f$, $N_g$, $N_h$ divide $N$. Furthermore, by Assumption \ref{NobleAssumption}, one has $N_f = N_{\mathcal{F}}$.
    \item (Test data) We obtain a triple $(f^{\circ}, g^{\circ}, h^{\circ})$ of cuspidal modular forms, where: $g^{\circ}$ and $h^{\circ}$ are the unique eigenforms of level $\Gamma_1(N)$ such that $\mathcal{F}^{\opn{univ}}_{x_2}, \mathcal{F}^{\opn{univ}}_{x_3}$ are the $p$-stabilisations of $g^{\circ}$, $h^{\circ}$ with respect to the roots $\alpha_g$, $\alpha_h$; and we define $f^{\circ} = \sum_{a | N/N_{\mathcal{F}}} \mu_a \cdot [a]^*(f)$. Note that $\mathcal{F}^{\circ}_{x_1}$ is the $p$-stabilisation of $f^{\circ}$ at the root $\alpha_f$.
\end{itemize}

To state the interpolation property for $\mathscr{L}_p$, we need to introduce several Euler factors. Let $(\beta_f, \beta_g, \beta_h)$ denote the roots of the Hecke polynomials at $p$ that are different from $(\alpha_f, \alpha_g, \alpha_h)$. Set $c = (k+ \ell + m - 2)/2$ and define:
\begin{align*}
    \mathcal{E}(f, g, h) &= (1 - \beta_f \alpha_g \alpha_h p^{-c})(1 - \beta_f \alpha_g \beta_h p^{-c})(1 - \beta_f \beta_g \alpha_h p^{-c})(1 - \beta_f \beta_g \beta_h p^{-c}), \\
    \mathcal{E}_0(f) &= (1- \beta_f^2 \varepsilon_f(p)^{-1} p^{1-k}), \\
    \mathcal{E}_1(f) &= (1- \beta_f^2 \varepsilon_f(p)^{-1} p^{-k}).
\end{align*}
For any integer $j \geq 0$, let $\delta_j = \frac{1}{2 \pi i}\left(\frac{d}{dz} + \frac{j}{z - \bar{z}} \right)$ denote the Maass--Shimura differential operator acting on weight $j$ nearly holomorphic modular forms. For any $t \geq 0$, we set $\delta_j^t = \delta_{j+2(t-1)} \circ \delta_{j + 2(t-2)} \circ \cdots \circ \delta_j$. Let $\langle \cdot, \cdot \rangle_N$ denote the Petersson inner product of level $\Gamma_1(N)$ on (complex-valued) nearly holomorphic modular forms. 

\begin{theorem} \label{TripleProductJinterpolation}
Let $x = (x_1, x_2, x_3) \in \Sigma_{\opn{cris}}$ with associated newforms $(f, g, h)$ and test data $(f^{\circ}, g^{\circ}, h^{\circ})$. Then $x$ is not a pole for $\mathscr{L}_p$ and the specialisation of $\mathscr{L}_p$ at $x$ satisfies:
\begin{equation} \label{MainInterpolationFormula}
\mathscr{L}_p(x) = \frac{\mathcal{E}(f, g, h)}{\mathcal{E}_0(f)\mathcal{E}_1(f)} \cdot \frac{I(f^{\circ}, g^{\circ}, h^{\circ})}{\langle f, f \rangle_N }
\end{equation}
where we view the left-hand side of (\ref{MainInterpolationFormula}) in $\mbb{C}$ via $\iota_p$, and $I(f^{\circ}, g^{\circ}, h^{\circ}) = \langle (f^{\circ})^*, \delta_{\ell}^t g^{\circ} \times h^{\circ} \rangle_N$ with $(f^{\circ})^*(\tau) = \overline{f^{\circ}(-\bar{\tau})}$.
\end{theorem}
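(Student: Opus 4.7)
The strategy is to unravel the definition of $\mathscr{L}_p = J_{\underline{\mu}}(\omega_{\mathcal{C}}, \omega_{\mathcal{C}})$ at the specific point $x = (x_1, x_2, x_3)$ and identify it with a classical trilinear expression involving the Maass--Shimura derivative. First, I would specialise the construction
\[
\mathscr{L}_p(x) = \lambda_{\mathcal{F}^{\circ, c}}\!\left(\Pi^{\opn{oc}, \leq h}\bigl((u \cdot 1_{\mbb{Z}_p^{\times}}) \star (\widehat{\omega_{\mathcal{C}}} \times \widehat{\omega_{\mathcal{C}}})\bigr)\right)\Big|_x .
\]
By Proposition \ref{UniversalClassProp}, the restrictions of $\omega_{\mathcal{C}}$ to $x_2$ and $x_3$ are the cuspidal overconvergent eigenforms with $q$-expansions equal to the $p$-stabilisations $\mathcal{F}^{\opn{univ}}_{x_2}$ and $\mathcal{F}^{\opn{univ}}_{x_3}$ of $g^{\circ}$ and $h^{\circ}$ (with respect to $\alpha_g$, $\alpha_h$). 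The character $u$ specialises to $z \mapsto z^t$ where $2t = k - \ell - m \geq 0$, so $u \cdot 1_{\mbb{Z}_p^{\times}}$ specialises to the extension by zero of $z \mapsto z^t$ on $\mbb{Z}_p^{\times}$.

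Next, I would use parts (4) of Theorem \ref{TheoIntro1} in a crucial way: the action of $1_{\mbb{Z}_p^{\times}}$ is the $p$-depletion operator $1 - p^{-1} S_p^{-1} \varphi U_p$, and the action of the identity function coincides with the Gauss--Manin connection $\nabla$ on the image of the space of classical nearly holomorphic modular forms inside $\mathscr{N}^{\dagger}$. Since $g^{\circ}$ and $h^{\circ}$ are classical (and a product of classical nearly holomorphic forms is nearly holomorphic), the specialised input lies in $\mathscr{N}^{\opn{hol}} \hookrightarrow \mathscr{N}^{\dagger}$, and the factorisation in Theorem \ref{TheoIntro1}(4) together with the classical identity $\nabla = \delta$ on nearly holomorphic forms (via the Hodge decomposition over $\mbb{C}$ and the comparison of Gauss--Manin with Maass--Shimura, see Appendix \ref{AppendixClassicalNHMFs}) lets me identify
\[
\bigl((z \mapsto z^t) \cdot 1_{\mbb{Z}_p^{\times}}\bigr) \star \bigl(\widehat{\mathcal{F}^{\opn{univ}}_{x_2}} \cdot \widehat{\mathcal{F}^{\opn{univ}}_{x_3}}\bigr)
\]
with (the image in $\mathscr{N}^{\dagger}$ of) $\delta^{t}_{\ell}\bigl( g^{[p]} \cdot h^{[p]} \bigr)$ or, equivalently using that $\delta$ commutes with $[p]$-depletion on the factor $g$ and that only the $p$-depleted parts of $g$ and $h$ interact with $\mathcal{F}^{\circ, c}$ (which is itself $p$-depleted in a suitable sense), with $\delta^{t}_{\ell}(g^{\circ, [p]}) \cdot h^{\circ, [p]}$ up to an explicit factor coming from the $p$-stabilisations.

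At this stage I would apply Lemma \ref{LemmaSpecialisationOfLinearFunctional}: the overconvergent projector $\Pi^{\opn{oc}, \leq h}$ followed by $\lambda_{\mathcal{F}^{\circ, c}}$ specialises, on classical cuspidal input, to the ratio of Petersson inner products
\[
\delta \;\longmapsto\; \frac{\langle \mathcal{F}^{\circ, c}_{x_1}, \delta \rangle_{N, p}}{\langle \mathcal{F}^{c}_{x_1}, \mathcal{F}^{c}_{x_1} \rangle_{N, p}},
\]
where one is allowed to pair against a nearly holomorphic form since the pairing against the cuspidal eigenform kills the image of $\overline{\mathfrak{n}}$ (equivalently, the orthogonal projector to the holomorphic part is implicit). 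The final step is then a purely classical computation: relating the level $\Gamma_1(N) \cap \Gamma_0(p)$ Petersson pairing of $p$-stabilised/$p$-depleted forms to the level $\Gamma_1(N)$ pairing of newforms. This comparison produces exactly the ratio $\mathcal{E}(f, g, h)/(\mathcal{E}_0(f) \mathcal{E}_1(f))$ together with the conversion $\langle \mathcal{F}^{c}_{x_1}, \mathcal{F}^{c}_{x_1} \rangle_{N, p} \rightsquigarrow \langle f, f \rangle_N$, yielding the stated formula.

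The main obstacle is the bookkeeping in the last paragraph: tracking all the Euler factors at $p$ that arise from (i) $p$-depleting the forms $g^{\circ}$, $h^{\circ}$ and comparing with their $p$-stabilisations, (ii) converting $\mathcal{F}^{\circ, c}_{x_1}$ to the new vector $f$ at level $\Gamma_1(N_f)$, and (iii) comparing Petersson norms across levels. This is essentially the same local computation carried out in \cite{DarmonRotgerGrossZagier1} and \cite{AI_LLL}; once the identification $\nabla = \delta$ on nearly holomorphic forms and the $p$-depletion interpretation of $1_{\mbb{Z}_p^{\times}}$ are in place, the rest follows from this explicit calculation, together with a standard density/Zariski-closure argument to move from individual classical points to the meromorphic identity on the three-variable space.
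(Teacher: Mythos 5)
Your proposal is correct and follows essentially the same route as the paper: specialise $J_{\underline{\mu}}$, use the $q$-expansion identification of $(x^t\cdot 1_{\mbb{Z}_p^{\times}})\star(-)$ with $\nabla^t$ of the $p$-depletion (justified via the injectivity of filtered pieces into $p$-adic modular forms), replace the depleted class by the classical one at the cost of $\mathcal{E}(f,g,h)/\mathcal{E}_1(f)$ as in \cite[Lemmas 5.9--5.10]{AI_LLL}, and conclude with Lemma \ref{LemmaSpecialisationOfLinearFunctional} and the Petersson-norm/level bookkeeping of \cite{DarmonRotgerGrossZagier1}. The only superfluous element is your closing density argument, which is not needed since the theorem is a pointwise interpolation statement.
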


\begin{proof}
The proof of this is very similar to \cite{DarmonRotgerGrossZagier1} and \cite{AI_LLL}. Let $\omega_{g^{\circ}}^{\alpha}$ and $\omega_{h^{\circ}}^{\alpha}$ denote the specialisations of $\omega_{\mathcal{C}}$ at $x_2$ and $x_3$ respectively, and recall $t = (k-\ell-m)/2$. Firstly, we note that $\mathscr{L}_p(x) = J_{\underline{\mu}}( \omega_{\mathcal{C}}, \omega_{\mathcal{C}})_x$ is equal to 
\[
\lambda_{\mathcal{F}^{\circ, c}_{x_1}} ( \Pi^{\opn{oc}, \leq h} \left( (x^t\cdot 1_{\mbb{Z}_p^{\times}} ) \star \omega_{g^{\circ}}^{\alpha} \times \omega_{h^{\circ}}^{\alpha} \right) ) = \lambda_{\mathcal{F}^{\circ, c}_{x_1}} ( e_{f^{\circ, c}, \alpha} \Pi^{\opn{oc}, \leq h} \left( (x^t \cdot 1_{\mbb{Z}_p^{\times}} ) \star \omega_{g^{\circ}}^{\alpha} \times \omega_{h^{\circ}}^{\alpha} \right) ) .
\]
where $e_{f^{\circ, c}, \alpha} \colon \mathscr{S}^{\dagger, \leq h}_{k} \twoheadrightarrow \mathscr{S}^{\dagger, \leq h}_{k}[\pi_{f^*}, U_p^{\circ} = \varepsilon_f(p)^{-1}\alpha_f]$ is the projection to the eigenspace for $(f^{\circ, \alpha})^{c}$ (for the Hecke operators away from $Np$ and $U_p$). Here $f^{\circ, \alpha} = \mathcal{F}^{\circ}_{x_1}$ and $\pi_{f^*}$ denotes the automorphic representation associated with $f^* = f \otimes \varepsilon_f^{-1}$. Set $\nu = (x^t\cdot 1_{\mbb{Z}_p^{\times}} ) \star \omega_{g^{\circ}}^{\alpha} \times \omega_{h^{\circ}}^{\alpha} \in \mathscr{N}^{\dagger}_k$. This class has the same $q$-expansion as the class
\[
\nu' = \nabla^t \omega^{\alpha, [p]}_{g^{\circ}} \times \omega_{h^{\circ}}^{\alpha} \in \opn{Fil}_t \mathscr{N}^{\dagger}_k
\]
where $(-)^{[p]}$ is $p$-depletion. Any filtered piece $\opn{Fil}_{b}\mathscr{N}^{\dagger}_k$ injects into the space of $p$-adic modular forms (and hence the space of $q$-expansions), and since $\Pi^{\opn{oc}, \leq h}$ is $U_p^{\circ}$-equivariant, and $\alpha_f \neq 0$, we therefore see that $\mathscr{L}_p(x)$ is equal to
\[
\mathscr{L}_p(x) = \lambda_{\mathcal{F}_{x_1}^{\circ, c}}(e_{f^{\circ, c}, \alpha} \Pi^{\opn{oc}, \leq h} \nu').
\]
By the computations in the proof of Lemma 5.9 and Lemma 5.10 in \cite{AI_LLL}, we see that
\[
e_{f^{\circ, c}, \alpha} \Pi^{\opn{oc}, \leq h} \nu' = \frac{\mathcal{E}(f, g, h)}{\mathcal{E}_1(f)} e_{f^{\circ, c}, \alpha} \Pi^{\opn{oc}, \leq h} (\nabla^t \omega_{g^{\circ}} \times \omega_{h^{\circ}} )
\]
where $\omega_{g^{\circ}} \in \mathscr{S}^{\dagger}_{\ell}$, $\omega_{h^{\circ}} \in \mathscr{S}^{\dagger}_m$ denote the classes attached to $g^{\circ}$ and $h^{\circ}$. But the argument in the right-hand side is classical: if we denote $\nu'' = e_{f^{\circ, c}} \Pi^{\opn{hol}}(\delta_{\ell}^t g^{\circ} \times h^{\circ})$ the projection to the $f^{\circ, c}$-eigenspace of the holomorphic projection of $\delta_{\ell}^t g^{\circ} \times h^{\circ}$, then we have
\[
\mathscr{L}_p(x) =  \frac{\mathcal{E}(f, g, h)}{\mathcal{E}_1(f)} \lambda_{\mathcal{F}_{x_1}^{\circ, c}}((\nu'')^{\alpha})
\]
where $(\nu'')^{\alpha}$ is the $p$-stabilisation of $\nu''$ with respect to the eigenvalue $\varepsilon_f(p)^{-1} \alpha_f$. By the specialisation property in Lemma \ref{LemmaSpecialisationOfLinearFunctional}, we have
\begin{align*}
    \mathscr{L}_p(x) &= \frac{\mathcal{E}(f, g, h)}{\mathcal{E}_1(f)} \lambda_{\mathcal{F}_{x_1}^{\circ, c}}( (\nu'')^{\alpha}) \\
    &= \frac{\mathcal{E}(f, g, h)}{\mathcal{E}_1(f)} \frac{\langle (f^{\circ, c})^{\alpha}, (\nu'')^{\alpha} \rangle_{N, p}}{\langle (f^c)^{\alpha}, (f^c)^{\alpha} \rangle_{N, p}} \\
    &= \frac{\mathcal{E}(f, g, h)}{\mathcal{E}_0(f)\mathcal{E}_1(f)} \frac{\langle f^{\circ, c}, \nu'' \rangle_{N}}{\langle f^c, f^c \rangle_{N}} \\
    &= \frac{\mathcal{E}(f, g, h)}{\mathcal{E}_0(f)\mathcal{E}_1(f)} \frac{\langle (f^{\circ})^*, \delta_\ell^t g^{\circ} \times h^{\circ} \rangle_{N}}{\langle f, f \rangle_{N}}
\end{align*}
where the third equality follows from an explicit calculation (c.f. \cite[Lemma 5.12]{AI_LLL}) and we've used the fact that $(f^{\circ})^* = f^{\circ, c}$ because the conductor of $\varepsilon_f$ is not divisible by $p$.
\end{proof}

\subsubsection{Relation to central $L$-values} \label{ChoiceOfTestdataSec}

The trilinear period in Theorem \ref{TripleProductJinterpolation} is closely related to central critical $L$-values of the Garrett--Rankin triple product $L$-function. More precisely:

\begin{theorem}[{\cite[Theorem 4.2]{DarmonRotgerGrossZagier1}}]
There exists a constant $C$ (defined over the number field generated by the Fourier coefficients of $f, g, h$ and depending on $f^{\circ}$, $g^{\circ}$, $h^{\circ}$) such that
\[
|I(f^{\circ}, g^{\circ}, h^{\circ})|^2 = \frac{C}{\pi^{2k}} L(f, g, h, c)
\]
where $c = (k+\ell+m - 2)/2$ and $L(f, g, h, -)$ is the Garrett--Rankin triple product $L$-function.
\end{theorem}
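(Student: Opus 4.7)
The plan is to derive the identity by unfolding the trilinear Petersson pairing to the classical integral representation of the Garrett--Rankin triple product $L$-function, and then applying Ichino's formula to control the local archimedean and finite factors. First I would rewrite the period $I(f^{\circ},g^{\circ},h^{\circ}) = \langle (f^{\circ})^*, \delta_{\ell}^t g^{\circ} \times h^{\circ} \rangle_N$ as an adelic integral on $\mathrm{GL}_2(\mathbb{A})^3$ of the product of three cuspidal automorphic forms (viewing $\delta_{\ell}^t g^{\circ}$ as a vector in the corresponding representation of $\mathrm{GL}_2(\mathbb{R})$ of weight $\ell + 2t$ via the classical correspondence between nearly holomorphic modular forms and vectors in holomorphic discrete series). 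The key observation is that, because $k = \ell + m + 2t$, this integral is exactly of the shape appearing in Garrett's integral representation, i.e.\ the triple product is balanced after applying the differential operator.

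Next, I would invoke Ichino's formula in the form
\[
\Bigl| I(f^{\circ}, g^{\circ}, h^{\circ}) \Bigr|^2 = \frac{1}{8} \cdot \frac{\zeta_F(2)^2}{\prod_{v} \zeta_{F_v}(2)} \cdot \frac{L(1/2, \pi_f \otimes \pi_g \otimes \pi_h)}{L(1,\pi_f,\mathrm{Ad})L(1,\pi_g,\mathrm{Ad})L(1,\pi_h,\mathrm{Ad})} \cdot \prod_v I_v^{*},
\]
where $I_v^{*}$ are normalised local integrals of matrix coefficients and the central value of the triple product $L$-function is shifted to $s = c$ via the standard translation. Writing the adjoint $L$-values in terms of the Petersson norms via the formula $\langle f, f \rangle_N = (\text{constant}) \cdot \pi^{-(k+1)} L(1, \pi_f, \mathrm{Ad})$ (and similarly for $g, h$), combining with the completed archimedean factors which contribute the power of $\pi$, and tracking constants through the translation of $s$-variables, I would recover the expected shape $C/\pi^{2k}$ of the proportionality factor.

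The hard part will be the explicit evaluation of the local integrals $I_v^{*}$ at the archimedean place and at the primes dividing the levels $N_f, N_g, N_h$, in order to identify the rationality constant $C$ and verify that it only depends on the specific test vectors $f^{\circ}, g^{\circ}, h^{\circ}$ (and not on the variable weights $k, \ell, m$ beyond the $\pi^{2k}$ factor). At the archimedean place one uses the computation of the trilinear integral of lowest-weight vectors in discrete series, which produces gamma factors matching the archimedean $L$-factor up to $\pi^{-2k}$; at finite primes one uses the fact that $g^{\circ}, h^{\circ}$ are chosen to be newforms at level $\Gamma_1(N)$ and $f^{\circ}$ is a prescribed linear combination of $[a]^* f$, so that the local test vectors can be matched with explicit choices in the corresponding local representations. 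This is essentially the content of \cite[\S 4]{DarmonRotgerGrossZagier1}, and the remaining work is bookkeeping of normalisations between the classical Petersson pairing and Ichino's adelic normalisation.
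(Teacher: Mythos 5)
This statement is not proved in the paper at all: it is quoted verbatim as \cite[Theorem 4.2]{DarmonRotgerGrossZagier1}, so there is no internal argument to compare against. Your sketch is, in substance, the standard proof of the cited result — Garrett's integral representation as refined by Harris--Kudla, in the explicit Ichino/Watson form — and it is correct in outline: the identity $k=\ell+m+2t$ makes the weight of $\overline{(f^{\circ})^*}\cdot\delta_{\ell}^t g^{\circ}\cdot h^{\circ}$ zero so the adelic trilinear period is well defined, and Ichino's formula together with $\langle f,f\rangle\sim \pi^{-(k+1)}L(1,\pi_f,\mathrm{Ad})$ (and likewise for $g,h$) produces the proportionality with the central value at $s=c$. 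Two remarks. First, what you call the hard part — the explicit evaluation of the local integrals $I_v^*$ — is not actually needed for the statement as given: $C$ is allowed to depend on the test vectors and is not claimed to be nonzero, so one only needs the qualitative proportionality $|I|^2 = (\ast)\cdot L(f,g,h,c)$ from the global formula (the delicate nonvanishing of $C$ is deferred to the discussion of test data in \S\ref{ChoiceOfTestdataSec} and \cite[Remark 4.3]{DarmonRotgerGrossZagier1}). Second, the rationality of $C$ over the field of Fourier coefficients is more cheaply obtained from Shimura's algebraicity results (the holomorphic projection of $\delta_{\ell}^t g^{\circ}\times h^{\circ}$ has coefficients in that field, and its pairing against $(f^{\circ})^*$ is an algebraic multiple of $\langle f,f\rangle$, while $L(f,g,h,c)/\pi^{2k}$ is an algebraic multiple of $\langle f,f\rangle^2$) than from proving rationality of each normalised local integral. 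A small terminological caution: applying $\delta_{\ell}^t$ does not make the triple of representations balanced — $(k,\ell,m)$ remains unbalanced, which is exactly why the split trilinear form is nonzero; it only adjusts the chosen weight vector in $\pi_{g,\infty}$ so that the weights of the three vectors sum to zero.
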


Therefore, one can view $\mathscr{L}_p$ as a universal ``square-root'' triple product $L$-function. Unfortunately, it does not seem possible (with the current methods) to construct a three-variable $p$-adic $L$-function over $(\mathcal{C} \times \mathcal{C} \times \mathcal{C})^+$ as we don't know if the linear functional $\lambda_{\mathcal{F}^{\circ, c}}$ globalises. Furthermore, it can happen that the constant $C$ is equal to zero for the test vectors $(f^{\circ}, g^{\circ}, h^{\circ})$ (for example, if $N_{\mathcal{F}} = N$ and there exists a prime dividing $N$, but not $N_g \cdot N_h$ -- see \cite[Remark 4.3]{DarmonRotgerGrossZagier1}). One has more freedom for choosing test vectors so that this constant is non-zero in small opens of $\mathcal{C}$, however we are not sure if this is possible globally. 

\subsection{Rankin--Selberg \texorpdfstring{$p$}{p}-adic \texorpdfstring{$L$}{L}-functions}

We close with an application to the construction of Rankin-Selberg $p$-adic $L$-functions in three variables. Our strategy essentially follows the one in \cite[Appendix B]{AI_LLL} circumventing the restriction on the weight space, by replacing \cite[Theorem 4.3]{AI_LLL} by our generalised version Theorem \ref{SheafVersionMainThm} as the input into the construction. Let us give a very brief sketch of this construction.

Fix a Coleman family $\mathcal{F}$ as in \S \ref{ALinearFunctionalSection} and Assumption \ref{NobleAssumption}, defined over a quasi-compact affinoid open subspace $U_1 \subset \mathcal{W}$. We first observe that Definition \ref{DefJmu} still makes sense for non cuspidal forms $\psi$.

\begin{definition}
Fix a tuple $\underline{\mu}$ as in Definition \ref{DefOfLinearFunctional}. For any open subspaces $U_2 \subseteq \mathcal{W}, V_3 \subseteq \mathcal{C}$ such that $V_1 \times U_2 \times V_3 \subseteq (V_1 \times \mathcal{W} \times \mathcal{C})^+$, we define a bilinear pairing
\begin{eqnarray*}
J_{\underline{\mu}}(-,-): \mathcal{M}^\dagger(U_2) \times \mathcal{S}^\dagger_\mathcal{C}(V_3) &\to& \mathrm{Mer}(V_1 \times U_2 \times V_3) \\
(\phi, \psi) &\mapsto& \lambda_{\mathcal{F}^{\circ, c}}(\Xi(\phi, \psi)).
\end{eqnarray*}
in exactly the same way as in Definition \ref{DefJmu}, where $\mathcal{M}^{\dagger}$ denotes the sheaf over $\mathcal{W}$ of overconvergent modular forms. This pairing glues to a pairing $\mathcal{M}^{\dagger}(\mathcal{W}) \times \mathcal{S}_{\mathcal{C}}^{\dagger}(\mathcal{C}) \to \opn{Mer}(V_1 \times \mathcal{W} \times \mathcal{C})^+$ which we continue to denote by $J_{\underline{\mu}}(-,-)$.
\end{definition}

This bilinear pairing can be used to construct three-variable Rankin--Selberg $p$-adic $L$-functions, by taking $\phi$ to be a $p$-adic family of Eisenstein series. More precisely following \cite[Lemma 3.2]{LoefflerRSNote}, there exists a $p$-adic family of Eisenstein series $E^{[p]}_{\kappa} \in \mathcal{M}^\dagger(\mathcal{W})$ with $q$-expansion
\[
E^{[p]}_{\kappa}(q) = \sum_{\substack{n \geq 1 \\ (n, p) = 1}} \left( \sum_{d | n} d^{\kappa - 1} (\zeta_N^d + (-1)^{\kappa} \zeta_N^{-d} ) \right) q^n
\]
where $\kappa$ denotes the universal character. Recall that $\omega_{\mathcal{C}} \in \mathcal{S}^\dagger_\mathcal{C}(\mathcal{C})$ is the universal class over the eigencurve.

\begin{definition}
We define the universal Rankin-Selberg $p$-adic $L$-function to be the meromorphic function on $(V_1 \times \mathcal{W} \times \mathcal{C})^+$ given by
\[ \mathscr{L}_p \defeq J_{\underline{\mu}}(E^{[p]}_{\kappa}, \omega_\mathcal{C}). \]
\end{definition}

\begin{theorem} \label{RankinSelberginterpolation}
We have the following interpolation property: for any integer $k_2 \geq 2$ and noble classical points $x_1 \in V_1$, $x_3 \in \mathcal{C}(\overline{\mathbb{Q}}_p)$ such that $k_1 \defeq w(x_1) = k_2 + w(x_3) + 2t$ for some $t \geq 0$, one has
\[ 
\mathscr{L}_p(x_1, k_2, x_3) = (\star) \cdot L^{\opn{imp}}(f, h, k_1 - 1 - t)
\]
for some explicitly computable factor $(\star)$, where 
\begin{itemize}
    \item $L^{\opn{imp}}(f, h, s)$ denotes the imprimitive Rankin--Selberg $L$-function as in \cite[Definition 2.1]{LoefflerRSNote}.
    \item $f$ (resp. $h$) is the newform associated with the specialisation of $\mathcal{F}$ at $x_1$ (resp. the eigenform corresponding to the point $x_3$).
\end{itemize}
\end{theorem}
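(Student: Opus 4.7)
The proof should follow the template of Theorem \ref{TripleProductJinterpolation}, specializing the abstract pairing $J_{\underline{\mu}}$ at a classical point and reducing it to a classical Petersson/Rankin--Selberg computation. First I would unwind the definition: for $(x_1, k_2, x_3)$ as in the statement, writing $\omega_h^\alpha$ for the specialisation of $\omega_{\mathcal{C}}$ at $x_3$ and $E_{k_2}^{[p]}$ for the weight $k_2$ specialisation of the Eisenstein family, we have
\[
\mathscr{L}_p(x_1, k_2, x_3) = \lambda_{\mathcal{F}^{\circ, c}_{x_1}}\left(e_{f^{\circ, c}, \alpha}\,\Pi^{\opn{oc}, \leq h}\bigl((x^t \cdot 1_{\mbb{Z}_p^\times}) \star E^{[p]}_{k_2} \times \omega_h^\alpha\bigr)\right),
\]
where $e_{f^{\circ, c}, \alpha}$ is the projector to the $(f^{\circ, c})^\alpha$-eigenspace. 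Since both the overconvergent projector and $e_{f^{\circ, c}, \alpha}$ are $U_p^\circ$-equivariant and factor through some $\opn{Fil}_r \mathscr{N}^\dagger_{k_1}$, and the latter injects into $q$-expansions, the key point is that the class $(x^t \cdot 1_{\mbb{Z}_p^\times}) \star E^{[p]}_{k_2} \times \omega_h^\alpha$ has the same $q$-expansion as $\nabla^t\bigl((E^{[p]}_{k_2} \times \omega_h^\alpha)^{[p]}\bigr)$, which follows from the compatibility between the locally analytic action and the Atkin--Serre operator/$p$-depletion on $p$-adic modular forms (Lemma \ref{OrdPdepletionLemma} and the last part of Theorem \ref{TheoIntro1}).

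Next, I would replace the $p$-depleted product by its classical counterpart, exactly as in Lemmas 5.9 and 5.10 of \cite{AI_LLL} (or the analogous computation in the proof of Theorem \ref{TripleProductJinterpolation}). Expanding $\omega_h^\alpha$ in terms of the newform $h$ and its $p$-stabilisations, one obtains
\[
e_{f^{\circ, c}, \alpha}\,\Pi^{\opn{oc}, \leq h}\bigl(\nabla^t(E^{[p]}_{k_2} \times \omega_h^\alpha)\bigr) = \frac{\mathcal{E}(f, h, k_2, t)}{\mathcal{E}_1(f)} \cdot e_{f^{\circ, c}, \alpha}\,\Pi^{\opn{oc}, \leq h}\bigl(\nabla^t(E_{k_2} \times \omega_h)\bigr),
\]
where $\mathcal{E}(f, h, k_2, t)$ is an explicit product of Euler factors at $p$ built from $(\alpha_f, \beta_f, \alpha_h, \beta_h, \varepsilon_f(p), \varepsilon_h(p), k_1, k_2, k_3, t)$, and $E_{k_2}$ is the non-depleted Eisenstein series of weight $k_2$ whose $q$-expansion away from $p$ agrees with $E_{k_2}^{[p]}$. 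The right-hand side is now classical: $\nabla^t(E_{k_2} \times \omega_h)$ is the $q$-expansion of a classical nearly holomorphic form, and the overconvergent projection coincides on the relevant eigenspace with the classical holomorphic projection $\Pi^{\opn{hol}}$.

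I would then apply Lemma \ref{LemmaSpecialisationOfLinearFunctional} to convert $\lambda_{\mathcal{F}^{\circ, c}_{x_1}}$ applied to this class into a ratio of Petersson inner products:
\[
\lambda_{\mathcal{F}^{\circ, c}_{x_1}}\bigl(e_{f^{\circ, c}, \alpha}\,\Pi^{\opn{oc}, \leq h}\bigl(\nabla^t(E_{k_2} \times \omega_h)\bigr)\bigr) = \frac{1}{\mathcal{E}_0(f)} \cdot \frac{\langle (f^{\circ})^*, \Pi^{\opn{hol}}(\delta^t_{k_2} E_{k_2} \times h) \rangle_N}{\langle f, f\rangle_N},
\]
where $\delta^t_{k_2}$ is the iterated Maass--Shimura operator. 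The numerator is then precisely a Rankin--Selberg integral, and by Shimura's unfolding (in the form stated e.g.\ in \cite[\S 2]{LoefflerRSNote}), choosing the nebentypus and Eisenstein data so that it unfolds correctly, it equals an explicit constant times $L^{\opn{imp}}(f, h, k_1 - 1 - t)$, where the shift $s = k_1 - 1 - t$ is forced by the weight balance $k_1 = k_2 + k_3 + 2t$.

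The main obstacle will be the bookkeeping of Euler factors and local constants at the primes dividing $N$ and at $p$: one must match the specific Eisenstein series $E^{[p]}_{\kappa}$ appearing in the construction (whose $q$-expansion is prescribed) with the one appearing in Shimura's integral, keep track of the effect of the trace and pullback operators $[a]_*, [a]^*$ defining $\lambda_{\mathcal{F}^{\circ, c}}$, and correctly identify $(f^\circ)^*$ with $f^{\circ, c}$ using the assumption that the nebentypus of $f$ has conductor prime to $p$. The resulting constant $(\star)$ is the product of $\mathcal{E}(f, h, k_2, t)/(\mathcal{E}_0(f) \mathcal{E}_1(f))$ with the archimedean constant, the Gauss sum arising from the Eisenstein series, and combinatorial factors from the holomorphic projection formula --- all of these are standard but require care.
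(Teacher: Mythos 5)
Your proposal follows exactly the paper's route: the paper's proof of Theorem \ref{RankinSelberginterpolation} is a two-line reduction to the argument of Theorem \ref{TripleProductJinterpolation} (expressing the classical specialisation as a Petersson pairing of $(f^{\circ})^*$ against $\delta_{k_2}^t E^{[p]}_{k_2} \cdot h^{\circ}$) followed by a citation of the unfolding computations in \cite{LoefflerRSNote}, which is precisely the chain of steps you spell out. One small imprecision: in your intermediate displays the operators $1_{\mbb{Z}_p^{\times}}$ and $\nabla^t$ should act on the Eisenstein factor alone before multiplying by $\omega_h^{\alpha}$ (i.e.\ $\nabla^t E^{[p]}_{k_2} \times \omega_h^{\alpha}$ rather than $\nabla^t((E^{[p]}_{k_2} \times \omega_h^{\alpha})^{[p]})$), consistent with the definition of $\Xi$ and with the final Petersson product you write down.
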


\begin{proof}
As in Theorem \ref{TripleProductJinterpolation}, we can express classical specialisations of $\mathscr{L}_p$ as a Petersson inner product between $(f^{\circ})^*$ and $\delta_{k_2}^t E^{[p]}_{k_2} \cdot h^{\circ}$; the interpolation formula then follows from the calculations in \cite{LoefflerRSNote}.
\end{proof}

\begin{remark} 
As in \S \ref{ChoiceOfTestdataSec}, the factor $(\star)$ depends on the test data $f^{\circ}$ and $h^{\circ}$. Furthermore, we note that this three variable $p$-adic $L$-function was essentially constructed in \cite{LoefflerRSNote}, but its construction relies on the $p$-adic variation of Beilinson--Flach Euler system classes in \cite{LZ16Coleman} and imposes an ordinarity assumption on one of the families of overconvergent cuspidal forms. Our construction is completely independent of Euler systems.
\end{remark}

\appendix

\section{Classical nearly holomorphic modular forms} \label{AppendixClassicalNHMFs}

In this appendix we describe the relation between $(\mathfrak{g}, P)$-representations and $\mathcal{D}$-modules on Shimura varieties. We note that similar constructions can be found in \cite{ZLiu19}.

\subsection{Preliminaries on the flag variety} \label{PrelimsOnFLAppendix}

Let $k$ be a field of characteristic zero. Let $G$ be a reductive group over $k$ and $P \subset G$ a parabolic subgroup. Let $\opn{FL} = G/P$ be the corresponding partial flag variety. It carries an action of $G$ by left translation. 

\subsubsection{\texorpdfstring{$G$}{G}-equivariant sheaves}

We let $\mathrm{QCoh}_{G}(\opn{FL})$ be the category of $G$-equivariant quasi-coherent sheaves over $\opn{FL}$. We let $\mathrm{Rep}(P)$ be the category of algebraic representations of $P$ on $k$-vector spaces. We define a functor:
\begin{eqnarray*}
F :  \mathrm{Rep}(P)  & \rightarrow & \mathrm{QCoh}_{G}(\opn{FL})\\
V & \mapsto & \mathcal{V}=F(V)
\end{eqnarray*}
as follows. The group $G$ acts on itself by left and right translation. It follows that $\mathcal{O}_G$ carries two $G$-actions, denoted $\star_l$ and $\star_r$, given by the rule $g \star_l f(-) = f(g^{-1} \cdot -)$ and $g \star_r f(-) = f(- \cdot g)$. We consider the projection $\pi : G \mapsto G/P$.  For $(V, \rho)$ an object of $\mathrm{Rep}(P)$, we let
\[ F(V) = (\pi_\star \mathcal{O}_{G} \otimes_k V)^P, \] where the action of $P$ on the tensor product is the diagonal action given by $\star_r$ on the left factor and by $\rho$ on the right factor. The action $\star_l$ provides a $G$-equivariant structure on $F(V)$. 

On the other hand, consider $1 \cdot P=1 \in \opn{FL}$, and let $i_1 : \{1\} \rightarrow \opn{FL}$ be the inclusion. Let $\mathcal{V} \in \mathrm{QCoh}_{G}(\opn{FL})$. The stalk at $1 \in \opn{FL}$, denoted $i_1^{-1}\mathcal{V}$, is an $\mathcal{O}_{\opn{FL},1}$-module with a  semi-linear action of $P$. The maximal ideal $\mathfrak{m}_{\mathcal{O}_{\opn{FL},1}} \subset \mathcal{O}_{\opn{FL},1}$ is stable under the action of $P$ and the fibre at $1 \in \opn{FL}$, denoted $i_1^* \mathcal{V}$, is a $k$-linear representation of $P$. Hence we obtain a pullback functor: 
\begin{eqnarray*}
i_1^* : \mathrm{QCoh}_{G}(\opn{FL}) & \rightarrow & \mathrm{Rep}(P)\\
\mathcal{V} & \mapsto & i_1^* \mathcal{V}.
\end{eqnarray*} 

\begin{proposition} 
The functors $F$ and $i_1^*$ are equivalences of categories, quasi-inverse of each other.
\end{proposition}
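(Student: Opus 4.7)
The plan is to prove this via faithfully flat descent along the $P$-torsor $\pi \colon G \to \opn{FL}$, combined with the observation that $G$-equivariant sheaves on $G$ (for the left-translation action) are essentially trivial. More precisely, I would first establish the following auxiliary statement: the functor $i_1^* \colon \mathrm{QCoh}_G(G) \to \mathrm{Vect}_k$ given by taking the fibre at $1 \in G$ is an equivalence, with quasi-inverse sending $W \in \mathrm{Vect}_k$ to the $G$-equivariant sheaf $\mathcal{O}_G \otimes_k W$ (with $G$-action via $\star_l$ on the first factor). This is essentially because the action of $G$ on itself by left translation is simply transitive, so an equivariant structure amounts to trivialisation data.

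Next, since $\pi$ is a right $P$-torsor, faithfully flat descent identifies $\mathrm{QCoh}_G(\opn{FL})$ with the category of $(G \times P)$-equivariant quasi-coherent sheaves on $G$, where $G$ acts by $\star_l$ and $P$ by the restriction of $\star_r$. Under this identification, pullback $\pi^*$ realises the equivalence, and descent sends a $(G \times P)$-equivariant sheaf $\mathcal{W}$ on $G$ to $(\pi_* \mathcal{W})^P$. Combining this with the previous step, a $(G \times P)$-equivariant sheaf on $G$ is of the form $\mathcal{O}_G \otimes_k W$ where $W = i_1^* \mathcal{W}$ is a $k$-vector space carrying a residual $P$-action (induced from the $\star_r$-action, since $P$ fixes the point $1 \in G$ only up to an automorphism acting on the fibre). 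This produces the equivalence $\mathrm{QCoh}_G(\opn{FL}) \simeq \mathrm{Rep}(P)$.

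It then remains to identify the quasi-inverse with the explicit functor $F$. For $V \in \mathrm{Rep}(P)$, the $(G \times P)$-equivariant sheaf on $G$ corresponding to $V$ via the argument above is $\mathcal{O}_G \otimes_k V$, with $P$ acting diagonally via $\star_r$ on the first factor and via the given representation on $V$. Descending to $\opn{FL}$ yields exactly $(\pi_* \mathcal{O}_G \otimes_k V)^P = F(V)$. Conversely, for $\mathcal{V} \in \mathrm{QCoh}_G(\opn{FL})$, one verifies that $i_1^* F(V) \cong V$ by evaluation at $1 \in G$: the map $f \otimes v \mapsto f(1)v$ defines an evaluation $F(V) \to V$ which, after taking the fibre at $1$ and using that $\mathfrak{m}_{\opn{FL}, 1}$ is $P$-stable, is an isomorphism of $P$-representations (using that $\mathcal{O}_G$ is free of rank one as a $P$-equivariant $\mathcal{O}_G^P$-module locally).

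The main obstacle is the descent step: one needs to know that the $P$-torsor $\pi \colon G \to \opn{FL}$ admits enough local sections (e.g. Zariski-locally, or at worst étale-locally) to apply descent, and that the descent is compatible with the $G$-equivariance. This is standard since $P$ is smooth and $G \to G/P$ is a Zariski-locally trivial $P$-bundle over the big cell, and translating by $G$ covers $\opn{FL}$; once granted, the equivalence follows formally. The remaining verifications (functoriality of $F$ and $i_1^*$, compatibility with morphisms) are routine.
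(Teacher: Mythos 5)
The paper states this proposition without proof, treating it as a standard fact about equivariant sheaves on homogeneous spaces, so there is no "paper's proof" to compare against; your descent argument is correct and is the standard one. Concretely: descent along the Zariski-locally trivial $P$-torsor $\pi \colon G \to \opn{FL}$ (local sections exist over the big cell and its $G$-translates) gives $\mathrm{QCoh}_G(\opn{FL}) \simeq \mathrm{QCoh}_{G \times P}(G)$, and since the $G \times P$-action $(g,p)\cdot x = gxp^{-1}$ on $G$ is transitive with stabiliser of $1 \in G$ equal to the diagonal copy $\{(p,p)\} \cong P$, the fibre functor at $1$ identifies this with $\mathrm{Rep}(P)$. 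The one imprecision is your phrase that ``$P$ fixes the point $1$ only up to an automorphism'': right translation by $P$ does not fix $1$ at all; what you actually use is that the \emph{diagonal} $P$ inside $G \times P$ stabilises $1$, so that after trivialising via the left $G$-action the residual structure is a genuine $P$-action on the fibre $W = i_1^*\mathcal{W}$. With that reading, descending $\mathcal{O}_G \otimes_k V$ with the diagonal $\star_r \otimes \rho$ action recovers exactly $(\pi_*\mathcal{O}_G \otimes_k V)^P = F(V)$, and evaluation at $1$ gives $i_1^*F(V) \cong V$, so the two functors are quasi-inverse as claimed.
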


\begin{example} \leavevmode
\begin{enumerate}
\item If $V \in \mathrm{Rep}(G)$, one can check that $F(V) = \mathcal{O}_{\opn{FL}} \otimes_k V$.
\item Let $\mathfrak{p} \subset \mathfrak{g}$ be the Lie algebras of $P \subset G$. We let $\mathfrak{p}^0 = F(\mathfrak{p}) \subseteq F(\mathfrak{g}) = \mathfrak{g}^0 = \mathcal{O}_{FL} \otimes \mathfrak{g}$. The tangent sheaf on $\opn{FL}$ is given by $T_{\opn{FL}} = F(\mathfrak{g}/\mathfrak{p}) = \mathfrak{g}^0/\mathfrak{p}^0$. 
\item Let $\mathcal{D}_{\opn{FL}}$ be the sheaf of differential operators over $\opn{FL}$. This is an object of $\mathrm{QCoh}_{G}(\opn{FL})$. There is a surjective map $\mathcal{O}_{\opn{FL}} \otimes \, \mathcal{U}(\mathfrak{g}) \rightarrow \mathcal{D}_{\opn{FL}}$ inducing an isomorphism $\mathcal{O}_{\opn{FL}} \otimes \, \mathcal{U}(\mathfrak{g})/(\mathfrak{p}^0(\mathcal{O}_{\opn{FL}} \otimes \mathcal{U}(\mathfrak{g})) \rightarrow \mathcal{D}_{\opn{FL}}$.  One thus checks that $\mathcal{D}_{\opn{FL}} = F(\mathcal{U}(\mathfrak{g})\otimes_{\mathcal{U}(\mathfrak{p})} k)$. 
\end{enumerate}
\end{example}

\subsubsection{\texorpdfstring{$G$}{G}-equivariant \texorpdfstring{$\mathcal{D}_{\opn{FL}}$}{D}-modules}

As above, let $\mathcal{D}_{\opn{FL}}$ be the sheaf of differential operators over $\opn{FL}$. We let $\mathcal{D}_{\opn{FL}}\mathrm{-Mod}_G$ be the category of $G$-equivariant $\mathcal{D}_{\opn{FL}}$-modules. Its objects are objects of $\mathrm{QCoh}_{G}(\opn{FL})$ together with an action of $\mathcal{D}_{\opn{FL}}$ which is compatible with the $G$-equivariant structure.

We let $\mathrm{Rep}(( \mathfrak{g},P))$ be the category of $k$-vector spaces $V$ equipped with an algebraic representation of $P$, and a representation of $\mathfrak{g}$ satisfying the following compatibility: 
\begin{enumerate}
\item For any $v \in V, g \in \mathfrak{g}$ and $p \in P$, we have $(\mathrm{Ad}(p) g) v = p g p^{-1} v$,
\item The action of $P$ induces an action of the Lie algebra $\mathfrak{p}$. This action coincides with the  restriction to $\mathfrak{p}$ of the action of $\mathfrak{g}$. 
\end{enumerate}

\begin{lemma} \label{FEnrichedLemmaAppendix}
The functor $F$ can be enriched to a functor: 
\begin{eqnarray*}
F :  \mathrm{Rep}((\mathfrak{g},P))  & \rightarrow & \mathcal{D}_{\opn{FL}}\mathrm{-Mod}_G \\
V & \mapsto & \mathcal{V}=F(V)
\end{eqnarray*}
\end{lemma}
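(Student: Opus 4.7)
The plan is to construct, for each $V \in \mathrm{Rep}((\mathfrak{g},P))$, a $G$-equivariant flat connection on $F(V)$ whose defining formula uses the $\mathfrak{g}$-action on $V$ to correct the ``vertical ambiguity'' of the torsor $\pi \colon G \to \opn{FL}$. Concretely, I identify sections of $F(V)$ with $P$-equivariant maps $\alpha \colon G \to V$ satisfying $\alpha(gp) = \rho(p)^{-1} \alpha(g)$, and for $X \in \mathfrak{g}$ I define
\[
(\nabla_X \alpha)(g) \;:=\; (\hat{X}\alpha)(g) \;+\; \mathrm{Ad}(g^{-1})(X)\cdot \alpha(g),
\]
where $\hat{X}$ is the right-invariant vector field $(\hat{X}\alpha)(g) = \tfrac{d}{dt}\big|_{t=0}\alpha(e^{tX} g)$ and $\mathrm{Ad}(g^{-1})(X) \in \mathfrak{g}$ acts on $\alpha(g) \in V$ via the given $\mathfrak{g}$-module structure. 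I then extend $\nabla_{(-)}$ to an $\mathcal{O}_{\opn{FL}}$-linear map $\mathfrak{g}^0 = \mathcal{O}_{\opn{FL}} \otimes \mathfrak{g} \to \mathrm{End}_{k}(F(V))$ in the obvious way.

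The first verification is that $\nabla_X \alpha$ is again $P$-equivariant; this follows from a one-line manipulation using the identity $\rho(p^{-1}) X \rho(p) = \mathrm{Ad}(p^{-1}) X$ as operators on $V$, which is precisely condition (1) in the definition of a $(\mathfrak{g},P)$-module. Leibniz with respect to $\mathcal{O}_{\opn{FL}}$ follows from the fact that $\hat{X}$ is a derivation of $\mathcal{O}_G$ and its action on functions pulled back from $\opn{FL}$ equals the action of the vector field on $\opn{FL}$ induced by $X$. The main step is to show that the $\mathfrak{g}^0$-action factors through $T_{\opn{FL}} = \mathfrak{g}^0/\mathfrak{p}^0$, i.e. that sections of $\mathfrak{p}^0$ act as zero. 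A section $\gamma$ of $\mathfrak{p}^0 \hookrightarrow \mathfrak{g}^0 = \mathcal{O}_{\opn{FL}} \otimes \mathfrak{g}$ corresponds to a function $\gamma \colon \opn{FL} \to \mathfrak{g}$ with $\gamma(gP) \in \mathrm{Ad}(g)\mathfrak{p}$, so $Z_g := \mathrm{Ad}(g^{-1})\gamma(gP) \in \mathfrak{p}$. Using linearity of $\hat{(-)}|_g$ in the Lie algebra argument, the ``vector field'' contribution to $(\nabla_\gamma \alpha)(g)$ is $\tfrac{d}{dt}\big|_{t=0}\alpha(e^{t\gamma(gP)} g) = \tfrac{d}{dt}\big|_{t=0}\alpha(g\, e^{t Z_g}) = -Z_g \cdot \alpha(g)$ by $P$-equivariance of $\alpha$, while the ``$V$-action'' contribution is exactly $\mathrm{Ad}(g^{-1})\gamma(gP)\cdot \alpha(g) = Z_g \cdot \alpha(g)$; here condition (2) on $(\mathfrak{g},P)$-modules is used to identify the restriction of the $\mathfrak{g}$-action to $\mathfrak{p}$ with the infinitesimal $P$-action. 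The two contributions cancel.

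It then remains to check flatness $[\nabla_X,\nabla_Y] = \nabla_{[X,Y]}$, and $G$-equivariance of the resulting $\mathcal{D}_{\opn{FL}}$-module structure. Flatness is a direct computation combining $[\hat{X},\hat{Y}] = -\widehat{[X,Y]}$ (the standard sign for right-invariant vector fields) with the $\mathfrak{g}$-module axiom $XY - YX = [X,Y]$ on $V$; the cross terms produced by differentiating the factor $\mathrm{Ad}(g^{-1})(-)$ contribute $\pm \mathrm{Ad}(g^{-1})[X,Y]$ with signs that exactly produce $\nabla_{[X,Y]}$ rather than $-\nabla_{[X,Y]}$. Functoriality in $V$ is immediate since the defining formula is manifestly natural; $G$-equivariance follows because left translation by $g_0 \in G$ sends $\hat{X}^{FL}$ to $\widehat{\mathrm{Ad}(g_0)X}^{FL}$ and the change of variable $g \mapsto g_0^{-1} g$ in the formula produces exactly the required intertwining. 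I expect the main obstacle in writing this up to be the $\mathfrak{p}^0$-vanishing step, since it is the only point where the full strength of the $(\mathfrak{g},P)$-compatibility is used (one must identify two a priori unrelated occurrences of an element of $\mathfrak{p}$ acting on $V$ — one arising from the vertical derivative on the $P$-torsor $G \to \opn{FL}$, the other from restricting the given $\mathfrak{g}$-action on $V$ to $\mathfrak{p}$).
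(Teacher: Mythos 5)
Your construction is exactly the one in the paper: the connection is the infinitesimal left-translation action corrected by the $\mathcal{O}_{\opn{FL}}$-linear term $g\mapsto \mathrm{Ad}(g^{-1})(X)$ acting through the given $\mathfrak{g}$-module structure on $V$, with conditions (1) and (2) of a $(\mathfrak{g},P)$-module used precisely where you use them. The only (cosmetic) difference is that the paper reduces the $\mathfrak{p}^0$-vanishing to the fibre at $1\in\opn{FL}$ via $G$-equivariance and $\mathcal{O}_{\opn{FL}}$-linearity, whereas you verify the cancellation pointwise at every $g$; both are correct.
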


\begin{proof} 
Let $(V, \rho)$ be an object of $\mathrm{Rep}((\mathfrak{g},P))$ (where $\rho$ stands for the action of both $P$ and $\mathfrak{g}$). 
We first observe that the $G$-equivariant structure induces an action $\star_l$ of  $\mathfrak{g}$ on $F(V)$ by derivations. This action can be extended linearly to an action of $\mathfrak{g}^0 $ which extends the action by derivations on $\mathcal{O}_{\opn{FL}}$. However it will not in general factor through an action of $T_{\opn{FL}}$ on $F(V)$ and induce a $\mathcal{D}_{\opn{FL}}$-module structure. 

Instead, we construct a second action of $\mathfrak{g}$ on $F(V)$ by using the action $\rho$ of $\mathfrak{g}$ on $V$ as follows. Let $f \in \mathcal{O}_G \otimes V$. We let $h \in \mathfrak{g}$ and define $h\star f = [g \mapsto g^{-1} h g \star_\rho f(g)]$.  We check that this descends to an action on $F(V)$ as follows. For $p \in P$ we have:
\begin{eqnarray*}
p (\star_r \otimes \rho)   (h \star f)(g)& = &p (h\star f) (g p)  \\
&=& p p^{-1} g^{-1} h g p f(gp)\\
&=& h \star (p (\star_r \otimes \rho) f)(g).
\end{eqnarray*}
We check that this action is $G$-equivariant as follows. For $t$ in $G$ we have
\begin{eqnarray*}
t \cdot h \star (t \star_l f)(g) & = &  t ht^{-1} \star f(t^{-1} g ) \\
&=& g^{-1} t h t^{-1} g f(t^{-1} g ) \\
&=& t \star_l (  h \star f) (g).
\end{eqnarray*}
The $\star$ action is $\mathcal{O}_{\opn{FL}}$-linear. 
The difference $\star_{\mathcal{D}} = \star_l - \star$ defines an action  by derivations of $\mathfrak{g}$ on $F(V)$ which extends the  action by derivations on $\mathcal{O}_{\opn{FL}}$. 
We claim that  the action $\star_l - \star$ induces a $\mathcal{D}_{\opn{FL}}$-module structure on $F(V)$.
The main step is  to check that $\mathfrak{p}^0$ acts trivially.  As the restriction of the action $\star_{\mathcal{D}}$ of $\mathfrak{p}^0$ is $\mathcal{O}_{\opn{FL}}$-linear and $G$-equivariant it suffices to check 
that the induced action  is trivial on $i_1^{*}$. The action $\star_l$ restricts to the natural action of $P$ on $V$ via $i_1^*$ and induces an action of the Lie algebra $\mathfrak{p}$. The action $\star$ restricts by construction to the natural action of $\mathfrak{p}$ on $V$, induced by the inclusion $\mathfrak{p} \hookrightarrow \mathfrak{g}$. By the compatibility imposed on both actions, the difference is the  trivial action of $\mathfrak{p}$. 
\end{proof}

\begin{lemma} 
The functor  $i_1^{*}$ can be enriched to a functor 
\begin{eqnarray*}
i_1^* \colon \mathcal{D}_{\opn{FL}}\mathrm{-Mod}_G & \rightarrow & \mathrm{Rep}((\mathfrak{g},P))\\
\mathcal{V} & \mapsto & i_1^* \mathcal{V}
\end{eqnarray*}
\end{lemma}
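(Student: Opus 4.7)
The plan is to run the construction of the previous lemma in reverse: starting from a $G$-equivariant $\mathcal{D}_{\opn{FL}}$-module $\mathcal{V}$, I want to endow the fibre $i_1^*\mathcal{V}$ with a $(\mathfrak{g},P)$-module structure. The $P$-action is immediate: the stabiliser of $1 \in \opn{FL}$ is $P$, and the $G$-equivariant structure restricts to an algebraic action of $P$ on $i_1^*\mathcal{V}$. The content of the lemma is the construction of a compatible $\mathfrak{g}$-action.

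The key observation is that there are \emph{two} actions of $\mathfrak{g}$ on local sections of $\mathcal{V}$ whose difference is $\mathcal{O}_{\opn{FL}}$-linear. On one hand, differentiating the $G$-equivariant structure gives an action $\star_l$ of $\mathfrak{g}$ on local sections of $\mathcal{V}$, whose restriction to $\mathcal{O}_{\opn{FL}}$ is the Lie derivative coming from left translation on $G$. On the other hand, the $\mathcal{D}_{\opn{FL}}$-module structure combined with the canonical surjection $\mathfrak{g}^0 = \mathcal{O}_{\opn{FL}}\otimes \mathfrak{g} \twoheadrightarrow T_{\opn{FL}}$ produces an action $\star_{\mathcal{D}}$ of $\mathfrak{g}$ on local sections that restricts to the same Lie derivative on $\mathcal{O}_{\opn{FL}}$. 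Hence the difference $\star := \star_l - \star_{\mathcal{D}}$ is $\mathcal{O}_{\opn{FL}}$-linear, and consequently descends to a well-defined action of $\mathfrak{g}$ on every fibre; we take this as the $\mathfrak{g}$-action on $i_1^*\mathcal{V}$.

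It remains to verify the two compatibilities in the definition of $\mathrm{Rep}((\mathfrak{g},P))$. The $\mathrm{Ad}$-compatibility is a formal consequence of the $G$-equivariance of both $\star_l$ and of the $\mathcal{D}_{\opn{FL}}$-module structure, which pass to the difference $\star$. For the compatibility of the actions of $\mathfrak{p}$ via $P$ and via $\mathfrak{g}$, I would argue as follows: for $X \in \mathfrak{p}\subset \mathfrak{g}$, the image of $X$ in the fibre $T_{\opn{FL},1} = \mathfrak{g}/\mathfrak{p}$ vanishes, so $X\star_{\mathcal{D}}$ acts by zero on $i_1^*\mathcal{V}$, whence $X\star = X\star_l$ on the fibre. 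But by construction $\star_l$ restricted to $i_1^*\mathcal{V}$ is the differential of the $P$-action, which is exactly the required compatibility.

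The main subtlety to keep an eye on is to check that $\star$ is genuinely a Lie algebra action, i.e.\ that the bracket relation $[X,Y]\star = X\star Y\star - Y\star X\star$ holds; unwinding, this reduces to the statement that the two actions $\star_l$ and $\star_{\mathcal{D}}$ commute on $\mathcal{V}$, which in turn is precisely the $G$-equivariance of the $\mathcal{D}_{\opn{FL}}$-module structure. With this in hand, functoriality in $\mathcal{V}$ is automatic, since the construction of $\star$ is manifestly natural in morphisms of $G$-equivariant $\mathcal{D}_{\opn{FL}}$-modules; one then observes that $F$ and $i_1^*$ (in their enriched forms) are quasi-inverse, which follows by comparing the two constructions fibre by fibre and using the case already established for $\mathrm{QCoh}_G(\opn{FL})$.
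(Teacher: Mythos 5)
Your construction is the same as the paper's: the $P$-action on $i_1^*\mathcal{V}$ comes from restricting the $G$-equivariant structure to the stabiliser of $1$, and the $\mathfrak{g}$-action is the $\mathcal{O}_{\opn{FL}}$-linear difference $\star_l - \star_{\mathcal{D}}$ of the two $\mathfrak{g}$-actions extending the derivation action on $\mathcal{O}_{\opn{FL}}$, which therefore descends to the fibre. Your verification of the $\mathfrak{p}$-compatibility (the vector field attached to $X\in\mathfrak{p}$ vanishes at $1$, so $X\star_{\mathcal{D}}$ kills the fibre and $X\star = X\star_l$ there) is correct and is a detail the paper leaves implicit.

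One step in your added verification is wrong as stated: the bracket relation for $\star := \star_l - \star_{\mathcal{D}}$ does \emph{not} reduce to $\star_l$ and $\star_{\mathcal{D}}$ commuting, and in fact they do not commute — already on $\mathcal{O}_{\opn{FL}}$ both restrict to $X \mapsto \xi_X$ and $[\xi_X,\xi_Y] = \xi_{[X,Y]} \neq 0$ in general. The correct input is the \emph{infinitesimal form of the $G$-equivariance of the $\mathcal{D}_{\opn{FL}}$-module structure}, namely
\[
[\,X\star_l\, ,\, Y\star_{\mathcal{D}}\,] \;=\; [X,Y]\star_{\mathcal{D}} \qquad (X,Y\in\mathfrak{g}),
\]
i.e.\ $\star_l$ acts on differential operators by the Lie derivative. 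Expanding $[\,X\star,Y\star\,]$ with $\star=\star_l-\star_{\mathcal{D}}$ and using this identity (together with the fact that $\star_l$ and $\star_{\mathcal{D}}$ are each Lie algebra actions), the cross terms contribute $+[X,Y]\star_{\mathcal{D}}+[X,Y]\star_{\mathcal{D}}$ against the $-2[X,Y]\star_{\mathcal{D}}$ coming from the two pure $\star_{\mathcal{D}}$ terms, and one gets $[\,X\star,Y\star\,]=[X,Y]\star$ as required. (Note that if the two actions literally commuted, the same expansion would give $[\,X\star,Y\star\,]=[X,Y]\star_l+[X,Y]\star_{\mathcal{D}}\neq [X,Y]\star$, so the commuting hypothesis would not even yield the conclusion.) The same equivariance identity is what gives the $\mathrm{Ad}$-compatibility after exponentiating, so it is the single input you should isolate; with that correction the argument is complete and matches the paper's.
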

\begin{proof} Let $\mathcal{V}$ be an object of $\mathcal{D}_{\opn{FL}}\mathrm{-Mod}_G$. The action of $G$ induces an action $\star_l$ of $\mathfrak{g}$. The $\mathcal{D}_{\opn{FL}}$-module action induces an action $\star_{\mathcal{D}}$ of $\mathfrak{g}$. Both actions extend the action by derivations on $\mathcal{O}_{\opn{FL}}$, therefore the difference is an $\mathcal{O}_{\opn{FL}}$-linear action which induces an action of $\mathfrak{g}$ on $i_1^* \mathcal{V}$.
\end{proof}

We immediately obtain the following proposition.

\begin{proposition} 
The categories $\mathcal{D}_{\opn{FL}}\mathrm{-Mod}_G$ and $\mathrm{Rep}((\mathfrak{g},P))$ are equivalent via the quasi-inverse functors $i_1^*$ and $F$.
\end{proposition}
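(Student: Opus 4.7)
The plan is to bootstrap from the equivalence $F \dashv i_1^*$ already established at the level of $\mathrm{QCoh}_G(\opn{FL})$ and $\mathrm{Rep}(P)$. Let $\eta_V \colon V \xrightarrow{\sim} i_1^* F(V)$ and $\epsilon_\mathcal{V} \colon F(i_1^* \mathcal{V}) \xrightarrow{\sim} \mathcal{V}$ denote the natural isomorphisms furnished by that prior equivalence. Since the enriched functors are refinements of $F$ and $i_1^*$ through the forgetful functors to $\mathrm{QCoh}_G(\opn{FL})$ and $\mathrm{Rep}(P)$, and the forgetful functors are clearly faithful, it suffices to verify that $\eta_V$ intertwines the $\mathfrak{g}$-actions when $V$ is enriched to an object of $\mathrm{Rep}((\mathfrak{g},P))$, and that $\epsilon_\mathcal{V}$ intertwines the $\mathcal{D}_{\opn{FL}}$-module structures when $\mathcal{V}$ is enriched. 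The $P$- and $G$-equivariance compatibilities come for free.

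For the first compatibility, unpack the construction of Lemma \ref{FEnrichedLemmaAppendix}: on $F(V)$ we have the $G$-equivariant action $\star_l$ (which induces a $\mathfrak{g}$-action by derivations) and the auxiliary $\mathcal{O}_{\opn{FL}}$-linear action $\star$ built from $\rho$ via $h \star f = [g \mapsto g^{-1}hg \star_\rho f(g)]$; the $\mathcal{D}_{\opn{FL}}$-module structure on $F(V)$ is given by $\star_{\mathcal{D}} := \star_l - \star$. Passing to $i_1^*$, the enriched functor produces the $\mathfrak{g}$-action $\star_l|_{1} - \star_{\mathcal{D}}|_{1} = \star|_{1}$, and by the very definition of $\star$ this coincides with $\rho$. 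Thus $\eta_V$ is an isomorphism in $\mathrm{Rep}((\mathfrak{g}, P))$.

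For the second compatibility, start from $\mathcal{V} \in \mathcal{D}_{\opn{FL}}\mathrm{-Mod}_G$, write $\rho$ for the induced $\mathfrak{g}$-action on $V := i_1^* \mathcal{V}$, and form $F(V)$ with its constructed $\mathcal{D}_{\opn{FL}}$-module structure. We must show that $\epsilon_\mathcal{V}$ carries $\star_l^{F(V)} - \star^{F(V)}$ to $\star_{\mathcal{D}}^{\mathcal{V}}$. Both sides define $\mathcal{O}_{\opn{FL}}$-linear, $G$-equivariant maps $\mathfrak{g}^0 \otimes_{\mathcal{O}_{\opn{FL}}} F(V) \to \mathcal{V}$ (once the common derivation part coming from $\star_l$ on $\mathcal{O}_{\opn{FL}}$ is subtracted off), so by the underlying QCoh equivalence applied to $\mathrm{Hom}_{\mathcal{O}_{\opn{FL}}}(\mathfrak{g}^0 \otimes F(V), \mathcal{V})$ it is enough to check equality after applying $i_1^*$. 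But there both maps reduce, tautologically, to $\star_l|_1 - \rho$ on $V$: this is the definition of $\rho$ on one side and the formula defining $\star^{F(V)}$ on the other.

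The main obstacle, such as it is, lies in carefully tracking the three $\mathfrak{g}$-actions in play ($\star_l$ geometric, $\star$ algebraic extending $\rho$, and $\star_{\mathcal{D}}$ differential) and ensuring that the differences one forms in either direction are genuinely $\mathcal{O}_{\opn{FL}}$-linear so that the reduction to the fibre at $1 \in \opn{FL}$ suffices. This was already done while proving Lemma \ref{FEnrichedLemmaAppendix} (there to produce the $\mathcal{D}_{\opn{FL}}$-structure on $F(V)$, and implicitly to produce the $\mathfrak{g}$-action on $i_1^*\mathcal{V}$); the proposition is the formal statement that these two constructions are inverse to one another, which the paragraph above verifies by exhibiting both as manifestations of the identity $\rho = \star_l|_1 - \star_{\mathcal{D}}|_1$.
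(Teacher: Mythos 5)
Your proposal is correct and follows the same route as the paper, which simply declares the proposition an immediate consequence of the two enrichment lemmas; you have merely made explicit the verification that the unit and counit of the underlying equivalence respect the enriched structures, via the identity $\rho = \star_l|_1 - \star_{\mathcal{D}}|_1$ and the fact that an $\mathcal{O}_{\opn{FL}}$-linear, $G$-equivariant comparison map is determined by its fibre at $1$. No gaps.
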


\begin{example} \leavevmode
\begin{enumerate}
\item The forgetful functor $\mathrm{Rep}((\mathfrak{g},P))  \rightarrow \mathrm{Rep}(P)$ has a left adjoint $V \mapsto \mathcal{U}(\mathfrak{g}) \otimes_{\mathcal{U}(\mathfrak{p})} V$. Similarly, the forgetful functor $ \mathcal{D}_{\opn{FL}}\mathrm{-Mod}_G \rightarrow \mathrm{QCoh}_{G}(\opn{FL})$ has a left adjoint $\mathcal{V} \mapsto \mathcal{D}_{\opn{FL}} \otimes_{\mathcal{O}_{\opn{FL}}} \mathcal{V}$. 
This implies that for any $\mathcal{V} = F(V)$ with $V \in \mathrm{Rep}(P)$, one has $\mathcal{D}_{\opn{FL}} \otimes_{\mathcal{O}_{\opn{FL}}} \mathcal{V} = F(\mathcal{U}(\mathfrak{g}) \otimes_{\mathcal{U}(\mathfrak{p})} V)$. 
\item Let $E,G \in \mathrm{Rep}(P)$ be finite dimensional representations. Let $\mathcal{E}=F(E)$, $\mathcal{G}= F(G)$.  Let $\mathrm{Diff}_G(\mathcal{E}, \mathcal{G})$ be the space of  $G$-equivariant differential operators $\mathcal{E} \rightarrow \mathcal{G}$. By definition, a $G$-equivariant differential operator is a  global $G$-invariant section of 
$\mathcal{F} \otimes_{\mathcal{O}_{\opn{FL}}} \mathcal{D}_{\opn{FL}} \otimes_{\mathcal{O}_{\opn{FL}}} \mathcal{E}^\vee$, or equivalently a 
$G$-equivariant map 
$\mathcal{F}^\vee \rightarrow \mathcal{D}_{\opn{FL}} \otimes_{\mathcal{O}_{\opn{FL}}} \mathcal{E}^\vee $. We deduce that 
\begin{eqnarray*}
\mathrm{Diff}_G(\mathcal{E}, \mathcal{G}) &=& \mathrm{Hom}_{\mathrm{Rep}(P)}( F^\vee , \mathcal{U}(\mathfrak{g}) \otimes_{\mathcal{U}(\mathfrak{p})} E^\vee) \\
&=& \mathrm{Hom}_{\mathrm{Rep}((\mathfrak{g},P))}( \mathcal{U}(\mathfrak{g}) \otimes_{\mathcal{U}(\mathfrak{p})}F^\vee , \mathcal{U}(\mathfrak{g}) \otimes_{\mathcal{U}(\mathfrak{p})} E^\vee).
\end{eqnarray*}
\end{enumerate}

\end{example}

\subsubsection{The case of \texorpdfstring{$\mathrm{GL}_2$}{GL(2)}}

In this subsection, we freely use the notation in \S \ref{TheMainResultIntroSubSec}. Let $\overline{\mathfrak{n}} \subset \mathfrak{gl}_2$ denote the lower triangular nilpotent Lie algebra, and let $w_0$ denote the longest Weyl element of $G=\opn{GL}_2$. We will consider certain $(\mathfrak{g}, \overline{B})$-representations which belong to category $\mathcal{O}$. Since we are interested in representations of the lower triangular Borel subgroup (but define the positive roots to be those which lie in $B$) our conventions are slightly different from those found in the literature. For example, our convention is that Verma modules are \emph{lowest} weight modules. 

\begin{definition}
For any $\kappa \in X^*(T)$, the Verma module of lowest weight $w_0\kappa$ is $M_\kappa = \mathcal{U}(\mathfrak{g}) \otimes_{\mathcal{U}(\overline{\mathfrak{b}})} w_0\kappa$.  We let $M_\kappa^\vee$ be its dual in category $\mathcal{O}$.\footnote{Here is how duality is defined. Let $M \in \mathcal{O}$. Then $M = \oplus_{\chi \in X^*(T)_{\Q}} M_\chi$. We let $M^\vee = \oplus_{\chi \in X^*(T)_{\Q}} M_\chi^\vee \subset \mathrm{Hom}(M, \Q)$. The action on $M^\vee$ is given by $g f (-) = f( {^tg} \cdot -)$.} 
\end{definition}

If $\kappa \in X^*(T)^+$, let $V_\kappa$ be the irreducible representation of $G$ of highest weight $\kappa$. Then there is an exact sequence 
\[
0 \rightarrow V_\kappa \rightarrow M^\vee_\kappa \rightarrow M_{w_0\kappa -2\rho} \rightarrow 0 .
\]
If $\kappa \notin X^*(T)^+$ then $M^\vee_\kappa = M_\kappa$ is irreducible. 
The character of $M_\kappa^\vee$ is $\sum_{n \geq 0} w_0\kappa +2n \rho$ and we have a filtration $M_\kappa^\vee = \bigcup_{r \geq 0} \mathrm{Fil}_{r} M_\kappa^\vee$ where $\mathrm{Fil}_{r} M_\kappa^\vee = M_\kappa^\vee[\overline{\mathfrak{n}}^{r+1}]$ is the subspace of elements killed by $\overline{\mathfrak{n}}^{r+1}$. Moreover, there is an isomorphism of $\overline{B}$-modules: 
\[
M_\kappa^\vee[\overline{\mathfrak{n}}^{r+1}] \cong \mathrm{Sym}^r \opn{St} \otimes \Q(w_0\kappa + r(1; -1))
\]
where $\opn{St}$ denotes the standard representation of $G$, and $\Q(w_0\kappa + r(1; -1))$ is the one-dimensional line on which $\overline{B}$ acts through the character $w_0\kappa + r(1; -1)$.

 We let $\oscr_{\overline{B}}$ be the space of functions on $\overline{B}$. We equip it with an action of $\overline{B}$ via $b  \star_l f(-) = f(b^{-1} \cdot -)$. We can turn it into a $(\mathfrak{g},\overline{B})$-module  as follows. We have an open immersion $\overline{B} \hookrightarrow G/U$, where $U$ denotes the upper triangular unipotent, and the left translation action of $G$ on $G/U$ induces an action of $\mathfrak{g}$ stabilising $\oscr_{\overline{B}}$.  We also have  another action of $\overline{B}$ via $b\star_r f(-)= f(- \cdot b)$. 

We let $x, t_1, t_2$ be coordinates on $\overline{B}$, where the universal element of $\overline{B}$ is written as 
$$\begin{pmatrix} 
         t_2 & 0 \\
         x & t_1 \\
      \end{pmatrix}.$$
Using this description, we have $\oscr_{\overline{B}} = k[x, t_1, t_2, t_1^{-1}, t_2^{-1}]$ and the actions $\star_r$ and $\star_l$ of $\overline{B}$ and that of $\mathfrak{g}$ are easy to describe. Indeed, the infinitesimal action is given by
\begin{align*}
    \begin{pmatrix}
            0 & 1 \\
            0 & 0 \\
   \end{pmatrix} &\mapsto x t_1 t_2^{-1} \partial_{t_1} - x \partial_{t_2}, \quad \quad \begin{pmatrix}
            1 & 0 \\
            0 & 0 \\
   \end{pmatrix} \mapsto -\partial_{t_2} \\
   \begin{pmatrix}
            0 & 0 \\
            0 & 1 \\
   \end{pmatrix} &\mapsto -(\partial_x +\partial_{t_1}), \quad \quad \; \quad \quad \begin{pmatrix}
            0 & 0 \\
            1 & 0 \\
   \end{pmatrix} \mapsto -t_2 \partial_{x} .
\end{align*}
The action of the lower triangular unipotent $\overline{U}$ via $\star_r$ or $\star_l$ induces a filtration $\oscr_{\overline{B}} = \bigcup_{r \geq 0} \mathrm{Fil}_r(\oscr_{\overline{B}})$ where $\mathrm{Fil}_r(\oscr_{\overline{B}})$ is the sub-module of vectors killed by $\overline{\mathfrak{n}}^{r+1}$. This is the subspace of $\oscr_{\overline{B}}$ of elements with degree in $x$ bounded by $r$. The action of $T$ via $\star_r$ commutes with the actions of $\mathfrak{g}$ and $\overline{B}$ via $\star_l$ and the space   $\oscr_{\overline{B}}$ decomposes as a sum of eigenspaces: 

\begin{lemma}\label{lem-Brep}  
Via the action of $T$ by $\star_r$, the space $\oscr_{\overline{B}}$ decomposes as $\oplus_{\kappa \in X^*(T)} (M_{\kappa})^\vee$ with $(M_{\kappa})^\vee = \mathscr{O}_{\overline{B}}[-w_0\kappa] \defeq \opn{Hom}_T(-w_0\kappa, \mathscr{O}_{\overline{B}})$. 
\end{lemma}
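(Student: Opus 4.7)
My plan has three main steps.

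First, I would establish the $T$-weight decomposition under $\star_r$ using the Iwahori factorisation $\overline{B} = \overline{U} \cdot T$, with $\overline{U}$ the lower unipotent radical. The quotient map $\overline{B} \to \overline{B}/\overline{U} = T$ is $T$-equivariant for $\star_r$, and introducing the $\star_r$-invariant coordinate $y \defeq x t_2^{-1}$ on $\overline{U}$ yields
\[
\mathscr{O}_{\overline{B}} \;=\; k[y] \otimes_k k[t_1^{\pm 1}, t_2^{\pm 1}] \;=\; \bigoplus_{\lambda \in X^*(T)} k[y] \cdot e^{\lambda},
\]
where $e^\lambda$ denotes the unique monomial in $t_1, t_2$ of $\star_r$-weight $\lambda$. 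Reindexing by $\lambda = -w_0\kappa$ gives the decomposition $\mathscr{O}_{\overline{B}} = \bigoplus_\kappa \mathscr{O}_{\overline{B}}[-w_0\kappa]$, with $\mathscr{O}_{\overline{B}}[-w_0\kappa] = k[y] \cdot e^{-w_0\kappa}$ as a $k$-vector space.

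Second, since $\star_l$ commutes with $\star_r$, each summand is a $(\mathfrak{g}, \overline{B})$-submodule of $\mathscr{O}_{\overline{B}}$. Using the explicit formulas for the infinitesimal action, a short calculation shows that $y^a \cdot e^{-w_0\kappa}$ has $\star_l$-weight $w_0\kappa + a\alpha$, where $\alpha$ is the positive root, so the character of $\mathscr{O}_{\overline{B}}[-w_0\kappa]$ under $\star_l$ equals $\sum_{a \geq 0} e^{w_0\kappa + a\alpha}$, matching the character of $(M_\kappa)^\vee$. Moreover, because $F \star_l$ acts as $-t_2 \partial_x$, the base vector $v_0 \defeq e^{-w_0\kappa}$ is annihilated by $\overline{\mathfrak{n}}$ and spans a copy of the one-dimensional socle $k(w_0\kappa)$.

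Third, to identify $\mathscr{O}_{\overline{B}}[-w_0\kappa]$ with $(M_\kappa)^\vee$ as $(\mathfrak{g}, \overline{B})$-modules I would match their natural filtrations by degree in $x$: the subspace $\opn{Fil}_r \mathscr{O}_{\overline{B}}[-w_0\kappa]$ of polynomials of degree $\leq r$ in $x$ coincides with the subspace annihilated by $\overline{\mathfrak{n}}^{r+1}$ and is $\overline{B}$-stable (since $\overline{U}$ and $T$ preserve $x$-degree under $\star_l$). A direct verification identifies this filtered piece with $\mathrm{Sym}^r \opn{St} \otimes k(w_0\kappa + r(1;-1))$, matching the description of $\opn{Fil}_r (M_\kappa)^\vee$ recorded before the lemma. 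Passing to the union over $r$ and invoking the characterisation of $(M_\kappa)^\vee$ in category $\mathcal{O}$ as the unique module with this character and one-dimensional socle $k(w_0\kappa)$ yields the required isomorphism. The main obstacle is this last matching of filtered pieces: one must carefully track the conventions relating right versus left torsor structures and the Weyl element twist in order to recognise the $\overline{B}$-action on each $\opn{Fil}_r$ as the claimed twist of $\mathrm{Sym}^r \opn{St}$; once this is established the rest of the identification is formal.
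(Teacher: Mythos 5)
The paper states Lemma \ref{lem-Brep} without proof (it is treated as standard), so there is no argument of the authors' to compare against; your proposal supplies one, and its substance is correct. Steps 1 and 2 check out against the explicit formulas given just before the lemma: $y = x t_2^{-1}$ is indeed $\star_r$-invariant, each $\star_r$-weight is realised by exactly one monomial $t_1^b t_2^c$, the $\star_l$-weight of $y^a e^{-w_0\kappa}$ is $w_0\kappa + 2a\rho$ (so the $\star_l$-character of $\mathscr{O}_{\overline{B}}[-w_0\kappa]$ agrees with the character $\sum_{n\geq 0} w_0\kappa + 2n\rho$ of $M_\kappa^\vee$ recorded before the lemma), and $-t_2\partial_x$ kills exactly the elements of $x$-degree zero, so $\opn{Fil}_r = \{x\text{-degree} \leq r\}$ coincides with the $\overline{\mathfrak{n}}^{r+1}$-torsion.

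Two points in Step 3 need attention. First, ``one-dimensional socle $k(w_0\kappa)$'' is not correct as literally stated: for dominant $\kappa = (k;w)$ the $\mathfrak{g}$-socle of $M_\kappa^\vee$ is the irreducible submodule $V_\kappa$, of dimension $k+1$, so no characterisation by a one-dimensional socle is available. What your computation with $-t_2\partial_x$ actually establishes, and what you should invoke, is that the space of $\overline{\mathfrak{n}}$-invariants (equivalently $\opn{Fil}_0$, the lowest-weight space) is one-dimensional of weight $w_0\kappa$. With that correction the uniqueness argument does work for $\opn{GL}_2$: the character forces the composition factors to be $V_\kappa$ and $M_{w_0\kappa - 2\rho}$, the only modules with these factors are $M_\kappa$, $M_\kappa^\vee$ and the split extension, and among these only $M_\kappa^\vee$ has one-dimensional $\overline{\mathfrak{n}}$-invariants (for non-dominant $\kappa$ the character alone suffices, since then $M_\kappa^\vee = M_\kappa$ is irreducible); the resulting $\mathfrak{g}$-isomorphism upgrades to a $(\mathfrak{g},\overline{B})$-isomorphism because $\overline{B}$ is connected. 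Second, your Step 3 currently mixes two strategies: once you use the uniqueness argument, the matching of $\opn{Fil}_r$ with $\opn{Sym}^r\opn{St}\otimes k(w_0\kappa + r(1;-1))$ is a corollary rather than an input; conversely, if you wanted to argue purely by matching filtered pieces you would also have to check that the isomorphisms on the $\opn{Fil}_r$ are compatible with one another and with the $\mathfrak{g}$-action, which is precisely the work the uniqueness argument avoids. I would commit to the corrected uniqueness route; alternatively, one can shortcut everything by recognising $\mathscr{O}_{\overline{B}}[-w_0\kappa]$, via the open cell $\overline{B} \hookrightarrow G/U$, as the standard algebraic (co)induced model of the dual Verma module.
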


Let $\pi : G \rightarrow G/\overline{B}$ be the canonical $\overline{B}$-torsor. It is a $G$-equivariant torsor via the left $G$-action. 

\begin{proposition} 
 The $\overline{B}$-torsor structure on $G$ induces a decomposition, 
$$\pi_* \mathcal{O}_G = \oplus_{\kappa \in X^*(T)} \mathcal{O}_G[-w_0\kappa] =  \oplus_{\kappa \in X^*(T)}  \bigcup_{r \geq 0} \mathrm{Fil}_r \mathcal{O}_G [-w_0\kappa]$$
where $\mathcal{O}_G[-w_0\kappa] = F(M_{\kappa}^\vee)$ and $\mathrm{Fil}_r \mathcal{O}_G [-w_0\kappa] = F(\mathrm{Fil}_{r} M_{\kappa}^\vee)$. We have a $G$-equivariant connection  $\nabla : \pi_*\mathcal{O}_G \rightarrow \pi_*\mathcal{O}_G \otimes \Omega^1_{\opn{FL}/\Q}$ inducing a connection:
$$\nabla_\kappa : \mathcal{O}_G[-w_0\kappa] \rightarrow \mathcal{O}_G[-w_0\kappa] \otimes \Omega^1_{\opn{FL}/\Q}.$$
Moreover, we have $\nabla_{\kappa} (  \mathrm{Fil}_r \mathcal{O}_G [-w_0\kappa]) \subseteq \mathrm{Fil}_{r+1} \mathcal{O}_G [-w_0\kappa] \otimes  \Omega^1_{\opn{FL}/\Q}$ for all $r \geq 0$.
\end{proposition}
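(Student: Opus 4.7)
The plan is to derive all three assertions by combining the equivalence of categories in Lemma \ref{FEnrichedLemmaAppendix} with Lemma \ref{lem-Brep}, and then to verify the filtration estimate on $\nabla$ by an algebraic computation inside $M_\kappa^\vee$.

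First I would identify $\pi_* \mathcal{O}_G$ as an object of $\mathcal{D}_{\opn{FL}}\mathrm{-Mod}_G$. The fiber $i_1^*(\pi_* \mathcal{O}_G)$ is canonically $\mathcal{O}_{\pi^{-1}(1)} = \mathcal{O}_{\overline{B}}$, and the induced action of $\overline{B} = \mathrm{Stab}_G(1)$ on this fiber is the $\star_l$-action, together with its extension to $\mathfrak{g}$ as recorded in \S\ref{PrelimsOnFLAppendix}. Thus, under the equivalence of Lemma \ref{FEnrichedLemmaAppendix}, $\pi_* \mathcal{O}_G \cong F(\mathcal{O}_{\overline{B}})$. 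The right $\overline{B}$-torsor structure on $\pi$ provides a commuting right $\overline{B}$-action on $\pi_* \mathcal{O}_G$ which, on the fiber, is $\star_r$; this commutes with $\star_l$ and with the $\mathfrak{g}$-action. By Lemma \ref{lem-Brep}, the $T$-eigendecomposition $\mathcal{O}_{\overline{B}} = \bigoplus_\kappa M_\kappa^\vee$ for the $\star_r$-action is therefore a decomposition in $\mathrm{Rep}((\mathfrak{g}, \overline{B}))$. Applying $F$ gives the desired identification $\pi_* \mathcal{O}_G = \bigoplus_\kappa F(M_\kappa^\vee) = \bigoplus_\kappa \mathcal{O}_G[-w_0\kappa]$.

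Next, for the filtration, I would observe that $\mathrm{Fil}_r M_\kappa^\vee = M_\kappa^\vee[\overline{\mathfrak{n}}^{r+1}]$ is $\overline{B}$-stable because $\overline{\mathfrak{n}}$ is $\mathrm{Ad}(\overline{B})$-invariant (so $\overline{\mathfrak{n}}^{r+1} \cdot b = b \cdot (\mathrm{Ad}(b^{-1}) \overline{\mathfrak{n}})^{r+1}$ as operators on $M_\kappa^\vee$). Hence $F(\mathrm{Fil}_r M_\kappa^\vee) \subseteq F(M_\kappa^\vee)$ is a well-defined $G$-equivariant subsheaf, which we take as $\mathrm{Fil}_r \mathcal{O}_G[-w_0\kappa]$. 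The connection $\nabla$ is then the canonical connection attached to the $\mathcal{D}_{\opn{FL}}$-module structure, and its $G$-equivariance is automatic; restricting to each isotypic summand yields $\nabla_\kappa$.

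The filtration-shift assertion is where the remaining work lies. Using the description of the $\mathcal{D}_{\opn{FL}}$-module structure from Lemma \ref{FEnrichedLemmaAppendix} as $\star_{\mathcal{D}} = \star_l - \star$, the claim reduces to showing that both $\star_l$ and $\star$ send $F(\mathrm{Fil}_r M_\kappa^\vee)$ into $F(\mathrm{Fil}_{r+1} M_\kappa^\vee)$. The $\star_l$-action preserves every $G$-equivariant subsheaf, so the non-trivial case is $\star$; from the formula $(h \star f)(g) = g^{-1} h g \star_\rho f(g)$, this further reduces to the purely algebraic statement
\[
\mathfrak{g} \cdot \mathrm{Fil}_r M_\kappa^\vee \subseteq \mathrm{Fil}_{r+1} M_\kappa^\vee.
\]
For $X \in \overline{\mathfrak{b}}$ this is immediate, since $[\overline{\mathfrak{b}}, \overline{\mathfrak{n}}] \subseteq \overline{\mathfrak{n}}$. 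For $X \in \mathfrak{n}$ (spanned by the positive root vector $E$ in the $\mathrm{GL}_2$-case), I would argue by induction on $r$: given $v \in \mathrm{Fil}_r$ and $Y \in \overline{\mathfrak{n}}$, the relation $[Y, E] \in \overline{\mathfrak{b}}$ (explicitly $[F, E] = -H$) allows one to rewrite $Y^{r+2} E v = Y^{r+1}(EY + [Y,E])v$; the first summand lies in $Y^{r+1} E \cdot \mathrm{Fil}_{r-1} \subseteq Y^{r+1} \cdot \mathrm{Fil}_r = 0$ by the inductive hypothesis (using $Yv \in \mathrm{Fil}_{r-1}$), while the second reduces, via $[Y^{r+1}, H] \in \mathbb{Q} \cdot Y^{r+1}$, to terms annihilated by $Y^{r+1}$. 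This standard PBW-style computation is the only real technical input in the proof.
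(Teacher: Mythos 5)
Your proposal is correct and follows essentially the same route as the paper: identify $\pi_*\mathcal{O}_G$ with $F(\mathscr{O}_{\overline{B}}, \star_l)$, let the $\star_r$-action give the torsor structure, and read the decomposition and filtration off the $(\mathfrak{g},\overline{B})$-module $\mathscr{O}_{\overline{B}}$ via Lemma \ref{lem-Brep}. The only (harmless) difference is at the final step: the paper reads the inclusion $\nabla_\kappa(\mathrm{Fil}_r) \subseteq \mathrm{Fil}_{r+1}\otimes\Omega^1$ directly off the explicit coordinate formulas for the infinitesimal action on $k[x,t_1^{\pm 1},t_2^{\pm 1}]$ (the operator attached to the positive root vector is $xt_1t_2^{-1}\partial_{t_1}-x\partial_{t_2}$, which visibly raises the $x$-degree by at most one), whereas you verify the equivalent statement $\mathfrak{g}\cdot\mathrm{Fil}_r M_\kappa^\vee\subseteq\mathrm{Fil}_{r+1}M_\kappa^\vee$ by a correct abstract commutator computation.
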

\begin{proof}  We have  an isomorphism of $G$-equivariant sheaves $\pi_* \mathcal{O}_G = F((\oscr_{\overline{B}}, \star_l))$, where $\oscr_{\overline{B}}$ is equipped with the $\star_l$-action of $\overline{B}$. The $\star_r$-action of $\overline{B}$ corresponds to  the $\overline{B}$-torsor structure. All the properties can be read from the properties of $\oscr_{\overline{B}}$. 
\end{proof}

\subsection{Shimura varieties} 

Let $(G,X)$ be a Shimura datum, and let $\{S_K\}_{K \subseteq G(\mathbb{A}_f)}$ be the tower of Shimura varieties, defined over the reflex field $E$. We let $P=P_{\mu}^{\opn{std}}$ be the parabolic attached to (a representative of) the  cocharacter $\mu$ of the Shimura datum and let $\opn{FL} = G/P$ be the flag variety, defined over $E$. We let $Z(G)$ be the center of $G$ and denote by $Z_s(G)$ the largest subtorus which is $\mathbb{R}$-split but has no subtorus split over $\mathbb{Q}$. Set $G^c = G/Z_s(G)$, $P^c = P/Z_s(G)$, and define $M^c= M/Z_s(G)$ where $M$ is the Levi of $P$. Note $\opn{FL} = G^c/P^c$.

Let $G_{\opn{dR},K} \rightarrow S_K$ denote the de Rham $G^c$-torsor (see \cite[\S III.3]{MR1044823}) and $P_{\opn{dR}, K} \rightarrow S_K$ its $P^c$-reduction. We define $M_{\opn{dR}, K} = P_{\opn{dR}, K} \times^{P^c} M^c$ to be the push out of the $P^c$-torsor $P_{\opn{dR}, K}$ to an $M^c$-torsor $M_{\opn{dR}, K}$. We have a diagram: 
\begin{eqnarray*}
\xymatrix{ & \ar[ld]_{p} G_{\opn{dR},K} \ar[rd]^{q} &  \\
S_K & & \opn{FL}}
\end{eqnarray*}
characterised by the property that the pullback of the $P^c$-torsor $G^c \rightarrow \opn{FL}$ via $q$ is $P_{\opn{dR}, K} \rightarrow G_{\opn{dR}, K}$. 
We obtain a functor: 
\begin{eqnarray*}
\opn{VB}_K \colon \mathrm{QCoh}_{G^c}(\opn{FL}) & \rightarrow & \mathrm{QCoh}(S_K) \\
\mathcal{V} & \rightarrow & \mathrm{H}^0(G^c, p_* q^* \mathcal{V})
\end{eqnarray*}

\begin{lemma} 
We have $\opn{VB}_K(\Omega^1_{\opn{FL}/E}) = \Omega^1_{S_K/E}$ and $\opn{VB}_K( \mathcal{D}_{\opn{FL}}) = \mathcal{D}_{S_{K}}$. 
\end{lemma}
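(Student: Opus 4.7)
The strategy is to first give a concrete description of the functor $\opn{VB}_K$ in terms of the $P^c$-torsor $P_{\opn{dR},K}\to S_K$, and then invoke the Kodaira--Spencer isomorphism for Shimura varieties.

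First I would establish the following general identification: for any $V\in\mathrm{Rep}(P^c)$ with associated $\mathcal V=F(V)\in\mathrm{QCoh}_{G^c}(\opn{FL})$, there is a natural isomorphism
\[
\opn{VB}_K(F(V)) \;\cong\; P_{\opn{dR},K}\times^{P^c} V
\]
between $\opn{VB}_K(F(V))$ and the sheaf of sections of the associated vector bundle. This is essentially formal, using the Cartesian square
\[
\begin{tikzcd}
P_{\opn{dR},K} \arrow[r] \arrow[d] & G^c \arrow[d,"\pi"] \\
G_{\opn{dR},K} \arrow[r,"q"] & \opn{FL}
\end{tikzcd}
\]
together with the definition $F(V)=(\pi_*\mathcal O_{G^c}\otimes V)^{P^c}$. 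Flat base change along $q$ (valid since $\pi$ is affine) identifies $q^*\pi_*\mathcal O_{G^c}$ with the pushforward of $\mathcal O_{P_{\opn{dR},K}}$ along $P_{\opn{dR},K}\to G_{\opn{dR},K}$; applying $p_*$ and then taking $G^c$-invariants amounts to descending along the torsor $p$, and the resulting sheaf is precisely $(P_{\opn{dR},K}\to S_K)_*\mathcal O_{P_{\opn{dR},K}}\otimes V)^{P^c}=P_{\opn{dR},K}\times^{P^c}V$.

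Applying this general fact to $\Omega^1_{\opn{FL}/E}=F((\mathfrak g^c/\mathfrak p^c)^\vee)$ gives $\opn{VB}_K(\Omega^1_{\opn{FL}/E})=P_{\opn{dR},K}\times^{P^c}(\mathfrak g^c/\mathfrak p^c)^\vee$. The first identity then reduces to the classical Kodaira--Spencer isomorphism for the Shimura datum $(G,X)$, which provides a canonical identification $T_{S_K}\cong P_{\opn{dR},K}\times^{P^c}(\mathfrak g^c/\mathfrak p^c)$ coming from the derivative of the period map (or equivalently, from the Borel embedding realising $X$ as an open subspace of $\opn{FL}(\mathbb C)$). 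Dualising gives $\Omega^1_{S_K/E}=P_{\opn{dR},K}\times^{P^c}(\mathfrak g^c/\mathfrak p^c)^\vee$ as required.

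For the second identity, $\mathcal D_{\opn{FL}}=F(\mathcal U(\mathfrak g^c)\otimes_{\mathcal U(\mathfrak p^c)}k)$, so the first step immediately gives $\opn{VB}_K(\mathcal D_{\opn{FL}})=P_{\opn{dR},K}\times^{P^c}(\mathcal U(\mathfrak g^c)\otimes_{\mathcal U(\mathfrak p^c)}k)$. Both $\opn{VB}_K(\mathcal D_{\opn{FL}})$ and $\mathcal D_{S_K}$ are naturally filtered $\mathcal O_{S_K}$-algebras: the former via the order filtration on $\mathcal U(\mathfrak g^c)$, with PBW giving associated graded $\bigoplus_n \opn{Sym}^n \opn{VB}_K(T_{\opn{FL}})$, and by Kodaira--Spencer this matches $\mathrm{gr}^\bullet\mathcal D_{S_K}=\bigoplus_n\opn{Sym}^n T_{S_K}$. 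To upgrade this graded match to a filtered isomorphism, I would construct the map $\opn{VB}_K(\mathcal D_{\opn{FL}})\to\mathcal D_{S_K}$ directly from the derivative of the $G^c$-action on the torsor $p\colon G_{\opn{dR},K}\to S_K$: this yields an Atiyah-algebroid anchor map $\mathcal O_{S_K}\otimes\mathfrak g^c\to T_{S_K}$ which is $P^c$-equivariant and factors through $\opn{VB}_K(T_{\opn{FL}})\xrightarrow{\sim}T_{S_K}$. Extending to universal enveloping algebras produces a filtered $\mathcal O_{S_K}$-algebra map, which is an isomorphism since it is one on associated graded.

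The key substantive input is the Kodaira--Spencer isomorphism; the remainder is formal manipulation of torsors and equivariant sheaves. The main subtlety to watch for is convention-tracking---one must verify that the $P^c$-reduction provided by the de Rham torsor is exactly the one matching the parabolic $P^{\opn{std}}_\mu$ used to define $\opn{FL}$, and that the $P^c$-module $\mathfrak g^c/\mathfrak p^c$ corresponds to $T_{S_K}$ (rather than its dual or opposite) under the chosen Hodge cocharacter. The passage from graded to filtered in the $\mathcal D$-module case is also a delicate point, but is handled cleanly via the Atiyah algebroid formalism once the tangent-bundle identification is in place.
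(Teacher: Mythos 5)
Your proposal is correct in substance and reaches the same conclusions, but the route differs from the paper's in a meaningful way, particularly for the second identity. For $\opn{VB}_K(\Omega^1_{\opn{FL}/E}) = \Omega^1_{S_K/E}$ both arguments ultimately rest on Kodaira--Spencer; the difference is that you cite it as an external fact (derivative of the period map / Borel embedding), whereas the paper \emph{derives} it internally from the flat connection on $\opn{VB}_K(\mathfrak{g}^{0,\vee})$ together with Griffiths transversality, which keeps the argument self-contained in the automorphic-vector-bundle formalism. Your preliminary identification $\opn{VB}_K(F(V)) \cong P_{\opn{dR},K}\times^{P^c}V$ is correct and is used implicitly throughout the paper, so making it explicit is harmless and arguably clarifying.

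For $\opn{VB}_K(\mathcal{D}_{\opn{FL}}) = \mathcal{D}_{S_K}$ the two proofs genuinely diverge. The paper chooses, étale-locally on $S_K$, a section $s$ of $G_{\opn{dR},K}\to S_K$ such that $q\circ s$ is étale (this requires a small deformation-theoretic argument), uses the invariance of sheaves of differential operators under étale pullback to get $\opn{VB}_K(\mathcal{D}_{\opn{FL}})\vert_U = \mathcal{D}_{U/E}$, and glues the algebra structure via the $G^c$-equivariance. Your approach instead builds a global filtered algebra map and checks it is an isomorphism on associated gradeds via PBW and the tangent-bundle identification. This buys a cleaner global statement and avoids the transversality argument for local sections, at the cost of having to construct the anchor correctly. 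On that point there is a small but real imprecision: the derivative of the $G^c$-action on the torsor $G_{\opn{dR},K}\to S_K$ is \emph{vertical} (it lands in $T_{G_{\opn{dR},K}/S_K}$), so it does not by itself produce a map to $T_{S_K}$. The anchor must instead come from the flat connection on $G_{\opn{dR},K}$ (equivalently, from the derivative of $q\colon G_{\opn{dR},K}\to\opn{FL}$), and its source is the adjoint bundle $\opn{VB}_K(\mathfrak{g}^0)$ rather than the constant sheaf $\mathcal{O}_{S_K}\otimes\mathfrak{g}^c$; one then composes with the projection $\mathfrak{g}^0\to\mathfrak{g}^0/\mathfrak{p}^0$ and Kodaira--Spencer. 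With that correction, extending to enveloping algebras and comparing associated gradeds closes the argument as you describe.
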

\begin{proof} 
The first statement $\opn{VB}_K(\Omega^1_{\opn{FL}/E}) = \Omega^1_{S_K/E}$ follows from Kodaira--Spencer theory. Let us  recall the argument. 
Recall that  $\mathfrak{g}$ is the  Lie algebra of $G$, and $\mathfrak{g}^{ 0}$ is the associated vector bundle with flat connection over $\opn{FL}$. It carries a filtration $\mathfrak{n}^{0} \subseteq \mathfrak{p}^{0} \subseteq \mathfrak{g}^{0}$, where $\mathfrak{m}^0 = \mathfrak{p}^{0}/\mathfrak{n}^{0}$ is associated to the Levi $\mathfrak{m}$ and $\mathfrak{g}^{0}/\mathfrak{p}^{0}$ is the tangent sheaf. We explain how we can recover the isomorphism $\mathfrak{g}^0/\mathfrak{p}^0 \xrightarrow{\sim} \mathcal{T}_{\opn{FL}}$ from the connection. 

Passing to dual vector bundles, and using Griffiths transversality and the connection, we obtain a map  $\mathfrak{g}^{0,\vee}/\mathfrak{n}^{0,\vee} \rightarrow \mathfrak{m}^{0,\vee} \otimes_{\mathcal{O}_{\opn{FL}}} \Omega^1_{\opn{FL}/E} $ or equivalently a map $\mathfrak{g}^{0,\vee}/\mathfrak{n}^{0,\vee} \otimes_{\mathcal{O}_{\opn{FL}}}\mathfrak{m}^0 \rightarrow \Omega^1_{\opn{FL}/E} $. This map factors through the isomorphism $\mathfrak{g}^{0,\vee}/\mathfrak{n}^{0,\vee} \xrightarrow{\sim} \Omega^1_{\opn{FL}/E}$ via the adjoint action of $\mathfrak{m}$ on $\mathfrak{g}/\mathfrak{p}$ which induces a map $\mathfrak{g}^{0,\vee}/\mathfrak{n}^{0,\vee} \otimes_{\mathcal{O}_{\opn{FL}}} \mathfrak{m}^0 \rightarrow \mathfrak{g}^{0,\vee}/\mathfrak{n}^{0,\vee}$. The vector bundle $\opn{VB}_K(\mathfrak{g}^{0,\vee})$ carries an integrable connection, and we thus get a map 
\[
\opn{VB}_K(\mathfrak{g}^{0,\vee}/\mathfrak{n}^{0,\vee}) \otimes_{\mathcal{O}_{S_K}} \opn{VB}_K( \mathfrak{m}^0)\rightarrow \Omega^1_{S_K/E}
\]
which factors through the isomorphism $\opn{VB}_K(\mathfrak{g}^{0,\vee}/\mathfrak{n}^{0,\vee}) \cong \Omega^1_{S_K/E}$. 

We now turn to the identification $\opn{VB}_K( \mathcal{D}_{\opn{FL}}) = \mathcal{D}_{S_{K}}$. We first observe that by the first point we have a map $\mathcal{O}_{S_K} \oplus \mathcal{T}_{S_K} \rightarrow \opn{VB}_K( \mathcal{D}_{\opn{FL}}) $. We will show that $\opn{VB}_K( \mathcal{D}_{\opn{FL}})$ carries an algebra structure, and is generated by $\mathcal{O}_{S_K}$ and $\mathcal{T}_{S_K}$ subject to the usual relations. We recall that if $f \colon X \rightarrow Y$ is an \'etale map of $E$-schemes then the pullback map $ f^* \Omega^1_{Y/E} \rightarrow \Omega^1_{X/E}$ is an isomorphism. Similarly, the natural map $\mathcal{D}_X \rightarrow f^* \mathcal{D}_Y$ is an isomorphism. 

The $G^c$-torsor $G_{\opn{dR},K} \rightarrow S_K$ has sections \'etale locally. Let $U \rightarrow S_K$ be an \'etale map and let $s : U \rightarrow G_{\opn{dR}, K}$ be a section. By a deformation theory argument, one can choose $s$ so that the map $q \circ s : U \rightarrow \opn{FL}$ is \'etale. We deduce that $\opn{VB}_K(\mathcal{D}_{\opn{FL}/E})\vert_U 
 = (q\circ s)^* \mathcal{D}_{\opn{FL}/E} = \mathcal{D}_{U/E}$. 
 We therefore obtain an algebra structure on $\opn{VB}_K(\mathcal{D}_{\opn{FL}/E})\vert_U $. This algebra structure glues using the $G^c$-equivariant action.  Indeed, let $s' : U \rightarrow G_{\opn{dR}, K}$ be another section. Then there exists $g \in G^c(U)$ such that $s' = g \circ s$. Let $\opn{act} : G^c \times \opn{FL} \rightarrow \opn{FL}$ be the action map and let $p : G^c \times \opn{FL} \rightarrow \opn{FL}$ be the projection. We have a map $U \stackrel{(g, q \circ s)}\rightarrow G^c \times \opn{FL}$ and $\opn{act} ( (g, q \circ s)) = q \circ s'$, while $p ( (g, q\circ s)) = q \circ s$. The equivariant structure on 
$\mathcal{D}_{\opn{FL}/E}$ is an isomorphism of algebras (satisfying a certain cocycle condition) $\opn{act}^* \mathcal{D}_{\opn{FL}/E} \xrightarrow{\sim} p^* \mathcal{D}_{\opn{FL}/E}$. We thus get an isomorphism: $ (g, q\circ s)^* \opn{act}^* \mathcal{D}_{\opn{FL}/E} \xrightarrow{\sim}  (g, q \circ s)^* p^* \mathcal{D}_{\opn{FL}/E}$. 
\end{proof}

We immediately obtain the following corollary:

\begin{corollary} 
The functor $\opn{VB}_K$ induces a functor $\opn{VB}_K \colon \mathcal{D}_{\opn{FL}}\mathrm{-Mod}_{G^c} \rightarrow  \mathcal{D}_{S_K}\mathrm{-Mod}$. 
\end{corollary}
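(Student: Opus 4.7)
The plan is to transport the $\mathcal{D}_{\opn{FL}}$-module structure along $\opn{VB}_K$ by exploiting the fact that $\opn{VB}_K$ is a symmetric monoidal functor. The key observation, already implicit in the proof of the preceding lemma, is that locally on $S_K$ the functor $\opn{VB}_K$ coincides with an \'{e}tale pullback, and \'{e}tale pullback is symmetric monoidal.

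First, I would establish that $\opn{VB}_K$ is compatible with tensor products over the structure sheaf: for any $\mathcal{V}, \mathcal{W} \in \mathrm{QCoh}_{G^c}(\opn{FL})$, there is a natural isomorphism
\[
\opn{VB}_K(\mathcal{V} \otimes_{\mathcal{O}_{\opn{FL}}} \mathcal{W}) \xrightarrow{\sim} \opn{VB}_K(\mathcal{V}) \otimes_{\mathcal{O}_{S_K}} \opn{VB}_K(\mathcal{W}).
\]
This will be checked locally: as in the previous proof, choose an \'{e}tale map $U \to S_K$ together with a section $s \colon U \to G_{\opn{dR}, K}$ such that $q \circ s \colon U \to \opn{FL}$ is \'{e}tale; then $\opn{VB}_K(-)|_U = (q \circ s)^*(-)$, and the claim reduces to the fact that pullback along a morphism of schemes commutes with tensor products. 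Independence of the chosen section, and hence the global glueing, will follow from the $G^c$-equivariance of $\mathcal{V}$ and $\mathcal{W}$ together with the equivariant structure on $\mathcal{O}_{\opn{FL}}$, exactly as in the argument given for $\mathcal{D}_{\opn{FL}}$.

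Second, I would apply $\opn{VB}_K$ to the structural action morphism
\[
\alpha \colon \mathcal{D}_{\opn{FL}} \otimes_{\mathcal{O}_{\opn{FL}}} \mathcal{V} \to \mathcal{V}
\]
attached to a $G^c$-equivariant $\mathcal{D}_{\opn{FL}}$-module $\mathcal{V}$; by definition this is a morphism in $\mathrm{QCoh}_{G^c}(\opn{FL})$, so $\opn{VB}_K(\alpha)$ makes sense. Combining it with the monoidal isomorphism above and the identification $\opn{VB}_K(\mathcal{D}_{\opn{FL}}) = \mathcal{D}_{S_K}$ from the preceding lemma yields a morphism
\[
\mathcal{D}_{S_K} \otimes_{\mathcal{O}_{S_K}} \opn{VB}_K(\mathcal{V}) \to \opn{VB}_K(\mathcal{V}).
\]
The axioms for a $\mathcal{D}_{S_K}$-module structure (associativity and unitality) are expressed as the commutativity of certain diagrams involving the multiplication $\mathcal{D}_{\opn{FL}} \otimes_{\mathcal{O}_{\opn{FL}}} \mathcal{D}_{\opn{FL}} \to \mathcal{D}_{\opn{FL}}$ and the unit $\mathcal{O}_{\opn{FL}} \to \mathcal{D}_{\opn{FL}}$; applying the monoidal functor $\opn{VB}_K$ preserves these diagrams, yielding the corresponding commutativity on $S_K$ (using that $\opn{VB}_K$ sends the multiplication and unit of $\mathcal{D}_{\opn{FL}}$ to those of $\mathcal{D}_{S_K}$, which can again be checked locally via \'{e}tale pullback). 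Functoriality in $\mathcal{V}$ is automatic from the functoriality of $\opn{VB}_K$.

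The main obstacle is the monoidal compatibility of the first step: while the isomorphism is obvious \'{e}tale locally, one must verify that the local identifications glue globally. This will be handled by the $G^c$-equivariance, which supplies compatible transition cocycles on overlaps in the same way as for $\mathcal{D}_{\opn{FL}}$. Once the monoidal structure is in place, the rest of the construction is formal category-theoretic book-keeping.
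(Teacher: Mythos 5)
Your proposal is correct and is essentially the route the paper intends: the corollary is stated as an immediate consequence of the preceding lemma, and your argument simply unwinds that by re-using the lemma's local technique (étale sections $s$ of $G_{\opn{dR},K}$ with $q\circ s$ étale, so that $\opn{VB}_K$ is locally an étale pullback, with the $G^c$-equivariant structure supplying the glueing). The only point worth flagging is that the action map $\mathcal{D}_{\opn{FL}}\otimes_{\mathcal{O}_{\opn{FL}}}\mathcal{V}\to\mathcal{V}$ is a morphism in $\mathrm{QCoh}_{G^c}(\opn{FL})$ precisely because objects of $\mathcal{D}_{\opn{FL}}\mathrm{-Mod}_{G^c}$ are by definition required to have the $\mathcal{D}_{\opn{FL}}$-action compatible with the equivariant structure, which you implicitly use and which is indeed how the paper sets things up.
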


Using the equivalences of categories in \S \ref{PrelimsOnFLAppendix}, we have functors (that we keep denoting by $\opn{VB}_K$):
\begin{eqnarray*}
\mathrm{Rep}(P^c) & \rightarrow & \mathrm{QCoh}(S_K) \\
\mathrm{Rep}((\mathfrak{g}^c, P^c)) & \rightarrow & \mathcal{D}_{S_K}\mathrm{-Mod} \\
\end{eqnarray*}

Let $S_{K,\Sigma}^{\opn{tor}}$ be a toroidal compactification, where, as usual, $\Sigma$ denotes a cone decomposition. These functors can be extended to functors $\opn{VB}^{\opn{can}}_K$ : 
\begin{eqnarray*}
\mathrm{Rep}(P^c) & \rightarrow & \mathrm{QCoh}(S^{\opn{tor}}_{K,\Sigma}) \\
\mathrm{Rep}((\mathfrak{g}^c, P^c)) & \rightarrow & \mathcal{D}_{S^{\opn{tor}}_{K,\Sigma}}\mathrm{-Mod} \\
\end{eqnarray*}
where $\mathcal{D}_{S^{\opn{tor}}_{K,\Sigma}}$ is the sheaf of logarithmic differential operators.

\subsection{Application to modular curves} \label{ApplicationToModularCurvesAppendix}

We now specialise to the setting of modular curves, where $(G,X) = (\mathrm{GL}_2, \mathcal{H})$ with $\mathcal{H}$ the upper and lower half-plane. We change notation and let $X_K = S_{K,\Sigma}^{\opn{tor}}$. We take a representative of $P$ to be the lower triangular Borel $\overline{B}$ with Levi $M=T$ the diagonal torus. 

\subsubsection{Modular forms}
For any $\kappa \in X^*(T)$, we let $\omega^{\kappa}_K$ or simply $\omega^\kappa$ be $\opn{VB}^{\opn{can}}_K(w_0\kappa)$. The space of weight $\kappa$, level $K$ modular forms is $\mathrm{H}^0(X_K, \omega^\kappa)$. 

\begin{remark} 
Let $E \rightarrow X_K$ be the universal semi-abelian scheme. Let $\omega_E$ be the conormal sheaf, and $\mathrm{Lie}(E)$ the Lie algebra sheaf dual to $\omega_E$. We adopt similar notations for the dual semi-abelian scheme $E^D$. We let $\mathcal{H}_E = \mathcal{H}_{1}^{\opn{dR}}(E)$ be the relative log-de Rham homology of the Kuga-Sato compactification of $E$. 

Let $\opn{St}$ be the standard representation of $G$. By construction $\opn{VB}_K^{\opn{can}}(\opn{St}) = \mathcal{H}_E$ equipped with the Gauss-Manin connection. The $\overline{B}$-filtration $0 \rightarrow \Q((-1;1)) \rightarrow \opn{St} \rightarrow \Q((1;0))\rightarrow 0$ gives the Hodge filtration:
$$0 \rightarrow \omega_{E^D} \rightarrow \mathcal{H}_{E} \rightarrow \mathrm{Lie}(E) \rightarrow 0.$$
We deduce that $\omega_{E^D} = \omega^{(1;0)}$ and $\omega_E = \omega^{(1; -1)}$. 
\end{remark}

Let $\pi \colon M_{\opn{dR},K} \rightarrow X_K$ be the $T$-torsor over $X_K$. We have $\pi_* \mathcal{O}_{M_{\opn{dR},K}} = \oplus_{\kappa \in X^*(T)} \omega^\kappa$, and hence we deduce that 
\[
\mathrm{H}^0(M_{\opn{dR},K}, \mathcal{O}_{M_{\opn{dR},K}}) = \bigoplus_{\kappa \in X^*(T)} \mathrm{H}^0(X_K, \omega^\kappa)
\]
and that $\mathrm{Hom}_{T}(-w_0\kappa, \mathrm{H}^0(M_{\opn{dR},K}, \mathcal{O}_{M_{\opn{dR},K}})) = \mathrm{H}^0(X_K, \omega^\kappa)$.

\subsubsection{Nearly holomorphic modular forms}

We now discuss nearly holomorphic modular forms. For an integer $r \geq 0$ and $\kappa \in X^*(T)$, let $\mathcal{H}_{\kappa,r} = \opn{VB}^{\opn{can}}_{K} ( \mathrm{Fil}_r(M_{\kappa}^\vee))$ and $\mathcal{H}_{\kappa} = \opn{VB}^{\opn{can}}_K(M_{\kappa}^\vee)$. We have a connection $\nabla_\kappa : \mathcal{H}_{\kappa} \rightarrow \mathcal{H}_{\kappa} \otimes \Omega^1_{X_K/\Q}(\mathrm{log}\,D)$, where $D$ denotes the boundary divisor of $X_K$. This connection satisfies $\nabla_\kappa ( \mathcal{H}_{\kappa, r} ) \subseteq  \mathcal{H}_{\kappa, r+1} \otimes \Omega^1_{X_K/\Q}(\mathrm{log}\,D)$.

\begin{remark} 
We have $\mathcal{H}_{\kappa,r} = \mathrm{Sym}^r \mathcal{H}_{E} \otimes \omega^{\kappa-r((1;0))}$. Furthermore, we also have $\Omega^1_{X_K/\Q}(\mathrm{log}\,D) \cong \omega^{2\rho}$ by the Kodaira-Spencer isomorphism. 
\end{remark}

The space of nearly holomorphic modular forms of weight $\kappa$, degree $r$ and level $K$ is precisely the space $\mathrm{H}^0(X_K, \mathcal{H}_{\kappa,r})$. Using the Kodaira--Spencer isomorphism, we have a map 
\[
\nabla_\kappa : \mathrm{H}^0(X_K, \mathcal{H}_{\kappa,r}) \rightarrow \mathrm{H}^0(X_K, \mathcal{H}_{\kappa + 2\rho,r+1}).
\]
It is also possible to describe nearly holomorphic modular forms as functions on the torsor $P_{\opn{dR},K}$. Let $\pi' : P_{\opn{dR},K} \rightarrow X_{K}$ be the $\overline{B}$-torsor over $X_K$. 

\begin{proposition} \label{OPdrKPropAppendix}
The following properties are satisfied:
\begin{enumerate}

\item The sheaf $\pi'_* \mathcal{O}_{P_{\opn{dR},K}}$ has an action of $\overline{B}$ and the action of the unipotent radical $\overline{U}$ yields a filtration $\mathrm{Fil}_r \pi'_* \mathcal{O}_{P_{\opn{dR},K}} = \pi'_* \mathcal{O}_{P_{\opn{dR},K}}[\overline{\mathfrak{n}}^{r+1}]$. 
\item The action of $T$ yields a decomposition 
\[
\pi'_* \mathcal{O}_{P_{\opn{dR},K}} = \bigoplus_{\kappa \in X^*(T)} \mathcal{H}_\kappa
\]
compatible with the filtration. 
\item  We have a connection $\nabla : \pi'_* \mathcal{O}_{P_{\opn{dR},K}} \rightarrow \pi'_* \mathcal{O}_{P_{\opn{dR},K}} \otimes_{\mathcal{O}_{X_K}} \Omega^1_{X_K/\Q}(\mathrm{log} \, D)$ which commutes with the action of $T$,
\item We have  $\mathrm{Hom}_{\overline{B}} ( 2\rho, \pi'_* \mathcal{O}_{P_{\opn{dR},K}}) = \Omega^1_{X_K/\Q}(\mathrm{log}\,D)$.
\end{enumerate}
\end{proposition}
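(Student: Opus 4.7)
The entire proposition will follow from applying the functor $\opn{VB}^{\opn{can}}_{K}$ to the abstract structural results about $\mathcal{O}_{\overline{B}}$ developed in this appendix. The main structural identification underlying every part is
\[
\pi'_{*}\mathcal{O}_{P_{\opn{dR},K}} \;\cong\; \opn{VB}^{\opn{can}}_{K}(\mathcal{O}_{\overline{B}}),
\]
where $\mathcal{O}_{\overline{B}}$ carries its $(\mathfrak{g},\overline{B})$-module structure from the open embedding $\overline{B}\hookrightarrow G/U$ together with the commuting $\star_{r}$-action of $\overline{B}$ by right translation. The first step is to verify this identification: since $P_{\opn{dR},K}$ is obtained from the universal $\overline{B}$-torsor $G\to\opn{FL}$ by pullback along $q \colon G_{\opn{dR},K}\to \opn{FL}$, unwinding the definition of $\opn{VB}^{\opn{can}}_{K}$ together with the identification $F(\mathcal{O}_{\overline{B}}) = \pi_{*}\mathcal{O}_{G}$ from Lemma \ref{FEnrichedLemmaAppendix} yields the displayed isomorphism.

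Granting this identification, part (1) is automatic: the torsor structure gives the $\star_{r}$-action of $\overline{B}$ on $\pi'_{*}\mathcal{O}_{P_{\opn{dR},K}}$, and the filtration by $\overline{\mathfrak{n}}^{r+1}$-annihilators transports directly from the corresponding filtration on $\mathcal{O}_{\overline{B}} \cong k[x, t_1, t_2, t_1^{-1}, t_2^{-1}]$ by degree in the unipotent coordinate $x$. Part (2) is then obtained by applying $\opn{VB}^{\opn{can}}_{K}$ to the $T$-isotypic decomposition of Lemma \ref{lem-Brep}; the decomposition respects the filtration because on $\mathcal{O}_{\overline{B}}$ the $\star_{r}$-actions of $T$ and $\overline{\mathfrak{n}}$ commute with one another. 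For part (3), the $(\mathfrak{g},\overline{B})$-module structure on $\mathcal{O}_{\overline{B}}$ endows $\pi'_{*}\mathcal{O}_{P_{\opn{dR},K}}$ with a $\mathcal{D}_{X_{K}}$-module structure via the enriched functor $\mathrm{Rep}((\mathfrak{g},\overline{B}))\to\mathcal{D}_{X_{K}}\mathrm{-Mod}$, producing the desired connection with values in $\Omega^{1}_{X_K/\Q}(\log D)$ (the logarithmic part is automatic from working at the canonical extension to the toroidal compactification). Commutation with the $T$-action follows because on $\mathcal{O}_{\overline{B}}$ the $\star_{l}$- and $\star_{r}$-actions manifestly commute.

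Part (4) is the only part requiring genuine computation beyond formal transport. A $\overline{B}$-equivariant map $2\rho\to\pi'_{*}\mathcal{O}_{P_{\opn{dR},K}}$ selects an element whose image is annihilated by $\overline{\mathfrak{n}}$ and is a $T$-eigenvector for the character $2\rho$; hence it lies in $\mathrm{Fil}_{0}\,\pi'_{*}\mathcal{O}_{P_{\opn{dR},K}} = \bigoplus_{\kappa}\omega^{\kappa}$. Since the $T$-action on the summand $\omega^{\kappa} = \mathcal{H}_{\kappa,0}$ corresponds to the lowest weight $w_{0}\kappa$ of $M_{\kappa}^{\vee}$, the $2\rho$-isotypic condition singles out a unique line bundle which, after reconciling the convention $\omega^{\kappa} = \opn{VB}^{\opn{can}}_{K}(w_{0}\kappa)$, is identified with $\omega^{2\rho}$. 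The final step is to match this with $\Omega^{1}_{X_{K}/\Q}(\log D)$, which is exactly the content of the Kodaira--Spencer isomorphism $\Omega^{1}_{X_{K}/\Q}(\log D) \cong \omega_{E}\otimes\omega_{E^{D}} = \omega^{(1;-1)}\otimes\omega^{(1;0)} = \omega^{(2;-1)} = \omega^{2\rho}$.

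The main obstacle is purely bookkeeping: keeping track of the several Weyl-group twists between the $\overline{B}$-character $2\rho$ appearing in the statement of part (4), the weight by which $T$ acts on $\mathcal{H}_{\kappa,0}$ under the torsor structure, and the normalisation $\omega^{\kappa} = \opn{VB}^{\opn{can}}_{K}(w_{0}\kappa)$; once these are reconciled, the proof is a formal consequence of the functorial machinery set up in \S\ref{PrelimsOnFLAppendix} combined with Kodaira--Spencer.
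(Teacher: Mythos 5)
Your proposal is correct and follows essentially the same route as the paper, whose proof is the one-line observation that the $(\mathfrak{g},\overline{B})$-module $\mathscr{O}_{\overline{B}}$ corresponds to the tautological torsor $G\to G/\overline{B}$, hence under $\opn{VB}^{\opn{can}}_K$ to $P_{\opn{dR},K}$, so that all four assertions transport from the structures on $\pi_*\mathcal{O}_G$ and Lemma \ref{lem-Brep} established earlier in the appendix. Your more explicit treatment of part (4) (identifying the $2\rho$-isotypic line with $\omega^{2\rho}=\omega_{\mathcal{E}}\otimes\omega_{\mathcal{E}^D}$ and invoking Kodaira--Spencer) is exactly the bookkeeping the paper leaves implicit in the remark preceding the proposition.
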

\begin{proof} The $(\mathfrak{g},\overline{B})$-module $\oscr_{\overline{B}}$ corresponds to the tautological torsor $G \rightarrow G/\overline{B}$ which corresponds to $P_{\opn{dR}, K}$. \end{proof}

We now pass to the limit over $K$ and let $P_{\opn{dR}} = \lim_K P_{\opn{dR},K}$ and $X = \lim_K X_K$.

\begin{prop}  \label{PropNearlyOverconvergent}
The following properties are satisfied:
\begin{enumerate}
\item The space $ \mathrm{H}^0(P_{\opn{dR}}, \mathcal{O}_{P_{dR}})$ has commuting actions of $\overline{B}$ and $G(\mathbb{A}_f)$.
\item The $\overline{U}$-action on $ \mathrm{H}^0(P_{\opn{dR}}, \mathcal{O}_{P_{\opn{dR}}})$ yields an increasing filtration 
\[
\mathrm{Fil}_\bullet \mathrm{H}^0(P_{\opn{dR}}, \mathcal{O}_{P_{\opn{dR}}}) \subset \mathrm{H}^0(P_{\opn{dR}}, \mathcal{O}_{P_{\opn{dR}}}).
\] 
\item For each $\kappa \in X^*(T)$, we have $$\mathrm{Hom}_{T}(-w_0\kappa,  \mathrm{H}^0(P_{\opn{dR}}, \mathcal{O}_{P_{\opn{dR}}})) = \mathrm{H}^0(X, \mathcal{H}_{\kappa})$$ and $$\mathrm{Hom}_{T}(-w_0\kappa,  \mathrm{Fil}_r\mathrm{H}^0(P_{\opn{dR}}, \mathcal{O}_{P_{\opn{dR}}})) = \mathrm{H}^0(X, \mathcal{H}_{\kappa, r})$$
\item We have a $G(\mathbb{A}_f) \times \overline{B}$-equivariant isomorphism $\mathcal{O}_{P_{\opn{dR}}}\otimes_{\mathcal{O}_X} \Omega^1_{X/\Q} \simeq \mathcal{O}_{ P_{\opn{dR}}} \{2\rho\}$, where the latter means we twist the action of $\overline{B}$ by the character $2\rho$.
\item We have a $T \times G(\mathbb{A}_f)$-equivariant derivation  
\[
\nabla :  \mathrm{H}^0(P_{\opn{dR}}, \mathcal{O}_{P_{\opn{dR}}})  \rightarrow \mathrm{H}^0(P_{\opn{dR}}, \mathcal{O}_{P_{\opn{dR}}}) \otimes_\Q \Q\{2\rho\}
\]
which satisfies $\nabla (\mathrm{Fil}_r \mathrm{H}^0(P_{\opn{dR}}, \mathcal{O}_{P_{\opn{dR}}})) \subseteq \mathrm{Fil}_{r+1} \mathrm{H}^0(P_{\opn{dR}}, \mathcal{O}_{P_{\opn{dR}}}) \otimes_\Q \Q\{2\rho\}$ for all $r \geq 0$.
\end{enumerate}
\end{prop}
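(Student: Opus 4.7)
The strategy is to deduce all five assertions from Proposition \ref{OPdrKPropAppendix} by taking the inverse limit $P_{\opn{dR}} = \varprojlim_K P_{\opn{dR}, K}$, which corresponds to taking the colimit of global sections, and to keep track of the additional $G(\mathbb{A}_f)$-action emerging from the tower of transition maps $P_{\opn{dR}, K'} \to P_{\opn{dR}, K}$ for $K' \subset K$. For (1), the $\overline{B}$-torsor structure on each $P_{\opn{dR}, K} \to X_K$ is compatible with the transition maps (which are $G(\mathbb{A}_f)$-equivariant morphisms of $\overline{B}$-torsors obtained from the equivariant structure on $P_{\opn{dR}}$), so passing to the limit gives an action of $\overline{B}$ on $\mathrm{H}^0(P_{\opn{dR}}, \mathcal{O}_{P_{\opn{dR}}}) = \varinjlim_K \mathrm{H}^0(P_{\opn{dR}, K}, \mathcal{O}_{P_{\opn{dR}, K}})$ commuting with $G(\mathbb{A}_f)$-translations. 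Items (2) and (3) then follow by taking colimits in Proposition \ref{OPdrKPropAppendix}(1)-(2): the filtration $[\overline{\mathfrak{n}}^{r+1}]$ and the isotypic decomposition under $T$ are preserved by pullback along the tower, and $\mathrm{Hom}_T(-w_0\kappa, -)$ commutes with the relevant colimits.

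For (4), the Kodaira-Spencer isomorphism $\Omega^1_{X_K/\Q}(\log D) \cong \omega^{2\rho}_K = \opn{VB}^{\opn{can}}_K(w_0(2\rho))$ is intrinsic to the universal (generalised) elliptic curve and therefore natural in $K$; it passes to the limit to give a $G(\mathbb{A}_f)$-equivariant isomorphism $\Omega^1_{X/\Q}(\log D) \cong \opn{VB}^{\opn{can}}(w_0(2\rho))$. Pulling back along $\pi' \colon P_{\opn{dR}} \to X$ and using the universal trivialisation provided by the $\overline{B}$-torsor structure, namely the canonical $G(\mathbb{A}_f) \times \overline{B}$-equivariant isomorphism $\pi'^{*}\opn{VB}^{\opn{can}}(V) \cong \mathcal{O}_{P_{\opn{dR}}} \otimes_{\Q} V$ for every $\overline{B}$-representation $V$ (where $\overline{B}$ acts diagonally), applied to $V = w_0(2\rho)$, yields the claimed identification $\mathcal{O}_{P_{\opn{dR}}} \otimes_{\mathcal{O}_X} \Omega^1_{X/\Q} \simeq \mathcal{O}_{P_{\opn{dR}}}\{2\rho\}$.

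For (5), at finite level the connection $\nabla_K \colon \pi'_{*}\mathcal{O}_{P_{\opn{dR}, K}} \to \pi'_{*}\mathcal{O}_{P_{\opn{dR}, K}} \otimes \Omega^1_{X_K/\Q}(\log D)$ from Proposition \ref{OPdrKPropAppendix}(3) may be rewritten, via the isomorphism of (4), as a $T$-equivariant derivation into $\pi'_{*}\mathcal{O}_{P_{\opn{dR}, K}}\{2\rho\}$. These assemble under the colimit $\varinjlim_K$ into a $T \times G(\mathbb{A}_f)$-equivariant derivation on $\mathrm{H}^0(P_{\opn{dR}}, \mathcal{O}_{P_{\opn{dR}}})$, where equivariance for $G(\mathbb{A}_f)$ is inherited from the fact that the Gauss-Manin connection is preserved by pullback along the tower (isogenies are horizontal for Gauss-Manin). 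The transversality statement $\nabla(\mathrm{Fil}_r) \subset \mathrm{Fil}_{r+1} \otimes \Q\{2\rho\}$ follows directly from the corresponding statement at finite level in Proposition \ref{OPdrKPropAppendix}.

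The main obstacle is verifying the $G(\mathbb{A}_f)$-equivariance of the Kodaira-Spencer isomorphism, rather than its mere existence in the limit. Concretely, one must check that the identification $\Omega^1_{X_K/\Q}(\log D) \cong \omega_E \otimes \omega_{E^D}$ is compatible with the Hecke correspondences induced by isogenies, which reduces to the standard but slightly technical verification that the Kodaira-Spencer map intertwines with pullback of differentials along isogenies on both sides; everything else is formal transport of structure through the colimit.
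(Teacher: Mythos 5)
Your proposal is correct and follows exactly the paper's route: the paper's own proof is the one-line observation that everything follows from Proposition \ref{OPdrKPropAppendix} by passing to the limit over $K$, and your argument is simply a careful unpacking of that deduction (including the standard compatibility of the Kodaira--Spencer isomorphism with the tower, which the paper leaves implicit).
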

\begin{proof}
This follows immediately from Proposition \ref{OPdrKPropAppendix}.
\end{proof}

\section{Glossary of notation} \label{GlossaryNotAppendix}

In the following table, the reader will find a list of some objects appearing frequently in this article, together with how they are denoted in \S 2 (where there is no assumption on the levels in order to state our main result in its most general form) and in the rest of the article (i.e. at hyperspecial level).

\vspace{0.3cm}
\begin{center}
    \textbf{Comparison of notation between \S \ref{PadicMFsMainresultSec} and the rest of the article}
\end{center}
\vspace{0.3cm}

\begin{center}
    \begin{tabular}{c|c|c}
     Object description & General level (\S \ref{PadicMFsMainresultSec}) & Hyperspecial level (\S \ref{SectionNOCMFs}, \S \ref{GMinterpolationSection}, \S \ref{AdditionalStructuresSection})  \\
     \hline 
     Level subgroup at $p$  & $K_p \subset \opn{GL}_2(\mbb{Q}_p)$ compact open & $K_p = \opn{GL}_2(\mbb{Z}_p)$ (omitted from notation) \\
     Level subgroup away from $p$  & $K^p$ & $K^p$ (omitted from notation) \\
     Level of ordinary locus  & $K_{p, P} K^p$ & $(P')^{\opn{int}}K^p$ (omitted from notation) \\
     Level of Igusa tower  & $U_{K_{p, P}}K^p$ & $U_{P'}^{\opn{int}}K^p$ (omitted from notation) \\
     Torsor group of the Igusa tower  & $M_{K_{p, P}}$ & $T(\mbb{Z}_p)$ \\
     The ordinary locus  & $\mathfrak{IG}_{K_{p, P}K^p}$ & $\mathfrak{IG}_{(P')^{\opn{int}}K^p} = \mathfrak{X}^{\opn{ord}}_{\opn{GL}_2(\mbb{Z}_p)K^p} = \mathfrak{X}_{\opn{ord}}$ \\
     The Igusa tower  & $\mathfrak{IG}_{U_{K_{p, P}}K^p}$ & $\mathfrak{IG}_{U_{P'}^{\opn{int}}K^p} = \mathfrak{IG}_{\infty}$ \\
     $p$-adic modular forms  & $\mathscr{M}_{U_{K_{p, P}}}$ & $\mathscr{M}$ \\
     Nearly overconvergent modular forms  & $\mathscr{N}^{\dagger}_{U_{K_{p, P}}}$ & $\mathscr{N}^{\dagger}$ \\
     Continuous functions  & $C_{\opn{cont}}(U_{K_{p, P}}(-1)^{\vee}, \mbb{Z}_p)$  & $C_{\opn{cont}}(\mbb{Z}_p, \mbb{Z}_p)$ \\
     Locally analytic functions  & $C^{\opn{la}}(U_{K_{p, P}}(-1)^{\vee}, \mbb{Q}_p)$ & $C^{\opn{la}}(\mbb{Z}_p, \mbb{Q}_p)$ \\
     Atkin--Serre operator  & $\Theta_{U_{K_{p, P}}}$ & $\Theta$ or $\theta$
    \end{tabular}
\end{center}

\vspace{0.3cm}

\begin{center}
    \textbf{Diagram of torsors and (nearly) overconvergent forms}
\end{center}

\vspace{0.3cm}

For $s \gg r \gg 1$, one has the commutative diagram:
\[
\begin{tikzcd}
                                                     & {\tilde{\mathcal{F}}_{r, s}} \arrow[rr] \arrow[rd] &                                                                         & {\tilde{\mathcal{F}}_{r, s}^{\opn{AI}}} \arrow[ld] \arrow[r] & P_{\opn{dR}}^{\opn{an}} \arrow[d] \\
\mathcal{IG}_{\infty} \arrow[d] \arrow[ru] \arrow[r] & \mathcal{F}_s \arrow[r] \arrow[rd]                 & \mathcal{F}_r \times_{\mathcal{X}_r} \mathcal{X}_s \arrow[d] \arrow[rr] &                                                              & M_{\opn{dR}}^{\opn{an}} \arrow[d] \\
\mathcal{X}_{\opn{ord}} \arrow[rr]                   &                                                    & \mathcal{X}_s \arrow[rr]                                                &                                                              & \mathcal{X}                      
\end{tikzcd}
\]
and spaces: 
\begin{itemize}
    \item $\mathscr{M} = \opn{H}^0(\mathcal{IG}_{\infty}, \mathcal{O}_{\mathcal{IG}_{\infty}})$ ($p$-adic modular forms)
    \item $\mathscr{M}^{\dagger} = \varinjlim_s \opn{H}^0(\mathcal{F}_s, \mathcal{O}_{\mathcal{F}_s})$ (overconvergent modular forms)
    \item $\mathscr{N}^{\dagger, \opn{AI}} = \varinjlim_{r, s} \opn{H}^0(\tilde{\mathcal{F}}^{\opn{AI}}_{r, s}, \mathcal{O}_{\tilde{\mathcal{F}}^{\opn{AI}}_{r, s}})$ (nearly overconvergent modular forms as in \cite{AI_LLL})
    \item $\mathscr{N}^{\dagger} = \varinjlim_{r, s} \opn{H}^0(\tilde{\mathcal{F}}_{r, s}, \mathcal{O}_{\tilde{\mathcal{F}}_{r, s}})$ (nearly overconvergent modular forms)
    \item $\mathscr{N}^{\opn{hol}} = \opn{H}^0( P_{\opn{dR}}^{\opn{an}}, \mathcal{O}_{P_{\opn{dR}}^{\opn{an}}})$ (nearly holomorphic modular forms)
    \item $\mathscr{M}^{\opn{hol}} = \opn{H}^0( M_{\opn{dR}}^{\opn{an}}, \mathcal{O}_{M_{\opn{dR}}^{\opn{an}}})$ (holomorphic modular forms)
\end{itemize}
fitting into the diagram:
\[
\begin{tikzcd}
            & \mathscr{M}^{\dagger} \arrow[d, hook] \arrow[ld, hook]      & \mathscr{M}^{\opn{hol}} \arrow[l, hook] \arrow[d, hook]  \\
\mathscr{M} & {\mathscr{N}^{\dagger, \opn{AI}}} \arrow[d, hook] \arrow[l] & \mathscr{N}^{\opn{hol}} \arrow[l, hook] \arrow[ld, hook] \\
            & \mathscr{N}^{\dagger} \arrow[lu]                            &                                                         
\end{tikzcd}
\]

\providecommand{\bysame}{\leavevmode\hbox to3em{\hrulefill}\thinspace}
\providecommand{\MR}{\relax\ifhmode\unskip\space\fi MR }
\providecommand{\MRhref}[2]{%
  \href{http://www.ams.org/mathscinet-getitem?mr=#1}{#2}
}
\providecommand{\href}[2]{#2}

\Addresses


\begin{thebibliography}{{Sch}02}

\bibitem[AI21]{AI_LLL}
Fabrizio Andreatta and Adrian Iovita,
  \emph{\href{https://doi.org/10.1215/00127094-2020-0076}{Triple product
  {$p$}-adic {$L$}-functions associated to finite slope {$p$}-adic families of
  modular forms}}, Duke Math. J. \textbf{170} (2021), no.~9, 1989--2083.

\bibitem[AIP18]{HaloSpectral}
Fabrizio Andreatta, Adrian Iovita, and Vincent Pilloni, \emph{Le halo
  spectral}, Ann. Sci. \'{E}c. Norm. Sup\'{e}r. (4) \textbf{51} (2018), no.~3,
  603--655. \MR{3831033}

\bibitem[AIS14]{AIS}
Fabrizio Andreatta, Adrian Iovita, and Glenn Stevens, \emph{Overconvergent
  modular sheaves and modular forms for {${\bf GL}_{2/F}$}}, Israel J. Math.
  \textbf{201} (2014), no.~1, 299--359.

\bibitem[Bel12]{bellpadic}
Jo\"{e}l Bella\"{i}che, \emph{Critical {$p$}-adic {$L$}-functions}, Invent.
  Math. \textbf{189} (2012), no.~1, 1--60,
  \url{https://doi.org/10.1007/s00222-011-0358-z}. \MR{2929082}

\bibitem[BP21]{BPHCT}
George {Boxer} and Vincent {Pilloni}, \emph{{Higher Coleman theory}}, arXiv
  e-prints (2021), arXiv:2110.10251.

\bibitem[BP22]{BoxerPilloni}
George Boxer and Vincent Pilloni, \emph{{Higher Hida and Coleman theories on
  the modular curve}}, {Épijournal de Géométrie Algébrique} \textbf{{Volume
  6}} (2022), no. 16.

\bibitem[BP23]{BPHHT}
George {Boxer} and Vincent {Pilloni}, \emph{{Higher Hida theory for Siegel
  modular forms}}, preprint, 2023.

\bibitem[Col97]{Coleman}
Robert~F. Coleman, \emph{{{\(p\)}}-adic {Banach} spaces and families of modular
  forms}, Invent. Math. \textbf{127} (1997), no.~3, 417--479 (English).

\bibitem[Col10]{ColmezFonctions}
Pierre Colmez, \emph{Fonctions d'une variable {$p$}-adique}, Ast\'{e}risque
  (2010), no.~330, 13--59. \MR{2642404}

\bibitem[CS17]{CS17}
Ana Caraiani and Peter Scholze,
  \emph{\href{http://www.jstor.org/stable/26395700}{On the generic part of the
  cohomology of compact unitary {S}himura varieties}}, Annals of Mathematics
  \textbf{186} (2017), no.~3, 649--766.

\bibitem[DR14]{DarmonRotgerGrossZagier1}
Henri Darmon and Victor Rotger, \emph{Diagonal cycles and {E}uler systems {I}:
  {A} {$p$}-adic {G}ross-{Z}agier formula}, Ann. Sci. \'{E}c. Norm. Sup\'{e}r.
  (4) \textbf{47} (2014), no.~4, 779--832. \MR{3250064}

\bibitem[GR24]{2024arXiv241104559G}
Andrew {Graham} and Rob {Rockwood}, \emph{{{N}early higher Coleman theory and
  p-adic L-functions for $\mathrm{GSp}(4) \times \mathrm{GL}(2)$ and
  $\mathrm{GSp}(4) \times \mathrm{GL}(2) \times \mathrm{GL}(2)$}}, arXiv
  e-prints (2024), arXiv:2411.04559.

\bibitem[Gra24]{UFJII}
Andrew Graham, \emph{{U}nitary {F}riedberg--{J}acquet periods and
  anticyclotomic p-adic {L}-functions}, 2024.

\bibitem[How20]{howe2020unipotent}
Sean Howe, \emph{A unipotent circle action on {$p$}-adic modular forms},
  Transactions of the American Mathematical Society, Series B \textbf{7}
  (2020), no.~6, 186--226.

\bibitem[HX14]{HarronXiao}
Robert Harron and Liang Xiao, \emph{Gauss-{Manin} connections for
  {{\(p\)}}-adic families of nearly overconvergent modular forms}, Ann. Inst.
  Fourier \textbf{64} (2014), no.~6, 2449--2464 (English).

\bibitem[Kat73]{Katz}
Nicholas~M. Katz, \emph{{$p$}-adic properties of modular schemes and modular
  forms}, Modular functions of one variable, {III} ({P}roc. {I}nternat.
  {S}ummer {S}chool, {U}niv. {A}ntwerp, {A}ntwerp, 1972), Lecture Notes in
  Math., Vol. 350, Springer, Berlin, 1973, pp.~69--190.

\bibitem[KL05]{KisinLai}
Mark Kisin and King~Fai Lai, \emph{Overconvergent {H}ilbert modular forms},
  Amer. J. Math. \textbf{127} (2005), no.~4, 735--783. \MR{2154369}

\bibitem[Liu19]{ZLiu19}
Zheng Liu, \emph{Nearly overconvergent {S}iegel modular forms}, Ann. Inst.
  Fourier (Grenoble) \textbf{69} (2019), no.~6, 2439--2506. \MR{4033924}

\bibitem[Loe18]{LoefflerRSNote}
David Loeffler, \emph{A note on {$p$}-adic {R}ankin-{S}elberg {$L$}-functions},
  Canad. Math. Bull. \textbf{61} (2018), no.~3, 608--621. \MR{3831932}

\bibitem[LZ16]{LZ16Coleman}
David Loeffler and Sarah~Livia Zerbes, \emph{Rankin-{E}isenstein classes in
  {C}oleman families}, Res. Math. Sci. \textbf{3} (2016), Paper No. 29, 53.
  \MR{3552987}

\bibitem[Mil90]{MR1044823}
J.~S. Milne, \emph{Canonical models of (mixed) {S}himura varieties and
  automorphic vector bundles}, Automorphic forms, {S}himura varieties, and
  {$L$}-functions, {V}ol. {I} ({A}nn {A}rbor, {MI}, 1988), Perspect. Math.,
  vol.~10, Academic Press, Boston, MA, 1990, pp.~283--414. \MR{1044823}

\bibitem[Pil13]{Pilloni}
Vincent Pilloni, \emph{Overconvergent modular forms}, Ann. Inst. Fourier
  (Grenoble) \textbf{63} (2013), no.~1, 219--239.

\bibitem[{Sch}02]{Schneider}
Peter {Schneider}, \emph{{N}onarchimedean {F}unctional {A}nalysis}, 1 ed.,
  Springer Berlin, Heidelberg, 2002.

\bibitem[Shi76]{Shimura76}
Goro Shimura, \emph{The special values of the zeta functions associated with
  cusp forms}, Comm. Pure Appl. Math. \textbf{29} (1976), no.~6, 783--804.

\bibitem[Shi86]{Shimura86}
\bysame, \emph{On a class of nearly holomorphic automorphic forms}, Ann. of
  Math. (2) \textbf{123} (1986), no.~2, 347--406.

\bibitem[{Sta}22]{stacks-project}
The {Stacks project authors}, \emph{The stacks project},
  \url{https://stacks.math.columbia.edu}, 2022.

\bibitem[SW13]{ScholzeWeinstein}
Peter Scholze and Jared Weinstein, \emph{Moduli of {{\(p\)}}-divisible groups},
  Camb. J. Math. \textbf{1} (2013), no.~2, 145--237 (English).

\bibitem[Urb14]{Urban}
Eric Urban, \emph{Nearly overconvergent modular forms}, Iwasawa Theory 2012
  (Berlin, Heidelberg), Springer Berlin Heidelberg, 2014, pp.~401--441.

\end{thebibliography}
\end{document}